\documentclass[11pt, USletter, reqno, twoside, makeidx]{amsart}
\usepackage[margin=2.8cm, marginparsep=0.2cm, marginparwidth=2.4cm]{geometry}
\usepackage[color=blue!20!white,textsize=tiny]{todonotes}
 \setlength{\parskip}{3pt}

\usepackage[font={small,it}]{caption}
\usepackage{amsmath} 
\usepackage{amsthm}
\usepackage{amssymb} 
\usepackage{amsfonts}\usepackage{tikz-cd}
\usepackage{graphicx}
\usepackage{xcolor}
\definecolor{darkgreen}{rgb}{0,0.5,0}
\usepackage[normalem]{ulem}
\usepackage{comment}
\usepackage{xypic} 
\usepackage[all]{xy}
\usepackage{tikz-cd}
\usepackage[autostyle]{csquotes}
\usepackage[makeroom]{cancel}
\usepackage{braket}

\usepackage{tikz}

\usepackage{caption,subcaption,pinlabel}
\usepackage{graphics}
\usepackage{hyperref}

\usepackage{comment}
\usepackage{enumitem}
\usepackage{mathtools}
\usepackage{tikz}
\usetikzlibrary{matrix,arrows,decorations.pathmorphing}
\usepackage{mathabx}
\usetikzlibrary{matrix,calc}
\usepackage{scrextend}


\newtheorem{theorem}{Theorem}[section]
\newtheorem{lemma}[theorem]{Lemma}

\newtheorem{proposition}[theorem]{Proposition}
\newtheorem{question}[theorem]{Question}

\newtheorem{corollary}[theorem]{Corollary}

\theoremstyle{definition}
\newtheorem{remark}[theorem]{Remark}
\newtheorem{definition}[theorem]{Definition}


\newcommand{\f}{\mathbb{F}}

\newcommand{\from}{\colon}
\newcommand{\two}{\underline{2}}

\def\im{\text{im }}
\def\spinc{\text{spin}^c}

\newcommand{\Z}{\mathbb{Z}}
\newcommand{\Q}{\mathbb{Q}}
\newcommand{\R}{\mathbb{R}}
\newcommand{\C}{\mathbb{C}}

\newcommand{\gr}{\text{gr}}

\newcommand{\s}{\mathfrak{s}}

\newcommand{\T}{\mathbb{T}}

\newcommand{\Char}{\text{Char}}
\newcommand{\spa}{\text{span}}
\newcommand{\Hom}{\text{Hom}}


\def\HF {\mathit{HF}}

\newcommand\HFp {\HF^+}

\newcommand \HFm {\HF^-}

\newcommand{\Hl}{\mathbb{H}}

\newcommand{\Hla}{\mathbb{H}}
\newcommand{\Cla}{\mathbb{C}}

\newcommand{\SWF}{\mathit{SWF}}

\newcommand{\Hty}{\mathcal{H}}

\newcommand{\F}{\mathbb{F}}
\newcommand{\bbH}{\mathbb{H}}
\newcommand{\cT}{\mathcal{T}}

\def\B {\widetilde{H}_*^{S^1}}
\def\cB {c\widetilde{H}_*^{S^1}}
\def\tB {t\widetilde{H}_*^{S^1}}

\def\rX{\widetilde{X}}
\def\O{\mathcal{O}}

\usepackage{extarrows}

\newcommand{\Pin}{\operatorname{Pin}}
\newcommand{\ts}{\mathfrak{t}}
\newcommand{\hocolim}{\mathrm{hocolim}}

\newcommand{\spwt}{S^{((w(\square_d) - h)/2)\C}}
\numberwithin{equation}{section}

\newcommand{\topo}{\mathrm{Top}_*}
\newcommand{\Cat}{\mathcal{C}}
\newcommand{\Ide}{\mathrm{id}}

\author{Irving Dai}
\address{Department of Mathematics, The University of Texas at Austin}
\email{irving.dai@math.utexas.edu}
\author{Hirofumi Sasahira}
\address{Faculty of Mathematics, Kyushu University, 744, Motooka, Nishi-Ku, Fukuoka, 819-0395, Japan}
\email{hsasahira@math.kyushu-u.ac.jp}
\author{Matthew Stoffregen}
\address{Department of Mathematics, Michigan State University}
\email{stoffre1@msu.edu}

\begin{document}
\title{Lattice homology and Seiberg-Witten Floer spectra}

\begin{abstract}
Using lattice homology, we give an explicit combinatorial description of the Seiberg-Witten Floer spectrum $\SWF(Y)$ for $Y$ an almost-rational plumbed homology sphere. This class of manifolds includes all Seifert fibered rational homology spheres with base orbifold $S^2$. Using our computations, we provide a calculation of Manolescu's $\kappa$-invariant for certain connected sums of these spaces.
\end{abstract}
\vspace{-1cm}
\maketitle

\noindent
\vspace{-1cm}
\section{Introduction}

In \cite{furuta-108}, Furuta initiated the use of stable homotopy theory in the study of the Seiberg-Witten equations via his proof of the 10/8-theorem. Building on these techniques, Manolescu \cite{ManolescuSWF} defined a Seiberg-Witten Floer stable homotopy type $\SWF(Y)$ for $Y$ a rational homology sphere, partially implementing a program started by Floer \cite{floer-homotopy, CJS} of realizing Floer homologies as the homology of some naturally-occurring spectra. Indeed, work of Lidman and Manolescu \cite{LidmanMan} has shown that for rational homology spheres, $\SWF(Y)$ recovers the monopole Floer homology of Kronheimer and Mrowka \cite{KMmonopoles}. Recent research has focused on defining a Seiberg-Witten Floer stable homotopy type for 3-manifolds with $b_1 > 0$ and developing further formal properties of this theory \cite{KronheimerManolescu, KLS1, KLS2,sasahira-stoffregen, Sasahira-Stoffregen_Triangle}. The construction of a homotopy type for other invariants from low-dimensional or symplectic topology has likewise attracted a great deal of attention; see for example \cite{LipshitzSarkar, ManolescuSarkar, abouzaid-blumberg, abouzaid-mclean-smith, khovanov-lipshitz}. 


Following work of Fr\o yshov \cite{Fro96}, Manolescu used Pin(2)-equivariant Seiberg-Witten Floer homology to define a new suite of Floer-theoretic invariants with a wide array of topological applications, exemplified by his disproof of the triangulation conjecture \cite{ManolescuTriangulation}. (See also \cite{lin-pin}.) Despite this success, there are few explicit computations of Seiberg-Witten Floer spectra in the literature; or, for that matter, Floer spectra in others contexts. Although a significant amount of machinery has been established for computing and understanding Seiberg-Witten Floer \textit{homology}, progress on the homotopy side has been slow due to a lack of the typical tools present in Floer theory.

Indeed, to the best of the authors' knowledge, an (essentially) exhaustive list of all known non-trivial Seiberg-Witten Floer homotopy types is presented in \cite{ManolescuIntersection}: these consist of Brieskorn spheres of the form $\Sigma(2, 3, k)$.\footnote{The behavior of $\SWF$ under connected sums and orientation reversal is also understood.} Such examples follow from the work of Mrowka, Ozsv\'ath, and Yu \cite{MOY}, who provided an explicit description for the moduli spaces of the Seiberg-Witten equations on Seifert fibered homology spheres. In a handful of particularly simple cases, the critical points are such that the Seiberg-Witten Floer spectrum is completely determined, giving the computations in \cite{ManolescuIntersection}.

The goal of the present paper is to expand the available computations of Seiberg-Witten Floer spectra for 3-manifolds.  We work with the class of almost-rational plumbed homology spheres; this includes all Seifert fibered rational homology spheres. Our approach is based on ideas from lattice homology, a combinatorial invariant for plumbed 3-manifolds which has its roots in work of Ozsv\'ath and Szab\'o \cite{OSplumbed} and was formally introduced and developed by N\'emethi \cite{Nemethi}. Lattice homology is now known, by work of Zemke, to be isomorphic to Heegaard Floer homology \cite{Zemkelattice} and has been used by several authors to understand various other invariants such as involutive Heegaard Floer homology and instanton Floer homology \cite{DaiManolescu, ABDS}.\footnote{It should be noted that some of these results are established over different coefficient rings. For example, the full isomorphism of Zemke \cite{Zemkelattice} is currently known over $\F = \Z/2\Z$, while here we will generally work over $\Z$.}

A key input from the work of N\'emethi \cite{Nemethi} is a re-formulation of lattice homology analogous to discrete Morse theory or persistent homology. We draw inspiration from this model to construct an $S^1$-equivariant CW complex which we then prove computes the $S^1$-equivariant Seiberg-Witten Floer spectrum of any almost-rational homology sphere. Using some delicate equivariant homotopy theory, we are also able to determine the $\Pin(2)$-equivariant Seiberg-Witten Floer spectrum of these examples. We use this to analyze Manolescu's $\kappa$-invariant \cite{ManolescuIntersection} for certain connected sums. 

Due to the work of N\'emethi, there is also a close connection between lattice homology and the study of surface singularities \cite{NemethiOS, Nemethi}. We demonstrate that the work in the present paper has a curious overlap with this program by showing that in certain cases, the Seiberg-Witten Floer spectrum can be interpreted as a mapping cone between various sheaf cohomology groups associated to the resolution of a singularity. See Section~\ref{sec:ag} for a precise discussion.


\subsection{Statement of results}

Let $\Gamma$ be a negative-definite plumbing graph. We denote the associated plumbing of $2$-spheres by $W_\Gamma$ and its boundary by $Y_\Gamma$. As discussed in Section~\ref{sec:4}, each $\spinc$-structure $\s$ on $Y_\Gamma$ corresponds to an equivalence class $[k] \subset H^2(W_\Gamma; \Z)$ of characteristic elements on $W_\Gamma$. 

In general, the \textit{lattice homology} $\Hla(\Gamma, [k])$ is a combinatorially-defined $\F[U]$-module which is now known to be isomorphic to the Heegaard Floer homology of $(Y_\Gamma, \s)$. If $\Gamma$ is an \textit{almost-rational plumbing} (see \cite[Definition 8.1]{NemethiOS}), then the lattice homology is especially simple and its output can be described in terms of an object called a \textit{graded root} (see Section~\ref{sec:2.1}). We refer to the boundary of such a plumbing as an \textit{almost-rational plumbed homology sphere}, or sometimes just an AR homology sphere. In Section~\ref{sec:2.2} we describe how to construct an $S^1$-equivariant spectrum $\Hty(\Gamma, [k])$ from the data of a graded root. The main theorem of this paper will be to show that this computes the $S^1$-equivariant Seiberg-Witten Floer spectrum of $(Y_\Gamma, \s)$:

\begin{theorem} \label{thm:1.1}
Let $Y_\Gamma$ be an almost-rational plumbed homology sphere and $\s$ be a $\spinc$-structure on $Y_\Gamma$. Then we have an $S^1$-equivariant homotopy equivalence
\[
\Hty(\Gamma, [k]) = \SWF(Y_\Gamma, \s).
\]
\end{theorem}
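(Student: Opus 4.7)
The plan is to exhibit a direct $S^1$-equivariant homotopy equivalence by comparing Manolescu's finite-dimensional approximation of $\SWF(Y_\Gamma, \s)$ with the explicit CW model $\Hty(\Gamma, [k])$ built from the graded root. Recall that $\SWF(Y_\Gamma, \s)$ is constructed as a (suitably desuspended) Conley index for the approximated gradient flow of the Chern-Simons-Dirac functional on large $S^1$-invariant subspaces of the Coulomb slice. The goal is to show that this Conley index admits an $S^1$-equivariant cellular filtration whose associated CW spectrum is precisely the one dictated by the graded root.

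First, I would confirm that $\Hty(\Gamma, [k])$ is of ``$\SWF$ type'' in Manolescu's sense, with $S^1$-fixed subspectrum a sphere of dimension determined by the $d$-invariant, correct connectivity of the free part, and reduced $S^1$-equivariant Borel homology matching $\SWFH(Y_\Gamma, \s)$; the last point follows from the chain $\SWFH \cong \HFp \cong \mathbb{H}(\Gamma, [k])$ (Lidman-Manolescu, Zemke) combined with N\'emethi's description of $\mathbb{H}$ by the graded root. I would then promote this homology-level match to a spectrum-level equivalence inductively, either by analyzing the critical set of the approximated functional directly (using the explicit Seifert-type description when $Y_\Gamma$ is Seifert fibered, following Mrowka-Ozsv\'ath-Yu), or by a surgery/gluing induction that builds an AR plumbing from rational ones: rational plumbings reduce to cases where the graded root is a single ray and both sides are classical, while each extension contributes a controlled cofiber sequence on both sides that must be matched node-by-node.

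The main obstacle is the final identification of equivariant attaching maps: $S^1$-equivariant homotopy type is not determined by Borel homology, and in principle nontrivial equivariant $k$-invariants could twist the Conley index away from the naive model. Fortunately, the graded-root structure forces all free cells of $\Hty$ and of $\SWF$ into even Borel degrees, so the relevant obstruction groups are equivariant stable homotopy groups $\pi^{S^1}_{*}$ of low-dimensional representation spheres; these can be enumerated, and the $U$-action on $\SWFH$ pins down the only potentially nontrivial classes. I expect the bulk of the technical work to lie in organizing this equivariant obstruction-theoretic analysis so that it simultaneously handles the branching structure of an arbitrary AR graded root, rather than only the simple ``staircase'' cases previously analyzed in \cite{ManolescuIntersection}.
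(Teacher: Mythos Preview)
Your proposal has a genuine gap: you never construct an actual $S^1$-equivariant map between $\Hty(\Gamma,[k])$ and $\SWF(Y_\Gamma,\s)$. Knowing that two spectra of SWF type have isomorphic Borel homology is not enough to conclude they are equivariantly equivalent, and the obstruction-theoretic patch you sketch---arguing that free cells live in even Borel degrees and hoping the relevant $\pi_*^{S^1}$ groups cooperate---is neither carried out nor clearly sufficient to pin down attaching maps for an arbitrary graded root. The surgery-induction alternative is similarly incomplete: matching cofiber sequences on both sides requires comparison maps compatible with those sequences, and you have not said where such maps come from at the spectrum level. Analyzing the Conley index cell structure directly, \`a la Mrowka--Ozsv\'ath--Yu, only covers Seifert fibered spaces, not general AR plumbings.

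The paper's approach supplies exactly the missing map. For each leaf $k_i$ of the graded root (a characteristic element on $W_\Gamma$), the relative Bauer--Furuta invariant of $(W_\Gamma, k_i)$ furnishes an $S^1$-map from the sphere $\mathcal{F}(k_i)\subset \Hty(\Gamma,[k])$ into $\SWF(Y_\Gamma,\s)$. An adjunction relation for Bauer--Furuta invariants (proved by gluing over a tubular neighborhood of an embedded $(-1)$-sphere) shows that the maps for adjacent $k_i$, $k_{i+1}$ agree up to homotopy on the required subsphere, so these assemble into a global $S^1$-map $\cT\colon \Hty(\Gamma,[k])\to \SWF(Y_\Gamma,\s)$. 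By construction $\cT$ induces the lattice map $\T$ on co-Borel homology; rerunning the Ozsv\'ath--Szab\'o/N\'emethi induction inside Seiberg--Witten--Floer homology (using the surgery exact triangle of \cite{Sasahira-Stoffregen_Triangle}) shows $\T$ is an isomorphism. Since $\cT$ is trivially an equivalence on $S^1$-fixed points, a short Gysin-sequence argument plus the equivariant Whitehead theorem finishes. The moral is that the comparison map comes from the $4$-manifold $W_\Gamma$, not from abstract obstruction theory on the Conley index.
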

\noindent
Since the lattice homology $\Hla(\Gamma, [k])$ is combinatorial, this provides a combinatorial calculation of $\SWF(Y_\Gamma, \s)$. Note that our construction of $\Hty(\Gamma, [k])$ factors through $\Hla(\Gamma, [k])$. Rather disappointingly, this indicates that (for such 3-manifolds) the Seiberg-Witten Floer spectrum of $(Y_\Gamma, \s)$ is determined by its Floer homology; see Remark~\ref{rem:2.1}.

In the case that $[k]$ represents a self-conjugate $\spinc$-structure, it is also possible to upgrade $\Hty(\Gamma, [k])$ to a $\Pin(2)$-spectrum.  The proof of Theorem \ref{thm:1.1} extends in a somewhat delicate way to show: 

\begin{theorem} \label{thm:1.2}
 Let $Y_\Gamma$ be an almost-rational plumbed homology sphere and $\s$ be a self-conjugate $\spinc$-structure on $Y_\Gamma$. Then we have a $\Pin(2)$-equivariant homotopy equivalence
\[
\Hty(\Gamma, [k]) = \SWF(Y_\Gamma, \s).
\]
\end{theorem}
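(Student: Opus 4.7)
The plan is first to enhance the $S^1$-equivariant spectrum $\Hty(\Gamma, [k])$ to a $\Pin(2)$-equivariant spectrum when $\s$ is self-conjugate, and then to upgrade the equivalence of Theorem~\ref{thm:1.1} equivariantly. Since $\Pin(2) = S^1 \cup \jmath S^1$, giving a $\Pin(2)$-structure on an $S^1$-spectrum amounts to producing an involution $\jmath$ that anticommutes with the $S^1$-action. On the Seiberg-Witten side this involution is charge conjugation, which is available precisely for self-conjugate $\s$; on the lattice side the corresponding symmetry is the conjugation $k \mapsto -k$ on characteristic vectors, which preserves the class $[k]$ in exactly the same case.

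First I would verify that the involution $x \mapsto -x - k$ on lattice vectors, which preserves the weight function, descends to a cellular involution on the $S^1$-CW model of $\Hty(\Gamma, [k])$ from Section~\ref{sec:2.2}. With this symmetry in hand, one can refine the CW decomposition into $\Pin(2)$-cells: orbits of cells swapped by $\jmath$ contribute free $\Pin(2)$-cells, while cells fixed by $\jmath$ contribute $\Pin(2)/\langle \jmath \rangle$-cells, with the representation gradings read off from how $\jmath$ acts on the normal data at the corresponding vertices of the graded root. Standard equivariant obstruction theory then promotes the attaching maps from $S^1$-equivariant to $\Pin(2)$-equivariant.

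Next I would rerun the proof of Theorem~\ref{thm:1.1} with $\Pin(2)$ replacing $S^1$ throughout. Each intermediate object in that argument (finite-dimensional Bauer-Furuta approximation, flow category or handle decomposition, attaching data, etc.) admits a $\Pin(2)$-enhancement: on the Seiberg-Witten side this uses that for self-conjugate $\s$ the finite-dimensional approximations carry a natural $\Pin(2)$-action commuting with the Chern-Simons-Dirac flow, and on the lattice side it uses the cellular involution just constructed. Matching these two enhancements cell-by-cell then produces the desired $\Pin(2)$-equivariant homotopy equivalence.

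The main obstacle, and the source of the delicate equivariant homotopy theory alluded to in the introduction, is matching the $\Pin(2)$-representation structure at the $\jmath$-fixed cells. The reducibles of $\SWF(Y_\Gamma, \s)$ lie inside the $\jmath$-fixed subspectrum and carry specific $\Pin(2)$-equivariant normal data, expressed via copies of the standard complex representation $\C$ and the sign-twisted representation $\widetilde{\C}$. One must check that the conjugation-fixed cells of $\Hty(\Gamma, [k])$ come equipped with matching normal representations. This is combinatorial in principle but requires careful tracking of the alternation between $\C$ and $\widetilde{\C}$ as one ascends successive levels of the graded root, since an error in even a single step would prevent the $S^1$-equivalence from being upgraded.
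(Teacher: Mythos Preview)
Your proposal misses the actual obstruction and substitutes a different (and nonexistent) one in its place.

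You suggest ``rerunning the proof of Theorem~\ref{thm:1.1} with $\Pin(2)$ replacing $S^1$ throughout,'' arguing that the Bauer--Furuta data admit a $\Pin(2)$-enhancement because $\s$ is self-conjugate. This is where the argument breaks. The map $\cT$ of Lemma~\ref{lem:5.6} is built cell-by-cell from the Bauer--Furuta invariants $\Psi_{W_\Gamma, k_i}$ attached to the characteristic vectors $k_i$ along a path $\gamma$. These $k_i$ are \emph{not} individually self-conjugate; conjugation sends $k_i$ to $-k_i$, a different vertex of the path. For a pair $\{k_i, -k_i\}$ one can package the two Bauer--Furuta maps into a single $\Pin(2)$-map from a wedge of spheres (see Section~\ref{subsec:relative-bauer-furuta}), and this is what makes $\cT$ equivariant on the ``outer'' part $\gamma_0$ of an almost $J$-invariant path. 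But the central portion $\gamma_\Theta$ runs between two opposite corners of the $J$-invariant cube $\square_J$ and is in general \emph{not} $J$-invariant; the Bauer--Furuta maps along $\gamma_\Theta$ (and the adjunction homotopies between them) carry no $\Pin(2)$-structure. So $\cT$ is only $S^1$-equivariant on $\Hty(\gamma_\Theta,[k])$, and no amount of representation-matching at fixed cells repairs this.

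The paper's proof does not upgrade $\cT$ directly. Instead it uses the cofiber sequence $\Gamma_0 \to \Gamma \to \Sigma^{\tilde{\R}}S^0$ (where $\Gamma_0 = \Hty(\gamma_0,[k])$) and the parallel sequence $\Gamma_0 \to \SWF(Y,\s) \to \Theta$. One knows $\Theta \simeq_{S^1} \Sigma^{\tilde{\R}}S^0$; the hard work (Lemmas~\ref{lem:6.5}--\ref{lem:6.7}) is to upgrade this to a $\Pin(2)$-equivalence, using the tom Dieck splitting and Burnside-ring computations to control $[\Theta,\Sigma^{\tilde{\R}}S^0]_{\Pin(2)}$. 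One then compares the attaching maps via the adjunction $[-, \Pin(2)_+\wedge_{S^1}(-)]_{\Pin(2)} \cong [-,-]_{S^1}$. Finally, the difficulty you describe---an alternation between $\C$ and $\widetilde{\C}$ at $\jmath$-fixed cells---is not the issue here; the relevant representations are $\tilde{\R}$ and $\mathbb{H}$, and the grading condition $(w(\square_J)-h)\equiv 0 \bmod 4$ already ensures the central sphere is a $\Pin(2)$-representation sphere.
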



One especially effective application of Manolescu's construction has been in constraining the topology of spin $4$-manifolds $X$ with $\partial X=Y$. While several such restrictions may be derived using Seiberg-Witten Floer homology (via the use of the invariants $\alpha,\beta,$ and $\gamma$ of \cite{ManolescuTriangulation}), if $b_2^+(X)$ is large then the most useful constraint comes from the $\kappa$-invariant of \cite{ManolescuIntersection}. Explicitly, consider the quantity
\[
\xi(Y)=\max_{ \substack{p,q\in \mathbb{Z} \\ q>1} } \ \{p-q\mid \exists X \text{ spin with } \partial X =Y \text{ and } Q(X)=p(-E_8)\oplus q(\begin{smallmatrix}1 & 0\\0 & 1\end{smallmatrix})\},
\]
where $Q(X)$ denotes the intersection form of $X$. In \cite{ManolescuIntersection}, it is shown that  
\[
\xi(Y)\leq \kappa(Y)-1
\]
for all integer homology spheres $Y$. 

Since $\kappa(Y, \s)$ is defined in terms of the $K$-theory of $\SWF(Y, \s)$ it is much harder to compute than the homological invariants $\alpha,\beta,$ and $\gamma$. The value of $\kappa$ for many Seifert spaces was calculated by Ue in \cite{ue-k-theory}, where it is was also shown that $0\leq \kappa(Y,\s) + \bar{\mu}(Y,\s)\leq 2$ for all Seifert fibered rational homology spheres. Here, $\bar{\mu}(Y, \s)$ is the Neumann-Siebenmann invariant. We sharpen and extend Ue's result to the class of all AR homology spheres:

\begin{corollary}\label{cor:a}
Let $Y$ be an AR homology sphere and $\s$ be a self-conjugate $\spinc$-structure on $Y$. Then there are two possibilities for $\kappa(Y, \s)$:
\begin{enumerate}
\item If $- \bar{\mu}(Y, \s) = \delta(Y, \s)$, then $\kappa(Y, \s)= - \bar{\mu}(Y, \s)$.
\item If $- \bar{\mu}(Y, \s) < \delta(Y, \s)$, then $\kappa(Y, \s)= - \bar{\mu}(Y, \s)+2$.  
\end{enumerate}
Here, $\delta(Y, \s)$ is the monopole Fr\o yshov invariant.
\end{corollary}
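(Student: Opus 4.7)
The plan is to combine Theorem~\ref{thm:1.2} with a direct computation of $\Pin(2)$-equivariant $K$-theory for the combinatorial spectrum $\Hty(\Gamma,[k])$. Since Manolescu's $\kappa$ depends only on the $\Pin(2)$-equivariant stable homotopy type, Theorem~\ref{thm:1.2} immediately reduces the corollary to a purely combinatorial question on the graded root. Under this identification, the Fr\o yshov invariant $\delta(Y,\s)$ is the grading of the bottom of the infinite $U$-tower in $\Hla(\Gamma,[k])$, while $-\bar{\mu}(Y,\s)$ corresponds to the grading of the highest $\iota$-fixed vertex of the graded root, where $\iota$ denotes the involution induced by self-conjugation of $\s$. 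A standard property of AR graded roots gives the inequality $-\bar{\mu}(Y,\s)\leq \delta(Y,\s)$, so the two cases of the corollary are exhaustive.

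Write $A\subset \Hty(\Gamma,[k])$ for the $\Pin(2)$-subspectrum built from the central $\iota$-fixed branch of the graded root, and let $B = \Hty(\Gamma,[k])/A$ be the cofiber. The subspectrum $A$ carries trivial $j$-action and is equivalent to a suspension of a half-infinite trivial-action tower beginning in grading $-\bar{\mu}(Y,\s)$. The quotient $B$ is built entirely from free $\Pin(2)$-orbit cells corresponding to pairs of $\iota$-swapped branches above the symmetric apex. Thus $\widetilde{KO}^{\ast}_{\Pin(2)}(B)$ behaves as a non-equivariant $KO^{\ast}$-module and contributes only to the reduced part of the $K$-theory of $\Hty(\Gamma,[k])$.

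I would then extract $\kappa$ from the $\Pin(2)$-equivariant $KO$-theory via the long exact sequence
\[
\widetilde{KO}^{\ast}_{\Pin(2)}(B) \to \widetilde{KO}^{\ast}_{\Pin(2)}(\Hty(\Gamma,[k])) \to \widetilde{KO}^{\ast}_{\Pin(2)}(A).
\]
The bottom of the infinite tower in the middle term is determined by that of $A$ together with possible $\eta$-extensions pushed in from $B$. When $-\bar{\mu}(Y,\s) = \delta(Y,\s)$, the symmetric apex already sits at the bottom of the tower and no extensions arise, giving $\kappa(Y,\s) = -\bar{\mu}(Y,\s)$. When $-\bar{\mu}(Y,\s) < \delta(Y,\s)$, the positive gap forces a nontrivial $\eta$-extension; combined with the relation $\eta^{3}=0$ in $KO^{\ast}$ and the graded-root structure immediately above the symmetric apex, this produces a shift of exactly $2$, yielding $\kappa(Y,\s) = -\bar{\mu}(Y,\s)+2$.

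The main obstacle is the last step: controlling the $\Pin(2)$-equivariant attaching maps of the free-orbit cells in $B$ finely enough to pin down the precise $\eta$-multiplications in $\widetilde{KO}^{\ast}_{\Pin(2)}(\Hty(\Gamma,[k]))$. Ruling out larger corrections (multiples of $4$ or $8$ coming from higher Bott-periodic phenomena) hinges on the fact that the graded-root combinatorics constrain the relative grading of the first $\iota$-conjugate pair of branches above the symmetric apex, forcing at most an $\eta^{2}$ extension to survive.
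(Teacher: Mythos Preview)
Your proposal has a genuine gap, and in fact the computational framework is misaligned with how $\kappa$ is actually defined. Manolescu's $\kappa$ is extracted from $\Pin(2)$-equivariant \emph{complex} $K$-theory: one computes the ideal $\mathfrak{I}(X)\subset R(\Pin(2))=\Z[z,w]/(2w-zw,w^2-2w)$ given by the image of $\tilde{K}_{\Pin(2)}(X)\to \tilde{K}_{\Pin(2)}(X^{S^1})$, and $\kappa$ is read off from the minimal $w$-power appearing. Your $KO$/$\eta$-extension story is not the relevant structure here, and the discussion of ``ruling out larger corrections (multiples of $4$ or $8$ coming from higher Bott-periodic phenomena)'' has no clear relation to $\mathfrak{I}(X)$. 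Relatedly, your description of the central subspectrum $A$ as carrying ``trivial $j$-action'' is incorrect: the sphere at the symmetric vertex is a quaternionic representation sphere $S^{m\mathbb{H}}$ on which $j$ acts by quaternionic multiplication (see Section~\ref{sec:6.2}). This error propagates into your proposed long exact sequence analysis.

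The paper's proof avoids these difficulties entirely by a local-equivalence reduction. First, the notion of local equivalence for spectra is shown to coincide with graded-root local equivalence (Theorem~\ref{thm:lesame}, Lemma~\ref{lem:graded-root-switch}), so one may replace the symmetric graded root by its monotone subroot. After a suspension normalizing $\beta=0$, the monotone root is sandwiched via local maps between projective roots $X_m$ and $X_n$ with $m,n>0$, whose associated spectra $A_m,A_n$ admit a direct computation $\mathfrak{I}(A_n)=(z^{\lceil n/2\rceil},w)$ via the cofiber sequence $S^{n\C}\vee S^{n\C}\to A_n\to S^{\tilde{\R}}$ and the change-of-groups adjunction (Lemma~\ref{lem:k-calc-projective}). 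This pins down $\kappa=2$ in the nontrivial case without any delicate analysis of attaching maps; the degenerate case where the top $J$-invariant vertex is a leaf is handled by one additional commutative square in complex $K$-theory (Lemma~\ref{lem:k-calc-monotone}). The obstacle you identify at the end --- controlling the attaching maps of the free-orbit cells --- is exactly what the sandwich argument sidesteps.
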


Corollary~\ref{cor:a} is similar to \cite[Corollary 1.2]{Stoffregen} and \cite[Theorem 2.4]{DaiMonopole}, in which it is shown that the $\alpha$, $\beta$, and $\gamma$-invariants for an AR homology sphere are determined by $\delta(Y, \s)$ and $\bar{\mu}(Y, \s)$. Explicitly, we have
\[
\beta(Y, \s) = \gamma(Y, \s) = - \bar{\mu}(Y, \s),
\]
while $\alpha(Y, \s)$ is either $\delta(Y, \s)$ or $\delta(Y, \s) + 1$, depending on whether the parity of $- \bar{\mu}(Y, \s)$ coincides with that of $\delta(Y, \s)$. (Our orientation conventions are such that the set of AR homology spheres encompasses the class of \textit{negative} Seifert spaces.) Hence in particular
\[
-\bar{\mu}(Y, \s) = \delta(Y, \s) \text{ or } -\bar{\mu}(Y, \s) < \delta(Y, \s).
\]
In the former case, we show that the local equivalence class of $(Y, \s)$ is trivial; this should be thought of as giving the dichotomy in Corollary~\ref{cor:a}.

We now specialize to the case when $(Y, \s)$ is \textit{projective}, as defined in \cite[Section 5.2]{Stoffregen}. The class of projective AR homology spheres includes all positive integer surgeries on (right-handed) torus knots. Our goal will be to analyze the $\kappa$-invariant for connected sums of projective AR homology spheres:


\begin{corollary}\label{cor:b}
Let $\{(Y_i, \s_i)\}_{i=1}^{n}$ be a family of projective AR homology spheres. Then
\[
\kappa(\#_{i=1}^n (Y_i, \s_i))\leq -\sum_{i = 1}^n \bar{\mu}(Y_i, \s_i) + 2\lceil n/2 \rceil.
\]
\end{corollary}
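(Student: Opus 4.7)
The plan is to combine subadditivity of $\kappa$ under connected sum with an improved bound for pairs of projective AR homology spheres. Because $\SWF$ is multiplicative under connected sums (up to suspension by a standard representation sphere), Manolescu's $\kappa$-invariant is subadditive:
\[
\kappa\bigl((Y_1, \s_1) \# (Y_2, \s_2)\bigr) \leq \kappa(Y_1, \s_1) + \kappa(Y_2, \s_2).
\]
Applying Corollary~\ref{cor:a} to each summand separately would yield only $\kappa(\#_i (Y_i, \s_i)) \leq -\sum_i \bar{\mu}(Y_i, \s_i) + 2n$, which is too weak by essentially a factor of two. The goal is to recover the missing factor by processing the summands in pairs.

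The heart of the argument will be the following pairing lemma: for two projective AR homology spheres $(Y_1, \s_1)$ and $(Y_2, \s_2)$,
\[
\kappa\bigl((Y_1, \s_1) \# (Y_2, \s_2)\bigr) \leq -\bar{\mu}(Y_1, \s_1) - \bar{\mu}(Y_2, \s_2) + 2.
\]
To prove this, I would invoke Theorem~\ref{thm:1.2} to replace $\SWF(Y_i, \s_i)$ by the combinatorial Pin(2)-spectrum $\Hty(\Gamma_i, [k_i])$. For projective graded roots, the local Pin(2)-equivalence class admits a small explicit model: a two-cell Pin(2)-complex whose cells are representation spheres at the levels dictated by $-\bar{\mu}(Y_i, \s_i)$ and $-\delta(Y_i, \s_i)$ (degenerating to a single representation sphere when these coincide, per case~(1) of Corollary~\ref{cor:a}). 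The smash product of two such cones is a four-cell Pin(2)-complex in which I would construct a Pin(2)-equivariant map from a representation sphere at level $-\bar{\mu}(Y_1, \s_1) - \bar{\mu}(Y_2, \s_2) + 2$ realizing the required $KO$-theoretic class; the $+2$ saving over the naive sum comes from a diagonal cell being absorbed into a lower-dimensional cofiber in the smash product.

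Given the pairing lemma, Corollary~\ref{cor:b} follows by a direct induction on pairs. Partition the $n$ summands into $\lfloor n/2 \rfloor$ pairs (with one singleton if $n$ is odd), bound each pair by the lemma, bound the singleton (if present) by Corollary~\ref{cor:a}, and combine via subadditivity. For $n = 2k$ this produces $-\sum_i \bar{\mu}(Y_i, \s_i) + 2k$, and for $n = 2k+1$ it produces $-\sum_i \bar{\mu}(Y_i, \s_i) + 2(k+1)$, each of which equals $-\sum_i \bar{\mu}(Y_i, \s_i) + 2\lceil n/2 \rceil$.

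The principal obstacle is the pairing lemma. The analogous cancellation for the Fr\o yshov invariant $\beta$ in \cite{Stoffregen} is tractable via singular Borel homology; by contrast, $\kappa$ is defined in terms of Pin(2)-equivariant $KO$-theory, so producing the $+2$ saving requires a genuinely $K$-theoretic computation in the smash product of two two-cell cones. The feasibility rests on the explicit CW description of $\Hty$ for projective graded roots furnished by our main construction, together with a careful cofiber-sequence analysis in Pin(2)-equivariant $KO$-theory.
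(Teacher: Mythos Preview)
Your reduction strategy---pair off summands, apply a pairing lemma, then use subadditivity of $\kappa$---is valid and yields the stated bound, but it is genuinely different from what the paper does. The paper never invokes subadditivity. Instead it works with the full $n$-fold smash product $A=\bigwedge_i A_{n_i}$ at once, building it from the cube $[-1,1]^n$ with a weight function and observing that collapsing the boundary subcomplex $M$ gives $A/M\simeq S^{n\tilde{\mathbb R}}$. The quotient map $A\to A/M$ then pulls back the Euler class to produce $w^{\lceil n/2\rceil}\in\mathfrak I(A)$ directly, and the grading shift (\ref{eq:gradingshift}) converts this into the stated inequality. Your pairing lemma is exactly the $n=2$ instance of that argument; what you gain by isolating it is modularity, what you lose is that the full-cube picture also feeds into the sharper computations needed for Corollary~\ref{cor:c}.

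The real gap is in your sketch of the pairing lemma itself. Two issues. First, $\kappa$ is defined via $\Pin(2)$-equivariant complex $K$-theory, not $KO$; this is minor but should be fixed. Second, and more substantively, your proposed mechanism---``construct a $\Pin(2)$-equivariant map from a representation sphere \ldots\ realizing the required class''---does not match how upper bounds on $\kappa$ are obtained. To show $\kappa(X)\le 2k$ one must exhibit a class in $\tilde K_G(X)$ whose restriction to $\tilde K_G(X^{S^1})=R(G)$ equals $w^k$; maps \emph{into} $X$ from spheres do not produce such classes. The paper's device is a map \emph{out of} $X$: the collapse $p\colon A\to A/M\cong S^{\tilde{\mathbb C}}$ (for $n=2$) gives $p^*(1)\in\tilde K_G(A)$ with $\Phi^*p^*(1)=w$. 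Your ``diagonal cell absorbed into a lower-dimensional cofiber'' heuristic is pointing at the right phenomenon but does not constitute a proof; you would need to either carry out the $K$-theory cofiber computation explicitly or discover the quotient-map trick.
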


Note that understanding the behavior of $\kappa$ under connected sums is significantly more difficult than the corresponding question for $\alpha$, $\beta$, and $\gamma$ (see \cite{Stoffregenconnectedsum, Lin-Connected-Sums} for results concerning the latter). Although it is possible to compute the entire $K$-theory in the case that $(Y, \s)$ is projective, this does not immediately yield Corollary~\ref{cor:b} since equivariant $K$-theory does not have a K{\"u}nneth formula. We also establish a more refined version of Corollary~\ref{cor:b} in certain cases:

\begin{corollary}\label{cor:c}
Let $\{(Y_i, \s_i)\}_{i=1}^{n}$ be a family of projective AR homology spheres. Suppose that $n$ is even and we have $- \bar{\mu}(Y_i, \s_i) \leq \delta(Y_i, \s_i) - 2$ for each $i$. Then
\[
\kappa(\#_{i = 1}^n(Y_i, \s_i)) = - \sum_{i = 1}^n \bar{\mu}(Y_i, \s_i) + n.
\]
\end{corollary}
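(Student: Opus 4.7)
The upper bound is immediate from Corollary~\ref{cor:b}: since $n$ is even we have $2\lceil n/2 \rceil = n$, and so
\[
\kappa\bigl(\#_{i=1}^n(Y_i,\s_i)\bigr) \leq -\sum_{i=1}^n \bar{\mu}(Y_i,\s_i) + n
\]
without needing the stronger hypothesis. All of the work therefore lies in establishing the matching lower bound, and this is where projectivity and the strict inequality $-\bar{\mu}(Y_i,\s_i) \leq \delta(Y_i,\s_i) - 2$ are used.

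My plan is to leverage Theorem~\ref{thm:1.2} and the projectivity hypothesis to obtain, for each $i$, an explicit $\Pin(2)$-equivariant CW model of $\SWF(Y_i,\s_i)$ built from the graded-root spectrum $\Hty(\Gamma_i,[k_i])$. Under the assumption $-\bar{\mu}(Y_i,\s_i) \leq \delta(Y_i,\s_i) - 2$, we are in case~(2) of Corollary~\ref{cor:a}: individually $\kappa(Y_i,\s_i) = -\bar{\mu}(Y_i,\s_i) + 2$, and the graded root carries a distinguished off-tower cell in a degree governed by $\bar{\mu}(Y_i,\s_i)$. I would use this cell to extract a specific nonzero class in the reduced $\Pin(2)$-equivariant K-theory of $\SWF(Y_i,\s_i)$ whose level witnesses the value of $\kappa(Y_i,\s_i)$.

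The core of the proof is to track these K-theory classes through the smash product $\SWF(\#_i(Y_i,\s_i)) \simeq \bigwedge_i \SWF(Y_i,\s_i)$. Although equivariant K-theory does not admit a K\"unneth formula, the explicit graded-root model makes the smashing analyzable: I would pair the factors off two at a time, arguing that each pair's contribution to $\kappa$ is $+2$ rather than $+4$, due to a representation-theoretic shift coming from the $\Pin(2)$-action on the smash of two off-tower cells in the projective setting. Iterating this pairing $n/2$ times (which is precisely where the evenness of $n$ enters), one produces a nonzero K-theory class of degree exactly $-\sum_i \bar{\mu}(Y_i,\s_i) + n$ in $\SWF(\#_i(Y_i,\s_i))$, forcing the desired lower bound.

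The principal obstacle is verifying that the candidate K-theory class actually survives this iterated smashing, i.e.\ that it is not cancelled by attaching maps in the graded-root model or by interactions between tower and off-tower pieces of distinct factors. I expect the hypothesis $-\bar{\mu}(Y_i,\s_i) \leq \delta(Y_i,\s_i) - 2$ to be precisely the condition providing enough spectral ``room'' between the tower level and the off-tower cell for the inductive pairing to go through cleanly. To formalize this I would filter each $\SWF(Y_i,\s_i)$ by a cofibre sequence whose associated graded consists of a tower piece plus standard representation-sphere cells, and use additivity of $\bar{\mu}$ under connected sum to identify the final K-theoretic degree with the claimed value.
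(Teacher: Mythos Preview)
Your upper bound via Corollary~\ref{cor:b} is fine. The lower-bound sketch, however, has two genuine gaps. First, the direction is backwards: $\kappa$ is read off from the ideal $\mathfrak{I}(X) = \mathrm{im}\bigl(\tilde K_G(X) \to \tilde K_G(X^{S^1})\bigr)$, and a larger ideal forces a \emph{smaller} $\kappa$. Producing an explicit $K$-theory class in this image --- which is what ``a nonzero $K$-theory class of degree exactly $\ldots$'' amounts to --- therefore gives an \emph{upper} bound on $\kappa$, not a lower one. To prove $\kappa \geq n$ you must instead show that $\mathfrak{I}$ is \emph{contained} in some prescribed ideal, i.e.\ exclude classes from the image of $\Phi^*$. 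Second, the ``pair off two at a time'' induction does not work: there is no relation between $\kappa(X \wedge Y)$ and $\kappa(X),\kappa(Y)$ of the kind you need, precisely because equivariant $K$-theory lacks a K\"unneth formula, and no cofibre filtration repairs this.

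The paper's argument is entirely different and is not inductive in $n$. For the lower bound it first uses local maps of graded roots (Lemma~\ref{lem:graded-root-switch}) to build $\Pin(2)$-local maps $A_2 \to A_{n_i}$ whenever $n_i \geq 2$; monotonicity of $\kappa$ under local maps then reduces everything to the single computation $\kappa(\wedge_{i=1}^n A_2) = n$. That computation (Lemma~\ref{lem:exactidealcalculation}) is done globally: one realizes $A = \wedge_i A_{n_i}$ inside $[-1,1]^n_+ \wedge S^{(\sum n_i/2)\mathbb{H}}$ via Lemma~\ref{lem:identifyingjaction}, and then uses the subspace $M$ lying over $\partial[-1,1]^n$ together with the inclusion $\Sigma^{\mathbb{H}}\tilde{S}^{n-1}_+ \subset M$ and the $S^1$-restriction to bound $\mathfrak{I}(A)$ from above by $(z^{(\sum n_i)/2}, w^{n/2})$. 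The hypothesis $-\bar\mu(Y_i,\s_i)\leq \delta(Y_i,\s_i)-2$ (i.e.\ $n_i \geq 2$) enters only to guarantee the local maps $A_2 \to A_{n_i}$, not as ``spectral room'' in a pairwise argument.
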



By applying Corollary~\ref{cor:c} to self-connected sums of a single projective AR homology sphere, we obtain:

\begin{corollary} \label{cor:d}
Let $(Y, \s)$ be a projective AR homology sphere with $- \bar{\mu}(Y, \s) \leq \delta(Y, \s) - 2$. Then
\[
\gamma(\#_{i = 1}^n(Y, \s))-\kappa(\#_{i = 1}^n(Y, \s))\to \infty
\]
as $n\to \infty$. Furthermore,
\[
\kappa(\#_{i = 1}^n(-Y, \s))-\alpha(\#_{i = 1}^n(-Y, \s))\to \infty
\]
as $n\to \infty$.
\end{corollary}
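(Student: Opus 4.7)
The plan is to reduce the second statement to the first via orientation reversal, then combine Corollary~\ref{cor:c} with a connected-sum lower bound for $\gamma$. For the reduction: since $\alpha(-Z) = -\gamma(Z)$ and $\kappa(-Z) = -\kappa(Z)$ for any rational homology sphere, and since orientation reversal commutes with connected sum, applying these with $Z = \#_{i=1}^n(Y,\s)$ gives
\[
\kappa(\#_{i=1}^n(-Y,\s)) - \alpha(\#_{i=1}^n(-Y,\s)) = \gamma(\#_{i=1}^n(Y,\s)) - \kappa(\#_{i=1}^n(Y,\s)),
\]
so it suffices to prove $\gamma(\#^n(Y,\s)) - \kappa(\#^n(Y,\s)) \to \infty$. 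For $\kappa$, Corollary~\ref{cor:c} gives the exact value $\kappa(\#^n(Y,\s)) = -n\bar{\mu}(Y,\s) + n$ when $n$ is even, and Corollary~\ref{cor:b} yields the uniform upper bound $\kappa(\#^n(Y,\s)) \leq -n\bar{\mu}(Y,\s) + 2\lceil n/2 \rceil \leq -n\bar{\mu}(Y,\s) + n + 1$ for all $n$.

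For $\gamma$, the crucial input is a lower bound of the form $\gamma(\#^n(Y,\s)) \geq -n\bar{\mu}(Y,\s) + 2n - C$ with $C$ a constant depending only on $(Y,\s)$. By Corollary~\ref{cor:a}(2) and the discussion preceding it, our hypothesis $-\bar{\mu}(Y,\s) \leq \delta(Y,\s) - 2$ forces $\gamma(Y,\s) = -\bar{\mu}(Y,\s)$, $\alpha(Y,\s) \in \{\delta(Y,\s), \delta(Y,\s)+1\}$, and $\kappa(Y,\s) = -\bar{\mu}(Y,\s) + 2$, so the $\alpha$-$\gamma$ gap in the $\Pin(2)$-Floer spectrum of $(Y,\s)$ is at least $2$. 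Since $(Y,\s)$ is projective, Theorem~\ref{thm:1.2} identifies this spectrum with the explicit combinatorial model $\Hty(\Gamma,[k])$, and the connected-sum calculus of \cite{Stoffregenconnectedsum}, tracked through the $n$-fold smash, shows that the width-$2$ shoulder in the bottom tower of $(Y,\s)$ accumulates into a shoulder of width $2n$ in $\#^n(Y,\s)$, yielding the claimed lower bound on $\gamma$.

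Combining the two bounds,
\[
\gamma(\#^n(Y,\s)) - \kappa(\#^n(Y,\s)) \geq (2n - C) - (n+1) = n - (C+1) \to \infty.
\]
The main obstacle is establishing the lower bound on $\gamma$ for iterated connected sums: naive super-additivity gives only $\gamma(\#^n(Y,\s)) \geq n\gamma(Y,\s) = -n\bar{\mu}(Y,\s)$, which falls short of the required estimate by precisely $2n$. Obtaining this extra $+2n$ requires exploiting the projective $\Pin(2)$-spectrum structure of $(Y,\s)$ at the chain/spectrum level and carefully tracking how the shoulders in the bottom towers interact across iterated smash products via the local-equivalence techniques of \cite{Stoffregenconnectedsum}.
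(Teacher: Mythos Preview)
Your reduction of the second statement to the first contains an error: the identity $\kappa(-Z) = -\kappa(Z)$ is false in general. What is true is the inequality $\kappa(Z) + \kappa(-Z) \geq 0$ (this is the $K$-theoretic duality the paper invokes). Combined with the correct identity $\alpha(-Z) = -\gamma(Z)$, this still gives
\[
\kappa(\#^n(-Y,\s)) - \alpha(\#^n(-Y,\s)) \geq -\kappa(\#^n(Y,\s)) + \gamma(\#^n(Y,\s)),
\]
which suffices for the reduction. So this is repairable.

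The real gap is your lower bound on $\gamma$. You correctly identify that naive subadditivity is too weak, but your proposed fix---tracking ``shoulder accumulation'' through iterated smashes---is not a proof; it is a restatement of what needs to be shown. The paper bypasses this entirely. It quotes \cite[Theorem~1.3]{Stoffregenconnectedsum} (already stated in the introduction of the present paper), which says that $\gamma(\#^n(Y,\s)) - n\delta(Y,\s)$ is a bounded function of $n$. Then the hypothesis $-\bar{\mu}(Y,\s) \leq \delta(Y,\s) - 2$ is used on the $\kappa$ side, not the $\gamma$ side: from Corollary~\ref{cor:c} (or Corollary~\ref{cor:b} for odd $n$) one gets
\[
\kappa(\#^n(Y,\s)) \leq -n\bar{\mu}(Y,\s) + n + 1 \leq n\delta(Y,\s) - n + 1,
\]
and subtracting from $\gamma(\#^n(Y,\s)) \geq n\delta(Y,\s) - C$ gives $\gamma - \kappa \geq n - C - 1 \to \infty$. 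Your target inequality $\gamma(\#^n) \geq -n\bar{\mu} + 2n - C$ is in fact a \emph{consequence} of Stoffregen's bound (since $-n\bar{\mu} + 2n \leq n\delta$), so you were trying to reprove something already available as a black box.
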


This is in contrast to the asymptotic behavior of the invariants $\alpha$, $\beta$, $\gamma$, and $\delta$. Indeed, in \cite[Theorem 1.3]{Stoffregenconnectedsum}, it is shown that
\[
\alpha(\#_{i = 1}^n (Y, \s)) - n \delta(Y, \s), \quad \beta(\#_{i = 1}^n (Y, \s)) - n \delta(Y, \s), \quad \text{and} \quad \gamma(\#_{i = 1}^n (Y, \s)) - n \delta(Y, \s)
\]
are bounded functions of $n$.

We also explore some connections between $\Hty(\Gamma, [k])$ and N\'emethi's work regarding singularity theory \cite{NemethiOS, Nemethi}. To put this in context, recall that if $(X, 0)$ is an isolated surface singularity whose link is a rational homology sphere, then taking a \textit{good resolution} $\smash{\rX}$ of $X$ gives a plumbing of $2$-spheres $W_\Gamma$ equipped with a preferred analytic structure. In \cite{Nemethi}, N\'emethi establishes an upper bound for the \textit{geometric genus} of $\smash{\rX}$ in terms of the lattice homology $\Hla(\Gamma, [K])$.\footnote{Here, $K$ is the canonical characteristic element on $W_\Gamma$.}

We show that if this upper bound is sharp, then it is possible to construct a certain mapping cone $\Hty(V_1, \ldots, V_n)$ between various sheaf cohomology groups on $\smash{\rX}$. (Our construction cannot be carried out unless the upper bound is sharp.) Moreover, $\Hty(V_1, \ldots, V_n)$ turns out to be $S^1$-homotopy equivalent to the lattice spectrum $\Hty(\Gamma, [K])$. Hence for certain plumbing graphs, the lattice spectrum $\Hty(\Gamma, [K])$ (which is a topological invariant) can be interpreted in terms of natural objects arising from certain analytic structures on $W_\Gamma$. The terms in the following theorem are defined in Section~\ref{sec:ag}:

\begin{theorem}\label{thm:1.3}
Let $\Gamma$ be an extremal (almost-rational) plumbing graph equipped with an extremal analytic structure. Let $(x_i)_{i = 0}^n$ be an extremal path of divisors. Consider the sequence of formal differences
\[
V_i = H^1(\rX, \O_{x_i})^* - H^0(\rX, \O_{x_i})^*
\]
equipped with the inclusions induced by the sheaf maps $\O_{x_{i+1}} \rightarrow \O_{x_i}$. Up to suspension, the contracted mapping cone $\Hty(V_1, \ldots, V_n)$ is $S^1$-homotopy equivalent to the lattice homotopy type $\Hty(\Gamma, [K])$. 
\end{theorem}
\noindent
Theorem~\ref{thm:1.3} is broadly reminiscent of the description of the critical sets by Mrowka, Ozsv\'ath, and Yu \cite{MOY}, who showed that if $Y$ is a Seifert fibered space, the critical points of the Chern-Simons-Dirac functional (and flows between them) may be interpreted in terms of the algebraic geometry of a particular ruled surface.

Finally, we indicate the construction of a general lattice spectrum in Section~\ref{sec:5.1}. As the current paper focuses on applications to Seiberg-Witten Floer homotopy, we leave a full development of this theory to future work.

\subsection*{Organization} In Section \ref{sec:2}, we provide a short summary of the construction and proof of Theorem~\ref{thm:1.1}.  In Section~\ref{sec:3} we review what we will need about stable homotopy categories and Seiberg-Witten Floer spectra, including the Bauer-Furuta maps and an exact triangle in Seiberg-Witten Floer spectra. In Section~\ref{sec:4} we review lattice homology. Theorems~\ref{thm:1.1} and \ref{thm:1.2} are proven in Sections~\ref{sec:5} and \ref{sec:6}. In Section~\ref{sec:7}, we analyze Manolescu's $\kappa$-invariant for certain connected sums and establish some miscellaneous theorems pertaining to the lattice spectrum. Section~\ref{sec:ag} explores the connection between our construction and the work of N\'emethi in singularity theory. Finally, Section~\ref{sec:8} provides an appendix which collects various technical results related to lattice homology.

\subsection*{Acknowledgments} The authors would like to thank Teena Gerhardt, Kristen Hendricks, Ciprian Manolescu, Ian Montague, Tomasz Mrowka, Andras N\'emethi, and Ian Zemke for helpful conversations. The first author was supported by NSF DMS-1902746 and NSF DMS-2303823. The second author was supported by JSPS KAKENHI Grant Number 23K03115. The third author was supported by NSF DMS-2203828. 

\section{Overview and Sketch of Proof}\label{sec:2}

In this section, we describe how to compute the ($S^1$-)Seiberg-Witten Floer spectra of AR plumbed homology spheres. In order to communicate the main ideas of the paper, we also give a brief outline of the proof. Details may be found in Section~ \ref{sec:5}.


\subsection{Preliminaries}\label{sec:2.1}
Our proposed algorithm relies on understanding the output of lattice homology, as defined in \cite{OSplumbed, Nemethi}. In particular, central to our discussion will be the notion of a graded root. We thus review this here.

Throughout, let $Y$ be a plumbed homology sphere with plumbing graph $\Gamma$. Denote by $W_\Gamma$ the plumbed $4$-manifold corresponding to $\Gamma$ and let $\s$ be a $\spinc$-structure on $Y$. As discussed in Section~\ref{sec:4}, we may identify $\s$ with an equivalence class $[k] \subset H^2(W_\Gamma; \Z) = \Hom(H_2(W_\Gamma; \Z), \Z)$ of characteristic elements on $W_\Gamma$. 

The \textit{(zeroth) lattice homology} $\Hl_0(\Gamma, [k])$ is a combinatorially-defined $\Z[U]$-module constructed from $(\Gamma, [k])$. This was first introduced by Ozsv\'ath and Szab\'o in \cite{OSplumbed} and generalized to the \textit{higher lattice homology groups} $\Hl_i(\Gamma, [k])$ by N\'emethi \cite{Nemethi}. In \cite{OSplumbed, Nemethi}, it is shown that the $\Hl_i(\Gamma, [k])$ are invariants of $(Y, \s)$, rather than the plumbing $\Gamma$. Moreover, if $Y$ is an AR plumbed homology sphere, then $\Hl_0(\Gamma, [k])$ is isomorphic to $\HFm(Y, \s)$ and all higher lattice groups vanish; see \cite[Theorem 1.2]{OSplumbed}, \cite[Theorem 8.3]{NemethiOS}, and \cite[Theorem 5.2.2]{Nemethi}.\footnote{For concreteness, we write $\HFm$ here, as this is the original setting of \cite{OSplumbed} and \cite{Nemethi}. However, the appropriate monopole Floer or Seiberg-Witten Floer homology groups can also be substituted.} See Section~\ref{sec:4.1} for the definition of lattice homology.

Importantly, $\Hl_0(\Gamma, [k])$ comes with a preferred basis over $\Z$, which is again independent of $\Gamma$. We record this preferred basis in the form of a \textit{graded root} $R$. This is an upwards-opening tree with an infinite downwards stem, where each vertex of $R$ is given a grading (represented by its height) lying in a coset of $2\Z$ in $\Q$. The vertices of $R$ correspond to the preferred generators of $\Hl_0(\Gamma, [k])$ over $\Z$; following edges downwards records multiplication by $U$. Note that the fact that $\Hl_0(\Gamma, [k])$ is represented by $R$ places certain restrictions on $\Hl_0(\Gamma, [k])$: for instance, all generators of $\Hl_0(\Gamma, [k])$ differ in grading by an element of $2\Z$. An example of a graded root is given on the left of Figure~\ref{fig:2.1}.

\begin{figure}[h!]
\includegraphics[scale = 0.8]{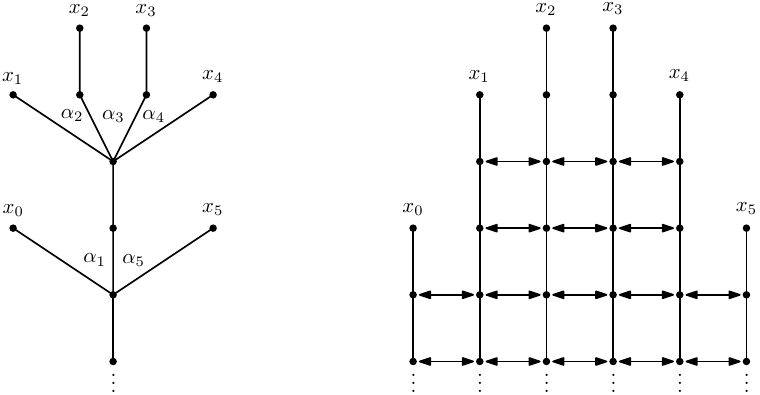}
\caption{Left: a graded root. Right: a presentation of the corresponding $\Z[U]$-module.}\label{fig:2.1}
\end{figure}

The structure of $R$ gives a preferred presentation for $\Hl_0(\Gamma, [k])$ as a $\Z[U]$-module, as follows. Label the leaves of $R$ from left-to-right by $x_0, x_1, \ldots, x_n$ and label the upwards-opening angles of $R$ from left-to-right by $\alpha_0, \alpha_1, \ldots, \alpha_{n-1}$. Let $\gr(x_i)$ be the Maslov grading of $x_i$ and let $\gr(\alpha_i)$ be the Maslov grading of the vertex at which $\alpha_i$ is based. Then $\Hl_0(\Gamma, [k])$ is isomorphic to
\begin{equation}\label{eq:2.1}
\spa_{\Z[U]}\{x_0, x_1,\ldots, x_n\}/\sim
\end{equation}
where $\sim$ is generated over $\Z[U]$ by the relations
\[
U^{(\gr(x_i)-\gr(\alpha_i))/2} x_i \sim U^{(\gr(x_{i+1})-\gr(\alpha_i))/2} x_{i+1}
\]
for $0 \leq i \leq n - 1$. Visually, this is formed as follows: for each leaf $x_i$, we draw a $\Z[U]$-tower starting in grading $\gr(x_i)$. We then glue each successive pair of towers $\Z[U] \cdot x_i$ and $\Z[U] \cdot x_{i+1}$ along a subtower starting in grading $\gr(\alpha_i)$, as in the right of Figure~\ref{fig:2.1}. We will use $R$ interchangeably to refer to: (a) the graded root (as a combinatorial object); (b) the corresponding $\Z[U]$-module; and even (c) the presentation (\ref{eq:2.1}) of this module.

\subsection{The algorithm.}\label{sec:2.2}
Let $S^0 = \{\mathrm{pt}\}^+$ (the point with a disjoint basepoint) and $S^2 = \C^+$ be given the usual $S^1$-actions. Denote the $S^1$-equivariant inclusion of $S^0$ into $S^2$ by $i \colon S^0 \hookrightarrow S^2$. Suspending this map gives an inclusion 
\[
\Sigma^{n\C} i \colon S^{2n} \hookrightarrow S^{2n + 2}
\]
for any $n$, and thus an increasing sequence of $S^1$-equivariant inclusions
\[
S^0 \hookrightarrow S^2 \hookrightarrow S^4 \hookrightarrow \cdots,
\]
where $S^{2n} = (n\mathbb{C})^+$. Abusing notation, we denote each such inclusion (or composition of inclusions) uniformly by $i \colon S^{2m} \hookrightarrow S^{2n}$ for $m \leq n$.  

Given a graded root $R$, we now construct an $S^1$-equivariant space $X$ in analogy with the presentation (\ref{eq:2.1}). First, cut $R$ at some vertex of grading $h$ on the infinite stem, such that any vertices of valency greater than two lie at or above the cut. For each leaf $x_i$ of $R$, introduce an $S^1$-sphere of dimension $\gr(x_i) - h$. Define $X$ by starting with the disjoint union of these spheres and quotienting by the equivalence relation $\sim$:
\begin{equation}\label{eq:2.2}
X = \left(\bigsqcup_{i=0}^n S^{(\gr(x_i) - h)}\right)/\sim
\end{equation}
where $\sim$ is defined as follows. For each angle $\alpha_i$, glue together the pair of successive spheres $S^{(\gr(x_i) - h)}$ and $S^{(\gr(x_{i+1}) - h)}$ along the images of the inclusions
\[
i \colon S^{(\gr(\alpha_i) - h)} \hookrightarrow S^{(\gr(x_i) - h)} \quad \text{and} \quad i \colon S^{(\gr(\alpha_i) - h)} \hookrightarrow S^{(\gr(x_{i+1}) - h)}.
\]
See the right of Figure~\ref{fig:2.2} for an example. The parallel between (\ref{eq:2.1}) and (\ref{eq:2.2}) is clear: the spheres in (\ref{eq:2.2}) play the roles of the $\Z[U]$-towers in (\ref{eq:2.1}), while the identification $\sim$ of (\ref{eq:2.2}) is analogous to the subtower-gluing relation of (\ref{eq:2.1}). We will explain this further presently.

\begin{figure}[h!]
\includegraphics[scale = 1]{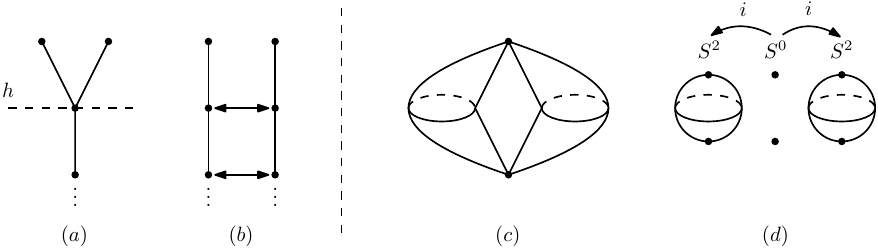}
\caption{Illustration of the example $Y = \Sigma(2, 3, 7)$. Left: $(a)$ the graded root corresponding to $\HFm(\Sigma(2,3,7))$ (together with a choice of $h$) and $(b)$ its presentation from (\ref{eq:2.1}). Right: (c) the well-known spectrum $\SWF(\Sigma(2, 3, 7))$, which is evidently formed by gluing the components of (d) together according to (\ref{eq:2.2}).}\label{fig:2.2}
\end{figure}

Decreasing the cutoff grading $h$ corresponds to increasing the dimensions of all of the spheres in the construction of $X$, which is easily seen to be the same as suspending $X$ by some $\mathbb{C}^n$. Hence $X$ is well-defined up to suspension. We then define the $(S^1$-)lattice homotopy type associated to $(\Gamma, [k])$ to be
\[
\mathcal{H}_0(\Gamma, [k]) = (X, 0, -h/2);
\]
see Section \ref{sec:stable homotopy} for the meaning of this notation. We often write this as the formal suspension $\Sigma^{(h/2)\C}X$, although $h/2$ here may be rational. Note that the $S^1$-fixed point set of $X$ is $S^0$.

The main theorem of this paper will be to show that $\mathcal{H}_0(\Gamma, [k])$ is indeed homotopy equivalent to $\SWF(Y, \s)$. In the case that $\s$ is self-conjugate, it is possible to put a Pin(2)-action on $X$ which calculates the Pin(2)-equivariant Seiberg-Witten Floer spectrum of $Y$. We discuss this in Section~\ref{sec:6.2}.

\begin{remark}\label{rem:2.1}
\textit{A priori}, our construction of $X$ depends on more than the combinatorial structure of $R$: we have also chosen a particular planar embedding of $R$ in order to label the leaves of $R$ from left-to-right. However, it is not difficult to show that choosing a different planar embedding does not change the homeomorphism class of $X$. In Lemma~\ref{lem:graded-root-switch}, we will show that $X$ depends only on the isomorphism class of $R$ as a $\Z[U]$-module, rather than its graded root structure. However, the proof of Theorem~\ref{thm:1.1} requires an understanding of the output of lattice homology as a graded root, which is why we emphasize this point-of-view.
\end{remark}

\subsection{Outline of proof}\label{sec:2.3}
For the reader who is broadly familiar with the basics of both lattice homology and Seiberg-Witten Floer homotopy, we now outline the proof of Theorem~\ref{thm:1.1}. 
\noindent
\\
\\
\textit{Computing the co-Borel homology.} The first step of the proof is to show that the $S^1$-equivariant homology of $X$ is in fact isomorphic (as an $\Z[U]$-module) to $R$. This certainly needs to be the case if Theorem~\ref{thm:1.1} is to hold, since $R$ is isomorphic to the monopole Floer (and thus the Seiberg-Witten Floer) homology of $(Y, \s)$. More precisely, we wish to show that (up to an overall grading shift)
\[
\cB(X) \cong R,
\]
where $\cB(X)$ is the \textit{co-Borel homology} of $X$. The definition of co-Borel homology may be found in \cite[Section 2.2]{ManolescuTriangulation}, but this is almost unimportant: all we will need is that the co-Borel homology of $S^{2n}$ is a single $\Z[U]$-tower which starts in Maslov grading $2n$: 
\[
\cB(S^{2n}) \cong \Z[U]_{2n}.
\]
Note here that $\deg(U) = -2$. Moreover, it is straightforward to check that the map $i_*$ on co-Borel homology induced by the inclusion $i \colon S^{2m} \hookrightarrow S^{2n}$
\[
i_* \colon \cB(S^{2m}) \cong \Z[U]_{2m} \rightarrow \cB(S^{2n}) \cong \Z[U]_{2n},
\]
simply maps the first tower to the subtower of the second starting in grading $2m$.

It is then not difficult to show that (roughly speaking) taking the co-Borel homology converts (\ref{eq:2.2}) into the presentation (\ref{eq:2.1}). Each sphere in the construction of $X$ contributes a single $\Z[U]$-tower to the co-Borel homology. The identification $\sim$ of (\ref{eq:2.2}) (combined with the above observation regarding $i_*$) shows that successive $\Z[U]$-towers are glued together along subtowers, precisely as (\ref{eq:2.1}). See Lemma~\ref{lem:5.5} for further discussion.
\noindent
\\
\\
\textit{Constructing a map of spectra.} Having verified that $X$ has the correct $S^1$-equivariant homology, the second step of the proof is to construct (after appropriate suspension) an actual $S^1$-equivariant map from $X$ to $\SWF(Y, \s)$:
\[
\cT \colon \mathcal{H}_0(\Gamma, [k]) \rightarrow \SWF(Y, \s).
\] 
For this, it will be helpful to draw on some intuition coming from lattice homology. We thus sketch how $\Hla_0(\Gamma, [k])$ is actually defined. In order to construct $\Hla_0(\Gamma, [k])$, we find a sequence of characteristic elements $x_1, \ldots, x_n$ in $[k]$ satisfying certain combinatorial conditions (see Theorem~\ref{thm:4.9}). Each of these is given the Maslov grading 
\[
\gr(x_i) = \dfrac{1}{4}\left(x_i^2 + |\Gamma|\right). 
\]
The lattice homology $\Hla_0(\Gamma, [k])$ is constructed by quotienting the $\Z[U]$-span of $\{x_0, x_1, \ldots, x_n\}$ by a certain equivalence relation $\sim$ defined in Section~\ref{sec:4.1}:
\[
\Hla_0(\Gamma, [k]) = \spa_{\Z[U]}\{x_0, x_1, \ldots, x_n\}/\sim.
\]
This presentation of $\Hla_0(\Gamma, [k])$ is precisely the graded root $R$ discussed in Section~\ref{sec:2.1}. The point we are making here is that the leaves of $R$ correspond to certain characteristic elements in $[k]$, and thus to certain $\spinc$-structures on $W_\Gamma$.

This interpretation of $\Hla_0(\Gamma, [k])$ gives rise to the \textit{lattice homology isomorphism map}
\[
\T \colon R \cong \Hla_0(\Gamma, [k]) \rightarrow \HFm(Y, \s)
\] 
defined in \cite{OSplumbed}. Let
\[
F_{W_\Gamma, x_i} \colon \HFm(S^3) \rightarrow \HFm(Y, \s)
\]
be the cobordism map associated to the (punctured) cobordism $W_\Gamma$ from $S^3$ to $(Y, \s)$, equipped with the $\spinc$-structure corresponding to $x_i$. (Note that the $x_i$ restrict to $\s$ on $Y$.) Set
\[
\T(x_i) = F_{W_\Gamma, x_i}(1),
\]
for some fixed generator $1 \in \HFm(S^3)$. Extending this map $\Z[U]$-linearly gives a map from $\spa_{\Z[U]}\{x_0, x_1, \ldots, x_n\}$ to $\HFm(Y, \s)$. In \cite{OSplumbed}, it is shown that $\T$ descends to the quotient by $\sim$ due to the adjunction relations of \cite[Theorem 3.1]{OS:four}. These relate the images of the cobordism maps corresponding to different $\spinc$-structures on $W_\Gamma$. In \cite[Theorem 8.3]{NemethiOS} (see also \cite[Theorem 1.2]{OSplumbed} and \cite[Theorem 5.2.2]{Nemethi}), it is shown that if $Y$ is an AR plumbed homology sphere, then $\T$ is an isomorphism. See Section~\ref{sec:4.2} for further discussion.

Roughly speaking, our map $\cT$ is defined by mimicking the construction of $\T$ in the setting of Floer homotopy. For each $x_i$, consider the relative Bauer-Furuta invariant associated to $(W_\Gamma, x_i)$. This gives a homotopy class of map
\[
\Psi_{W_\Gamma, x_i} \colon \Sigma^{(\gr(x_i)/2)\C} \SWF(S^3) \rightarrow \SWF(Y, \s);
\]
see Section~\ref{sec:3.4}. Now recall that in the suspension $\Sigma^{(h/2)\C} X$, the leaf $x_i$ corresponds to a subsphere 
\[
\Sigma^{(h/2)\C}(S^{(\gr(x_i) - h)}) = \Sigma^{(h/2)\C} \Sigma^{(\gr(x_i)/2-h/2)\C} S^0 = \Sigma^{(\gr(x_i)/2)\C} S^0.
\]
We thus define $\cT$ on $\Sigma^{(h/2)\C}(S^{(\gr(x_i) - h)}) \subset \mathcal{H}_0(\Gamma, [k])$ to be $\Psi_{W_\Gamma, x_i}$. Taking the disjoint union of these Bauer-Furuta maps defines $\cT$ on the disjoint union of spheres in (\ref{eq:2.2}). In Section~\ref{sec:3.5}, we establish an adjunction relation for Seiberg-Witten Floer spectra which (roughly speaking) allows us conclude that the Bauer-Furuta maps associated to two consecutive $x_i$ must coincide (up to homotopy) along a subsphere of a particular dimension. We use this to show that $\cT$ descends to a map on the quotient space $\mathcal{H}_0(\Gamma, [k])$. See Lemma~\ref{lem:5.6} for further discussion. 
\noindent
\\
\\
\textit{The equivariant Whitehead theorem.} As we show in Lemma~\ref{lem:5.6}, putting the above ingredients together gives a commutative diagram
\[
\begin{tikzpicture}[scale=1]
\node (A) at (0,0) {$\cB(\mathcal{H}_0(\Gamma, [k]))$};
\node at (0,-0.75) {\rotatebox{270}{\scalebox{1}[1]{$\cong$}}};
\node (B) at (0, -1.5) {$R$};
\node at (0,-2.25) {\rotatebox{270}{\scalebox{1}[1]{$\cong$}}};
\node (C) at (0,-3) {$\Hla_0(\Gamma, [k])$};
\node (D) at (5,-1.5) {$\cB(\SWF(Y,\s))$};
\path[->,font=\scriptsize,>=angle 90]
(A) edge node[above]{$\cT_*$} (D)
(C) edge node[below]{$\T$} (D);
\end{tikzpicture}.
\]
Here, $\cT_*$ is the map on co-Borel homology induced by $\cT \colon \mathcal{H}_0(\Gamma, [k]) \rightarrow \SWF(Y, \s)$. The map $\T$ out of the combinatorial object $\Hla_0(\Gamma, [k])$ is defined as in the previous section, except with $\HFm(Y,\s)$ replaced by $\smash{\cB(\SWF(Y, \s))}$; we note that \emph{a priori}, even though $\cT_*$ is canonical, the map $\cT$, up to homotopy, may depend on choices. That is, on each generator $x_i$ of $\Hla_0(\Gamma, [k])$, we set $\T(x_i) = F_{W_\Gamma, x_i}(1)$, where $F_{W_\Gamma, x_i}$ is the map on Seiberg-Witten Floer \textit{homology} corresponding to the $\spinc$-structure $x_i$ on $W_\Gamma$.

There are two components of the final step. First, we show that $\T$ is an isomorphism. Note that $\Hla_0(Y, [k])$ is isomorphic to $\HFm(Y, \s)$ by \cite[Theorem 8.3]{NemethiOS}, so it is certainly the case that $\Hla_0(Y, [k])$ and $\smash{\cB(\SWF(Y,\s))}$ are isomorphic, since the latter is isomorphic to monopole Floer homology \cite{LidmanMan} and thus Heegaard Floer homology \cite{KLT, CGH}. However, this isomorphism is not known to be natural with respect to cobordism maps. Hence this does not necessarily show that $\T$ (defined in the context of Seiberg-Witten Floer homology) is itself an isomorphism. 

Instead, we simply replicate the original argument of \cite{OSplumbed, NemethiOS}, replacing Heegaard Floer homology with Seiberg-Witten Floer homology. The argument of \cite{OSplumbed, NemethiOS} depends on certain formal properties of the Heegaard Floer package, the most prominent of which is the existence of a surgery exact sequence. In the setting of Seiberg-Witten Floer homology, this has recently been established by the second and third authors \cite{Sasahira-Stoffregen_Triangle}. For readers unfamiliar with \cite{OSplumbed, NemethiOS}, we give an overview of the argument in Section~\ref{sec:8}.

The above commutative diagram then shows that the induced map on co-Borel homology $\cT_*$ is an isomorphism. The final component of the proof is to appeal to an equivariant Whitehead theorem. Indeed, in Lemma~\ref{lem:5.7} we show that in this situation, the fact that $\cT$ induces an isomorphism on co-Borel homology suffices, along with limited other information from the geometric situation, to establish that $\cT$ is an $S^1$-equivariant homotopy equivalence. 


\section{Review of Seiberg-Witten Floer Spectrum}\label{sec:3}

In this section, we will review the Seiberg-Witten Floer spectrum and relative Bauer-Furuta invariant.   Also we will recall some basic facts from homotopy theory.

\subsection{Stable homotopy categories} \label{sec:stable homotopy} \label{sec:3.1}

 We will  define an $S^1$-equivariant stable homotopy  category $\mathfrak{C}_{S^1}$  to define $S^1$-equivalent Seiberg-Witten Floer spectrum. The category $\mathfrak{C}_{S^1}$ was introduced in \cite[Section 6]{ManolescuSWF} and is an $S^1$-equivariant analogue of the Spanier-Whitehead category.
  
An  object of $\mathfrak{C}_{S^1}$ is  a triple $(W, m, n)$, where $W$ is a pointed $S^1$-CW complex,  $m \in \Z$ and $n \in \Q$.  For objects $(W_0, m_0, n_0)$ and $(W_1, m_1, n_1)$ of $\mathfrak{C}_{S^1}$, the set of morphisms is defined by
\[
     \operatorname{Mor}_{\mathfrak{C}_{S^1}}((W_0, m_0, n_0), (W_1,m_1,n_1)) =
    \lim_{p, q \rightarrow \infty} [ \Sigma^{p\R\oplus q\C} W_0,  \Sigma^{(p+m_0 -m_1) \R \oplus (q+n_0-n_1)\C} W_1]_{S^1}^0
\]
if $n_1 - n_0 \in \Z$, and we define it to be the empty set otherwise, where $[X,Y]_{S^1}$ is the set of $S^1$-equivariant (unstable) homotopy classes  $X\to Y$.   For a finite dimensional complex vector space $F$ with an $S^1$-action induced by the complex structure, we define the suspension $\Sigma^{F} (W, m, n)$ and desuspension $\Sigma^{-F} (W, m, n)$  by
\[
          \Sigma^{F} (W, m, n) := (\Sigma^{F} W, m, n),  \      \Sigma^{-F} (W, m, n) := (W, m, n + \dim_{\C} F). 
\]
For a finite dimensional real vector space $F$ with the trivial $S^1$-action, we define
\[ 
         \Sigma^{F} (W, m, n) := (\Sigma^{F} W, m, n), \   \Sigma^{-F}(W, m, n) := (\Sigma^{F} W, m + 2 \dim_{\R} F, n). 
\] 
The reason why we define $\Sigma^{-F}(W, m, n)$ to be  $(\Sigma^{F}W, m +  2\dim_{\R} F, n)$ rather than $(W, m + \dim_{\R} F, n)$ is the following.   Note that the homotopy class of a trivialization $f : F \rightarrow \R^{p}$ is not unique since $\pi_{0}(O(p)) = \Z_2$.    Hence we do not have a canonical identification  between $(W, m, n)$ and $\Sigma^{F}(W, m+\dim_{\R} F, n)$.  However, the homotopy class of the trivialization $f \oplus f : F \oplus F \rightarrow \R^{2p}$ is independent of the choice of $f$ and we have a canonical identification
\[
      \Sigma^{F}(\Sigma^{F} W, m + 2\dim_{\R} F, n) = (\Sigma^{F \oplus F} W, m+ 2\dim_{\R} F, n)
      \cong (\Sigma^{2p\R} W, m+2p, n) \cong (W, m, n)
\]
up to homotopy. 

For $q \in \Q$,  we denote  $(S^0, 0, q)$ by  $S^{-q\C}$.

We will introduce a $\Pin(2)$-equivariant stable homotopy category $\mathfrak{C}_{\Pin(2)}$ to define $\Pin(2)$-equivariant Seiberg-Witten Floer spectrum. Let $\tilde{\R}$ be the non-trivial, one-dimensional  real representation of $\Pin(2)$.   An object of $\mathfrak{C}_{\Pin(2)}$ is a triple $(W, m, n)$, where $W$ is a pointed $\Pin(2)$-CW complex, $m \in \Z$ and $n \in \Q$.   We define the set of morphisms by
 \[
 \begin{split}
 &\operatorname{Mor}_{\mathfrak{C}_{\Pin(2)}}((W_0, m_0, n_0), (W_1, m_1, n_1))  \\
  & \ := \lim_{p, q, r \rightarrow \infty} 
 [ \Sigma^{ p\R \oplus q\tilde{\R}  \oplus r\mathbb{H}} W_0,    \Sigma^{p\R \oplus (q+m_0-m_1) \tilde{\R} \oplus (r + n_0 - n_1)\mathbb{H}} W_1]_{\Pin(2)}^0, 
 \end{split}
 \] 
if $n_0 - n_1 \in \Z$ and we define it to be the empty set otherwise.  For a $\Pin(2)$-representation $F$ which is isomorphic to $\tilde{\R}^{a} \oplus \mathbb{H}^{b}$, we define
\[
      \Sigma^{F} (W, m, n) := (  \Sigma^{F}W, m, n), \    \Sigma^{-F} (W, m, n) := (\Sigma^{F^{S^1}} W, m + 2a, n+b). 
\]
Here $F^{S^1}$ is the $S^1$-fixed point set which is isomorphic to $\tilde{\R}^{a}$. 
For $q \in \Q$, we denote $(S^0, 0, q)$ by $S^{-q\mathbb{H}}$.


\subsection{Seiberg-Witten Floer spectrum}\label{sec:3.2}

We will explain how to define the Seiberg-Witten Floer spectrum $\SWF(Y, \s)$ briefly.  See \cite{ManolescuSWF} for the details. 

Let $(Y, \s)$ be a $\spinc$ rational homology 3-sphere and take a Riemann metric $g$ on $Y$. Write $S$ for the spinor bundle and $A_0$ for a flat spin$^c$-connection on $Y$.    We have the Clifford multiplication
\[
    \rho : T^* Y \otimes \C   \xrightarrow{\cong} \mathfrak{sl}(S). 
\]
The Chern-Simons-Dirac function $CSD:i\Omega^1(Y) \oplus \Gamma(S) \rightarrow \R$ is defined by
\[
           CSD(a, \phi) = -\frac{1}{2} \int_{Y} a \wedge da +  \frac{1}{2} \int_{Y} \left< D_{a} \phi, \phi \right> d\mu_{g}. 
\]
Here $D_{a} : \Gamma(S) \rightarrow \Gamma(S)$ is the Dirac operator associated to the connection $A_0 + a$.  The gradient vector field of $CSD$ with respect to the $L^2$-inner product is given by
\[
    \operatorname{grad}_{L^2} CSD  (a, \phi) = ( *da + q(\phi), D_a \phi  ), 
\]
where
\[
    q(\phi) = \rho^{-1} \left( \phi \otimes \phi^* - \frac{1}{2} \operatorname{Tr}(\phi \otimes \phi^*)  id  \right).
\]
 The gradient trajectory equation
 \begin{equation} \label{SW eq tilde gamma}
   \begin{split}
  &   \tilde{\gamma}   : \R \rightarrow i \Omega^1(Y) \oplus \Gamma(S),   \\
  &   \frac{\partial \tilde{\gamma}}{\partial t}(t) = - \operatorname{grad}_{L^2} CSD ( \tilde{\gamma}(t))
  \end{split}
\end{equation}
of $CSD$ is the Seiberg-Witten equations on $Y \times \R$. 

To define $\SWF(Y,\s)$, we need to take the Coulomb gauge slice:
\[
      V := i \ker (d^* : \Omega^1(Y) \rightarrow \Omega^0(Y)) \oplus \Gamma(S). 
\]
Let $\Pi_{C}$ be the map $i\Omega^1(Y) \oplus \Gamma(S) \rightarrow V$ defined by
\[
     \Pi_{C}(a, \phi) := ( a - d\xi(a), e^{\xi(a)} \phi ), 
\]
where is $\xi(a)$ is the unique function $Y \rightarrow i\R$  which satisfies 
\[
      \Delta \xi (a) = d^* a,   \quad \int_{Y} \xi(a)  d\mu_g = 0. 
\]
The pushforward $\Pi_{C * (a,\phi)} : i\Omega^1(Y) \oplus \Gamma(S) \rightarrow V$ of $\Pi_{C}$ at $(a, \phi)$ is given by the formula
\[
      \Pi_{C * (a, \phi)} (b, \psi) = (b - d \xi(b), \psi + \xi(b) \phi). 
\]
We consider the following equation for $\gamma : \R \rightarrow V$:
\begin{equation}  \label{SW eq gamma}
     \frac{\partial \gamma}{\partial t}(t) = - \Pi_{C *}  \operatorname{grad}_{L^2} CSD (\gamma(t)). 
\end{equation}
We can write
\[
       \Pi_{C *} \operatorname{grad}_{L^2} CSD(a, \phi) = l(a, \phi) + c(a, \phi), 
\]
where $l$ is the self-adjoint operator on $V$ defined by 
\[
           l = *d \oplus D_{A_0}, 
\]
and the non-linear term $c(a, \phi)$ is given by
\[  
        c(a, \phi) = \left( q(\phi) - d\xi(q(\phi)),  \rho(a) \phi  + \xi( q(\phi)  ) \phi \right).   
\]  
 We can see that if $\tilde{\gamma}$ is a solution to (\ref{SW eq tilde gamma}), then $\gamma := \Pi_{C} \tilde{\gamma}$ is a solution to (\ref{SW eq gamma}).  Conversely, if $\gamma$ is a solution to (\ref{SW eq gamma}),  a lift $\tilde{\gamma}$ of $\gamma$ is a solution to (\ref{SW eq tilde gamma}).  

The vector field $\Pi_{C *} \operatorname{grad}_{L^2} CSD$ on $V$ is the gradient vector field of $CSD|_{V}$ with respect to a metric on $V$.  See \cite[Section 2]{KLS1}. Hence if $\gamma : \R \rightarrow  V$ is a solution to (\ref{SW eq gamma}), then $CSD(\gamma(t))$ decreases in $t$.   We define the energy $E(\gamma)$ of $\gamma$ by 
\[
   E(\gamma) = 
     \lim_{t \rightarrow -\infty} CSD(\gamma(t)) -  \lim_{t \rightarrow \infty} CSD(\gamma(t))
     \in \R_{\geq 0} \cup \{ \infty \}. 
\]
We have the following compactness result for finite energy solutions to (\ref{SW eq gamma}).

\begin{proposition}  \label{prop:compactness gamma} 
Let $k$ be  a non-negative number. Then there is a positive constant $R = R_{k}$ such that for any solution $\gamma : \R \rightarrow V$ with $E(\gamma) < \infty$ and $t \in \R$, we have
\[
      \| \gamma(t) \|_{L^2_{k}(Y)} < R. 
\]
\end{proposition}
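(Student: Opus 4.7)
\textit{Strategy.} This is a standard compactness-for-finite-energy-trajectories result, and my plan is to reduce the Coulomb-sliced statement to the gauge-invariant compactness theorem for Seiberg-Witten trajectories on $Y\times\R$. Given $\gamma:\R\to V$ solving (\ref{SW eq gamma}) with $E(\gamma)<\infty$, the first step is to produce a lift $\tilde\gamma:\R\to i\Omega^1(Y)\oplus\Gamma(S)$ solving (\ref{SW eq tilde gamma}); such a lift exists by integrating the gauge adjustment along $\R$, and since $CSD$ is gauge invariant we have $E(\tilde\gamma)=E(\gamma)<\infty$. In temporal gauge, $\tilde\gamma$ is precisely a finite-energy solution of the four-dimensional Seiberg-Witten equations on the cylinder $Y\times\R$.

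\textit{Gauge-invariant pointwise bounds.} The core of the argument is a uniform pointwise bound on the spinor, independent of the particular trajectory. I would apply the Weitzenb\"ock formula to $|\Phi|^2$ for the four-dimensional trajectory $(A(t),\Phi(t))$ on $Y\times\R$, obtaining an inequality of the form $\tfrac12\Delta|\Phi|^2 + |\nabla_A\Phi|^2 + \tfrac14|\Phi|^4 \leq C_g|\Phi|^2$ where $C_g$ depends only on the metric. An interior maximum of $|\Phi|^2$ at a point where $|\Phi|^2$ exceeds $4C_g$ is then ruled out by the maximum principle; together with the finite-energy hypothesis (which forces $|\Phi|$ to decay toward critical points at the ends), this gives a uniform pointwise bound $\|\Phi(t)\|_{L^\infty}\leq C_0$ for all $t\in\R$. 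This is the same mechanism as in Kronheimer--Mrowka and in Manolescu's construction of $\SWF$.

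\textit{Elliptic bootstrapping and transfer to $V$.} Once the pointwise spinor bound is in hand, the curvature equation $*da + q(\phi)=0$ on each time slice controls $\|a(t)\|_{L^\infty}$ modulo gauge, and together with the Coulomb slice condition $d^*a=0$ already built into $V$, Hodge theory on $Y$ (recall $b_1(Y)=0$) promotes this to a bound on $\|a(t)\|_{L^2_1}$. Standard elliptic bootstrapping, using the self-adjoint elliptic operator $l=*d\oplus D_{A_0}$ and the fact that the nonlinearity $c$ is controlled by the $L^\infty$ bound on $(a,\phi)$, then yields the desired $\|\gamma(t)\|_{L^2_k}<R_k$ for every $k\geq 0$. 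The Coulomb projection $\Pi_C$ only involves solving the Poisson equation $\Delta\xi=d^*a$ with zero average, which is a bounded operation on every Sobolev scale, so bounds on $\tilde\gamma$ modulo gauge pass directly to bounds on its Coulomb representative, which is $\gamma$ itself.

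\textit{Main obstacle.} The delicate point is justifying the Weitzenb\"ock maximum principle in the setting of a trajectory on the non-compact cylinder $Y\times\R$ (rather than on a closed four-manifold). The bound on $|\Phi|$ must rule out a sequence of points where $|\Phi|$ blows up, and this uses the finite energy hypothesis to prevent escape to $t\to\pm\infty$. This step is precisely where the full strength of $E(\gamma)<\infty$ is used; everything else is routine elliptic theory and the resulting proof is essentially a translation of the argument already present in \cite{ManolescuSWF} to our notation.
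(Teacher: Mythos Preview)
The paper does not prove this proposition: it is stated as a known input from the construction of $\SWF$ in \cite{ManolescuSWF} and used without further argument. Your sketch is the standard route (pointwise spinor bound via the Weitzenb\"ock formula and maximum principle on the cylinder, then elliptic bootstrapping through the operator $l$), and it is essentially what one finds in the cited references; there is nothing to compare against in the present paper.
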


We will take finite dimensional approximations of (\ref{SW eq gamma}) as follows.  Fix $k \geq 2$ and let $R = R_k$ be the positive constant  of Proposition \ref{prop:compactness gamma}.  For $\lambda, \mu \in \R$ with $\lambda < \mu$, let $V_{\lambda}^{\mu}$ denote the subspace of $V$ spanned by eigenvectors of $l$ whose eigenvalues are in $(\lambda, \mu]$.  By the standard theory of elliptic differential operators, the dimension of $V_{\lambda}^{\mu}$ is finite.  Let $p_{\lambda}^{\mu}$ be the $L^2$-projection onto $V_{\lambda}^{\mu}$ and $\chi : V \rightarrow [0,1]$ be a smooth cut off function such that
\[
     \chi|_{B(V_{\lambda}^{\mu},2R)} \equiv 1, \ 
     \operatorname{supp}(\chi)  \subset B(V_{\lambda}^{\mu}, 3R), 
\] 
where for $r > 0$ 
\[
       B(V_{\lambda}^{\mu}, r) = \{  (a, \phi) \in V_{\lambda}^{\mu} | \| (a, \phi )\|_{L^2_k(Y)} \leq r  \}. 
\]
We consider the following equation 
\[
       \frac{\partial \gamma}{\partial t}(t) = - \chi (\gamma(t)) \{ l (\gamma(t)) + p_{\lambda}^{\mu} c(\gamma(t)   \}
\]
for $\gamma : \R \rightarrow V_{\lambda}^{\mu}$. 

This equation generates  an approximated Seiberg-Witten flow
\[
     \varphi_{\lambda}^{\mu} : V_{\lambda}^{\mu} \times \R \rightarrow V_{\lambda}^{\mu}. 
\]
This flow is $S^1$-equivariant, where the $S^1$-action on $V_{\lambda}^{\mu}$ is defined by complex multiplication on the spinor component.  Using Proposition \ref{prop:compactness gamma}, we can prove the following:

\begin{proposition}[\cite{ManolescuSWF}]
For $\lambda \ll 0$ and $\mu \gg 0$,  the ball $B(V_{\lambda}^{\mu}, R)$ is an isolating neighborhood with respect to the flow $\varphi_{\lambda}^{\mu}$.  That is,  the maximal invariant set $\operatorname{Inv}( B(V_{\lambda}^{\mu}, R), \varphi_{\lambda}^{\mu}  )$  in $B(V_{\lambda}^{\mu}, R)$ is included in the interior $\operatorname{Int}(B(V_{\lambda}^{\mu}, R))$. 
\end{proposition}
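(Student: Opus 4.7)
The plan is to argue by contradiction using the compactness result for finite-energy trajectories (Proposition \ref{prop:compactness gamma}). Suppose the conclusion fails. Then for any sequences $\lambda_n \to -\infty$ and $\mu_n \to +\infty$ we can find a trajectory $\gamma_n \colon \R \to V_{\lambda_n}^{\mu_n}$ of $\varphi_{\lambda_n}^{\mu_n}$ contained in $B(V_{\lambda_n}^{\mu_n}, R)$ for all time whose orbit meets the boundary. After a time translation we may assume $\|\gamma_n(0)\|_{L^2_k(Y)} = R$.

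The first key observation is that $\gamma_n$ has uniformly bounded energy. Indeed, inside the strictly larger ball $B(V_{\lambda_n}^{\mu_n}, 2R)$ the cutoff satisfies $\chi \equiv 1$, so $\gamma_n$ is an honest gradient trajectory for $CSD|_{V_{\lambda_n}^{\mu_n}}$ on that ball; hence $CSD(\gamma_n(t))$ is monotone decreasing in $t$. Since $\|\gamma_n(t)\|_{L^2_k}\le R$ for all $t$, the quantity $CSD(\gamma_n(t))$ is uniformly bounded both above and below, giving a uniform bound on $E(\gamma_n)$.

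Next I would pass to a limit. The $L^2_k$-bound plus the equation $\partial_t \gamma_n = -l(\gamma_n) - p_{\lambda_n}^{\mu_n} c(\gamma_n)$ yields, by standard elliptic bootstrapping for the first-order self-adjoint operator $l = *d \oplus D_{A_0}$ together with the quadratic nature of $c$, uniform $C^0_{\mathrm{loc}}(\R, L^2_{k+1})$ bounds on $\gamma_n$. Extracting a diagonal subsequence, $\gamma_n$ converges on compact time intervals in $L^2_k$ to some $\gamma \colon \R \to V$. Since $p_{\lambda_n}^{\mu_n}$ converges strongly to the identity on $V$ and $c$ is continuous on the relevant Sobolev spaces, the limit $\gamma$ satisfies the unapproximated Seiberg-Witten equation \eqref{SW eq gamma}. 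The uniform energy bound passes to the limit, so $E(\gamma) < \infty$.

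We now obtain the contradiction. Proposition \ref{prop:compactness gamma} applied to $\gamma$ gives the strict inequality $\|\gamma(t)\|_{L^2_k(Y)} < R$ for every $t \in \R$; in particular at $t=0$. On the other hand, $L^2_k$-convergence on compact intervals forces $\|\gamma(0)\|_{L^2_k(Y)} = \lim_{n\to\infty} \|\gamma_n(0)\|_{L^2_k(Y)} = R$, a contradiction. The main technical obstacle is the passage to the limit: one must verify that $p_{\lambda_n}^{\mu_n} c(\gamma_n)$ converges to $c(\gamma)$ in a strong enough topology to preserve the flow equation, which requires controlling the spectral projections on bounded sets of the relevant Sobolev space and using continuity of the quadratic nonlinearity $c$. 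Once that is in place the argument is essentially a direct limit argument, relying crucially on the strict inequality built into Proposition \ref{prop:compactness gamma}.
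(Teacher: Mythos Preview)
The paper does not give its own proof of this proposition; it is stated as a result from \cite{ManolescuSWF} and used as input for the construction of $\SWF(Y,\s)$. Your proposal sketches precisely the standard argument from that reference: assume a sequence of approximate trajectories touching $\partial B(V_{\lambda_n}^{\mu_n},R)$, extract a limiting finite-energy trajectory of the unapproximated equation, and contradict the strict bound of Proposition~\ref{prop:compactness gamma}. This is the correct approach, and your identification of the main technical point---controlling the convergence of $p_{\lambda_n}^{\mu_n} c(\gamma_n)$ to $c(\gamma)$---is accurate.
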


This proposition implies that we can take an $S^1$-equivaraint index pair $(N_{\lambda}^{\mu}, L_{\lambda}^{\mu})$ of the  invariant set $\operatorname{Inv}( B(V_{\lambda}^{\mu}, R), \varphi_{\lambda}^{\mu}  )$ \cite{Conley, FloerConley}.

Let $n(Y, g, \s)$ be the rational number defined by
\begin{equation}  \label{eq: n(Y,g s)}
     n(Y, g, \s) := \operatorname{ind} D_{\hat{A}_0} - \frac{c_1(\ts)^2 -\sigma(X)}{8}.
\end{equation}
Here $(X, \hat{g},  \ts)$  is a compact,  $\spinc$ Riemannian  $4$-manifold with boundary $(Y, g,  \s)$,  $\hat{A}_0$ is a $\spinc$ connection on $X$ with $\hat{A}_0|_{Y} = A_0$, $D_{\hat{A}_0}$ is the Dirac operator on $X$ associated to $\hat{A}_0$ and $\operatorname{ind} D_{\hat{A}_0}$ is the Atiyah-Patodi-Singer index \cite{APS1} of $D_{\hat{A}_0}$.  We can show that $n(Y, g, \s)$  depends only on $Y, g$ and $\s$.

\begin{definition}
We define the $S^1$-equivariant  Seiberg-Witten Floer spectrum $\SWF(Y, \s)$ to be the object $\Sigma^{-V_{\lambda}^{0}}(N_{\lambda}^{\mu} / L_{\lambda}^{\mu}, 0, n(Y, g, \s))$ of the category $\mathfrak{C}_{S^1}$ for fixed real numbers $\lambda, \mu$ with $\lambda \ll 0$, $\mu \gg 0$. 
\end{definition}

If $\s$ is a spin structure on $Y$,  the spinor bundle $S$ is a quaternionic vector bundle and we have a $\Pin(2)$-action on $V$, where the action of $j$ is given by
\[
     j(a, \phi) = (-a, \phi j). 
\]
The equation (\ref{SW eq gamma}) is $\Pin(2)$-equivariant,  and we can choose a $\Pin(2)$-equivariant index pair $(N_{\lambda}^{\mu}, L_{\lambda}^{\mu})$ for $\operatorname{Inv} ( B(V_{\lambda}^{\mu}, R), \varphi_{\lambda}^{\mu}  )$. 

\begin{definition}
For a closed, spin $3$-manifold $(Y, \s)$,  we define the $\Pin(2)$-equivariant Seiberg-Witten Floer spectrum $\SWF(Y, \s)$ to be the object $\Sigma^{-V_{\lambda}^{0}}(N_{\lambda}^{\mu} / L_{\lambda}^{\mu}, 0, \frac{1}{2}n(Y, g, \s))$ of $\mathfrak{C}_{\Pin(2)}$ for fixed real numbers $\lambda, \mu$ with $\lambda \ll 0, \mu \gg 0$. 
\end{definition}

We can prove that $\SWF(Y, \s)$ is independent of the choices  and is an invariant of $(Y, \s)$. 

\begin{proposition} [\cite{ManolescuSWF}]
The object $\SWF(Y, \s)$ is independent of the choices of  $\lambda, \mu$, $(N_{\lambda}^{\mu}, L_{\lambda}^{\mu})$ and metric $g$,  up to canonical isomorphisms in $\mathfrak{C}_{S^1}$.  If $\s$ is spin,  $\SWF(Y, \s)$ is independent of the choices, up to canonical isomorphisms in $\mathfrak{C}_{\Pin(2)}$. 
\end{proposition}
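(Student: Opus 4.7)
The plan is to organize the argument into four components, corresponding to the four sources of choice in the construction: the index pair $(N_\lambda^\mu, L_\lambda^\mu)$; the eigenvalue cutoffs $\lambda, \mu$; the metric $g$; and the passage to the $\Pin(2)$-equivariant setting.

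First, independence from the choice of index pair is immediate from equivariant Conley index theory: once the isolating neighborhood and flow are fixed, any two $S^1$-equivariant (respectively $\Pin(2)$-equivariant) index pairs for $\operatorname{Inv}(B(V_\lambda^\mu, R), \varphi_\lambda^\mu)$ produce canonically pointed-homotopy-equivalent quotients $N_\lambda^\mu/L_\lambda^\mu$. This step requires no argument beyond the standard equivariant Conley--Floer machinery cited in the excerpt.

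Second, I would treat the independence from $\lambda, \mu$ by comparing a pair $(\lambda, \mu)$ to an enlargement $(\lambda', \mu')$ with $\lambda' \leq \lambda \ll 0 \ll \mu \leq \mu'$. On the new positive eigenspace $V_\mu^{\mu'}$ the cutoff-linearized flow $-l$ is strictly contracting and contributes a point to the Conley index; on the new negative eigenspace $V_{\lambda'}^\lambda$ it is strictly expanding and contributes a sphere of dimension $\dim_\R V_{\lambda'}^\lambda$. An attractor-repeller decomposition for the enlarged flow, combined with uniform boundedness from Proposition~\ref{prop:compactness gamma}, yields a canonical equivariant homotopy equivalence
\[
N_{\lambda'}^{\mu'} / L_{\lambda'}^{\mu'} \simeq \Sigma^{V_{\lambda'}^\lambda}\bigl(N_\lambda^\mu / L_\lambda^\mu\bigr).
\]
Since $V_{\lambda'}^\lambda$ splits as its $1$-form (real, or $\tilde{\R}$ in the $\Pin(2)$ case) summand and its spinor (complex, or quaternionic) summand, this suspension is cancelled by the change in $\Sigma^{-V_{\lambda'}^0}$ appearing in the definition of $\SWF(Y,\s)$, using exactly the trivialization conventions fixed in Section~\ref{sec:stable homotopy}.

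Third, for metric independence I would connect any two metrics $g_0, g_1$ by a smooth path $g_t$ and choose $\lambda, \mu$ not in the spectrum of $l_t$ for any $t$ in the parameter interval. The spaces $V_{\lambda,t}^\mu$ then form a finite-rank equivariant vector bundle over the interval, and Proposition~\ref{prop:compactness gamma} extends uniformly so that $B(V_{\lambda,t}^\mu, R)$ remains an isolating neighborhood throughout. Continuation invariance of the equivariant Conley index gives a homotopy equivalence of quotients between $t=0$ and $t=1$, while the discrepancy between $\Sigma^{-V_{\lambda,0}^0}$ and $\Sigma^{-V_{\lambda,1}^0}$ records the spectral flow of $l_t$ through zero. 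By applying the Atiyah--Patodi--Singer index theorem to a $\spinc$ cylinder $Y \times [0,1]$ with interpolating metric, this spectral flow is precisely $n(Y, g_1, \s) - n(Y, g_0, \s)$, so the triples agree canonically in $\mathfrak{C}_{S^1}$. The $\Pin(2)$-case proceeds verbatim, performed equivariantly: index pairs are chosen $\Pin(2)$-invariantly, the action of $j$ on the $1$-form factor of $V_{\lambda'}^\lambda$ is $-1$ (identifying it with $\tilde{\R}^{\dim}$) while the spinor factor is quaternionic, and the desuspension rules of $\mathfrak{C}_{\Pin(2)}$ (together with the factor $\frac{1}{2}$ in front of $n(Y, g, \s)$) are set up to absorb these contributions. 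The principal obstacle throughout is the spectral-flow bookkeeping under metric variation: one must match the integer jumps in $\dim V_{\lambda,t}^0$ with the variation of $n(Y, g, \s)$, which rests on a careful APS computation in the mixed real/complex (or $\tilde{\R}$/$\mathbb{H}$) representation setting.
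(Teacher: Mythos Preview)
The paper does not supply its own proof of this proposition; it is stated with the citation \cite{ManolescuSWF} and no argument is given. Your outline is a faithful sketch of the original proof in that reference: index-pair independence from equivariant Conley theory, cutoff independence via the attractor--repeller suspension identity, and metric independence by continuation plus the spectral-flow correction encoded in $n(Y,g,\s)$, all carried out equivariantly in the spin case. There is nothing to compare against here beyond noting that your sketch matches the cited source.
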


We have a duality between $\SWF(Y, \s)$ and $\SWF(-Y, \s)$.

\begin{proposition}[\cite{ManolescuGluing}] \label{prop:duality morphism}
There is a natural S-duality morphism
\[
   \eta : \SWF(Y, \s) \wedge \SWF(-Y, \s) \rightarrow S^0
\]
in $\mathfrak{C}_{S^1}$. If $\s$ is spin, we can take $\eta$ as a morphism in $\mathfrak{C}_{\Pin(2)}$. 
\end{proposition}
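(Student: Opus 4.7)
The plan is to obtain $\eta$ as an instance of equivariant Spanier--Whitehead duality for Conley indices, applied to the finite-dimensional approximations that define $\SWF$. The starting observation is that reversing orientation on $Y$ makes both $*d$ (on coclosed $1$-forms on a $3$-manifold) and the Dirac operator $D_{A_0}$ change sign, so the self-adjoint operator $l = *d \oplus D_{A_0}$ on the Coulomb slice $V$ becomes $-l$ on the same underlying Hilbert space. Consequently the approximated Seiberg--Witten flow $\varphi^{-Y}_{-\mu}^{-\lambda}$ on $V_{-\mu}^{-\lambda}(-Y)$ may be identified with the time-reversal of the flow $\varphi^{Y}_{\lambda}^{\mu}$ on $V_\lambda^\mu(Y)$, using $V_\lambda^\mu(Y) = V_{-\mu}^{-\lambda}(-Y)$. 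In particular, for a single metric $g$, the isolating neighborhood $B(V_\lambda^\mu,R)$ is simultaneously an isolating neighborhood for both the forward and reverse flows.

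Next, I would apply the standard Conley-index duality: if $A$ is a compact isolating neighborhood for a flow $\varphi$ on a finite-dimensional real vector space $W$, then one can pick index pairs $(N, L_+)$ for $\varphi$ and $(N, L_-)$ for $\varphi^{-1}$ sharing the same $N$, with $L_+ \cap L_-$ contained in the exit set of $\partial N$, and with $L_+ \cup L_- = \partial N$. The collapse map from $N$ onto $N/\partial N \simeq S^{W}$ together with the diagonal $N \to N \wedge N$ descends to a natural pairing
\[
(N/L_+) \wedge (N/L_-) \longrightarrow S^{W},
\]
exhibiting $N/L_-$ as the Spanier--Whitehead dual of $N/L_+$ inside $S^W$. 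Because the Seiberg--Witten flow and its approximation are $S^1$-equivariant (and $\Pin(2)$-equivariant in the spin case), one can choose the pair $(N, L_\pm)$ equivariantly, so the pairing lives in $\mathfrak{C}_{S^1}$ (resp.\ $\mathfrak{C}_{\Pin(2)}$). Applied to $W = V_\lambda^\mu$, this yields a map
\[
\bigl(N_\lambda^\mu(Y)/L_\lambda^\mu(Y)\bigr) \wedge \bigl(N_\lambda^\mu(-Y)/L_\lambda^\mu(-Y)\bigr) \longrightarrow S^{V_\lambda^\mu}.
\]

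Finally, I would desuspend to land in $S^0$. Using the definitions of $\SWF(Y,\s)$ and $\SWF(-Y,\s)$, the left hand side becomes, after formally desuspending by $V_\lambda^0(Y) \oplus V_\lambda^0(-Y)$ and shifting by $n(Y,g,\s) + n(-Y,g,\s)$, the smash $\SWF(Y,\s) \wedge \SWF(-Y,\s)$. Since $V_\lambda^0(Y) \oplus V_\lambda^0(-Y) = V_\lambda^0(Y) \oplus V_0^{-\lambda}(Y) = V_\lambda^{-\lambda}$, the codomain becomes (for $\mu = -\lambda$) precisely $\Sigma^{V_\lambda^\mu - V_\lambda^\mu} S^0 = S^0$ in the stable category. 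The required cancellation of the numerical shifts reduces to $n(Y,g,\s) + n(-Y,g,\s) = 0$, which follows from the definition (\ref{eq: n(Y,g s)}) by taking a $\spinc$ filling $(X,\hat g,\ts)$ for $Y$ and using its orientation reversal as a filling for $-Y$: both $\operatorname{ind} D_{\hat A_0}$ (after complex-conjugation of spinors) and $(c_1(\ts)^2 - \sigma(X))/8$ change sign.

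The main obstacle is the equivariant refinement. For the $S^1$-action the duality goes through by choosing an $S^1$-invariant metric on $V_\lambda^\mu$ and invoking an equivariant version of the construction of compatible index pairs. For the $\Pin(2)$ case, the subtlety is that $j \in \Pin(2)$ acts antilinearly on the spinor component and must intertwine the forward and reverse flows in a way compatible with the duality pairing; this requires verifying that the pairs $(N, L_\pm)$ can be chosen $\Pin(2)$-invariantly, which amounts to an equivariant Conley-index argument analogous to the one in \cite{ManolescuSWF} for constructing index pairs equivariantly in the first place.
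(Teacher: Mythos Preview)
The paper does not supply a proof of this proposition; it simply cites it from \cite{ManolescuGluing}. Your sketch is essentially the argument Manolescu gives there: orientation reversal flips the sign of $l$ and hence time-reverses the approximated flow, so Conley-index Spanier--Whitehead duality (in its equivariant form) pairs the two Conley indices against the representation sphere $S^{V_\lambda^\mu}$, and the normalizations cancel.

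One bookkeeping point worth tightening: rather than imposing $\mu=-\lambda$, it is cleaner to take cutoffs $\lambda'=-\mu$, $\mu'=-\lambda$ for $-Y$, so that $V_{\lambda'}^{\mu'}(-Y)=V_\lambda^\mu(Y)$ as vector spaces and the desuspensions combine to $V_\lambda^0(Y)\oplus V_{\lambda'}^0(-Y)=V_\lambda^0(Y)\oplus V_0^\mu(Y)=V_\lambda^\mu(Y)$ without any constraint on $\lambda,\mu$. Your cancellation $n(Y,g,\s)+n(-Y,g,\s)=0$ is correct but the justification needs a touch more care: one uses that the index of the Dirac operator on a closed manifold obtained by gluing $X$ to $-X$ along $Y$ vanishes, together with additivity of $c_1^2$ and $\sigma$.
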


Let $(Y, \s)$ be a closed, $\spinc$ $3$-manifold with $b_1 > 0$.  We write  $\mathfrak{C}_{S^1}^{\mathrm{ind}}$, $\mathfrak{C}_{S^1}^{\mathrm{pro}}$ for the categories of inductive systems and projective systems in $\mathfrak{C}_{S^1}$.  In \cite {KLS1}, two types of Seiberg-Witten Floer spectra are defined: 
\[
   \begin{split}
    &   \underline{\mathit{swf}}^{A}(Y, \s, g, A_0) = (I_0 \stackrel{i_0}{\rightarrow } I_1 \stackrel{i_1}{\rightarrow} I_2 \stackrel{i_2}{\rightarrow} \cdots) \in \mathrm{Ob}( \mathfrak{C}_{S^1}^{\mathrm{ind}}),  \\
   & \underline{\mathit{swf}}^{R}(Y,\s, g, A_0) = ( \bar{I}_0 \stackrel{j_0}{\leftarrow} \bar{I}_1 \stackrel{j_1}{\leftarrow} \bar{I}_2 \stackrel{j_2}{\leftarrow} \cdots) \in \mathrm{Ob}(\mathfrak{C}_{S^1}^{\mathrm{pro}}).
   \end{split}
\]
Here $g$ is a Riemannian metric on $Y$,  $A_0$ is a fixed $\spinc$ connection on $Y$,  $I_m, \bar{I}_m$ are objects of $\mathfrak{C}$ which are desuspension of Conley indices of approximated Seiberg-Witten flow and $i_m, j_m$ are morphisms of $\mathfrak{C}$ which are induced by attractor and repeller maps.  These Floer spectra are called unfolded Seiberg-Witten Floer spectra and it is expected that their  $S^1$-homologies are isomorphic to the monopole Floer homologies with coefficients in a local system whose fiber is the group ring $\Z[H^1(Y;\Z)]$.  

These Seiberg-Witten Floer spectra depend on the choices of $g, A_0$.    If $c_1(\s)$ is torsion, we can define normalized Seiberbg-Witten Floer spectra which are independent of the choices of $g,A_0$.   Assume that $c_1(\s)$ is torsion. Then we can define a rational number $n(Y, \s, g, A_0)$ as in (\ref{eq: n(Y,g s)}).  Put
\[
    \begin{split}
         & \underline{\SWF}^{A}(Y, \s) = ( \underline{\mathit{swf}}^{A}(Y, \s, g, A_0), 0,  n(Y, \s, g, A_0)  ) \in \mathrm{Ob}(\mathfrak{C}_{S^1}^{\mathrm{ind}}), \\
         & \underline{\SWF}^{R}(Y, \s) = ( \underline{\mathit{swf}}^{R}(Y, \s, g, A_0), 0, n(Y, \s, g, A_0)  ) \in \mathrm{Ob}( \mathfrak{C}_{S^1}^{\mathrm{pro}}).
    \end{split}
\]
We can show that $\underline{\SWF}^{A}(Y, \s)$, $\underline{\SWF}^{R}(Y, \s)$ are independent of choices of $g, A_0$ up to isomorphisms.

For a spin 3-manifold $(Y, \s)$, we can define Seiberg-Witten Floer spectra $\underline{\mathit{swf}}^{A}(Y, \s, g, A_0)$, $\underline{\SWF}^{A}(Y, \s)$, $\underline{\mathit{swf}}^{R}(Y, \s, g, A_0)$,  $\underline{\SWF}^{R}(Y, \s)$ in the categories $\mathfrak{C}_{\Pin(2)}^{\mathrm{ind}}$, $\mathfrak{C}_{\Pin(2)}^{\mathrm{pro}}$. 

In  \cite[Section 4.3]{KLS2}, it was proved that  $\underline{\mathit{swf}}^{A}(Y, \s, g, A_0)$ and $\underline{\mathit{\SWF}}^{A}(Y, \s)$ are Spanier-Whitehead dual to $\underline{\mathit{swf}}^{R}(-Y, \s, g, A_0)$ and $\underline{\SWF}^{R}(-Y, \s)$.

There is another version of Seiberg-Witten Floer spectrum. In \cite{sasahira-stoffregen},  the second and third authors defined  Seiberg-Witten Floer spectrum whose $S^1$-homology is conjecturally isomorphic to the monopole Floer homology with coefficients in $\Z$. It is defined as a parametrized spectrum over the Picard torus $\mathrm{Pic}(Y)$.

\subsection{CW complex of Type SWF}\label{sec:3.3}

Following  \cite{ManolescuIntersection} and  \cite{ManolescuTriangulation}, we will make the following definition:

\begin{definition}
Let $X$ be a pointed, finite $S^1$-CW complex and $l$ be a non-negative integer. We say that $X$ is of type SWF at level $l$ if the following conditions are satisfied: 

\begin{enumerate}

\item
The action of $S^1$  is free on $X - X^{S^1}$. 

\item
The $S^1$-fixed point set $X^{S^1}$ is homotopy equivalent to $S^{l\R}$. 

\end{enumerate}

Let $X$ be a pointed $\Pin(2)$-CW complex and $l$ be a non-negative integer.  We say that $X$ is of type SWF at level $l$ if the following conditions are satisfied: 

\begin{enumerate}

\item
The action of $\Pin(2)$ is free on $X - X^{S^1}$. 

\item 
The $S^1$-fixed point set $X^{S^1}$ is $\Pin(2)$-homotopy equivalent to $S^{l\tilde{\R}}$. 

\end{enumerate}

\end{definition}

\begin{lemma}   \label{lem:fixed pt}
Let $X$ be a pointed $S^1$-CW complex of type SWF. For a subgroup $H$ of $S^1$, we have
\[
      X^{H} =
      \left\{
         \begin{array}{ll}
            X & \text{if $H = 1$},  \\
           X^{S^1} & \text{if $H \not= 1$}. 
         \end{array}
      \right. 
\] 
Here $X^{H}$ is the $H$-fixed point set.

Let $X$ be a pointed $Pin(2)$-CW complex of type SWF. For a subgroup $H$ of $\Pin(2)$, we have
\[
     X^{H} =
     \left\{
       \begin{array}{ll}
         X & \text{if $H = 1$},  \\
         X^{S^1} & \text{if $H \not=1$, $H \subset S^1$, }  \\
         X^{\Pin(2)} & \text{if $H \not \subset S^1$. }
       \end{array}
     \right.
\]
\end{lemma}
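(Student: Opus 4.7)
The plan is to verify each case of the lemma by an elementary subgroup/orbit analysis, using only the freeness hypothesis on $X - X^{S^1}$ and the standard structure of $\Pin(2) = S^1 \cup jS^1$ as a subgroup of the unit quaternions. In both the $S^1$ and $\Pin(2)$ settings the case $H = 1$ is trivial, and the remaining cases amount to showing that any $x$ fixed by a prescribed subgroup must lie in the appropriate larger fixed-point set.

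For the $S^1$-case with $H \neq 1$, I would argue by contraposition: suppose $x \in X \setminus X^{S^1}$. By hypothesis $S^1$ acts freely on this complement, so the stabilizer of $x$ in $S^1$ is trivial; in particular no nontrivial $H \subset S^1$ can fix $x$. Hence $X^H \subset X^{S^1}$, and the reverse inclusion is automatic. The same argument, with $\Pin(2)$ in place of $S^1$, handles the $\Pin(2)$-case when $1 \neq H \subset S^1$: freeness of the $\Pin(2)$-action on $X \setminus X^{S^1}$ forces $X^H \subset X^{S^1}$, and $X^{S^1} \subset X^H$ is immediate.

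The only case requiring a new idea is $H \not\subset S^1$ in the $\Pin(2)$-setting, where the goal is $X^H = X^{\Pin(2)}$. The key observation, which I would state and then use, is that every $w \in \Pin(2) \setminus S^1$ has the form $w = hj$ for some $h \in S^1$ and satisfies $w^2 = -1$ inside the unit quaternions. Consequently $-1 \in H$, so $H \cap S^1$ is nontrivial. Given $x \in X^H$, the previous paragraph's argument (applied to the nontrivial subgroup $H \cap S^1$) forces $x \in X^{S^1}$, i.e.\ $S^1$ fixes $x$. Since $H$ also fixes $x$ and $H \cup S^1$ generates $\Pin(2)$ (because $H$ contains an element outside $S^1$), the full group $\Pin(2)$ fixes $x$, so $x \in X^{\Pin(2)}$. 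The reverse inclusion $X^{\Pin(2)} \subset X^H$ is automatic.

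The only real obstacle is the last case, and it is resolved entirely by the arithmetic identity $w^2 = -1$ in $\Pin(2) \setminus S^1$; once this is noted, the problem reduces to the $S^1$-freeness argument plus a one-line generation statement. No use of the homotopy-equivalence $X^{S^1} \simeq S^{l\R}$ or $X^{S^1} \simeq_{\Pin(2)} S^{l\tilde{\R}}$ is needed, which is worth flagging since it might otherwise seem natural to invoke.
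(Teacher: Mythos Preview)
Your proof is correct and follows essentially the same approach as the paper, which records only the one-line justification that the statement follows from freeness of the action on $X - X^{S^1}$; you have simply spelled out the details. One small remark: in the final case your detour through $w^2 = -1$ is not strictly needed, since freeness of the full $\Pin(2)$-action on $X - X^{S^1}$ already forces $X^H \subset X^{S^1}$ for \emph{any} nontrivial $H \subset \Pin(2)$, after which your generation argument finishes the job.
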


\begin{proof}
The statement follows from the condition that the action is free on $X - X^{S^1}$. 
\end{proof}

We also recall some general facts:

\begin{theorem}[Equivariant Whitehead theorem] \label{thm: eq Whitehead}
Let $G$ be a compact Lie group and $X, Y$ be $G$-CW complexes.  A $G$-map $f : X \rightarrow Y$ is a $G$-homotopy equivalence if and only if the restriction $f^{H} : X^H \rightarrow Y^{H}$ is a weak homotopy equivalence for each closed subgroup $H$ of $G$. \end{theorem}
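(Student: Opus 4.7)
The plan is to reduce to the classical (non-equivariant) Whitehead theorem by working cell-by-cell in the $G$-CW structure. The forward direction is immediate: if $f$ admits a $G$-homotopy inverse $g$, then since passage to $H$-fixed points is functorial, $g^H$ is a homotopy inverse to $f^H$, which is therefore a (strong, hence weak) homotopy equivalence.

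For the reverse direction, the key observation is the adjunction that for any $G$-space $Z$, evaluation at $eH \in G/H$ gives a natural bijection between $G$-maps $G/H \times D^n \to Z$ and ordinary maps $D^n \to Z^H$, and similarly for the pair $(D^n, S^{n-1})$. Consequently, any equivariant lifting/extension problem built from a single $G$-cell
\begin{equation*}
\begin{tikzcd}
G/H \times S^{n-1} \arrow[r] \arrow[d, hook] & X \arrow[d, "f"] \\
G/H \times D^n \arrow[r] & Y
\end{tikzcd}
\end{equation*}
reduces, on passage to $H$-fixed points, to the analogous problem for $f^H : X^H \to Y^H$, which admits a solution up to homotopy since $f^H$ is a weak equivalence. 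Using this, I would establish the equivariant Homotopy Extension Lifting Property (HELP) for $f$ against every relative $G$-CW pair $(A,B)$: any $G$-map $A \to Y$ together with a lift $B \to X$ along $f$ extends (after a $G$-homotopy rel $B$) to a lift on all of $A$. This is proved by induction on the $G$-cells of $(A,B)$, applying the cell-wise reduction at each stage and using the $G$-cofibration property of skeletal inclusions to glue the homotopies. Applying HELP to $(Y,\emptyset)$ with the identity on $Y$ yields a $G$-map $g : Y \to X$ with $f \circ g \simeq_G \mathrm{id}_Y$; applying HELP once more with $g \circ f$ and $\mathrm{id}_X$ gives $g \circ f \simeq_G \mathrm{id}_X$, so $f$ is a $G$-homotopy equivalence.

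The main difficulty is the cellular bookkeeping in the HELP argument, ensuring that each successive homotopy is $G$-equivariant and agrees on the previous skeleton with the one already constructed. For the applications in this paper only $G = S^1$ and $G = \Pin(2)$ arise; the lattice of closed subgroups is particularly simple in both cases, and for $X, Y$ of type SWF the hypothesis only needs to be checked on $X$ itself and on the $S^1$- (and in the $\Pin(2)$-case also $\Pin(2)$-) fixed point sets, by the preceding lemma.
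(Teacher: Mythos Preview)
The paper does not actually prove this theorem; it is stated without proof as a standard background result (it appears in Section~\ref{sec:3.3} alongside other facts quoted from the literature, e.g., from tom Dieck's \emph{Transformation Groups}). Your proof plan is the standard argument for the equivariant Whitehead theorem and is correct: the adjunction $\mathrm{Map}_G(G/H \times D^n, Z) \cong \mathrm{Map}(D^n, Z^H)$ reduces the equivariant lifting problem cell-by-cell to the non-equivariant one, and the inductive HELP argument over the $G$-CW skeleta goes through exactly as you describe. Since the paper offers no proof to compare against, there is nothing further to contrast; your sketch would serve as a perfectly good proof of the quoted result.
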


\begin{proposition}
Let $X, Y$ be simply connected CW complexes. A continuous map $f : X \rightarrow Y$ is a homotopy equivalence if and only if the induced map $f_* : H_*(X;\Z) \rightarrow H_*(Y;\Z)$ is an isomorphism. 
\end{proposition}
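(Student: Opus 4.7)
The ``only if'' direction is immediate: a homotopy equivalence $f$ induces an isomorphism on singular homology by functoriality, so I focus on the converse. My plan is to reduce the statement to the classical Whitehead theorem on homotopy groups by showing that $f_\ast$ being a homology isomorphism forces $f$ to be a weak equivalence.

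First I would replace $f \colon X \to Y$ by the inclusion $X \hookrightarrow M_f$ of $X$ into the mapping cylinder $M_f$. Since $M_f$ deformation retracts to $Y$, this changes nothing up to homotopy, and I may therefore assume $f$ is the inclusion of a subcomplex $X \subset Y$ with both spaces simply connected. The long exact sequence of the pair in singular homology, combined with the assumption that $f_\ast$ is an isomorphism, then gives $H_n(Y, X; \mathbb{Z}) = 0$ for all $n \geq 0$.

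Next I would show inductively that $\pi_n(Y, X) = 0$ for all $n \geq 1$. For the base case $n = 1$, the long exact sequence of homotopy groups of the pair
\[
\pi_1(X) \longrightarrow \pi_1(Y) \longrightarrow \pi_1(Y, X) \longrightarrow \pi_0(X) \longrightarrow \pi_0(Y)
\]
forces $\pi_1(Y, X) = 0$ since $X$ and $Y$ are simply connected and path-connected. Now assume $\pi_i(Y, X) = 0$ for $i \leq n$ with $n \geq 1$; then $(Y, X)$ is $n$-connected with $X$ simply connected, so the relative Hurewicz theorem yields an isomorphism $\pi_{n+1}(Y, X) \xrightarrow{\cong} H_{n+1}(Y, X; \mathbb{Z}) = 0$. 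Hence $\pi_{n+1}(Y, X) = 0$, completing the induction.

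Feeding this back into the long exact sequence of homotopy groups of the pair, the inclusion $X \hookrightarrow Y$ induces an isomorphism $\pi_n(X) \to \pi_n(Y)$ for every $n$. Applying the ordinary Whitehead theorem (for weak equivalences between CW complexes) then concludes that $f$ is a homotopy equivalence. The only genuinely nontrivial ingredient is the relative Hurewicz theorem used in the inductive step; everything else is formal manipulation of long exact sequences and the standard mapping cylinder replacement, so I do not anticipate a serious obstacle here.
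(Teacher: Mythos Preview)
Your argument is correct and is the standard proof of this classical fact. The paper does not actually prove this proposition; it is stated without proof as a well-known result from algebraic topology (it is the homology form of Whitehead's theorem for simply connected spaces), so there is no paper proof to compare against. Your mapping-cylinder replacement followed by the relative Hurewicz induction is exactly the textbook route.
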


\begin{lemma}[{\cite[Chapter II, Lemma (4.15)]{tom_dieck_transformation}}]
Let $G$ be a compact Lie group and $X, Y$ be pointed $G$-CW complexes such that $X = \Sigma A$ for some  pointed $G$-CW complex $A$ and $Y^{G}$ is simply connected. We write  $[X, Y]_{G}$ and $[X, Y]_{G}^0$ for the sets of homotopy classes of $G$-maps $X \rightarrow Y$ and homotopy classes of $G$-maps $X \rightarrow Y$ preserving base points respectively. Then the forgetful map
\[
    [X, Y]_{G}^0  \rightarrow [X, Y]_{G}
\]
is bijective. 
\end{lemma}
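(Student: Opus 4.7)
The plan is to show the forgetful map is bijective in two steps (surjectivity and injectivity), in each case exploiting the fact that the basepoint of $X = \Sigma A$ is $G$-fixed (since $G$ acts trivially on the suspension coordinate) together with the hypothesis that $Y^G$ is simply connected. In particular, for any $G$-map $f\colon X \to Y$, the image $f(*)$ automatically lies in $Y^G$, which puts the whole argument in reach of the connectivity hypothesis on $Y^G$.

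For surjectivity, I would begin with a free $G$-map $f\colon X \to Y$. Since $Y^G$ is path-connected by hypothesis, I can choose a path $\gamma\colon I \to Y^G$ from $f(*)$ to the basepoint $*_Y$. The inclusion $\{*\} \hookrightarrow X$ is a $G$-cofibration, because $\{*\}$ is a $G$-subcomplex of the $G$-CW complex $X$. Applying the $G$-equivariant homotopy extension property (noting that $\gamma$ carries trivial $G$-action since it lies in $Y^G$) then produces a free $G$-homotopy from $f$ to a basepoint-preserving $G$-map, establishing surjectivity.

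For injectivity, I would start with two basepoint-preserving $G$-maps $f_0, f_1 \colon X \to Y$ related by a free $G$-homotopy $H\colon X \times I \to Y$. The trace $t \mapsto H(*,t)$ is a loop in $Y^G$ based at $*_Y$, and since $\pi_1(Y^G, *_Y) = 0$ this loop is nullhomotopic rel endpoints. My plan is then to use the suspension structure $X = \Sigma A$ to absorb this nullhomotopy: writing points of $X$ in suspension coordinates $[a,s]$ with $s \in I$ collapsing to $*$ at $s = 0, 1$, I would reparametrize the $s$-direction inside a $G$-invariant collar of $\{*\} \times I$ and use the nullhomotopy of the trace loop to redefine $H$ there, producing a new $G$-homotopy $H'$ between $f_0$ and $f_1$ which is basepoint-preserving throughout.

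The hard part will be the absorption step in the injectivity argument: the nullhomotopy of the trace loop must be deployed throughout the collar in a way that is simultaneously $G$-equivariant, compatible with the suspension reparametrization, and undisturbed at the two endpoints $t = 0, 1$ (so as not to alter $f_0, f_1$). The reason this is possible is that the trace loop lies inside $Y^G$, so a nullhomotopy of it may be chosen inside $Y^G$ as well; since the suspension coordinate carries trivial $G$-action, the modification is then automatically $G$-equivariant. For the precise verification of this reparametrization and the required equivariance, I would defer to tom~Dieck \cite[Chapter II, Lemma (4.15)]{tom_dieck_transformation}.
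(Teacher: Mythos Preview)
The paper does not prove this lemma; it is quoted from tom Dieck's book and used as a black box. Your sketch is correct and follows the standard argument.

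One small simplification for the injectivity step, which you identify as the hard part: you do not need to work explicitly in suspension coordinates. Since $\{*\} \hookrightarrow X$ is a $G$-cofibration, so is $X \times \partial I \cup \{*\} \times I \hookrightarrow X \times I$. Now define a $G$-map on
\[
\bigl(X \times I \times \{0\}\bigr) \cup \bigl(X \times \partial I \times I\bigr) \cup \bigl(\{*\} \times I \times I\bigr)
\]
by taking $H$ on the first piece, the constant homotopies at $f_0$ and $f_1$ on the second, and the nullhomotopy $K\colon I \times I \to Y^G$ of the trace loop on the third. These agree on overlaps, and the equivariant homotopy extension property extends this to all of $X \times I \times I$; restricting to $X \times I \times \{1\}$ gives the required based $G$-homotopy from $f_0$ to $f_1$. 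The suspension hypothesis is used only to guarantee that the basepoint of $X$ is a nondegenerate $G$-fixed point.
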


Note that for a CW complex $X$, the suspension $\Sigma^{\R^p} X$ with $p \geq 2$ is simply connected by the Freudenthal suspension theorem. 
From these facts, we obtain:

\begin{corollary} \label{cor:eq h.e SWF type}
Let $X$ and $Y$ be pointed $S^1$-CW complexes of type SWF and let $f : X \rightarrow Y$ be an $S^1$-map. Then $f$ represents an isomorphism in the $S^1$-equivariant stable homotopy category $\mathfrak{C}_{S^1}$ if the induced homomorphisms
\[
          f_* : H_{*}(X;\Z) \rightarrow H_{*}(Y;\Z), \
          f_* : H_{*}(X^{S^1};\Z) \rightarrow H_{*}(Y^{S^1};\Z)
\] 
are isomorphic. 

Let $X$ and $Y$ be pointed $\Pin(2)$-CW complex of type SWF and $f : X \rightarrow Y$ be a $\Pin(2)$-map.  Then $f$ represents an  isomorphism in  the $\Pin(2)$-equivariant stable homotopy category $\mathfrak{C}_{\Pin(2)}$ if the induced homomorphisms
\[
     f_* : H_*(X;\Z) \rightarrow H_*(Y;\Z), \ 
     f_* : H_{*}(X^{S^1}; \Z) \rightarrow H_*(Y^{S^1};\Z), \
     f_* : H_{*}(X^{\Pin(2)};\Z) \rightarrow H_*(Y^{\Pin(2)};\Z)
\]
are isomorphic. 

\end{corollary}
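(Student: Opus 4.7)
The plan is to stabilize $f$ by a sufficiently large representation so that the relevant fixed-point sets become simply connected CW complexes, and then to chain together three tools already recorded in the excerpt: the classical Whitehead theorem for simply connected spaces (to turn homology isomorphisms into weak equivalences on fixed points), the equivariant Whitehead theorem (to assemble these into an unbased equivariant homotopy equivalence), and the lemma identifying based and unbased $G$-homotopy classes (to upgrade to an isomorphism in the stable category). Since morphisms in $\mathfrak{C}_{S^1}$ and $\mathfrak{C}_{\Pin(2)}$ are defined after suspension by the appropriate representations, it suffices to produce a based equivariant homotopy equivalence after some stabilization.

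For the $S^1$-case, I would suspend by $p\R \oplus q\C$ with $p,q \geq 2$ and compute the $H$-fixed sets. Using that $S^1$ acts freely on $\C \setminus \{0\}$, combined with the Lemma on fixed points of type SWF spaces, one gets
\[
(\Sigma^{p\R \oplus q\C} X)^H = \begin{cases} \Sigma^{p\R \oplus q\C} X & H = 1,\\ \Sigma^{p\R}(X^{S^1}) & H \neq 1,\end{cases}
\]
and likewise for $Y$. The Freudenthal suspension theorem makes each of these simply connected once $p \geq 2$. The reduced suspension isomorphism transfers the two hypothesized homology isomorphisms to $\Sigma^{p\R \oplus q\C} f$ and its $S^1$-fixed restriction. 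The Proposition for simply connected CW complexes therefore makes $(\Sigma^{p\R \oplus q\C} f)^H$ a weak homotopy equivalence for every closed $H \subset S^1$, and the equivariant Whitehead theorem yields an $S^1$-homotopy equivalence. Since the source is a suspension and the target's $S^1$-fixed set is simply connected, the final Lemma on $[X,Y]_{S^1}^0 \to [X,Y]_{S^1}$ promotes this to a based equivalence, so $f$ represents an isomorphism in $\mathfrak{C}_{S^1}$.

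The $\Pin(2)$-case is strictly parallel but with three classes of fixed sets. I would suspend by $p\R \oplus a\tilde\R \oplus r\Hla$ with all exponents at least $2$. The classification of closed subgroups $H \subset \Pin(2)$ together with the Lemma on type SWF spaces shows that $X^H$ is one of $X$, $X^{S^1}$, or $X^{\Pin(2)}$, and the basic facts that $S^1$ acts freely on $\Hla \setminus \{0\}$ and that elements of $\Pin(2) \setminus S^1$ act by $-1$ on $\tilde\R$ show that the corresponding suspended $H$-fixed sets become $\Sigma^{p\R \oplus a\tilde\R \oplus r\Hla} X$, $\Sigma^{p\R \oplus a\tilde\R}(X^{S^1})$, or $\Sigma^{p\R}(X^{\Pin(2)})$ respectively. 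All three are simply connected by Freudenthal, and the three-step argument from the $S^1$-case reproduces verbatim to give an isomorphism in $\mathfrak{C}_{\Pin(2)}$.

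The principal obstacle, in my view, is the careful bookkeeping of the fixed-point sets under suspension by the various representations: one must verify that $(\tilde\R)^H$ and $(\Hla)^H$ vanish for the appropriate subgroups, so that the suspended $H$-fixed sets reduce to real suspensions of the corresponding fixed set of $X$, and that the reduced suspension isomorphism then converts the hypothesized isomorphisms on $H_*(X^H)$ into isomorphisms for the stabilized map. This is essentially a mechanical check, but it is the one place where a slip could occur; once it is carried out cleanly, the three cited results combine to close the argument with no further work.
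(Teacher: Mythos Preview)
Your proposal is correct and is precisely the argument the paper has in mind: the paper states the corollary as an immediate consequence of the three preceding results (equivariant Whitehead, homology-iso-implies-equivalence for simply connected complexes, and the based/unbased lemma), together with the remark that $\Sigma^{\R^p}X$ is simply connected for $p \geq 2$, and you have simply spelled out the fixed-point bookkeeping that the paper leaves implicit. There is nothing to add or correct.
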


We can apply this corollary to the Conley index of the approximated Seiberg-Witten flow $\varphi_{\lambda}^{\mu}$ because of the following lemma:

\begin{lemma}
Let $(Y, \s)$ be a  $\spinc$ rational homology 3-sphere and $\varphi_{\lambda}^{\mu}$ be the approximated Seiberg-Witten flow of $(Y, \s)$.  For $\lambda \ll 0$ and $\mu \gg 0$, the $S^1$-equivariant Conley index $I_{\lambda}^{\mu} = N_{\lambda}^{\mu} / L_{\lambda}^{\mu}$ of $\operatorname{Inv}( B(V_{\lambda}^{\mu}, R); \varphi_{\lambda}^{\mu} )$  is a pointed $S^1$-CW complex of type SWF at level $\dim V_{\lambda}^{0}(\R)$. Here $V_{\lambda}^{0}(\R) = V_{\lambda}^{0} \cap i\Omega^1(Y)$.  

 If $\s$ is spin, $I_{\lambda}^{\mu}$ is a pointed  $\Pin(2)$-CW complex of type SWF at level $\dim V_{\lambda}^{0}(\tilde{\R})$.  Here $V_{\lambda}^{0}(\tilde{\R}) := V_{\lambda}^0 \cap \Omega^1(Y)$, which is isomorphic to a direct sum of copies of  $\tilde{\R}$ as a $\Pin(2)$-space. 
\end{lemma}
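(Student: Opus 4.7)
The plan is to verify, in both the $S^1$ and $\Pin(2)$ settings, the two defining conditions of being of type SWF: freeness of the action off the $S^1$-fixed locus, and identification of the $S^1$-fixed (and $\Pin(2)$-fixed) subspace of the Conley index with the appropriate sphere. Both follow from understanding how the approximated flow restricts to the reducible configurations.

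First I would verify freeness. The $S^1$-action on $V$ rotates only the spinor component, so $V^{S^1} = V \cap i\Omega^1(Y)$, and $V \setminus V^{S^1}$ carries a free $S^1$-action. The linear part $l$ and the nonlinear part $c$ are both $S^1$-equivariant; the spectral projection $p_\lambda^\mu$ is $S^1$-equivariant as well, and one chooses an $S^1$-invariant cutoff $\chi$. Hence $\varphi_\lambda^\mu$ is $S^1$-equivariant and preserves the stratification by $S^1$-isotropy. By the equivariant Conley index construction (\cite{FloerConley}, as used in \cite{ManolescuSWF}), one may choose an $S^1$-invariant index pair $(N_\lambda^\mu, L_\lambda^\mu)$; because the $S^1$-action is free on the complement of $V_\lambda^\mu(\R)$, the quotient $N_\lambda^\mu/L_\lambda^\mu$ has free $S^1$-action off its $S^1$-fixed set.

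Next I would identify the $S^1$-fixed set with $S^{l\R}$ where $l = \dim V_\lambda^0(\R)$. The key general fact from equivariant Conley index theory is that $(N_\lambda^\mu)^{S^1} / (L_\lambda^\mu)^{S^1}$ is itself an index pair for the restricted flow $\varphi_\lambda^\mu|_{V_\lambda^\mu(\R)}$. On the reducible locus $\phi = 0$, so $q(\phi) = 0$ and the nonlinear term $c(a,0)$ vanishes. The restricted flow is the cutoff linear flow $\partial_t a = -\chi(a)\cdot{*}da$, generated by the self-adjoint operator ${*}d$. Since $Y$ is a rational homology sphere, ${*}d$ has no kernel on $V(\R)$, and one may choose $\lambda,\mu$ so that $0$ is not an eigenvalue in $(\lambda,\mu]$. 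The maximal invariant set of a hyperbolic linear gradient flow on a large ball is then just the origin, whose Conley index is the Thom space of the negative-eigenspace $V_\lambda^0(\R)$, namely $S^{l\R}$.

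The $\Pin(2)$-case is a direct extension: the non-trivial element $j$ acts by $(a,\phi)\mapsto(-a,\phi j)$, so it preserves the reducible locus and acts there by negation, and together with $S^1$ acts freely on $V\setminus V^{S^1}$ since any $\phi\neq 0$ has trivial $\Pin(2)$-stabilizer. Hence $V_\lambda^\mu(\R)$ decomposes as a sum of copies of $\tilde\R$, and the linear Conley index computation delivers $S^{l\tilde\R}$ for the $\Pin(2)$-fixed set. I expect the main technical point to be invoking the identification of the $S^1$-fixed subspace of the equivariant Conley index with the Conley index of the restricted flow; this is a general property that I would cite from \cite{FloerConley, ManolescuSWF}, after which the computation of the Conley index of a hyperbolic fixed point of a linear gradient flow completes the argument.
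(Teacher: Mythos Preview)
Your proposal is correct and follows essentially the same approach as the paper: freeness comes from the $S^1$-action being free on $V_\lambda^\mu \setminus V_\lambda^\mu(\R)$, and the fixed-set identification comes from observing that the restricted flow on reducibles is the linear flow generated by $-{*}d$, whose Conley index is the sphere on the negative eigenspace $V_\lambda^0(\R)$. One small wording slip: in the $\Pin(2)$ case you wrote ``$\Pin(2)$-fixed set'' at the end, but what you actually computed (and what the definition requires) is that the $S^1$-fixed set $(I_\lambda^\mu)^{S^1}$ is $\Pin(2)$-homotopy equivalent to $S^{l\tilde{\R}}$; your argument already shows this since $j$ acts by negation on $V_\lambda^0(\R)$.
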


\begin{proof}

Since the $S^1$-fixed point set of $V_{\lambda}^{\mu}$ is $V_{\lambda}^{\mu}(\R)$ and the $S^1$-action on $V_{\lambda}^{\mu} - V_{\lambda}^{\mu}(\R)$  is free, the $S^1$-action is free on $I_{\lambda}^{\mu} - (I_{\lambda}^{\mu})^{S^1}$.
The approximated Seiberg-Witten flow $\varphi_{\lambda}^{\mu}$ on $V_{\lambda}^{\mu}(\R)$  is generated by the linear operator $-*d$.  Hence $(I_{\lambda}^{\mu})^{S^1}$ is homotopy equivalent to $(V_{\lambda}^{0}(\R))^+$.  Therefore $I_{\lambda}^{\mu}$ is of type SWF at level $\dim V_{\lambda}^{0}(\R)$. 

The proof for the spin case is similar. 
\end{proof}

\subsection{Relative Bauer-Furuta invariant}\label{subsec:relative-bauer-furuta}\label{sec:3.4}

Let $(X, \ts)$ be a compact, $\spinc$ 4-manifold with boundary $(Y, \s)$. Take a Riemann metric $\hat{g}$ on $X$ such that $\hat{g}$ is isometric to $g + dt^2$ near the boundary $Y$, where $g$ is a Riemann metric on $Y$. Assume that $b_1(X) = 0$ and $b_1(Y) = 0$.    Choose  a $spin^c$ connection $\hat{A}_0$ on $X$.  
For $\mu  > 0$, we have the Seiberg-Witten map
\[
  \begin{split}
   & SW^{\mu} :  L^2_k(S^+_X)  \times L^2_{k}(i \Omega_{CC}^1(X)) \rightarrow 
         L^2_{k-1}(S^-_{X}) \times   L^2_{k-1}(i\Omega^+(X)) \times L^2_{k-\frac{1}{2}}(V^{\mu})  \\
    & SW^{\mu}(\hat{\phi}, \hat{a}) 
    = (D_{X, \hat{A}_0} \hat{\phi} + \rho(\hat{a}) \hat{\phi}, d^+ \hat{a} + F_{\hat{A}_0}^+ - q(\hat{\phi}),  p^{\mu} i^* (\hat{\phi}, \hat{a})). 
   \end{split}
\]
Here $i : Y \rightarrow X$ is the inclusion,  $S^{\pm}_{X}$ are the spinor bundles on $X$ and $\Omega^1_{CC}(X)$ is the space of $1$-forms satisfying the double Coulomb condition \cite{Khandhawit}. 
Take a finite dimensional subspace $U'$ in $L^2_{k-1}(S^-_X) \times L^2_{k-1}(i\Omega^+(X))$ and $\lambda < 0$  such that  the image $\mathrm{Im} (D_{X, \hat{A}_0}, d^+, p^{\mu} i^*)$ and $U' \times V_{\lambda}^{\mu}$ are transverse in the codomain of $SW^{\mu}$.  Put
\[
        U := (D_{X, \hat{A}_0}, d^+, p^{\mu} i^*)^{-1}(U' \times V_{\lambda}^{\mu}). 
\]
We consider a finite dimensional approximation of $SW^{\mu}$ defined by
\begin{equation}\label{eq:finite-dimensional-approximation-of-seiberg-witten-map}
   \begin{split}
        & SW_{U'}^{\lambda, \mu} :  U  \rightarrow U' \times V_{\lambda}^{\mu},  \\
        & SW_{U'}^{\lambda,\mu} =  (p_{U'} \times id_{V^{\mu}} ) \circ SW^{\mu}|_{U}.
    \end{split}
\end{equation}
For $U'$ large, $\lambda \ll 0$ and $\mu \gg 0$, we can take an index pair $(N_{\lambda}^{\mu}, L_{\lambda}^{\mu})$ for the approximated Seiberg-Witten flow $\varphi_{\lambda}^{\mu}$ such that the map $SW_{U'}^{\lambda,\mu}$ induces an $S^1$-map
\[
           f_{U}^{\lambda, \mu} :    U^+ \rightarrow  (U')^+ \wedge (N_{\lambda}^{\mu}/L_{\lambda}^{\mu}).
\]
See \cite[Proposition 6]{ManolescuSWF} and \cite[Proposition 4.5]{Khandhawit}.
 The map $f_{U}^{\lambda, \mu}$ represents  a morphism
\[
     \Phi_{X, \ts} :   S^{\frac{c_1(\ts)^2 - \sigma(X)}{8} \C} \rightarrow \Sigma^{b^+(X)\R} \SWF(Y,\s)
\]
in the category $\mathfrak{C}_{S^1}$. 

Let $(X, \ts)$ be a $\spinc$-cobordism  from $(Y_0, \s_0)$ to $(Y_1, \s_1)$. Then the Seiberg-Witten equations on $X$ define a morphism
\[
     \Phi_{X, \ts} : S^{ \frac{c_1(\ts)^2 - \sigma(X)}{8} \C }  \rightarrow 
       \Sigma^{b^+(X)\R} \SWF(-Y_0, \s_0) \wedge \SWF(Y_1, \s_1).
\]
Composing with the duality morphism $\eta$ in Proposition  \ref{prop:duality morphism}, we get a morphism
\[
 \begin{split}
    \Psi_{X, \ts} : \Sigma^{\frac{c_1(\ts)^2 - \sigma(X)}{8} \C} \SWF(Y_0,\s_0) 
                         &\stackrel{\Phi_{X,\ts}}{\longrightarrow}
                         \Sigma^{b^+(X) \R} \SWF(-Y_0, \s_0) \wedge \SWF(Y_1, \s_1)  \wedge \SWF(Y_0, \s_0)    \\
                        & \stackrel{\eta}{\longrightarrow} \Sigma^{b^+(X)\R} \SWF(Y_1, \s_1)
  \end{split}
\]
in the category $\mathfrak{C}_{S^1}$. 
In the case when $\ts$ is self-conjugate, we can define $\Psi_{X, \ts}$ as a morphism in the category $\mathfrak{C}_{\Pin(2)}$, where $b^+(X)\mathbb{R}$ is replaced with $b^+(X)\tilde{\mathbb{R}}$.

Say $\s$ is a self-conjugate $\spinc$-structure on $Y$.  For $\ts,\bar{\ts}$ a pair of conjugate $\spinc$ structures restricting to $\s$, with $\bar{\ts}\neq \ts$, we can still form morphisms in $\mathfrak{C}_{\Pin(2)}$ by the following mechanism.  For the spinor bundles $S^{\pm}_{X,\ts}$, $S^{\pm}_{X,\bar{\ts}}$ the (tautological) conjugation action defines a complex-antilinear isomorphism of bundles $j\colon S^{\pm}_{X,\ts}\to S^{\pm}_{X,\bar{\ts}}$; there is also an identification of $\spinc$-connections on $S^{\pm}_{X,\ts},S^{\pm}_{X,\bar{\ts}}$, and these identifications are compatible with the double Coulomb condition.  Write $U',U,V^{\mu}_{\lambda}$ as above, for $\ts$, and write $\bar{U'},\bar{U}$ for the images of $U',U$ under these identifications.  We write $j$ for the identifications $U\to \bar{U}$ and $U'\to\bar{U}'$; we also define $j\colon \bar{U}\to U$ and $j\colon \bar{U}' \to U'$ from the bundle map $-j^{-1}$. 

Conjugation $j\colon S^{\pm}_{X,\ts}\to S^{\pm}_{X,\bar{\ts}}$ restricts on the boundary to a map $S_{Y,\s}\to S_{Y,\bar{\s}}$; by perhaps changing $j$ by a harmonic gauge transformation, we may assume that $j$ on the $4$-manifold agrees with $j\colon S_{Y,\s}\to S_{Y,\bar{\s}}$ defined for the three-manifold.  The restriction operation on connections agrees with the action of $j$ on $\spinc$-connections on $Y$ as well.  

Indeed, if the boundary $\spinc$-structure $\s$ is not self-conjugate, we may also consider $\bar{V}^{\mu}_{\lambda}$, which is defined similarly to the $\bar{U},\bar{U}'$ above.  Indeed, letting $[\s]$ be the orbit of $\s$ under conjugation (i.e. $[\s]=\{\s,\bar{\s}\}$), we define a $\Pin(2)$-equivariant spectrum
\[
\SWF(Y,[\s])=\vee_{\s\in[\s]}\SWF(Y,\s)
\]
with $j$-action defined as follows.  The configuration spaces $V^{\mu}_{\lambda}$ and $\bar{V}^{\mu}_{\lambda}$ both have components coming from sections of spinor bundles $\Gamma(S_{Y,\s})$ and $\Gamma(S_{Y,\bar{\s}})$; as before, we define $j$ on the spinor component of $V^{\mu}_{\lambda}$ and $\bar{V}^{\mu}_{\lambda}$ to be via the complex-antilinear bundle (tautological) bundle map $j\colon S_{Y,\s}\to S_{Y,\bar{\s}}$.  Also as before, there is a canonical identification of $\spinc$ connections on $S_{Y,\s},S_{Y,\bar{s}}$, giving us a map $V^{\mu}_{\lambda}\to \bar{V}^{\mu}_{\lambda}$.  As before, we use the bundle map $-j^{-1}\colon S_{Y,\bar{\s}}\to S_{Y,\s}$ to define $j\colon \bar{V}^{\mu}_{\lambda}\to V^\mu_{\lambda}$.    

Then, we have a pair of morphisms as in (\ref{eq:finite-dimensional-approximation-of-seiberg-witten-map}):
\[
SW_{U'}^{\lambda, \mu} :  U  \rightarrow U' \times V_{\lambda}^{\mu},   \qquad \qquad \mbox{ and } SW_{\bar{U}'}^{\lambda, \mu} :  \bar{U}  \rightarrow \bar{U}' \times V_{\lambda}^{\mu},  
\]

By inspecting the Seiberg-Witten map $SW$, we see that $SW_{U'}^{\lambda, \mu}=j^{-1}SW_U^{\lambda, \mu}j$.  The somewhat awkward definition of $j$ on $\bar{U}'\to U'$ (resp. $\bar{U}\to U$) was dictated by the requirement that $i^*$ be $j$-equivariant (in the event that $\s$ is not self-conjugate, one may construct a $S^1\times C_2$-equivariant map, if $\SWF(Y,[\s])$ is built as a $S^1\times C_2$-spectrum rather than as a $\Pin(2)$-spectrum). 

Indeed, we obtain that for $\ts,\s$ as above, we obtain a morphism:
\begin{equation}\label{eq:non-spin-bf}
\Phi_{X, [\ts]} :   S^{\frac{c_1(\ts)^2 - \sigma(X)}{8} \C}\vee S^{\frac{c_1(\ts)^2 - \sigma(X)}{8} \C} \rightarrow \Sigma^{b^+(X)\tilde{\R}} \SWF(Y,\s)
\end{equation}
in $\mathfrak{C}_{\Pin(2)}$.  The $j$-action on the left-hand side is $j(x,y)=(-y,x)$.  The restriction of this morphism to either sphere factor is exactly $\Phi_{X,\ts}$ (resp. $\Phi_{X,\bar{\ts}}$ as defined previously) in $\mathfrak{C}_{S^1}$. 

If $b_1(X) > 0$, the relative Bauer-Furuta invariant is defined as a morphism
\[
        \Phi_{X, \ts} : \mathit{Th}_{\mathrm{Ind} D} \rightarrow \Sigma^{b^+(X)\R} \SWF(Y, \s)
\]
in $\mathfrak{C}_{S^1}$. Here $\mathrm{Ind} D \in K(\mathrm{Pic}(X))$ is the index bundle of the family of Dirac operators on $X$ parametrized by the Picard torus $\mathrm{Pic}(X)$ and $\mathit{Th}_{\mathrm{Ind} D} \in \mathrm{Ob}(\mathfrak{C}_{S^1})$ is the Thom spectrum of $\mathrm{Ind} D$.  

Let $Y = -Y_0 \coprod Y_1$.  Composing the duality morphism $\eta$ of $\SWF(Y_0,\s)$ and $\SWF(-Y_0,\s_0)$ with $\Phi_{X, \ts}$, we obtain a morphism
\[
     \Psi_{X, \ts} : \mathit{Th}_{\mathrm{Ind} D}  \wedge \SWF(Y_0, \s_0) \rightarrow \Sigma^{b^+(X) \R} \SWF(Y_1, \s_1). 
\]
Restricting  $\Psi_{X, \ts}$  to the fiber, we get  a morphism 
\[
      \Psi_{X, \ts}^{\mathrm{fib}} : 
      \Sigma^{\frac{c_1(\s)^2 - \sigma(X)}{8}\C} \SWF(Y_0, \s_0) \rightarrow
      \Sigma^{b^+(X) \R} \SWF(Y_1, \s_1). 
\]

If $b_1(Y) > 0$, we have the type A and type R  Bauer-Furuta invariants  
\[
  \begin{split}
    &   \underline{\psi}_{X, \ts}^{A} : 
       \mathit{Th}_{\mathrm{Ind} D} \wedge \underline{\mathit{swf}}^{A}(Y_0, \s_0, g_0, A_0) \rightarrow 
        \Sigma^{b^+(X) \R \oplus V(Y_0)} \underline{\mathit{swf}}^{A}(Y_1, \s_1, g_1, A_1),   \\
   & \underline{\psi}_{X, \ts}^{R} : 
      \mathit{Th}_{\mathrm{Ind} D}  \wedge \underline{\mathit{swf}}^{R}(Y_0, \s_0, g_0, A_0) \rightarrow 
       \Sigma^{b^+(X) \R \oplus V(Y_1)} \underline{\mathit{swf}}^{R}(Y_1, \s_1, g_1, A_1), 
  \end{split}
\]
which are morphisms in $\mathfrak{C}^{\mathrm{ind}}_{S^1}$, $\mathfrak{C}^{\mathrm{pro}}_{S^1}$. 
Here $V(Y_i) = \mathrm{Coker} ( H^1(X;\R) \rightarrow H^1(Y_i;\R)  )$ for $i=0,1$  and  $\mathrm{Ind} D \in K(\mathrm{Pic}(X, Y))$ is the index bundle of a family of Dirac operators on $X$ parametrized by the relative Picard torus 
\[
         \mathrm{Pic}(X, Y) = \frac{\ker ( H^1(X;\R) \rightarrow H^1(Y;\R)) }{\ker ( H^1(X;\Z) \rightarrow H^1(Y;\Z))}.
\]
See \cite{KLS2} for the details.    

Restricting $\underline{\psi}_{X, \ts}^{A}, \underline{\psi}_{X, \ts}^{R}$ to the fibers, we have morphisms 
\[
    \begin{split}
     &   \underline{\psi}_{X, \ts}^{A, \mathrm{fib}} : \Sigma^{a\C}  \underline{\mathit{swf}}^{A}(Y_0, \s_0, g_0, A_0)   \rightarrow 
                                                         \Sigma^{b^+(X) \R\oplus V(Y_0)}  \underline{\mathit{swf}}^{A}(Y_1, \s_1, g_1, A_1), \\
    &    \underline{\psi}_{X, \ts}^{R, \mathrm{fib}} : \Sigma^{a\C}  \underline{\mathit{swf}}^{R}(Y_0, \s_0, g_0, A_0)   \rightarrow 
                                                         \Sigma^{b^+(X) \R \oplus V(Y_1)}  \underline{\mathit{swf}}^{R}(Y_1, \s_1, g_1, A_1).                                                                                            
    \end{split} 
\] 
Here $a \in \Z$ is the rank of $\mathrm{Ind} D$.

If $c_1(\s_0), c_1(\s_1)$ are torsion, we can define the normalized Bauer-Furuta invariants
\[
     \begin{split}
        &   \underline{\Psi}_{X, \ts}^{A} : 
       \mathit{Th}_{\mathrm{Ind} D} \wedge \underline{\mathit{SWF}}^{A}(Y_0, \s_0, g_0, A_0) \rightarrow 
        \Sigma^{b^+(X) \R \oplus V(Y_0)} \underline{\mathit{SWF}}^{A}(Y_1, \s_1, g_1, A_1),   \\
   & \underline{\Psi}_{X, \ts}^{R} : 
      \mathit{Th}_{\mathrm{Ind} D}  \wedge \underline{\mathit{SWF}}^{R}(Y_0, \s_0, g_0, A_0) \rightarrow 
       \Sigma^{b^+(X) \R \oplus V(Y_1)} \underline{\mathit{SWF}}^{R}(Y_1, \s_1, g_1, A_1). 
     \end{split}
\]
 Restricting  to the fibers, we have
\[
          \begin{split}
     &   \underline{\Psi}_{X, \ts}^{A, \mathrm{fib}} : \Sigma^{\frac{c_1(\ts)^2 - \sigma(X)}{8}\C}  \underline{\mathit{SWF}}^{A}(Y_0, \s_0)   \rightarrow 
                                                         \Sigma^{b^+(X) \oplus V(Y_0)}  \underline{\mathit{SWF}}^{A}(Y_1, \s_1), \\
    &    \underline{\Psi}_{X, \ts}^{R, \mathrm{fib}} : \Sigma^{\frac{c_1(\ts)^2 - \sigma(X)}{8}\C}  \underline{\mathit{SWF}}^{R}(Y_0, \s_0)   \rightarrow 
                                                         \Sigma^{b^+(X) \oplus V(Y_1)}  \underline{\mathit{SWF}}^{R}(Y_1, \s_1).                                                                                            
    \end{split} 
\]

\subsection{Adjunction relation and blow up formula}\label{subsec:adjunct}
In this subsection, we will prove  the adjunction relation and the blow-up formula  of the Bauer-Furuta invariant. 
To do it, we will need the following lemma.

\begin{lemma}[{\cite[Lemma 3.8]{bauer-furuta}}]   \label{lem : inclusion}
Let 
\[
       f : S^{p\R \oplus q\C}  \rightarrow  S^{p\R \oplus (q+k)\C}
\]
be an $S^1$-map such that the restriction
\[
     f^{S^1} :  S^{p\R}\rightarrow  S^{p\R}
\]
has degree $1$. Then $k \geq 0$ and $f$ is $S^1$-homotopic to the inclusion 
\[
       S^{ p\R \oplus q\C} \hookrightarrow S^{ p\R \oplus (q+k)\C}.
\]
\end{lemma}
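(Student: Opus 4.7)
The plan has three stages: normalize $f$ on the fixed-point set, establish the inequality $k \geq 0$ via equivariant cohomology, and then exhibit an $S^1$-homotopy to the inclusion by obstruction theory. Throughout, write $V = p\R \oplus q\C$ and $W = p\R \oplus (q+k)\C$; for $W$ to be a genuine $S^1$-representation we have $q + k \geq 0$ automatically.

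First I would normalize $f$. Since $f^{S^1}$ is a degree-one self-map of $S^{p\R}$, it is homotopic to the identity, and because $S^1$ acts trivially on $S^{p\R}$ this homotopy is automatically $S^1$-equivariant. The $S^1$-equivariant homotopy extension property for the $S^1$-CW pair $(S^V, S^{p\R})$ extends it to an $S^1$-homotopy of $f$, so I may replace $f$ by a map with $f^{S^1} = \mathrm{id}$.

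Next I would prove $k \geq 0$ by passing to the Borel construction. The map $f_{hS^1}\colon (S^V)_{hS^1} \to (S^W)_{hS^1}$ is a map of Thom spaces $\Sigma^p \mathrm{Th}(qL) \to \Sigma^p \mathrm{Th}((q+k)L)$ over $BS^1$, where $L$ denotes the tautological complex line bundle. The Thom isomorphism gives
\[
\tilde{H}^{p+2(q+k)}\bigl((S^V)_{hS^1}; \Z\bigr) \cong H^{2k}(BS^1; \Z),
\]
which vanishes when $k < 0$. On the other hand, restriction along the zero section sends the Thom class $\tau_W$ to the equivariant Euler class $U^{q+k}$, where $U \in H^2(BS^1)$ is the generator; naturality of $f_{hS^1}$ with respect to the zero-section inclusion, together with the fact that $f^{S^1} = \mathrm{id}$ acts trivially on $\tilde{H}^*(\Sigma^p (BS^1)_+)$, forces the pullback $(f_{hS^1})^*(\tau_W)$ to restrict to the nonzero element $U^{q+k}$. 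For $k < 0$ this contradicts the vanishing above, so $k \geq 0$. The same computation gives $(f_{hS^1})^*(\tau_W) = U^k \tau_V$, matching the cohomological action of the inclusion.

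Finally, to show $f \simeq \iota$ as $S^1$-maps I would use equivariant obstruction theory on the relative $S^1$-CW pair $(S^V, S^{p\R})$. Using the join decomposition $S^V \cong S^{p-1} * S(q\C)$, the cells outside $S^{p\R}$ are free $S^1$-cells of dimension at most $p + 2q$. Starting from $f|_{S^{p\R}} = \iota|_{S^{p\R}}$, I would extend a homotopy $f \sim \iota$ cell-by-cell; each obstruction lies in an equivariant homotopy group of $S^W$ that vanishes because the relative $S^1$-connectivity of $S^W$ over $S^{p\R}$ in the free range is at least $p + 2(q+k) - 1 \geq p + 2q - 1$. The main obstacle is this final step: while the cohomological agreement from the Borel computation is strong evidence, promoting it to a genuine $S^1$-homotopy requires carefully identifying the equivariant homotopy groups housing the obstructions and verifying their vanishing via these connectivity bounds.
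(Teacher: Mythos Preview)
The paper does not prove this lemma; it is quoted from Bauer--Furuta, and the sentence immediately following the statement refers to tom Dieck for the stronger stable fact that such $S^1$-maps are classified by $\deg(f^{S^1})$. So there is no in-paper argument to compare against.

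Your outline is correct, and the hesitation you express about the third step is unnecessary once the indexing is done carefully. Two remarks. First, the join decomposition should read $S^V \cong S^p * S(q\C)$ rather than $S^{p-1} * S(q\C)$, since $V^+ = S(V \oplus \R)$ and $S(A \oplus B) = S(A) * S(B)$. Second, and this is what dissolves your concern: giving $S(q\C)$ the free $S^1$-CW structure lifted from $\CP^{q-1}$, the free equivariant cells of $(S^V, S^{p\R})$ are of the form $S^1 \times D^m$ with $m \leq p + 2q - 1$, the top one coming from the join of the top cell $e^p$ of $S^p$ with the top free cell $S^1 \times D^{2q-2}$ of $S(q\C)$. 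The obstruction to extending an equivariant homotopy over such a free cell lies in $\pi_m(S^W)$, not $\pi_{m+1}(S^W)$, because freeness reduces the problem to filling a nonequivariant $D^m \times I$ relative to its boundary $S^m$. Since $S^W$ is $(p + 2(q+k) - 1)$-connected and $p + 2q - 1 \leq p + 2(q+k) - 1$ exactly when $k \geq 0$, every obstruction vanishes, including in the borderline case $k = 0$. The one dimension contributed by the $S^1$-orbit direction is precisely the margin that makes the connectivity estimate sufficient, so no separate top-cell analysis is needed.
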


In fact, more generally, stable $S^1$-maps $S^{p\R\oplus q\C}\to S^{p\R\oplus (q+k)\C}$ up to homotopy are determined by the degree of $f^{S^1}$ (as a map of nonequivariant spaces).  
See \cite[Chapter 2, Section 4]{tom_dieck_transformation}.

For an object $(W, m, n)$ of the category $\mathfrak{C}_{S^1}$,  let
\[
     U : (W, m, n) \rightarrow (\Sigma^{\C} W, m, n)
\]
be the morphism represented by the inclusion
\[
         U : W \hookrightarrow \Sigma^{\C} W,  \  w \mapsto (0, w). 
\]
For a map $f : \Sigma^{\C} W \rightarrow W'$, the composite
\[
       f U : W \rightarrow W' 
\]
is the restriction of $f$ to $W$.

\begin{proposition}\label{prop:adjunction}
Let  $(X, \ts)$ be a $\spinc$ cobordism from $(Y_0, \s_0)$ to $(Y_1, \ts_1)$ with  $b_1(X)  = b_1(\partial X) = 0$. Suppose that we have an embedded sphere $S$ in $X$ with $S \cdot S  < 0$.  Let $L$ be the complex line bundle with
\[
        c_1(L) =  PD [S].
\]
Put
\begin{equation}  \label{eq:s' n}
      \ts' = \ts \otimes L, \    n = \frac{ \left< c_1(\ts), [S] \right> + [S] \cdot [S]}{2} \in \Z.  
\end{equation}
If $n \geq 0$, we have
\[
      \Psi_{X, \ts'} U^{n} = \Psi_{X, \ts},   
\]
and if $n < 0$, we have
\[
     \Psi_{X, \ts'} =  \Psi_{X, \ts} U^{-n}.  
\]

\end{proposition}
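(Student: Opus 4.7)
\emph{Proof proposal.} The plan is to construct an $S^1$-equivariant comparison map between the finite-dimensional approximations of the Seiberg--Witten maps for $\ts$ and $\ts'$ by tensoring with a section of $L$, and then identify this comparison with $U^{|n|}$ using Lemma~\ref{lem : inclusion}. The first observation is that the complex virtual dimensions of the Dirac index for $\ts$ and $\ts'$ differ by exactly $n$: using $c_1(\ts') = c_1(\ts) + 2c_1(L)$ and $c_1(L) = \mathrm{PD}[S]$,
\[
\frac{c_1(\ts')^2 - c_1(\ts)^2}{8} = \frac{c_1(\ts)\cdot c_1(L) + c_1(L)^2}{2} = n.
\]
Moreover, since $S$ lies in the interior of $X$, the line bundle $L$ is trivial on $\partial X$, so $\ts'|_{\partial X} = \ts|_{\partial X}$ and the boundary Seiberg--Witten--Floer spectra appearing in the two Bauer--Furuta invariants coincide. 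Hence for $n \geq 0$ the composite $\Psi_{X, \ts'} \circ U^n$ has the same domain and codomain as $\Psi_{X, \ts}$ in $\mathfrak{C}_{S^1}$ (and symmetrically for $n < 0$).

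Next, I would choose a smooth section $\sigma$ of $L$ vanishing transversely along $S$ and equal to a fixed nonvanishing trivialization in a collar of $\partial X$. Tensoring with $\sigma$ gives a complex-linear bundle map $S^{\pm}_{\ts} \to S^{\pm}_{\ts'}$; combined with the canonical identification of $\spinc$-connections for $\ts$ and $\ts'$ away from $S$, this yields a comparison between the Seiberg--Witten maps on the Coulomb slices. Passing to finite-dimensional approximations and Conley indices, one obtains (after suspension) an $S^1$-equivariant map $\mu \colon S^{p\R \oplus q\C} \to S^{p\R \oplus (q+n)\C}$ fitting into a homotopy-commutative triangle $\Phi_{X, \ts'} \circ \mu \simeq \Phi_{X, \ts}$, which then yields the analogous triangle for the $\Psi$'s after composing with the duality morphism $\eta$. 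Crucially, on the $S^1$-fixed point set the Seiberg--Witten map reduces to the linear ASD operator $d^+$, which is independent of the $\spinc$-structure; hence $\mu^{S^1} \colon S^{p\R} \to S^{p\R}$ is the identity and in particular has degree one.

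By Lemma~\ref{lem : inclusion}, this forces $\mu$ to be $S^1$-homotopic to the standard inclusion, which in $\mathfrak{C}_{S^1}$ is represented by $U^n$; the homotopy-commutative triangle then yields $\Psi_{X, \ts'} \circ U^n \simeq \Psi_{X, \ts}$. The case $n < 0$ is symmetric: interchanging the roles of $\ts$ and $\ts'$ produces a comparison map in the opposite direction, whose fixed-point restriction again has degree one, so Lemma~\ref{lem : inclusion} gives $\Psi_{X, \ts} \circ U^{-n} \simeq \Psi_{X, \ts'}$. The main obstacle is the construction of $\mu$ in the second paragraph: multiplication by $\sigma$ does not manifestly preserve the Coulomb gauge slice or the double Coulomb boundary condition, so producing an honest $S^1$-equivariant map between finite-dimensional approximations—whose restriction to the boundary induces the identity on $\SWF(Y_0, \s_0)$ and $\SWF(Y_1, \s_1)$ so that the comparison descends to a morphism in $\mathfrak{C}_{S^1}$—requires careful regularization (for instance, composing with the Coulomb projection and truncating in a small neighborhood of $S$) together with a homotopy argument showing independence of these choices up to $S^1$-equivariant homotopy.
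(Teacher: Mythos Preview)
Your approach has a genuine gap, which you yourself identify: the construction of the comparison map $\mu$ via multiplication by a section $\sigma$ of $L$ is not carried out, and there is no obvious way to make it work. Multiplication by $\sigma$ does not intertwine the Dirac operators for $\ts$ and $\ts'$ (the Leibniz rule produces an extra Clifford term $\rho(d\sigma)$), does not preserve the Coulomb slice or the double Coulomb boundary condition, and vanishes along $S$, so there is no evident $S^1$-map between the finite-dimensional approximations making the triangle $\Phi_{X,\ts'}\circ\mu \simeq \Phi_{X,\ts}$ commute. Without $\mu$, you never get to invoke Lemma~\ref{lem : inclusion}.

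The paper's argument avoids this entirely by a \emph{localize and glue} strategy. Let $N$ be a closed tubular neighborhood of $S$ and $Z = \overline{X\setminus N}$. Since $c_1(L)=\mathrm{PD}[S]$ is supported in $N$, the restrictions $\ts|_Z$ and $\ts'|_Z$ agree, so $\Psi_{Z,\ts|_Z}=\Psi_{Z,\ts'|_Z}$. The boundary $\partial N$ is a lens space, hence carries positive scalar curvature, so $\SWF(\partial N,\ts|_{\partial N})\cong S^{a\C}$ is a sphere. Consequently both $\Psi_{N,\ts|_N}$ and $\Psi_{N,\ts'|_N}$ are $S^1$-maps between representation spheres whose restrictions to $S^1$-fixed points are homeomorphisms (since $b^+(N)=0$). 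Now Lemma~\ref{lem : inclusion} applies \emph{directly} to each of these maps separately, forcing both to be homotopic to standard inclusions; comparing their domains gives $\Psi_{N,\ts'|_N}\circ U^n = \Psi_{N,\ts|_N}$ (for $n\geq 0$). The gluing theorem $\Psi_{X,\ts}=\eta\circ(\Psi_{Z,\ts|_Z}\wedge\Psi_{N,\ts|_N})$ then transports this identity to all of $X$. The point is that no comparison map between the two Seiberg--Witten problems is ever needed: once you excise $N$, the two problems literally coincide on $Z$, and on $N$ both Bauer--Furuta maps land in a category (maps of spheres) where Lemma~\ref{lem : inclusion} classifies them outright.
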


\begin{proof}
Let $N$ be a compact tubular neighborhood of $S$ in $X$. The boundary $\partial N$ is diffeomorphic to the lens space $L(k,1)$, where $k = |S \cdot S|$.    We have
\[
      b_1(\partial N) = 0,  \ b_1(N) = 0,  \ b^+(N) = 0. 
\]
Since $\partial N$ has a positive scalar curvature metric,    the Seiberg-Witten Floer spectrum $\SWF(\partial N, \ts|_{\partial N})$ is isomorphic to  $S^{a\C} (= (S^0, 0, -a))$ for some $a \in \Q$ in $\mathfrak{C}_{S^1}$. Since $b^+(N) = 0$,  we have the relative Bauer-Furuta invariants 
\[
  \begin{split}
       &  \Psi_{N, \ts|_N} : S^{ \frac{c_1(\ts|_{N})^2 - \sigma(N)}{8} \C} \rightarrow S^{a\C},  \\
       &   \Psi_{N, \ts'|_N} : S^{ \frac{c_1(\ts'|_{N})^2 - \sigma(N)}{8} \C}  \rightarrow S^{a\C} 
  \end{split}
\]
which are represented by $S^1$-maps
\[
   \begin{split}
        &   f :  S^{p \R  \oplus q\C}  \rightarrow S^{p \R \oplus (q+r)\C},   \\
         &  f' :  S^{p\R \oplus (q+n) \C}  \rightarrow S^{p\R \oplus (q+r) \C}.
   \end{split}        
\] 
Here $p, q \in \Z$ with $p, q \gg 0$, $r =  a - \frac{c_1(\ts|_{N})^2 - \sigma(N)}{8} \in \Z$,  and $n$ is the integer defined in (\ref{eq:s' n}). Note that $f^{S^1}, (f')^{S^1} : S^{p \R} \rightarrow S^{p \R}$ represent the stable homotopy class of the finite dimensional approximation of the operator 
\[
      (d^+, p^0 i^*) : L^2_k(i\Omega^{1}_{CC}(N)) \rightarrow L^2_{k-1}(i\Omega^+(N)) \times L^2_{k-\frac{1}{2}}(V^0 \cap i\Omega^1(\partial N))
\] 
which is an isomorphism since $b_1(N) =  0$,  $b^+(N) = 0$. Hence it follows  that $f^{S^1}$ and $(f')^{S^1}$ are homotopy equivalences.

By Lemma \ref{lem : inclusion},   we have $r \geq \max \{ 0, n \}$,  and $f$, $f'$ are $S^1$-homotopic to the inclusions.  Therefore if $n \geq 0$,  the maps   $f$ and $f' U^n$ are $S^1$-homotopic. This means that
\[
         \Psi_{N, \ts'|_{N}} U^{n} =  \Psi_{N, \ts|_{N}}. 
\]
Similarly, if $n < 0$,
\[ 
      \Psi_{N, \ts'|_{N}} =  \Psi_{N, \ts|_{N}} U^{-n}.
\]
Let $Z$ be $\overline{X - N}$. By the gluing theorem \cite{ManolescuGluing} for the relative Bauer-Furuta invariants, 
\[
    \begin{split}
       \Psi_{X, \ts} &= \eta  (\Psi_{Z, \ts|_{Z}} \wedge  \Psi_{N, \ts|_N}),  \\
       \Psi_{X, \ts'} &= \eta ( \Psi_{Z, \ts|_{Z}} \wedge  \Psi_{N, \ts'|_N}).
\end{split}
\]
Here $\eta$ is the duality morphism
\[
         \SWF(\partial N, \ts|_{\partial N}) \wedge \SWF(-\partial N, \ts|_{\partial N}) \rightarrow S^0
\]
 of Proposition \ref{prop:duality morphism}. 
Therefore if $n \geq 0$,
\[
       \Psi_{X, \ts'}  U^{n} =   \Psi_{X, \ts},
\]
and if $n < 0$,
\[
       \Psi_{X, \ts'} = \Psi_{X, \ts}  U^{-n}. 
\]
\end{proof}

Next we will  show a blow up formula for the relative Bauer-Furuta invariant, which is necessary to prove Theorem \ref{thm:1.2}.  For $k \in \Z$, denote by $\ts_{k}$ the $\spinc$ on $\overline{\mathbb{CP}}^2$ with $c_1(\ts_k) = (2k+1) PD(E)$, where $E \in H_2(\overline{\mathbb{CP}}^2;\Z)$ is the class of the exceptional sphere. The Bauer-Furuta invariant $\Psi_{\overline{\mathbb{CP}}^2, \ts_k}$ is represented by an $S^1$-map 
\[
           f_k :   S^{p\R \oplus q \C}   \rightarrow  S^{p\R \oplus \left(q + \frac{k(k+1)}{2} \right) \C} 
\]
for some $p, q \in \Z$ with $p, q \gg 0$. 
Since  $b_1(\overline{\mathbb{CP}}^2) = 0$ and  $b^+(\overline{\mathbb{CP}}^2) = 0$, as in the proof of Proposition \ref{prop:adjunction}, we can deduce that 
\[
     f^{S^1}_k : S^{p\R} \rightarrow S^{p \R}
\]
is a homotopy equivalence.  By Lemma \ref{lem : inclusion}, $f_k$ is $S^1$-homotopic to the inclusion 
\[
   U^{\frac{k(k+1)}{2}} : S^{p\R \oplus q \C} \hookrightarrow S^{p\R \oplus  \left(q+ \frac{ k(k+1)}{2}\right) \C }.
\]

\begin{proposition}
Let $(X, \ts)$ be a $\spinc$ cobordism from $(Y_0, \s_0)$ to $(Y_1, \s_1)$ with $b_1(X) = b_1(\partial X) = 0$.  Then we have
\[
    \Psi_{X \#  \overline{\mathbb{CP}}^2, \ts \# \ts_{k}}
    = U^{\frac{k(k+1)}{2}} \Psi_{X, \ts}. 
\]
\end{proposition}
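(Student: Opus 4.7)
The plan is to mimic the proof of Proposition~\ref{prop:adjunction}: decompose $X \# \overline{\mathbb{CP}}^2$ along the connected-sum $S^3$, apply the gluing theorem for the relative Bauer-Furuta invariant from \cite{ManolescuGluing}, and then invoke the calculation carried out immediately before the proposition, which shows that the closed Bauer-Furuta invariant $\Psi_{\overline{\mathbb{CP}}^2, \ts_k}$ is $S^1$-homotopic to $U^{k(k+1)/2}$.

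Concretely, I would write $X \# \overline{\mathbb{CP}}^2 = X_0 \cup_{S^3} Z_0$, where $X_0 = X \setminus \operatorname{int}(B^4)$ and $Z_0 = \overline{\mathbb{CP}}^2 \setminus \operatorname{int}(B^4)$. Since $S^3$ admits a positive scalar curvature metric, one has $\SWF(S^3, \s_0) \cong S^0$ in $\mathfrak{C}_{S^1}$, and the duality morphism $\eta \colon \SWF(S^3) \wedge \SWF(-S^3) \to S^0$ is the tautological pairing. The gluing theorem then yields
\[
\Psi_{X \# \overline{\mathbb{CP}}^2,\, \ts \# \ts_k} \;=\; \eta \circ \bigl(\Psi_{X_0,\, \ts|_{X_0}} \wedge \Psi_{Z_0,\, \ts_k|_{Z_0}}\bigr).
\]

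Next, I would observe that both $\Psi_{X_0, \ts|_{X_0}}$ and $\Psi_{Z_0, \ts_k|_{Z_0}}$ are recovered from their closed/original counterparts by trivial gluings with $B^4$. Indeed, applying the same gluing theorem to the decompositions $X = X_0 \cup_{S^3} B^4$ and $\overline{\mathbb{CP}}^2 = Z_0 \cup_{S^3} B^4$, and using that $\Psi_{B^4, \ts_0}$ is the identity of $S^0$, one identifies $\Psi_{X_0, \ts|_{X_0}}$ (paired against $B^4$) with $\Psi_{X, \ts}$, and $\Psi_{Z_0, \ts_k|_{Z_0}}$ (paired against $B^4$) with the closed invariant $\Psi_{\overline{\mathbb{CP}}^2, \ts_k}$. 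By the preceding discussion, the latter is $S^1$-homotopic to the inclusion $U^{k(k+1)/2}$. Substituting this into the gluing formula and absorbing the smash product with $U^{k(k+1)/2}$ as post-composition by $U^{k(k+1)/2}$ (this is the statement that smashing with the inclusion $S^0 \hookrightarrow S^{(k(k+1)/2)\C}$ is post-composition by $U^{k(k+1)/2}$ on the target) gives
\[
\Psi_{X \# \overline{\mathbb{CP}}^2,\, \ts \# \ts_k} \;=\; U^{k(k+1)/2}\, \Psi_{X, \ts}.
\]

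The genuine content is supplied by the gluing theorem and by the calculation of $\Psi_{\overline{\mathbb{CP}}^2, \ts_k}$ via Lemma~\ref{lem : inclusion}, both of which are already in place. Thus the main (and only mildly tricky) obstacle is bookkeeping: carefully tracking the $(\R,\C)$-suspensions and the integer/rational grading shifts in the triples $(W,m,n)$ so that the two sides of the asserted equality live in the same morphism group of $\mathfrak{C}_{S^1}$, and checking that removing a ball has no effect beyond a canonical identification. The source-degree computation
\[
\tfrac{c_1(\ts \# \ts_k)^2 - \sigma(X \# \overline{\mathbb{CP}}^2)}{8} \;=\; \tfrac{c_1(\ts)^2 - \sigma(X)}{8} \;-\; \tfrac{k(k+1)}{2}
\]
confirms that the degree shift introduced by $U^{k(k+1)/2}$ is exactly the one required.
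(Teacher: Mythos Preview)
Your proposal is correct and follows essentially the same approach as the paper: decompose along the connect-sum $S^3$, use the gluing theorem of \cite{ManolescuGluing}, identify $\Psi_{X_0}$ with $\Psi_{X,\ts}$ and $\Psi_{Z_0}$ with $\Psi_{\overline{\mathbb{CP}}^2,\ts_k}=U^{k(k+1)/2}$ via the trivial $B^4$-gluings, and conclude. The paper's proof is exactly this, written out more tersely and without the explicit degree-shift sanity check.
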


\begin{proof}
First, note that  the duality morphism
\[
      \eta : \SWF(S^3) \wedge \SWF(-S^3) (=S^0 \wedge S^0 = S^0) \rightarrow S^0
\]
is the identity.  

Let $D$ be a small closed disk in $X$ and put $X_0 := X - \mathrm{Int} D$.   The relative  Bauer-Furuta invariant
\[
     \Psi_{D} :  S^0 \rightarrow \SWF(S^3) = S^0
\]
of $D$ is the identity.  Hence by the gluing formula \cite{ManolescuGluing}, 
\[
    \Psi_{X, \ts} = \eta ( \Psi_{X_0, \ts|_{X_0}} \wedge \Psi_{D}) = \Psi_{X_0, \ts|_{X_0}}. 
\]
Put $X_1 = \overline{\mathbb{CP}}^2 - \mathrm{Int} D'$, where $D'$ is a small closed  disk in $\overline{\mathbb{CP}}^2$. Similarly we have
\[
      U^{\frac{k(k+1)}{2}} =  \Psi_{ \overline{\mathbb{CP}}^2, \ts_{k}} =  \Psi_{X_1, \ts_{k}|_{X_1}}. 
\]
Therefore
\[
     \Psi_{X \# \overline{\mathbb{CP}}^2, \ts \# \ts_{k} } 
     = \eta ( \Psi_{X_0, \ts|_{X_0}} \wedge \Psi_{X_1, \ts_{k}|_{X_1}}  )
     = \Psi_{X, \ts} \wedge U^{\frac{k(k+1)}{2}}
     = U^{\frac{k(k+1)}{2}} \Psi_{X, \ts}. 
\]
\end{proof}

\subsection{Surgery exact triangle for Seiberg-Witten Floer spectra}\label{sec:3.5}

Take a knot $K$ in a closed, oriented 3-manifold $Y$ and let $m$ be the meridian of $K$.  Choose $h \in H_1(Y - N(K);\Z)$ with $m \cdot h = -1$, where $N(K)$ is a tubular neighborhood of $K$ in $Y$.   
Denote by $Y_{h}(K)$ the $3$-manifold obtained by attaching a solid torus to $Y - N(K)$ with framing associated to  $h$. 
For each integer $n$, put
\[
       Y_{3n} = Y, \  
       Y_{3n+1} = Y_{h}(K), \ 
       Y_{3n+2} = Y_{h+m}(K).
\] 

We assume that $b_1(Y_n) = 0$ for all $n$.  
Put
\[
       \SWF(Y_n) := \vee_{\mathfrak{s} \in \mathrm{Spin}^c(Y_n)} \SWF(Y_n, \s). 
\]
For each $n$, we have an elementary cobordism $W_n$ from $Y_n$ to $Y_{n+1}$, obtained by adding a $2$-handle to $Y_{n} \times [0,1]$.  A computation shows 
\[
     b_1(W_n) = 0,   \  b_2(W_n) = 1. 
\]
Let $W_n'$ be the composite of the cobordsims $W_n$ and $W_{n+1}$.  Then $W_n'$ includes a $2$-sphere $E_n$ with $[E_n] \cdot [E_n] = -1$.  Fix a $\spinc$ structure $\ts'_{n,0}$ on $W_n'$ and generator $\sigma_n$ of $H^2(W_n;\Z)/\mathrm{Tor} \cong \Z$. Define 
\[
    \epsilon_n : \mathrm{Spin}^c(W_n) \rightarrow \{ \pm 1 \}
\]
as follows. For $n$ even, put $\epsilon_{n}(\s) = + 1$ for all $\ts \in \mathrm{Spin}^c(W_n)$.  Let $n$ be odd. For $\ts \in \mathrm{Spin}^c(W_n)$, define
\[
      \epsilon_n(\ts) =
      \left\{
         \begin{array}{ll}
             +1 & \text{if $c_1(\ts) = c_1(\ts_{n,0}'|_{W_n}) + 4k \sigma_n$ in $H^2(W_n;\Z) / \mathrm{Tor}$ for some integer $k$, }  \\
            -1 & \text{if $c_1(\ts) = c_1(\ts_{n,0}'|_{W_n}) + (4k+2) \sigma_n$ in $H^2(W_n;\Z) / \mathrm{Tor}$ for some integer $k$.}
         \end{array}
      \right. 
\]

Take a reduced $S^1$-homology theory $E_*$ and a positive integer $N$. 
Dfine a homomorphism
\[
    f_{n, N} : E_*(\SWF(Y_n)) \rightarrow E_*(\SWF(Y_{n+1}))
\]
by
\[
      f_{n,N}:= \sum_{\substack{\ts \in \mathrm{Spin}^c(W_n) \\ |\left< c_1(\ts), \sigma_n \right>| \leq N}}  \epsilon_n(\ts) (\Psi_{W_n, \ts})_{*}, 
\]
where $\Psi_{W_n, \ts}$ is the relative Bauer-Furuta invariant. 
The collection $\{ f_{n, N} \}_{N=1, 2,\dots}$ induces  a homomorphism
\[
   \begin{split}
     &  f_{n \bullet} : E_{\bullet}(\SWF(Y_n)) \rightarrow E_{\bullet}(\SWF(Y_{n+1})). 
   \end{split}
\] 
Here $E_{\bullet}(\SWF(Y_n))$ is the completion with respect to a decreasing  filtration $\{ E_{* < -p}(\SWF(Y_n)) \}_{p= 1, 2,\dots}$:
\[
     E_{\bullet}(\SWF(Y_n)) = \varprojlim_{p} E_{*}(\SWF(Y_n)) / E_{* < -p}(\SWF(Y_n)). 
\]

In \cite{Sasahira-Stoffregen_Triangle}, the second and third authors proved the following: 

\begin{theorem}
We have the following exact sequence: 
\[
     \cdots \xrightarrow{f_{n-1 \bullet}}
         E_{\bullet}(\SWF(Y_n))   \xrightarrow{f_{n \bullet}}
         E_{\bullet}(\SWF(Y_{n+1}))  \xrightarrow{f_{n+1 \bullet}}
         E_{\bullet}(\SWF(Y_{n+2}))   \xrightarrow{f_{n+2 \bullet}} \cdots.
\]
\end{theorem}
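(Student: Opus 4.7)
I would follow the template of the monopole surgery exact triangle of Kronheimer--Mrowka--Ozsv\'ath--Szab\'o, adapted to the spectrum setting where the homology is taken with respect to a generalized $S^1$-equivariant theory $E_*$. The argument has two main parts: first verify that $f_{n+1\bullet}\circ f_{n\bullet}=0$, then construct explicit null-homotopies and deduce exactness via an algebraic acyclicity lemma.

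For the vanishing of compositions, the gluing theorem for Bauer--Furuta invariants (used already above to prove the adjunction relation) expresses $f_{n+1\bullet}\circ f_{n\bullet}$ as a sum over $\spinc$-structures $\ts'$ on the composite cobordism $W_n'$ of the weighted maps $\epsilon_n(\ts'|_{W_n})\epsilon_{n+1}(\ts'|_{W_{n+1}})\,(\Psi_{W_n',\ts'})_*$. Since $W_n'$ contains the embedded sphere $E_n$ of self-intersection $-1$, Proposition \ref{prop:adjunction} applied to the line bundle $L$ with $c_1(L)=PD[E_n]$ identifies $\Psi_{W_n',\ts'\otimes L}$ with $U^{\pm m}\Psi_{W_n',\ts'}$, where $m$ is determined by $\langle c_1(\ts'),[E_n]\rangle$. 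The signs $\epsilon_n$ are defined precisely so that $\epsilon_n\epsilon_{n+1}$ takes opposite values on the two members of each such pair, and after passing to the completed $E_\bullet$-homology (where the relevant infinite sums converge) the paired contributions cancel, giving $f_{n+1\bullet}\circ f_{n\bullet}=0$.

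For exactness, I would construct for each $n$ an explicit chain-level null-homotopy $H_n\colon E_\bullet(\SWF(Y_n))\to E_\bullet(\SWF(Y_{n+2}))$ of $f_{n+1\bullet}\circ f_{n\bullet}$, built directly from the pair-by-pair cancellation above: each canceling pair $\{\ts',\ts'\otimes L\}$ yields a concrete multiplication-by-$U$ homotopy between two representatives of the same map, which assemble into $H_n$. Exactness then follows from an algebraic acyclicity lemma in the style of \cite[Lemma 4.2]{OSplumbed}: once one verifies that the diagonal composition $f_{n+2\bullet}\,H_n + H_{n+1}\,f_{n\bullet}$ is a quasi-isomorphism (typically the identity up to a fixed $U$-power), the long sequence is exact at every position, and the $3$-periodicity of the construction ensures exactness propagates.

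The main obstacle is establishing this last identity. Geometrically it amounts to computing the relative Bauer--Furuta invariant of the triple composite cobordism $W_n\cup W_{n+1}\cup W_{n+2}$, which contains the two $-1$ spheres $E_n$ and $E_{n+1}$. After successive blow-downs via the blow-up formula proven above, and collapsing the resulting handle-cancelling pair, the triple cobordism is expected to reduce to (a $U$-power of) the trivial cobordism from $Y_n$ to $Y_{n+3}=Y_n$, whose Bauer--Furuta invariant is the identity. Carrying this reduction out rigorously -- keeping track of the precise effect of blow-downs on maps of spectra, handling the completion with respect to the filtration, and verifying that the sum over all $\spinc$-structures reassembles into the identity rather than a proper subspectrum -- is the technical heart of the argument and is where the bulk of the analytical work of \cite{Sasahira-Stoffregen_Triangle} lies.
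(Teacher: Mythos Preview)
The paper does not contain a proof of this theorem; it is stated as a result established in \cite{Sasahira-Stoffregen_Triangle} and simply quoted. So there is nothing in the present paper to compare your proposal against.

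That said, your outline follows the Kronheimer--Mrowka--Ozsv\'ath--Szab\'o template and correctly identifies the overall architecture: vanishing of $f_{n+1\bullet}\circ f_{n\bullet}$, null-homotopies $H_n$, and the triangle-detection criterion. There is, however, a conceptual gap in your construction of $H_n$. You write that each cancelling pair $\{\ts',\ts'\otimes L\}$ ``yields a concrete multiplication-by-$U$ homotopy'' and that these ``assemble into $H_n$.'' The adjunction relation (Proposition~\ref{prop:adjunction}) tells you the two maps are stably homotopic, but it does not hand you a preferred homotopy, and an arbitrary choice will not make $f_{n+2\bullet}H_n + H_{n+1}f_{n\bullet}$ a quasi-isomorphism. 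In the KMOS proof and its spectrum-level analogue, the null-homotopies come from \emph{one-parameter families of metrics} on the composite cobordism $W_n'$ (neck-stretching), and the identification of the triple composite with a $U$-power of the identity comes from a \emph{two-parameter family} on $W_n\cup W_{n+1}\cup W_{n+2}$. These geometric families are what make the homotopies coherent; the algebraic pairwise cancellation alone, while sufficient for $f_{n+1\bullet}\circ f_{n\bullet}=0$ in the completion, does not produce them.

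A related issue: your phrase ``chain-level null-homotopy $H_n\colon E_\bullet(\SWF(Y_n))\to E_\bullet(\SWF(Y_{n+2}))$'' does not parse for an arbitrary generalized homology theory $E_*$, where there is no chain complex. The argument must produce genuine stable homotopies of spectrum maps (or, more precisely, an exact triangle of spectra/pro-spectra), from which exactness for every $E_*$ then follows formally. Your closing paragraph acknowledges that the hard work lies in \cite{Sasahira-Stoffregen_Triangle}; the point is that this hard work is not the blow-down bookkeeping you describe, but rather the construction and analysis of the parameterized finite-dimensional approximations over these metric families.
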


Since $\tilde{H}_{-p}^{S^1}(\SWF(Y_n);\Z) = 0$ for $p$  large, we have $\tilde{H}_{\bullet}^{S^1}(\SWF(Y_n);\Z) = \tilde{H}_{*}^{S^1}(\SWF(Y_n);\Z)$.  Therefore we obtain the following corollary.

\begin{corollary}
We have the following exact sequence
\[
       \cdots 
      \xrightarrow{f_{n-1 *}} \tilde{H}_{*}^{S^1}(\SWF(Y_n);\Z)
      \xrightarrow{f_{n *}} \tilde{H}_{*}^{S^1}(\SWF(Y_{n+1});\Z) 
      \xrightarrow{f_{n+1 *}} \tilde{H}_{*}^{S^1}(\SWF(Y_{n+2});\Z)
      \xrightarrow{f_{n+2 *}}
      \cdots.
\]
\end{corollary}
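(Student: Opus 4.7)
The plan is to derive this corollary as a direct specialization of the preceding theorem, applied to the reduced $S^1$-equivariant ordinary homology theory $E_* = \tilde H^{S^1}_*(-;\Z)$. The theorem provides an exact sequence in the \emph{completed} homology $E_\bullet(\SWF(Y_n))$, so the only substantive task is to verify that, for the spaces in question, this completion coincides with the ordinary homology. Once that identification is in place, the exact sequence stated in the corollary is literally the exact sequence of the theorem, and the maps $f_{n*}$ are exactly the maps $f_{n\bullet}$ specialized to this $E$-theory.

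Concretely, I would proceed as follows. First, I note that since $b_1(Y_n)=0$ for each $n$, each $\SWF(Y_n,\s)$ is represented in $\mathfrak{C}_{S^1}$ by a triple $\Sigma^{-V_{\lambda}^{0}}(N_{\lambda}^{\mu}/L_{\lambda}^{\mu},0,n(Y_n,g,\s))$, where $N_{\lambda}^{\mu}/L_{\lambda}^{\mu}$ is a \emph{finite} $S^1$-CW complex. The reduced $S^1$-equivariant homology of such a desuspension of a finite $S^1$-CW complex is bounded below: there exists $p_0$ (depending on $n$, the Conley index, and the desuspension) such that $\tilde H^{S^1}_{-p}(\SWF(Y_n);\Z)=0$ for all $p\geq p_0$. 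Since $\SWF(Y_n)$ is a finite wedge over $\Spinc(Y_n)$, the same vanishing holds for the full wedge. Consequently, each term in the inverse system
\[
E_*(\SWF(Y_n))/E_{*<-p}(\SWF(Y_n))
\]
stabilizes for $p\gg 0$, so the filtration is eventually constant and
\[
E_\bullet(\SWF(Y_n))=\varprojlim_{p} E_*(\SWF(Y_n))/E_{*<-p}(\SWF(Y_n))=E_*(\SWF(Y_n)).
\]

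Second, I would verify that under this identification, the maps $f_{n\bullet}$ from the theorem become the finite sums $f_{n*}=\sum_{\ts}\epsilon_n(\ts)(\Psi_{W_n,\ts})_*$ (the sum being finite in each fixed degree because only finitely many $\spinc$-structures on $W_n$ contribute a nonzero map to any given homological degree, given the bounded-below nature of the targets). Passing to the inverse limit in the exact sequence of the theorem therefore yields exactly the stated exact sequence in $\tilde H^{S^1}_*$. The main (mild) obstacle is the bookkeeping that $f_{n\bullet}$, defined as a limit of truncated sums $f_{n,N}$, really does reduce to the honest sum $f_{n*}$ in each fixed degree; this follows from the above vanishing plus the fact that the image of any fixed homology class is supported in a fixed range of degrees, so all but finitely many terms in the sum $\sum_{\ts}\epsilon_n(\ts)(\Psi_{W_n,\ts})_*$ contribute zero on that class. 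With these observations in hand, the corollary follows immediately from the theorem.
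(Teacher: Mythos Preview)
Your proposal is correct and follows essentially the same approach as the paper: the key observation is that $\tilde{H}_{-p}^{S^1}(\SWF(Y_n);\Z) = 0$ for $p$ large (since each $\SWF(Y_n)$ is a desuspension of a finite $S^1$-CW complex), so the completion $\tilde{H}_{\bullet}^{S^1}$ agrees with $\tilde{H}_{*}^{S^1}$ and the corollary follows immediately from the theorem. Your additional care in checking that $f_{n\bullet}$ reduces to $f_{n*}$ is a reasonable elaboration, but the paper's proof is essentially the one-line version of what you wrote.
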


\subsection{Connected sum formula for Seiberg-Witten Floer spectra}

We will prove the following: 

\begin{theorem} \label{thm:connected sum SWF}
Let $(Y_1, \s_1), (Y_2, \s_2)$ be closed, $\spinc$ 3-manifolds with $b_1 = 0$. Then we have an isomorphism
\[
       \SWF(Y_1 \# Y_2, \s_1 \# \s_2) \cong \SWF(Y_1, \s_1) \wedge \SWF(Y_2, \s_2)
\]
in $\mathfrak{C}_{S^1}$. 

If $(Y_1, \s_1), (Y_2, \s_2)$ are spin, we have
\[
       \SWF(Y_1 \# Y_2, \s_1 \# \s_2) \cong \SWF(Y_1, \s_1) \wedge \SWF(Y_2, \s_2)
\]
in $\mathfrak{C}_{\Pin(2)}$.

\end{theorem}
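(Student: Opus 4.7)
The plan is to establish the connected sum formula via a standard neck-stretching finite-dimensional approximation, essentially following the pattern of \cite{ManolescuGluing}. First, I would equip $Y_1 \# Y_2$ with a family of metrics $g_T$ obtained by gluing $g_1$ on $Y_1$ and $g_2$ on $Y_2$ along a long cylindrical neck $[-T, T] \times S^2$ carrying the product of an interval with the round positive scalar curvature metric on $S^2$. Since $S^2$ has positive scalar curvature, a Weitzenb\"ock argument prevents nontrivial Seiberg-Witten trajectories from concentrating on the neck, so all finite-energy trajectories essentially decouple onto the two summands.

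Second, I would analyze the spectrum of the linear operator $l_T = \ast d \oplus D_{A_0}$ on $(Y_1 \# Y_2, g_T)$ restricted to the Coulomb slice. For $T \gg 0$, every eigenvalue of $l_T$ in a fixed window $(\lambda, \mu]$ is close to an eigenvalue of $l_1$ on $Y_1$ or $l_2$ on $Y_2$, with corresponding eigenvector essentially supported on the respective piece; the positive scalar curvature on $S^2$ ensures no small eigenvalues are contributed by the neck region. This yields a canonical $S^1$-equivariant identification
\[
V^\mu_\lambda(Y_1 \# Y_2, g_T) \;\cong\; V^\mu_\lambda(Y_1, g_1) \oplus V^\mu_\lambda(Y_2, g_2)
\]
for $T$ sufficiently large, under which the approximated Seiberg-Witten flow on $Y_1 \# Y_2$ differs from the product flow $\varphi^\mu_\lambda(Y_1) \times \varphi^\mu_\lambda(Y_2)$ by a $T$-dependent perturbation that tends to zero as $T \to \infty$.

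Third, I would apply Conley continuation to conclude that for $T$ large, the maximal invariant set of $\varphi^\mu_\lambda$ on $Y_1 \# Y_2$ inside $B(V^\mu_\lambda, R)$ continues to the product invariant set, so there is an $S^1$-equivariant homotopy equivalence of Conley indices
\[
I^\mu_\lambda(Y_1 \# Y_2, g_T) \;\simeq\; I^\mu_\lambda(Y_1, g_1) \wedge I^\mu_\lambda(Y_2, g_2),
\]
using the standard product property of Conley indices. Combined with the additivity
\[
n(Y_1 \# Y_2, g_T, \s_1 \# \s_2) = n(Y_1, g_1, \s_1) + n(Y_2, g_2, \s_2),
\]
which follows from additivity of the Dirac index and signature under boundary connected sum of $\spinc$-fillings, this gives the isomorphism in $\mathfrak{C}_{S^1}$. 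For the $\mathrm{Pin}(2)$-case, the same argument extends essentially verbatim: the quaternionic structure on the spinors is preserved by the neck-stretching, so the identification of configuration spaces and the Conley continuation are both $\mathrm{Pin}(2)$-equivariant, and the desuspension constant $\tfrac{1}{2} n(Y, g, \s)$ is likewise additive.

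The main obstacle is the neck-stretching spectral analysis in the second step: one must show that no spurious small eigenvalues arise from the $S^2$-neck and that the eigenspace decomposition is uniformly stable as $T \to \infty$, so that the Conley continuation in the third step genuinely applies on the nose. This is precisely where the positive scalar curvature of $S^2$ is essential, and it is the technical heart of the argument.
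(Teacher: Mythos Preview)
Your approach is plausible and, if carried out carefully, should work; however, it is genuinely different from what the paper does. The paper does not stretch the neck on the $3$-manifold at all. Instead, it considers the natural $\spinc$ cobordism $(W,\ts)$ from $(Y_1 \sqcup Y_2, \s_1 \sqcup \s_2)$ to $(Y_1 \# Y_2, \s_1 \# \s_2)$ obtained as a boundary connected sum of the two product cobordisms, and shows that the relative Bauer--Furuta invariant $\Psi_{W,\ts}$ is an isomorphism. One direction, $\Psi_{-W,\ts}\circ\Psi_{W,\ts}=\mathrm{id}$, follows from the gluing theorem of \cite{ManolescuGluing} applied to $W\cup_{Y_1\#Y_2}(-W)$, which is a connected sum of product cobordisms. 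The other direction, $\Psi_{W,\ts}\circ\Psi_{-W,\ts}=\mathrm{id}$, is more delicate because $(-W)\cup_{Y_1\sqcup Y_2}W$ has $b_1=1$; here the paper invokes the gluing theorem of \cite{KLS2} for the unfolded invariants, surgering out an $S^1\times D^3$ to reduce to the product cobordism on $Y_1\#Y_2$.

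The tradeoffs are as follows. The paper's argument is formally clean once the Manolescu and KLS gluing theorems are in hand, and it produces a specific isomorphism realized by a Bauer--Furuta map; but it does require the $b_1>0$ gluing machinery of \cite{KLS2}. Your approach avoids that machinery entirely and is in principle more self-contained, but the price is the spectral and Conley-continuation analysis you flag: you must control the identification $V^\mu_\lambda(Y_1\#Y_2,g_T)\cong V^\mu_\lambda(Y_1)\oplus V^\mu_\lambda(Y_2)$ uniformly in $T$, including the nonlinear cross-terms on the neck and the fact that the ambient finite-dimensional space itself varies with $T$. None of this is insurmountable (positive scalar curvature on $S^2$ and $H^1(S^2)=0$ together rule out small eigenvalues on the neck for both the Dirac and $\ast d$ pieces), but it is real work, and you would essentially be reproving a $3$-dimensional analogue of the analytic input to \cite{ManolescuGluing} rather than citing it.
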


We have a natural $\spinc$ cobordism $(W, \ts)$ from $(Y_1, \s_1) \coprod (Y_2, \s_2)$ to $(Y_1 \# Y_2, \s_1 \# \s_2)$, which is a boundary connected sum of $(Y_1 \times [0,1],  \pi_1^* \s_1)$ and $(Y_2 \times [0,1], \pi_2^* \s_2)$.  Here $\pi_i : Y_i \times [0,1] \rightarrow Y_i$ is the projection. Note that 
\[
          \SWF(Y_1 \coprod Y_2, \s_1 \coprod \s_2) = \SWF(Y_1, \s_1) \wedge \SWF(Y_2, \s_2).
\]
We will prove that the Bauer-Furuta invariant
\[
         \Psi_{W, \ts}: \SWF(Y_1, \s_1) \wedge \SWF(Y_2, \s_2)  \rightarrow \SWF(Y_1 \# Y_2, \s_1 \# \s_2)
\]
is an isomorphism.

Put 
\[
     W' := W \cup_{Y_1 \# Y_2 } (-W),   \    \ts' = \ts \cup_{Y_1 \# Y_2} \ts.
\]   
Then $(W', \ts')$ is a $\spinc$ cobordism from $(Y_1 \coprod Y_2, \s_1 \coprod \s_2)$ to itself.   
By the gluing theorem \cite{ManolescuGluing},  
\[
       \Psi_{W', \ts'} = \Psi_{-W, \ts} \circ \Psi_{W, \ts}. 
\]
Note that  $(W', \ts')$ is a connected sum of $(Y_1 \times [0,1], \pi_1^*\s_1)$ and $(Y_2 \times [0,1], \pi_2^* \s_2)$. By the gluing formula and the fact that the duality morphism 
\[
     \eta_{S^3} : \SWF(S^3) \wedge SWF(-S^3) ( = S^0 \wedge S^0) \rightarrow S^0
\] 
is the identity,  we have
\[
       \Psi_{W', \ts'} =  \eta_{S^3} \circ (\Psi_{Y_1 \times [0,1],\pi_1^* \s_1} \wedge \Psi_{Y_2 \times [0,1], \pi_2^* \s_2})
                         = id_{ \SWF(Y_1, \s_1) \wedge \SWF(Y_2, \s_2)  }. 
\]
Here we have used $\Psi_{Y_i \times [0,1], \pi_i^* \s_i} = id_{\SWF(Y_i, \s_i)}$. See \cite[Section 6]{Sasahira-Stoffregen_Triangle}. 
 Therefore we have obtained
\[
      \Psi_{-W, \ts} \circ \Psi_{W, \ts} = id_{ \SWF(Y_1, \s_1) \wedge \SWF(Y_2, \s_2)  }.
\]

Put
\[
         W'' = (-W) \cup_{Y_1 \coprod Y_2} W, \ 
         \ts'' = \ts \cup_{Y_1 \coprod Y_2} \ts. 
\]
Then $(W'', \ts'')$ is a $\spinc$ cobordism from $(Y_1 \# Y_2, \s_1 \# \s_2)$ to itself and $b_1(W'') = 1$.  
Although $Y_1 \coprod Y_2$ is not connected, we can still apply  arguments in the proofs of \cite[Theorem 1]{ManolescuGluing}, \cite[Theorem 1.5]{KLS2} to $\Psi_{W'', \ts''}^{\mathrm{fib}}$  and   we have 
\begin{equation} \label{eq:BF W'' 1}
       \Psi_{W'', \ts''}^{\mathrm{fib}} 
       = \Psi_{W,\ts} \circ \Psi_{-W, \ts}. 
\end{equation}
Here $\Psi_{W'', \ts''}^{\mathrm{fib}}$ is the restriction of the Bauer-Furuta invariant $\Psi_{W'', \ts''}$ to the fiber of the Thom spectrum of the index bundle of Dirac operators on $W''$.  See Section \ref{subsec:relative-bauer-furuta}. 
We take a simple closed curve $c$ in $W''$ which consists of curves $c_1$ and $c_2$. Here $c_1$ is a curve in $-W$ from a point $p_1$ on $Y_1$ to a point $p_2$ on $Y_2$ and $c_2$ is a curve in $W$ from $p_1$ to $p_2$. The curve $c$ represents a generator of $H_1(W'';\Z) \cong \Z$.  Take a small compact neighborhood $N$ of $c$ in $W''$ with $N \cong S^1 \times D^3$.  Put $\tilde{W}'' = W'' - \mathrm{Int} N$.  
Let  $\s_{S^1 \times S^2}$ be the $\spinc$ structure on $S^1 \times S^2$ with $c_1 =0$.  By Theorem 1.5 of \cite{KLS2}, 
\begin{equation} \label{eq:BF W'' 2}
      \Psi_{W'', \ts''}^{\mathrm{fib}} 
      = \Psi_{W'', \ts''}|_{\mathrm{Pic}(W'', \partial N)}
      =  \eta_{S^1 \times S^2} \circ ( \underline{\Psi}^{R}_{\tilde{W}'', \ts''|_{\tilde{W}''}} \wedge \underline{\Psi}^{A}_{N, \ts''|_{N}}). 
\end{equation}
Here $\eta_{S^1 \times S^2}$ is the duality morphism of $\underline{\SWF}^{A}(S^1 \times S^2, \s_{S^1 \times S^2})$ and $\underline{\SWF}^{R}(-S^1 \times S^2, \s_{S^1 \times S^2})$. 
Note that if we attach $D^2 \times S^2$ to $\tilde{W}''$, we get the trivial cobordism:
\[
        \tilde{W}'' \cup_{S^1 \times S^2} D^2 \times S^2 =  Y_1 \# Y_2 \times [0,1]. 
\]
Hence
\begin{equation} \label{eq:BF D2 times S2}
   \eta_{S^1 \times S^2} \circ  ( \underline{\Psi}^{R}_{\tilde{W}'', \ts''|_{\tilde{W}''}} \wedge \underline{\Psi}^{A}_{D^2 \times S^2, \ts_{D^2 \times S^2}}) 
    = id_{\SWF(Y_1 \# Y_2, \s_1 \# \s_2)}.
\end{equation}
As shown in \cite[Example 1.4]{KLS2}, both of $\underline{\Psi}^{A}_{N, \ts''|_{N}}$ and $\underline{\Psi}^{A}_{D^2 \times S^2, \ts_{D^2 \times S^2}}$ are represented by the identity $S^{p\R \oplus q\C} \rightarrow S^{p\R \oplus q\C}$.
 Therefore it follows from (\ref{eq:BF W'' 1}), (\ref{eq:BF W'' 2}) and (\ref{eq:BF D2 times S2}) that 
\[
         \Psi_{W, \ts} \circ \Psi_{-W, \ts} = id_{\SWF(Y_1 \# Y_2, \ts_1 \# \ts_2)}.
\]
We have proved Theorem \ref{thm:connected sum SWF}. 


\section{Review of Lattice Homology}\label{sec:4}

In this section, we review the definition of lattice homology and discuss several lattice-theoretic results due to Ozsv\'ath and Szab\'o \cite{OSplumbed} and N\'emethi \cite{NemethiOS, Nemethi}. Throughout, let $\Gamma$ be a negative-definite plumbing graph. Denote the corresponding plumbed $4$-manifold by $W_\Gamma$ and its boundary by $Y_\Gamma = \partial W_\Gamma$. Let $L_\Gamma \cong \Z^{|\Gamma|}$ be the lattice formally spanned by the vertices of $\Gamma$ and let $(-, -)_\Gamma$ be the intersection pairing on $L_\Gamma$. 

Recall that an element $k \in \Hom(L_\Gamma, \Z)$ is \textit{characteristic} if $k(x) \equiv x^2 \bmod 2$ for all $x \in L_\Gamma$. Denote the set of characteristic elements by $\Char_\Gamma$. The set $\Char_\Gamma$ has an action of $L_\Gamma$ which sends $k \mapsto k + 2x^*$, where $x^*$ is the Poincar\'e dual $x^* = (x, -)_\Gamma$. The action of $L_\Gamma$ partitions the elements of $\Char_\Gamma$ into equivalence classes; we denote a generic equivalence class by $[k]$. It is a standard fact that the $\spinc$-structures on $W_\Gamma$ are in bijection with $\Char_\Gamma$, and that the $\spinc$-structures on $Y_\Gamma$ are in bijection with the set of equivalence classes $[k]$. 

Extend $(-, -)_\Gamma$ to a $\Q$-valued pairing on $L_\Gamma \otimes \Q$. Since $(-, -)_\Gamma$ has non-zero determinant, any element $k \in \Hom(L_\Gamma, \Z)$ may be written as $(x, -)_\Gamma$ for some rational vector $x \in L_\Gamma \otimes \Q$. This correspondence allows us to define a $\Q$-valued intersection pairing on $\Hom(L_\Gamma, \Z)$, which by abuse of notation we also denote by $(-,-)_\Gamma$.

Finally, note that each equivalence class $[k]$ is an affine lattice over $L_\Gamma$. This gives a cubical subdivision of $[k] \otimes \R \cong \R^{|\Gamma|}$ coming from the obvious cubical subdivision of $L_\Gamma \otimes \R \cong \R^{|\Gamma|}$:

\begin{definition}\label{def:4.1}
Define the \textit{zero-dimensional cubes} of $[k]$ to be the elements of $[k]$. In general, a \textit{$d$-dimensional cube} of $[k]$ is the convex hull (in $[k] \otimes \R$) of a set of $2^d$ elements of the form
\[
\left\{ k + \sum_{i = 1}^d c_i (2v_i^*) \ | \ (c_1, \ldots, c_d) \in \{0, 1\}^d \right\}
\]
for some $k \in [k]$ and set of distinct vertices $v_1, \ldots, v_d$ in $\Gamma$.\footnote{In order to avoid an expansion of notation, we will write a generic equivalence class by $[k]$ and denote a generic element of such a class also by $k$.} For example, a square in $[k]$ is the convex hull of $\{k, k + 2v^*, k + 2w^*, k + 2v^* + 2w^*\}$ for some characteristic element $k \in [k]$ and pair of distinct vertices $v$ and $w$. We denote a typical cube of dimension $d$ by $\square_d$. Strictly speaking, each cube (as a geometric object) is a subset of $[k] \otimes \R$, but we refer to these as being in $[k]$ for convenience. We often refer to zero-dimensional cubes as \textit{lattice points} and one-dimensional cubes as \textit{lattice edges}.
\end{definition}

\subsection{The lattice complex}\label{sec:4.1}

Following \cite[Definition 3.1.8]{Nemethi}, we now recall the definition of lattice homology. 

\begin{definition}\label{def:4.2}
Let $[k]$ be an equivalence class in $\Char_\Gamma$. Define the \textit{weight function} $w$ on $[k]$ as follows. For any zero-dimensional cube $k$ in $[k]$, define
\[
w(k) = \dfrac{1}{4}\left(k^2 + |\Gamma|\right).
\]
For any $d$-dimensional cube $\square_d$ in $[k]$, define $w(\square_d)$ to be the minimum over the weights of all lattice points appearing as vertices of $\square_d$:
\[
w(\square_d) = \min_k \{w(k) \ | \ k \text{ a vertex of } \square_d\}.
\]
Thus, $w$ is a function on the set of all cubes (of any dimension) in $[k]$.
\end{definition}

For any fixed equivalence class $[k]$, the weight function $w$ takes values in a coset of $2\Z$ in $\Q$. To see this, note that for any pair of adjacent lattice points, we have
\[
w(k + 2v^*) - w(k) = (k, v^*)_\Gamma + (v^*, v^*)_\Gamma = k(v) + v^2 \equiv 0 \bmod{2}
\]
since $k$ is characteristic.


\begin{definition}\label{def:4.3}
Define the \textit{lattice complex} $\Cla(\Gamma, [k])$ as follows. Let $\Cla_d(\Gamma, [k])$ be the free (infinitely-generated) $\Z[U]$-module formally spanned by the set of $d$-dimensional cubes in $[k]$:
\[
\Cla_d(\Gamma, [k]) = \spa_{\Z[U]} \{\square_d \ | \square_d \text{ a $d$-dimensional cube in } [k]\}.
\]
Set 
\[
\Cla(\Gamma, [k]) = \bigoplus_{d \geq 0} \Cla_d(\Gamma, [k]).
\]
This has two gradings. The first is just $d$, which we call the \textit{dimensional grading}. The second is the \textit{Maslov grading}, which is defined on cubes by setting $\gr(\square_d) = w(\square_d) + d$. We extend this to all of $\Cla(\Gamma, [k])$ by setting $\gr(U) = -2$. The differential is given by inserting powers of $U$ into the usual cellular/cubical differential to make $\gr(\partial) = -1$. That is, let
\[
\partial(\square_d) = \sum_{\square_{d-1} \in \text{ cellular boundary of } \square_d} U^{\left(w(\square_{d-1}) - w(\square_d)\right)/2} \epsilon(\square_{d-1}) \square_{d-1}
\]
and extend $\Z[U]$-linearly. Here, the signs $\epsilon(\square_{d-1}) = \pm 1$ are chosen in the usual way by thinking of the cubes of $[k]$ as coming from a cubical decomposition of $\R^{|\Gamma|}$ and choosing orientations. Note that $\partial$ drops both the dimensional and Maslov gradings by one. The homology of $(\Cla(\Gamma, [k]), \partial)$ is called the \textit{lattice homology} and is denoted by 
\[
\Hla(\Gamma, [k]) = \bigoplus_{d \geq 0} \Hla_d(\Gamma, [k]),
\]
where $\Hla_d(\Gamma, [k])$ is the homology in dimensional grading $d$. We write $\Hla(\Gamma)$ to denote the sum of $\Hla(\Gamma, [k])$ over all equivalence classes $[k]$, and similarly write $\Hla_d(\Gamma)$ to denote the analogous sum of the groups $\Hla_d(\Gamma, [k])$.
\end{definition}

In \cite[Proposition 3.4.2]{Nemethi}, it is shown that the lattice homology (in fact, the homotopy equivalence class of the lattice chain complex) is an invariant of $(Y_\Gamma, \s)$, rather than the plumbing graph $\Gamma$.

\begin{remark}\label{rem:4.4}
The definition of lattice homology we have presented here is essentially due to N\'emethi \cite{Nemethi}, but we use slightly different conventions. For the convenience of the reader familiar with \cite{Nemethi}, we translate between these conventions here:
\begin{enumerate}
\item In \cite{OSplumbed, NemethiOS, Nemethi}, the authors work with the lattice \textit{co}homology $\Hla^d(\Gamma, [k])$. One can show that (up to overall grading shift) $\Hla^d(\Gamma, [k])$ is dual to $\Hla_d(\Gamma, [k])$. This duality isomorphism multiplies Maslov grading by $-1$, so that the action of $U$ on $\Hla^d(\Gamma, [k])$ is still of degree $-2$. Correspondingly, the isomorphism theorems of \cite{OSplumbed, NemethiOS, Nemethi} are stated in terms of $\HFp(-Y_\Gamma, \s)$, rather than $\HFm(Y_\Gamma, \s)$.
\item In \cite{Nemethi}, the weight function is defined slightly differently than in Definition~\ref{def:4.2}. This is because in \cite{Nemethi}, the weight function is defined on lattice cubes in $L_\Gamma \otimes \R$, rather than $[k] \otimes \R$. Explicitly, in \cite{Nemethi} the weight function is defined by fixing a reference characteristic element $k_0 \in [k]$ and setting
\[
\widetilde{w}(x) = -\dfrac{1}{2}(k_0(x) + x^2)
\]
for any $x \in L_\Gamma$ and 
\[
\widetilde{w}(\square_d) = \max_x \{\widetilde{w}(x) \ | \ x \text{ a vertex of } \square_d\}
\]
for any lattice cube $\square_d$ in $L_\Gamma$; see \cite[Section 3.2.1]{Nemethi}. To see that $w$ (as in Definition~\ref{def:4.2}) and $\widetilde{w}$ (which we use to denote the weight function of  \cite[Section 3.2.1]{Nemethi}) contain the same information, note that the map $x \mapsto k_0 + 2x^*$ gives an isomorphism between $L_\Gamma$ and $[k]$. Computing, we see that 
\[
w(k_0 + 2x^*) = \dfrac{1}{4}\left( k_0^2 + 4k_0(x) + 4x^2 + |\Gamma|\right ) = -2\tilde{w}(x) + \dfrac{1}{4} \left( k_0^2 + |\Gamma| \right).
\]
Although the left- and right-hand sides are not identical, the differences are cosmetic. The minus sign in front of $\tilde{w}(x)$ is due to the usage of lattice cohomology, as explained above. The factor of two is a convention: in \cite[Section 3.1.3]{Nemethi}, the Maslov grading is defined by doubling the weight function, whereas we have absorbed this into the definition of $w$. Finally, the term $(k_0^2 + |\Gamma|)/4$ is a constant grading shift depending on the auxiliary choice of $k_0$. This is cancelled out manually in \cite[Theorem 5.2.2]{Nemethi}, whereas we have again instead absorbed this into the definition of $w$.
\end{enumerate}
\end{remark}

\subsection{The lattice map}\label{sec:4.2}

In this paper, we will primarily be concerned with the lattice homology $\Hla_0(\Gamma, [k])$ in dimensional grading zero. It is a fundamental result of N\'emethi that if $Y_\Gamma$ is an AR plumbed manifold then $\Hla_d(\Gamma, [k]) = 0$ for all $d \geq 1$ \cite[Theorem 4.3.3]{Nemethi}.

It will be helpful for us to re-interpret $\Hla_0(\Gamma, [k])$ more explicitly in terms of $\Char_\Gamma$. We have
 \begin{equation}\label{eq:4.1}
 \Hla_0(\Gamma, [k]) = \spa_{\Z[U]}\{k \in [k]\} / \sim,
 \end{equation}
where $\sim$ is generated as follows. Let $k$ be any characteristic vector in $[k]$ and $v$ be any vertex in $\Gamma$. Denote
\[
 n = \dfrac{1}{2}\left(w(k + 2v^*) - w(k)\right) = \dfrac{1}{2}\left(k(v) + v^2\right).
 \]
Then we declare
\[
\begin{cases}
k \sim U^n (k + 2v^*) &\text{ if } n \geq 0 \\
U^{-n} k \sim k  + 2v^*&\text{ if } n < 0.
\end{cases}
\]
The fact that this $\Z[U]$-module is isomorphic to $\Hla_0(\Gamma, [k])$ (as in Definition~\ref{def:4.3}) follows from observing that $\sim$ corresponds precisely to the relations introduced by $\im \partial \colon \Cla_1(\Gamma, [k]) \rightarrow \Cla_0(\Gamma, [k])$. 

The reason that this re-interpretation is helpful is that we may construct a map from $\Hla_0(\Gamma, [k])$ into the Floer homology of $Y_\Gamma$:

\begin{definition}\label{def:4.5}
Let $[k]$ be an equivalence class in $\Char_\Gamma$ and let $\s$ be the $\spinc$-structure on $Y_\Gamma$ corresponding to $[k]$. We define the \textit{lattice homology map}
\[
\T \colon \Hla_0(\Gamma, [k]) \rightarrow \HFm(Y_\Gamma, \s)
\]
as follows. Fix, once and for all, a generator $1 \in \HFm(S^3)$. Let $k$ be any characteristic vector in $[k]$. Consider the cobordism map
\[
F_{W_\Gamma, k} \colon \HFm(S^3) \rightarrow \HFm(Y_\Gamma, \s)
\]
associated to $W_\Gamma$ with characteristic vector $k$. We then define
\[
\T(k) = F_{W_\Gamma, k}(1),
\]
extending $\Z[U]$-linearly. The fact that $\T$ respects $\sim$ follows from the adjunction relations, as discussed in \cite{OSplumbed}. Note that the Floer-theoretic Maslov grading shift $\gr(F_{W_\Gamma, k})$ is precisely $\gr(k)$, so $\T$ is grading-preserving.\footnote{Here, we use the convention that $1 \in \HFm(S^3)$ has Maslov grading zero. The original convention of Ozsv\'ath and Szab\'o is that $1 \in \HFm(S^3)$ has Maslov grading $-2$, in which case $\T$ has grading shift $-2$.}
\end{definition}

\begin{remark}\label{rem:4.6}
In addition to the choice of generator $1 \in \HFm(S^3)$, each cobordism map $F_{W_\Gamma, k}$ is canonically defined only up to multiplication by $\pm 1$ unless additional data is chosen. However, it is straightforward to fix signs for each $F_{W_\Gamma, k}$ so that $\T$ satisfies the adjunction relations corresponding to $\sim$; see \cite[Section 2.1]{OSplumbed}.
\end{remark}
 
We have used the Heegaard Floer homology $\HFm$ in Definition~\ref{def:4.5}, but it is clear that $\HFm$ can be replaced by monopole Floer homology or (in light of \cite{Sasahira-Stoffregen_Triangle}) by Seiberg-Witten Floer \textit{homology}. As discussed in Section~\ref{sec:2}, the principal idea in this paper will be to construct analogues of $\Hla_0(\Gamma, [k])$ and $\T$ in the setting of Seiberg-Witten Floer \textit{homotopy}.

It is a fundamental result of Ozsv\'ath and Szab\'o \cite[Theorem 1.2]{OSplumbed} and N\'emethi \cite[Theorem 8.3]{NemethiOS} that $\T$ is an isomorphism if $\Gamma$ is (for example) an AR plumbing. The proof of this rests on certain formal properties of the Heegaard Floer package, chief among which is the presence of a surgery exact sequence. By work of the second and third authors, a similar surgery exact sequence holds for Seiberg-Witten Floer homology \cite{Sasahira-Stoffregen_Triangle}. Following the same arguments as in \cite{OSplumbed, NemethiOS} then shows that $\T$ is likewise an isomorphism in our setting. For readers unfamiliar with \cite{OSplumbed, NemethiOS}, an abbreviated discussion of this is included in Section~\ref{sec:8}.

\subsection{Paths and graded roots}\label{sec:4.3}

There is one more lattice-theoretic construction which we will need in this paper:

\begin{definition}
Define a \textit{path} in $[k]$ to be a sequence $(k_i)$ of lattice points in $[k]$ such that:
\begin{enumerate}
\item $k_i \neq k_j$ for $i \neq j$; and,
\item for each $i$, we have $k_{i+1} = k_i + 2v^*$ for some vertex $v \in \Gamma$.
\end{enumerate}
We will usually assume that the sequence $(k_i)$ is finite. Often, we will consider the set of edges from each $k_i$ to $k_{i+1}$ to also be a part of the data of $\gamma$.
\end{definition}

As discussed in Definition~\ref{def:4.1}, we may consider the cubical subdivision of $[k] \otimes \R \cong \R^{|\Gamma|}$ as a CW complex. It is clear from the Definition~\ref{def:4.3} that choosing any cubical subcomplex $C$ of $\R^{|\Gamma|}$ gives a subcomplex of $\C(\Gamma, [k])$, simply by considering the $\Z[U]$-span of all lattice cubes present in $C$. Since any path $\gamma$ is an example of such a subcomplex, we have:

\begin{definition}
Let $\gamma$ be a path in $[k]$. Define the \textit{path lattice complex} $\Cla(\gamma, [k])$ to be the subcomplex of $\Cla(\Gamma, [k])$ generated over $\Z[U]$ by the vertices and edges present in $\gamma$. The homology of this subcomplex is called the \textit{path lattice homology} and is denoted by $\Hla(\gamma, [k])$.
\end{definition}

\begin{lemma}
For any path $\gamma$, the path lattice homology $\Hla(\gamma, [k])$ is supported only in dimensional grading zero and is a submodule of $\Hla_0(\Gamma, [k])$.
\end{lemma}
\begin{proof}
The fact that $\Hla(\gamma, [k])$ is supported only in dimensional grading zero is clear from the fact that $\gamma$ is non-self-intersecting, as this implies there are no cycles in dimensional grading one. We thus check that the map from $\Hla(\gamma, [k])$ to $\Hla_0(\Gamma, [k])$ is injective. Recall that $\sim$ is generated by individual relations corresponding to the edges in $\Cla_1(\Gamma, [k])$. However, each individual relation is complete in the following sense: if $k$ and $k + 2v^*$ are two adjacent vertices in $\Hla_0(\Gamma, [k])$, then not only do $k$ and $k + 2v^*$ satisfy the obvious edge relation, but in fact this is the only relation between $k$ and $k + 2v^*$. Indeed, note that if $k$, $k + 2v^*$, $k + 2w^*$, and $k + 2v^* + 2w^*$ are four corners of a square, then the sum of any three edge relations is a $U$-multiple of the fourth, as can be seen by considering the boundary of the corresponding square in $\Cla_2(\Gamma, [k])$. A general relation between $k$ and $k + 2v^*$ arises from summing together the relations along a sequence of edges from $k$ to $k + 2v^*$. However, repeatedly applying the previous principle shows that we can reduce any such sum to the obvious individual edge relation from $k$ to $k + 2v^*$, as claimed. Hence if $k$ and $k + 2v^*$ are two successive vertices in $\gamma$, then any relation between them in $\Hla_0(\Gamma, [k])$ is already present in $\Hla(\gamma, [k])$.
\end{proof}

As in the previous section, we have a presentation of $\Hla(\gamma, [k])$ which is generated over $\Z[U]$ by the characteristic vectors $k_i$ in $\gamma$, subject to the same relation $\sim$ as in (\ref{eq:4.1}). It is a fundamental result due to N\'emethi \cite{NemethiOS} that if $\Gamma$ is an AR plumbing, then there exists a finite path $\gamma$ in $[k]$ which carries the lattice homology in the following sense:

\begin{theorem}\cite[Remark 9.4(a)]{NemethiOS}\label{thm:4.9}
Let $\Gamma$ be an AR plumbing and $[k]$ be an equivalence class in $\Char_\Gamma$. Then there exists a finite path $\gamma$ in $[k]$ such that the obvious inclusion map
\[
i_* \colon \Cla(\gamma, [k]) \rightarrow \Cla(\Gamma, [k])
\]
induces an isomorphism on homology. In particular, the restriction of $\T$ to $\Hla(\gamma, [k])$ is an isomorphism:
\[
\T | _{\Hla(\gamma, [k])} \colon \Hla(\gamma, [k]) \cong \Hla_0(\Gamma, [k]) \xrightarrow{\cong} \HFm(Y, \s).
\]
\end{theorem}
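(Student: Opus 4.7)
The strategy is to construct $\gamma$ as a finite one-dimensional subcomplex of $[k]\otimes\R$ whose vertices realize the leaves of the graded root of $\Hla_0(\Gamma,[k])$ and whose connecting arcs realize the angles. Since $\Gamma$ is AR, N\'emethi's vanishing result gives $\Hla_d(\Gamma,[k])=0$ for all $d\geq 1$, so $\Hla(\Gamma,[k]) = \Hla_0(\Gamma,[k])$, and this module is captured by a finite graded root $R$ with leaves $x_0,\ldots,x_n$ (in gradings $\gr(x_i)$) and angles $\alpha_0,\ldots,\alpha_{n-1}$ (in gradings $h_i := \gr(\alpha_i)$). The reformulation (\ref{eq:4.1}) makes clear that realizing $R$ in this way gives a path whose lattice homology matches $\Hla_0(\Gamma,[k])$.

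The plan is as follows. First, for each leaf $x_i$, identify a characteristic vector $k_i\in[k]$ that minimizes $w$ on the connected component of the sublevel set $\{w\leq\gr(x_i)\}$ corresponding to $x_i$. Second, for each $i$ between $0$ and $n-1$, choose a lattice path $\gamma_i$ from $k_i$ to $k_{i+1}$ all of whose vertices have weight at most $h_i$. Such a $\gamma_i$ exists precisely because $h_i$ is, by definition of the graded root, the smallest grading at which the connected components containing $k_i$ and $k_{i+1}$ merge. Third, concatenate the $\gamma_i$ and, if necessary, prune to enforce the non-self-intersection condition from Definition~4.7, obtaining a single path $\gamma$.

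Next I would check directly that $i_\ast$ is an isomorphism. Since $\gamma$ carries no cubes of dimension $\geq 2$, the module $\Hla(\gamma,[k])$ is computed from the presentation (\ref{eq:4.1}) restricted to the edges present in $\gamma$. The edges interior to each $\gamma_i$ impose only the relations $U^{a}k\sim U^{b}k'$ for adjacent $k,k'$ with $a,b$ determined by the weights; these relations collectively identify the $\Z[U]$-tower based at any vertex of $\gamma_i$ with the tower based at $k_i$ (or $k_{i+1}$) glued along a subtower starting in grading $h_i$. Hence $\Hla(\gamma,[k])$ admits the same presentation as the graded root $R$, and the inclusion $i_\ast$ sends the preferred generators to the preferred generators, giving an isomorphism of graded $\Z[U]$-modules. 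The second assertion, that $\mathcal{T}|_{\Hla(\gamma,[k])}$ is an isomorphism, then follows by combining this with the isomorphism $\mathcal{T}\colon \Hla_0(\Gamma,[k])\xrightarrow{\cong}\HFm(Y_\Gamma,\s)$ of \cite[Theorem 8.3]{NemethiOS}.

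The main obstacle is the existence of the subpaths $\gamma_i$ with the required weight bound. This is the part of the argument where the AR hypothesis is essential: for a rational plumbing every sublevel set $\{w\leq t\}$ is connected, and for an AR plumbing one uses the reduction via the distinguished ``bad'' vertex to control the finitely many components and their merges as $t$ increases. A secondary but technically important point is to ensure that the concatenated path is genuinely non-self-intersecting; in practice this forces one to choose the $\gamma_i$ inductively and to trim any shared initial segments. Once these combinatorial choices are made, everything else (gradings, $U$-equivariance, and compatibility with the graded root presentation) is automatic because both $\Hla(\gamma,[k])$ and $\Hla_0(\Gamma,[k])$ are built from the same weight function on $[k]$.
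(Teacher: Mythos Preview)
Your approach differs from the paper's, which does not argue via level sets at all. The paper simply cites N\'emethi: the path is produced by repeatedly applying \emph{Laufer's algorithm} (see \cite[Section~7]{NemethiOS}), an explicit combinatorial procedure on the lattice that outputs a sequence of characteristic elements one step at a time; this yields an infinite path which, by \cite[Theorem~9.3(a)]{NemethiOS}, may be truncated after finitely many steps. N\'emethi's analysis shows directly that this particular path carries the lattice homology. The advantage of Laufer's algorithm is that it is constructive and sidesteps the issues you flag (non-self-intersection, weight control along the arcs) by enforcing the correct properties inductively rather than assembling the path from pieces after the fact.

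There is also a concrete convention error in your sketch. In the paper's normalization $w(k)=\tfrac14(k^2+|\Gamma|)$ with $\Gamma$ negative definite, $w$ is bounded \emph{above} and $w(k)\to-\infty$ as $|k|\to\infty$; the leaves of the graded root are local \emph{maxima} of $w$, and the relevant filtration is by \emph{superlevel} sets $S_h=\{w\geq h\}$ (compare Remark~\ref{rem:4.4} and Remark~\ref{rem:5.1}). Thus $k_i$ should realize $w(k_i)=\gr(x_i)$ as a local maximum, and the connecting arc $\gamma_i$ must lie in $\{w\geq h_i\}$, not $\{w\leq h_i\}$; as you wrote it, the sublevel sets are unbounded and ``minimizing $w$ on a component'' is ill-posed. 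Even after this correction, an arc chosen merely to stay in $\{w\geq h_i\}$ may acquire extra local maxima of $w$, so $\Hla(\gamma,[k])$ could a priori have more leaves than $R$; showing that $i_*$ nonetheless matches the two presentations (and that pruning to remove self-intersections does not destroy this) requires an argument you have not supplied. These are exactly the points that Laufer's algorithm handles for free.
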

\begin{proof}
The path in question is found by repeatedly applying Laufer's algorithm, as discussed in \cite[Section 7]{NemethiOS}. Although \cite[Remark 9.4]{NemethiOS} states that the path is infinite, by \cite[Theorem 9.3(a)]{NemethiOS} we may cut off the path after some sufficiently high index (although this is not straightforward to determine explicitly).
\end{proof}

Path lattice homology reconciles the general construction of lattice homology with the graded root picture of Section~\ref{sec:2.1}. For any path $\gamma$, the presentation (\ref{eq:4.1}) makes $\Hla(\gamma, [k])$ into a graded root $R$, with the lattice points of $\gamma$ corresponding to the leaves of $R$. Technically, if $R$ is constructed directly from (\ref{eq:4.1}), then it will have a large number of redundant generators $k_i$: for example, we may have pairs of relations of the form $U^m k_{i-1} \sim k_{i} \sim U^n k_{i+1}$. In Section~\ref{sec:2.1}, we have used a minimal presentation of $R$ obtained by discarding such $k_i$ and working directly with (for example) $U^m k_{i-1} \sim U^n k_{i+1}$. We view redundant generators as degenerate leaves of $R$, whose corresponding towers are completely glued to a tower on one or the other side. Clearly, this does not make any difference to the graded root; however, in what follows it will be notationally convenient for us to still treat $k_i$ as a leaf.

\section{Lattice Spectrum}\label{sec:5}

In this section, we define the lattice spectrum associated to an AR plumbing. In fact, we first give a more general construction which applies to any negative-definite plumbing, although this will not be used in the current paper. We then finish the proof of Theorem~\ref{thm:1.1}. 

\subsection{The general construction}\label{sec:5.1}
Let $\Gamma$ be a negative-definite plumbing and $[k]$ be an equivalence class in $\Char_\Gamma$. Recall that the weight function $w$ of Definition~\ref{def:4.2} is valued in a coset of $2\Z$ in $\Q$; fix an auxiliary cutoff weight $h$ lying in this coset.  We write $S^{n\C}=(\C^n)^+$ as $S^1$-spaces.  

For each $d$-dimensional cube in $[k]$ with $w(\square_d) \geq h$, let 
\begin{equation}\label{eq:5.1}
\mathcal{F}(\square_d) = S^{((w(\square_d) - h)/2)\C} \wedge \square_d^+.
\end{equation}
We make $\smash{\mathcal{F}(\square_d)}$ into a pointed $S^1$-space by giving $\smash{\square_d^+}$ the trivial $S^1$-action. Note that $\smash{\mathcal{F}(\square_d)}$ is just the product $\smash{S^{((w(\square_d) - h)/2)\C} \times \square_d}$, except that $\{\infty\} \times \square_d$ is collapsed to a single basepoint to stay in the category of pointed $S^1$-spaces. If $d = 0$, then $\smash{\mathcal{F}(\square_d)}$ is just the sphere $\smash{\spwt}$.

Now suppose $\square_{d-1}$ is a face of $\square_d$, so that we have an inclusion 
\[
f \colon \square_{d-1}^+ \rightarrow \square_d^+. 
\]
This gives a way to glue $\mathcal{F}(\square_{d-1})$ and $\mathcal{F}(\square_d)$ together: for any $\smash{x \in \spwt}$ and $t \in \square_{d-1}^+$, identify the points
\begin{equation}\label{eq:5.2}
(i(x), t) \sim (x, f(t))
\end{equation}
in $\smash{S^{((w(\square_{d-1}) - h)/2)\C} \wedge \square_{d-1}^+}$ and $\smash{\spwt \wedge \square_d^+}$, respectively. Here, $i$ is the usual inclusion map of $S^1$-spheres induced by the inclusion 
\[
\C^{(w(\square_{d-1}) - h)/2} \hookrightarrow \C^{(w(\square_{d})-h)/2}
\]
along the first $(w(\square_{d-1})-h)/2$-coordinates. Here, we are using the fact that if $\square_{d-1}$ is a face of $\square_d$, then $w(\square_d) \leq w(\square_{d-1})$. We then let 
\begin{equation}\label{eq:5.3}
X = \left( \bigsqcup_{\substack{\square_d \text{ in } [k] \text{ with } w(\square_d) \geq h}} \mathcal{F}(\square_d) \right) / \sim,
\end{equation}
where the quotient $\sim$ glues together $\mathcal{F}(\square_{d-1})$ and $\mathcal{F}(\square_d)$ according to (\ref{eq:5.2}) whenever $\square_{d-1}$ is a face of $\square_d$. See Figure~\ref{fig:5.1}. 

\begin{remark}\label{rem:5.1}
It may be psychologically easier for the reader to ignore basepoints and replace (\ref{eq:5.1}) with
\[
\mathcal{F}'(\square_d) = S^{((w(\square_d) - h)/2)\C} \times \square_d.
\]
The difference is largely cosmetic: let $X'$ be the space constructed as in (\ref{eq:5.3}) by using $\mathcal{F}'$ instead of $\mathcal{F}$. Let $S_h$ be the subset of $[k] \otimes \R$ consisting of the union of all cubes in $[k]$ of weight greater than or equal to $h$. Then the $S^1$-fixed point set of $X'$ is homeomorphic to 
\[
S^0 \times S_h = (\{0\} \times S_h) \sqcup (\{\infty\} \times S_h). 
\]
We obtain $X$ from $X'$ by collapsing $\{\infty\} \times S_h$ in $X'$ to a single basepoint. As shown in \cite[Corollary 3.2.5]{Nemethi}, $S_h$ is contractible for $h$ sufficiently negative, in which case $X$ and $X'$ are $S^1$-homotopy equivalent. Note that we can additionally collapse $\{0\} \times S_h$ to a single point in order to make the fixed-point set a literal copy of $S^0$, which again does not change the homotopy type.
\end{remark}

\begin{figure}[h!]
\includegraphics[scale = 0.8]{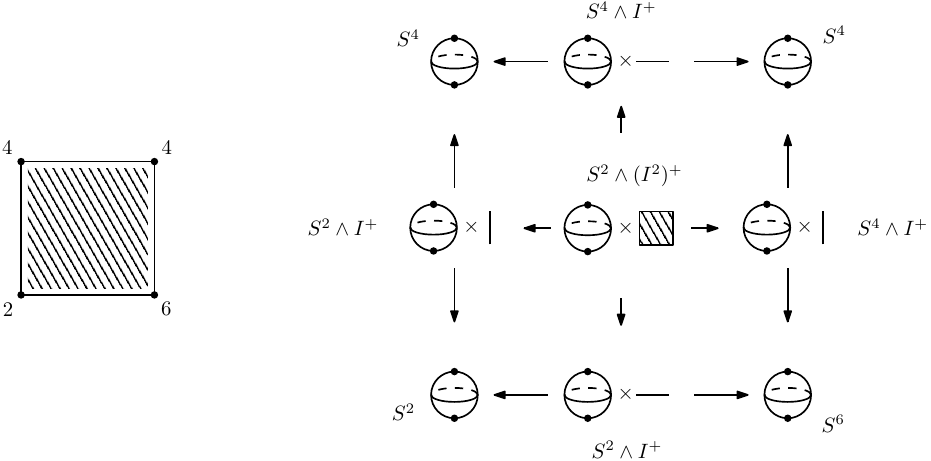}
\caption{A schematic depiction of the construction (\ref{eq:5.3}) restricted to a single square in $[k]$. Left: a square in $[k]$ with example values of $w$ on its vertices. Right: taking $h = 0$, the pointed $S^1$-space constructed according to (\ref{eq:5.3}). Each arrow represents gluing a portion of the boundary of the domain to a subset (or the entirety) of the codomain. For example, the upper edge $S^2 \times I \subset \partial(S^2 \times I^2)$ of the central square is glued homeomorphically to the subset $S^2 \times I \subset S^4 \times I$ along the top row of the diagram.}\label{fig:5.1}
\end{figure}

For $h$ sufficiently negative, the inclusion of $S_h$ into $S_{h-2}$ is a homotopy equivalence, and in fact the latter deformation retracts onto the former \cite[Corollary 3.2.5]{Nemethi}. It follows that up to $S^1$-homotopy equivalence, decreasing $h$ corresponds to suspending $X$.

\begin{definition}\label{def:5.2}
The ($S^1$-)\textit{lattice spectrum} is defined to be
\[
\mathcal{H}(\Gamma, [k]) = (X, 0, -h/2) \in \mathrm{Ob}(\mathfrak{C}_{S^1})
\]
for $h$ sufficiently negative.
\end{definition}

Several theorems from lattice homology have analogous statements in the context of lattice homotopy. In particular, it is possible to upgrade \cite[Proposition 3.4.2]{Nemethi} to show that $\mathcal{H}(\Gamma, [k])$ is an invariant of $(Y_\Gamma, \s)$, rather than $\Gamma$. Although this is not difficult, a full discussion of the proof is rather tedious. Since in this paper we will not use $\mathcal{H}(\Gamma, [k])$, we leave this to the enterprising reader.


\subsection{Homotopy coherent diagrams}\label{sec:homotopycoherent}
It is possible to view the lattice spectrum as a homotopy colimit \cite{bousfield-kan}, which we now explain. Although this will not be used until Section~\ref{sec:connectedsum}, we provide a brief digression here to place the construction of Section~\ref{sec:5.1} in a broader context. 

\begin{definition}\label{def:cubecategory}
Let $S$ be a (finite) cubical subcomplex of $\mathbb{Z}^n$. We think of $S$ as a cubical complex $S_h$ as in Remark~\ref{rem:5.1} and assume it is contractible. The \textit{cube category} associated to $S$ is the category $\mathcal{C}$ whose objects are the lattice cubes $\square_d$ (of any dimension) in $S$. If $\square_{d} \subset \square_{d+i}$ -- that is, if $\square_{d}$ is a face of $\square_{d+i}$ -- then there is a unique morphism $\square_{d+i}\to \square_{d}$; otherwise the morphism set is empty. We write $\phi_{\square_{d+i},\square_d}$ for the unique morphism $\square_{d+i}\to \square_{d}$, when it exists.
\end{definition}

\begin{definition}\label{def:weightfunction}
A \textit{weight function} on a cube category is a function $w \colon \mathrm{Ob}(\mathcal{C}) \rightarrow \mathbb{Z}$ such that $w(\square_{d+i}) \leq w(\square_d)$ whenever $\mathrm{Mor}(\square_{d+i}, \square_d)$ is non-empty. A cube category equipped with a weight function will be referred to a \textit{weighted cube category}. (Since all of our lattice cube categories will be weighted, sometimes we will drop this adjective.)
\end{definition}

We now recall the definition of a homotopy coherent diagram; for a discussion very convenient for our purposes, see \cite[Section 4.2]{LLS}. In \cite{Vogt}, the following definition is stated non-equivariantly; we state it here in the equivariant setting.

\begin{definition}[{\cite[Definition 2.3]{Vogt}}]\label{def:homco}
	Fix a compact Lie group $G$ and let $G$-$\topo$ be the category of pointed $G$-spaces. A \textit{homotopy coherent diagram on $\Cat$} is an assignment $F$ which sends each $x \in \mathrm{Ob}(\Cat)$ to a space $F(x) \in G$-$\topo$ and sends each sequence of composable morphisms 
	\[
	x_0 \xrightarrow{f_1} x_1 \xrightarrow{f_2} \cdots \xrightarrow{f_n} x_n
	\]
	to a continuous map
	\[
	F(f_n,\dots,f_1)\from [0,1]^{n-1} \times F(x_0) \to F(x_n)\phantom{\xrightarrow{f}}
	\]
	with $F(f_n,\dots,f_1)([0,1]^{n-1}\times \{ *\})=*$.  These maps
	are required to satisfy the following compatibility conditions:
	\begin{align}
		\nonumber F(f_n,&\dots,f_1)(t_1, \dots,t_{n-1})=  \\ 
		& \begin{cases}
			F(f_n,\dots,f_2)(t_2,\dots,t_{n-1}) &f_1 = \Ide\\
			F(f_n ,\dots,\hat{f}_i,\dots,f_1)(t_1,\dots,t_{i-1}\cdot t_i,\dots,t_{n-1}),&f_i=\Ide \ (1 < i < n)\\
			F(f_{n-1},\dots,f_1)(t_1,\dots,t_{n-2}) &f_n = \Ide\\
			F(f_n,\dots,f_{i+1})(t_{i+1},\dots, t_{n-1}) \circ F(f_i,\dots,f_1)(t_1,\dots,t_{i-1}) & t_i=0 \ (1 \leq i < n) \\
			F(f_n,\dots,f_{i+1}\circ f_i,\dots,f_1)(t_1,\dots,\hat{t}_{i},\dots,t_{n-1}) & t_i=1 \ (1 \leq i < n).
		\end{cases}\label{eq:compat}
	\end{align}
	When $\Cat$ does not contain any non-identity isomorphisms, homotopy
	coherent diagrams may be defined only in terms of non-identity
	morphisms and the last two compatibility conditions. For $n = 2$, for example, the condition of homotopy coherence says that if $f_1 \colon x_0 \rightarrow x_1$ and $f_2 \colon x_1 \rightarrow x_2$ are composable morphisms, then we have a homotopy
\[
F(f_2, f_1) \colon [0, 1] \times F(x_0) \rightarrow F(x_2)
\]
from $F(f_2) \circ F(f_1)$ to $F(f_2 \circ f_1)$. The reader should think of the structure maps for $n \geq 3$ as providing various higher homotopies.
\end{definition}

Associated to a homotopy coherent diagram $F$, there is a well-defined homotopy colimit, $\hocolim_{C}F$, an object of $G-\topo$; for a description convenient for our purposes, see \cite[Definition 4.10]{LLS} (or \cite[Definition 4.15]{stoffregen-zhang}, for the statement in the equivariant case). To see the connection to our setting, let $\Cat$ be a weighted cube category. Consider the homotopy coherent diagram (with $G = S^1$) on $\Cat$ defined as follows: 
\begin{enumerate}
\item For each object $\square_d$, let $F(\square_d) = S^{(w(\square_d)/2)\C}$.
\item For each morphism $\phi_{\square_{d+i}, \square_d}$, let $F(\phi_{\square_{d+i}, \square_d})$ be the inclusion from $S^{(w(\square_{d+i})/2)\C}$ into $S^{(w(\square_{d})/2)\C}$ induced by the inclusion of $\C^{w(\square_{d+i})/2}$ into $\C^{w(\square_{d})/2}$ along the first $w(\square_{d+i})/2$ coordinates.
\item For each sequence of composable morphisms, define
\[
F(f_n,\dots,f_1)\from [0,1]^{n-1} \times F(x_0) \to F(x_n)
\]
to be equal to the inclusion $F(\phi_{x_0, x_n}) \colon F(x_0) \rightarrow F(x_n)$ (assigned in the previous step), independent of the $[0, 1]^{n-1}$-coordinate(s).
\end{enumerate}
The $S^1$-space $X$ constructed in Section~\ref{sec:5.1} is then easily checked to be the homotopy colimit of the above homotopy coherent diagram in the case where $\Cat$ arises from the superlevel set $S_h$ of Remark~\ref{rem:5.1}. (The weight function $w(\square_d)$ of Definition~\ref{def:weightfunction} is the shifted weight function $w(\square_d) - h$ from Section~\ref{sec:5.1}.)

Note that in this case, the extra homotopy data of Definition~\ref{def:homco} is vacuous: since all morphisms are sent to linear inclusions along the leftmost coordinates of the codomain, $F$ respects composition on the nose and all homotopies are constant. However, one can certainly consider modifying the inclusion maps of Section~\ref{sec:5.1}, or even allowing the inclusion maps to vary depending on $\square_d$. To this end, we define the following notion:

\begin{definition}\label{def:linearcoherent}
Let $\Cat$ be a weighted cube category. We say that a homotopy coherent diagram $F$ on $\Cat$ is \textit{linear} if the following hold: 
\begin{enumerate}
\item For each object $\square_d$, we have $F(\square_d) = S^{(w(\square_d)/2)\C}$.
\item For each morphism $\phi_{\square_{d+i}, \square_d}$, we have that $F(\phi_{\square_{d+i}, \square_d})$ is a map from $S^{(w(\square_{d+i})/2)\C}$ into $S^{(w(\square_{d})/2)\C}$ which is induced by \textit{some} linear inclusion of $\C^{w(\square_{d+i})/2}$ into $\C^{w(\square_{d})/2}$. This linear inclusion does not have to be the inclusion map along the first $w(\square_{d+i})/2$ coordinates; instead, in general it is an element of the Stiefel manifold of complex $w(\square_{d+i})/2$-frames in $\mathbb{C}^{w(\square_{d})/2}$. We denote this Stiefel manifold by
\[
W(\square_{d},\square_{d+i}) = V_{w(\square_{d+i})/2}(\C^{w(\square_d)/2}).
\]
\item Consider any composable sequence of morphisms 
\[
x_0 \xrightarrow{f_1} x_1 \xrightarrow{f_2} \cdots \xrightarrow{f_n} x_n
\]
and let the corresponding structure map be
\[
F(f_n,\dots,f_1)\from [0,1]^{n-1} \times F(x_0) \to F(x_n)
\]
We require that for any fixed $t \in [0, 1]^{n-1}$, the map
\[
F(f_n,\dots,f_1)(t, -) \colon F(x_0) \to F(x_n)
\]
is induced by a linear inclusion of $\C^{w(x_0)/2}$ into $\C^{w(x_n)/2}$; that is, an element of $W(x_n, x_0)$. 
\end{enumerate}
Thus, for a linear homotopy coherent diagram, all of the inclusions -- and also all of the homotopies and higher homotopies -- are required to be induced by linear inclusions.
\end{definition} 

Given a linear homotopy coherent diagram on a weighted cube category, one can form the homotopy colimit to obtain a space much like the one constructed in Section~\ref{sec:5.1}. However, we stress that if $F_1$ and $F_2$ are two such diagrams on the same category, it is not clear that $\hocolim({F_1})$ and $\hocolim({F_2})$ coincide. Roughly speaking, these two spaces will be built out of the same spheres -- one for each lattice cube $\square_d$ -- but the gluing maps will be different. Using the methods of Section~\ref{sec:5.3}, it is not difficult to see that $\hocolim({F_1})$ and $\hocolim({F_2})$ will be $S^1$-spaces with the same (co-)Borel homology. However, we do not claim that they coincide in general, although we establish some special cases in Lemma~\ref{lem:lattice-homotopy-coherent-diagram}. See Section~\ref{sec:connectedsum} for further discussion.

\subsection{Path lattice homotopy}\label{sec:5.2}
We now resume the main discussion and return to the construction of Section~\ref{sec:5.1}. As in Section~\ref{sec:4.3}, it is clear that we may restrict the index set of (\ref{eq:5.3}) to any cubical subcomplex of $[k] \otimes \R$. In particular, let $\gamma$ be a path in $[k]$ and let $h$ be less than or equal to the minimum of $w$ on $\gamma$. Let $X$ be defined as in (\ref{eq:5.3}), but with index set restricted to the zero- and one-dimensional cubes present in $\gamma$. As long as $h$ is less than or equal to the minimum of $w$ on $\gamma$, it is clear that decreasing $h$ corresponds to suspending $X$. We thus obtain a well-defined $S^1$-equivariant spectrum: 




\begin{definition}\label{def:5.3}
If $\gamma$ is any path in $[k]$, the ($S^1$-)\textit{path spectrum} is defined to be
\[
\Hty(\gamma, [k]) = (X, 0, - h/2) \in \mathrm{Ob}(\mathfrak{C}_{S^1}) 
\]
for $h$ sufficiently negative.  The ($S^1$-)\textit{path homotopy type} is the stable homotopy type of $\Hty(\gamma,[k])$.  
\end{definition}

We will often abuse notation, and use $\Hty(\gamma,[k])$ to refer to any of (1) the spectrum of Definition \ref{def:5.3}, (2) a space underlying the spectrum in Definition \ref{def:5.3} or (3) the homotopy type of $\Hty(\gamma,[k])$; context should make clear what is meant. Up to $S^1$-equivariant homotopy, the path spectrum gives a spectrum which is of the same form as described in Section~\ref{sec:2.2}. The spheres of (\ref{eq:2.2}) correspond to the spheres $\mathcal{F}(\square_0)$ of (\ref{eq:5.3}). To make (\ref{eq:5.3}) into (\ref{eq:2.2}), we contract along the $I$-factor in each edge portion $\mathcal{F}(\square_1)$ and also collapse $\{0\} \times \gamma$ to a single point, as described in Remark~\ref{rem:5.1}. 

We now define the zeroth spectrum of an AR plumbed manifold to be the path spectrum associated to any path carrying the lattice homology, as in Theorem~\ref{thm:4.9}:

\begin{definition}\label{def:5.4}
Let $\Gamma$ be an AR plumbing and $[k]$ be an equivalence class in $\Char_\Gamma$. Define the \textit{(zeroth)} ($S^1$-)\textit{lattice spectrum} to be
\[
\Hty_0(\Gamma, [k]) = \Hty(\gamma, [k])
\]
for $\gamma$ any path carrying the lattice homology.
\end{definition}

There are several comments to be made regarding Definition~\ref{def:5.4}. First, it is not entirely obvious that $\Hty_0(\Gamma, [k])$ is independent of the choice of $\gamma$ (or if the equivalence is canonical). This will not actually be important for us: it turns out that the proof of Theorem~\ref{thm:1.1} does not depend on a particular choice of $\gamma$, so in fact Theorem~\ref{thm:1.1} provides an extremely roundabout way of showing that (the homotopy type of) $\Hty_0(\Gamma, [k])$ is independent of $\gamma$. Alternatively, the fact that Definition~\ref{def:5.4} coincides with the construction of Section~\ref{sec:2.2} shows that $\Hty_0(\Gamma, [k])$ depends only on the graded root $\Hla(\gamma, [k])$, rather than $\gamma$. One can prove that all paths $\gamma$ which carry the lattice homology give the same graded root structure up to collapsing degenerate leaves; this shows that $\Hty_0(\Gamma, [k])$ is independent of $\gamma$. 

It is also possible to show that if $\gamma$ is a path carrying the lattice homology, then $\Hty(\gamma, [k]) \simeq \Hty(\Gamma, [k])$. (This again implicitly shows that Definition~\ref{def:5.4} is independent of $\gamma$.) Hence we in fact have
\[
\Hty_0(\Gamma, [k]) = \Hty(\gamma, [k]) = \Hty(\Gamma, [k])
\]
for an AR plumbed manifold. However, for defining the map $\cT$ in the proof of Theorem~\ref{thm:1.1}, it will be critical that $\Hty_0(\Gamma, [k])$ is constructed using a path $\gamma$. Indeed, we do not know how to define $\cT$ on all of $\Hty(\Gamma, [k])$; see Remark~\ref{rem:5.8}. We thus write $\Hty_0(\Gamma, [k])$ rather than $\Hty(\Gamma, [k])$ to emphasize the restricted nature of the construction, even though we will often refer to $\Hty_0(\Gamma, [k])$ as the (full) lattice homotopy type in the case of an AR plumbing.

One can likewise show that $\Hty_0(\Gamma, [k])$ is an invariant of $(Y_\Gamma, \s)$, rather than $\Gamma$. Indeed, the invariance of lattice homology can be upgraded to show that the graded root structure is an invariant of $(Y_\Gamma, \s)$, rather than $\Gamma$; hence $\Hty_0(\Gamma, [k])$ is also an invariant of $(Y_\Gamma, \s)$. Alternatively, one can use the fact that $\Hty_0(\Gamma, [k]) = \Hty(\Gamma, [k])$ and show that the latter is independent of $\Gamma$, as discussed in Section~\ref{sec:5.1}. Since we will not need any of these statements in the current paper, we leave the proofs to the reader. Indeed, our proof that $\Hty_0(\Gamma, [k]) \simeq \SWF(Y_\Gamma , \s)$ does not depend on a choice of $\Gamma$, which implicitly establishes invariance.

Finally, note that in the case of path lattice homotopy, it is clear that changing the inclusion maps $i$ used in (\ref{eq:5.2}) does not alter the $S^1$-homotopy type of the resulting spectrum.



\subsection{Proof of Theorem \ref{thm:1.1}}\label{sec:5.3} 
We now turn to the proof of the main theorem. Let $\Gamma$ be an AR plumbing and $[k]$ be an equivalence class in $\Char_\Gamma$, with $R$ the graded root corresponding to $\Hla(\Gamma, [k])$.

\begin{lemma}\label{lem:5.5}
We have an isomorphism of $\Z[U]$-modules
\[
\cB(\Hty_0(\Gamma, [k])) \cong R \cong \Hla_0(\Gamma, [k]).
\]
\end{lemma}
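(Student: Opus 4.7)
The plan is to fix a path $\gamma = (x_0, \ldots, x_n)$ in $[k]$ carrying the lattice homology, as guaranteed by Theorem~\ref{thm:4.9}; by Definition~\ref{def:5.4} we then have $\Hty_0(\Gamma, [k]) = \Hty(\gamma, [k])$. By the construction of Section~\ref{sec:5.2}, the underlying $S^1$-space is built from spheres $\mathcal{F}(x_i) = S^{((w(x_i)-h)/2)\C}$ glued consecutively along subspheres $S^{((w(\alpha_i)-h)/2)\C}$, where $\alpha_i$ is the edge between $x_i$ and $x_{i+1}$ and $w(\alpha_i) = \min(w(x_i), w(x_{i+1}))$. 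After the formal rational desuspension by $h/2$ in Definition~\ref{def:5.3}, the tower over $x_i$ begins in Maslov grading $w(x_i) = \gr(x_i)$ and the tower over $\alpha_i$ begins in grading $w(\alpha_i) = \gr(\alpha_i)$.

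The key functorial input, recalled in Section~\ref{sec:2.3}, is that $\cB(S^{n\C}) \cong \Z[U]_{2n}$ and, for $m \leq n$, the inclusion $i \colon S^{m\C} \hookrightarrow S^{n\C}$ induces on $\cB$ the inclusion of the $\Z[U]$-subtower of $\Z[U]_{2n}$ starting in grading $2m$. I would induct on the number of leaves, writing $X_k$ for the subspectrum built from $x_0, \ldots, x_k$ together with the connecting edges. The base case $X_0 = \Sigma^{(h/2)\C}\mathcal{F}(x_0)$ gives $\cB(X_0) \cong \Z[U]_{\gr(x_0)}$, which is the graded root on one leaf. For the inductive step, $X_{k+1}$ is obtained from $X_k \vee \mathcal{F}(x_{k+1})$ by collapsing a copy of $\mathcal{F}(\alpha_k)$ via its two standard subsphere embeddings, and since these embeddings are equivariant cofibrations this is a homotopy pushout. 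The associated Mayer-Vietoris long exact sequence in co-Borel homology reads
\[
\cdots \to \cB(\mathcal{F}(\alpha_k)) \xrightarrow{\phi} \cB(X_k) \oplus \cB(\mathcal{F}(x_{k+1})) \to \cB(X_{k+1}) \to \cdots.
\]

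In this sequence, $\phi$ sends a generator of $\Z[U]_{\gr(\alpha_k)}$ to the difference of $U^{(\gr(x_k)-\gr(\alpha_k))/2}x_k$ inside $R_k \cong \cB(X_k)$ (a nonzero element of the tower over the rightmost leaf) and the inclusion into the subtower of $\Z[U]_{\gr(x_{k+1})}$ starting in grading $\gr(\alpha_k)$. Each component is injective, so $\phi$ is injective, the connecting map vanishes, and $\cB(X_{k+1})$ is the cokernel. This cokernel glues a new $\Z[U]_{\gr(x_{k+1})}$-tower onto $R_k$ along the common subtower starting in grading $\gr(\alpha_k)$, which is exactly the presentation of $R_{k+1}$ from Section~\ref{sec:2.1}. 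Inducting gives $\cB(\Hty_0(\Gamma,[k])) \cong R$; the second isomorphism $R \cong \Hla_0(\Gamma, [k])$ follows from Section~\ref{sec:4.3}, since $\gamma$ carries the lattice homology and the graded-root presentation of $\Hla(\gamma,[k])$ is $R$.

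The main obstacle is verifying that the quotient in~(\ref{eq:5.3}) at each inductive stage really is an $S^1$-equivariant homotopy pushout to which Mayer-Vietoris applies, and that the map $\cB(\mathcal{F}(\alpha_k)) \to \cB(X_k)$ lands in the tower over the previous leaf $x_k$ rather than behaving more subtly through earlier gluings; both reduce to the explicit CW structure of $X$ and the tower-inclusion description of $i_\ast$ above. Degenerate leaves (where $\gr(x_i) = \gr(\alpha_{i-1})$ or $\gr(\alpha_i)$) are absorbed automatically by this argument, contributing a zero cokernel factor. A secondary bookkeeping task is tracking the rational $-h/2$ shift consistently so that Maslov gradings on the spectrum side and on the graded root agree.
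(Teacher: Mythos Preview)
Your argument is correct and follows essentially the same approach as the paper: both compute $\cB$ by applying Mayer--Vietoris to the decomposition of $X$ into the vertex spheres $\mathcal{F}(x_i)$, using the tower-inclusion description of $i_*$. The only organizational difference is that the paper does this in one step via the generalized Mayer--Vietoris sequence for the cover $\{U_i\}$ (open neighborhoods of the $\mathcal{F}(x_i)$ extending halfway over adjacent edges), obtaining the short exact sequence
\[
0 \to \bigoplus_i \Z[U]_{w(e_i)-h} \to \bigoplus_i \Z[U]_{w(k_i)-h} \to \cB(X') \to 0
\]
directly, whereas you attach one sphere at a time and run the two-set Mayer--Vietoris inductively. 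Your version avoids invoking the generalized Mayer--Vietoris principle at the cost of the induction; the paper's version avoids the bookkeeping you flag about where $\cB(\mathcal{F}(\alpha_k)) \to \cB(X_k)$ lands, since in the generalized sequence all edge-to-vertex maps appear simultaneously as a single matrix whose injectivity is immediate.
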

\begin{proof}
Let $\gamma$ be a path in $[k]$ carrying the lattice homology. Denote the lattice points in $\gamma$ by $k_i$ and let $e_{i-1}$ and $e_i$ be the lattice edges in $\gamma$ which abut $k_i$. Throughout the proof, we use $\mathcal{F}'$ to construct $\Hty(\gamma, [k])$, as in Remark~\ref{rem:5.1}; we adopt notation from this remark as well. 

For each lattice point $k_i \in \gamma$, let $U_i$ be an open neighborhood of $\mathcal{F}'(k_i)$ in $X'$ which extends slightly more than halfway over $\mathcal{F}'(e_{i-1})$ and $\mathcal{F}'(e_i)$:
\[
U_i = \left(S^{((w(e_{i-1})- h)/2)\C} \times (1/2 - \epsilon, 1] \right) \cup S^{((w(k_i) - h)/2)/\C} \cup \left( S^{((w(e_i) - h)/2)\C} \times [0, 1/2 + \epsilon) \right).
\]
Clearly, $U_i$ equivariantly deformation retracts onto $S^{((w(k_i) - h)/2)\C}$, while each double intersection $U_i \cap U_{i+1}$ equivariantly deformation retracts onto $S^{((w(e_i) - h)/2)\C}$. All other double intersections and triple intersections are empty. The generalized Mayer-Vietoris principle then gives an exact sequence:
\[
\cdots \rightarrow \bigoplus_i \cB(U_i \cap U_{i+1}) \rightarrow \bigoplus_i \cB(U_i) \rightarrow \cB(X') \rightarrow \cdots.
\]
Computing the co-Borel homology of each term gives the short exact sequence
\[
0 \rightarrow \bigoplus_i \Z[U]_{w(e_i)-h} \rightarrow \bigoplus_i \Z[U]_{w(k_i)-h} \rightarrow \cB(X') \rightarrow 0.
\]
Here, the zero on the left-hand side is due to the fact that the map out of $\bigoplus_i \Z[U]_{w(e_i)-h}$ is easily seen to be an injection, while the zero on the right-hand side is because $\bigoplus_i \Z[U]_{w(e_i)-h}$ vanishes in odd gradings. This gives a presentation for $\smash{\cB(X')}$; we obtain a presentation for $\smash{\cB(\Hty(\Gamma, [k]))}$ by shifting the grading up by $h$. This is precisely the presentation of $\Hla_0(\Gamma, [k])$ as a graded root.
\end{proof}

We now construct a map from $\Hty_0(\Gamma, [k])$ into $\SWF(Y_\Gamma, \s)$:

\begin{lemma}\label{lem:5.6}
There exists an $S^1$-equivariant map
\[
\cT \colon \Hty_0(\Gamma, [k]) \rightarrow \SWF(Y_\Gamma, \s)
\]
such that
\[
\begin{tikzpicture}[scale=1]
\node (A) at (0,0) {$\cB(\mathcal{H}_0(\Gamma, [k]))$};
\node at (0,-0.75) {\rotatebox{270}{\scalebox{1}[1]{$\cong$}}};
\node (B) at (0, -1.5) {$R$};
\node at (0,-2.25) {\rotatebox{270}{\scalebox{1}[1]{$\cong$}}};
\node (C) at (0,-3) {$\Hla_0(\Gamma, [k])$};
\node (D) at (5,-1.5) {$\cB(\SWF(Y,\s))$};
\path[->,font=\scriptsize,>=angle 90]
(A) edge node[above]{$\cT_*$} (D)
(C) edge node[below]{$\T$} (D);
\end{tikzpicture}
\]
commutes. Here, $\cT_*$ is the map on co-Borel homology induced by $\cT$, the vertical isomorphisms are those of Lemma~\ref{lem:5.5}, and $\T$ is the map on lattice homology from Definition~\ref{def:4.5}.
\end{lemma}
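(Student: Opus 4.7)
The plan is to construct $\cT$ by mimicking the construction of $\T$ in Definition~\ref{def:4.5}, replacing each Floer-theoretic cobordism map with the corresponding relative Bauer-Furuta invariant from Section~\ref{sec:3.4}. Let $\gamma$ be a path in $[k]$ carrying the lattice homology, with lattice points $k_0, k_1, \ldots, k_n$ joined by edges $e_0, \ldots, e_{n-1}$. Since $W_\Gamma$ has $b_1(W_\Gamma) = b^+(W_\Gamma) = 0$ and $\sigma(W_\Gamma) = -|\Gamma|$, the identity $(c_1(\ts_{k_i})^2 - \sigma(W_\Gamma))/8 = w(k_i)/2$ shows that Section~\ref{sec:3.4} associates to each $k_i$ a morphism
\[
\Psi_{W_\Gamma, k_i} \colon S^{(w(k_i)/2)\C} \rightarrow \SWF(Y_\Gamma, \s)
\]
in $\mathfrak{C}_{S^1}$. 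After the formal desuspension $(X, 0, -h/2)$ of Definition~\ref{def:5.3}, this is exactly the data of an $S^1$-map on the leaf sphere $\mathcal{F}(k_i) = S^{((w(k_i)-h)/2)\C}$ inside $\Hty_0(\Gamma, [k])$. First I would fix, for each $i$, an unstable $S^1$-equivariant representative $\tilde{\Psi}_i$ of $\Psi_{W_\Gamma, k_i}$, stabilized by the same (sufficiently large) $S^1$-representation so that all subsequent gluings can be performed on the nose.

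Next, to extend $\cT$ across the cylinder $\mathcal{F}(e_i) = S^{((w(e_i)-h)/2)\C} \wedge e_i^+$ joining $\mathcal{F}(k_i)$ and $\mathcal{F}(k_{i+1})$, I would invoke the adjunction relation of Proposition~\ref{prop:adjunction}, applied to the embedded $2$-sphere in $W_\Gamma$ corresponding to the vertex $v$ with $k_{i+1} = k_i + 2v^*$. Setting $n = (k_i(v) + v^2)/2 = (w(k_{i+1}) - w(k_i))/2$, the proposition yields
\[
\Psi_{W_\Gamma, k_{i+1}} U^{n} = \Psi_{W_\Gamma, k_i} \quad (n \geq 0) \qquad \text{or} \qquad \Psi_{W_\Gamma, k_{i+1}} = \Psi_{W_\Gamma, k_i} U^{-n} \quad (n < 0).
\]
In either case $w(e_i) = \min(w(k_i), w(k_{i+1}))$, and the adjunction asserts that after restricting $\tilde{\Psi}_i$ and $\tilde{\Psi}_{i+1}$ along the standard inclusion of the common subsphere $S^{((w(e_i)-h)/2)\C}$ into each of $\mathcal{F}(k_i)$ and $\mathcal{F}(k_{i+1})$, the two maps become $S^1$-equivariantly homotopic. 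I would choose such a basepoint-preserving $S^1$-homotopy $H_i$ and use the parameter interval of $H_i$ to define $\cT$ on $\mathcal{F}(e_i)$, identifying $e_i$ with $[0,1]$. Because $\gamma$ is an embedded path, each leaf sphere abuts at most two edges and the gluing conditions do not interact, so collecting the $\tilde{\Psi}_i$ and $H_i$ produces a well-defined $S^1$-equivariant map $\cT \colon \Hty_0(\Gamma, [k]) \rightarrow \SWF(Y_\Gamma, \s)$.

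Commutativity of the diagram is essentially built into this construction. Under the identification of Lemma~\ref{lem:5.5}, the preferred generator of $\cB(\Hty_0(\Gamma, [k]))$ indexed by the leaf $k_i$ is (up to the overall $h$-shift) the fundamental co-Borel class of $\mathcal{F}(k_i)$. By construction, $\cT_*$ carries this class to $(\tilde{\Psi}_i)_*$ of the fundamental class of $S^{(w(k_i)/2)\C}$, which coincides with $(\Psi_{W_\Gamma, k_i})_*$ and hence equals $\T(k_i)$ when $\T$ is defined, as in Definition~\ref{def:4.5}, using Bauer-Furuta maps on $\cB(\SWF(Y_\Gamma, \s))$ in place of Heegaard Floer cobordism maps. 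Since both $\cT_*$ and $\T$ are $\Z[U]$-linear and agree on this generating set of leaves, they agree on all of $\Hla_0(\Gamma, [k])$.

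The main obstacle will be the passage from the stable statement of adjunction in $\mathfrak{C}_{S^1}$ to genuinely unstable, $S^1$-equivariant, basepoint-preserving homotopies needed for a cellular gluing. Concretely, one must show that a single choice of suspension suffices to realize \emph{every} adjunction identity simultaneously as an honest unstable $S^1$-homotopy, and that this can be arranged compatibly with the chosen unstable representatives $\tilde{\Psi}_i$. Once this uniform stabilization is fixed, the resulting $\cT$ depends on the auxiliary choices, but (as noted in Section~\ref{sec:2.3}) its induced co-Borel map $\cT_*$ does not, so the commutativity statement is insensitive to these selections.
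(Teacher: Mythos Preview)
Your proposal is correct and follows essentially the same approach as the paper: define $\cT$ on each leaf sphere $\mathcal{F}(k_i)$ via the Bauer--Furuta invariant $\Psi_{W_\Gamma,k_i}$, extend over each edge cylinder $\mathcal{F}(e_i)$ using a homotopy supplied by the adjunction relation of Proposition~\ref{prop:adjunction}, and then verify commutativity by observing that $\cT_*$ and $\T$ agree on the tower generators coming from the leaves. The paper's proof is slightly more terse about the stable-to-unstable passage you flag as the ``main obstacle,'' but otherwise the arguments are the same.
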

\begin{proof}
As in the proof of Lemma~\ref{lem:5.5}, let $\gamma$ be a path in $[k]$ carrying the lattice homology. Denote the lattice points in $\gamma$ by $k_i$ and let $e_{i-1}$ and $e_i$ be the lattice edges in $\gamma$ which abut $k_i$. For each $k_i$, consider the relative Bauer-Furuta invariant associated to $(W_\Gamma, k_i)$. As discussed in Section~\ref{sec:3.4}, this gives a homotopy class of map
\[
\Psi_{W_\Gamma, k_i} \colon \Sigma^{(\gr(k_i)/2)\C} \SWF(S^3) \rightarrow \SWF(Y, \s)
\]
where $\gr(k_i) = (k_i^2 + |\Gamma|)/4$. Recall that in the suspension $\mathcal{H}_0(\Gamma, [k]) = \Sigma^{(h/2)\C} X$, the lattice point $k_i$ corresponds to a subsphere 
\[
\Sigma^{(h/2)\C}(\mathcal{F}(k_i)) = \Sigma^{(h/2)\C} \Sigma^{((\gr(k_i)-h)/2)\C} S^0 = \Sigma^{(\gr(k_i)/2)\C} S^0.
\]
We thus define $\cT$ on $\Sigma^{(h/2)\C}(\mathcal{F}(k_i)) \subset \mathcal{H}_0(\Gamma, [k])$ to be $\Psi_{W_\Gamma, k_i}$. 

Now let $k_i$ and $k_{i+1}$ be two successive lattice points in $\gamma$, so that $k_{i+1} = k_i + 2v^*$ for some vertex $v$ of $\Gamma$. Let
\[
n = \dfrac{1}{2}(w(k_{i+1}) - w(k_i)) = \dfrac{1}{2}(k_i(v) + v^2).
\]
Suppose $n \geq 0$; that is, $\gr(k_{i+1}) \geq \gr(k_i)$. According to the adjunction relation of Proposition \ref{prop:adjunction}, we have:
\[
\Psi_{W_\Gamma, k_i} \simeq U^n \Psi_{W_\Gamma, k_{i+1}} \simeq \Psi_{W_\Gamma, k_{i+1}} U^n.
\]
Explicitly, this means that $\Psi_{W_\Gamma, k_i}$ is homotopic to the composition
\[
\Sigma^{(\gr(k_i)/2)\C} S^0 \xhookrightarrow{\ \ U^n \ \ } \Sigma^{(\gr(k_{i+1})/2)\C} S^0 \xrightarrow{\Psi_{W_\Gamma, k_{i+1}}} \SWF(Y, \s).
\]
where $U^n$ is the usual inclusion of spheres. Choosing representatives for $\Psi_{W_\Gamma, k_i}$ and $\Psi_{W_\Gamma, k_{i+1}}$, we thus have a homotopy
\[
H_t \colon \Sigma^{(\gr(k_i)/2)\C} S^0 \rightarrow \SWF(Y, \s)
\]
where $H_0 = \Psi_{W_\Gamma, k_i}$ and $H_1$ is the restriction of $\Psi_{W_\Gamma, k_{i+1}}$ to ${\Sigma^{(\gr(k_i)/2)\C} S^0}$. Recall that
\[
\mathcal{F}(e_i) = S^{((w(e_i) - h)/2)\C} \wedge I^+ = \Sigma^{(\gr(k_i)/2)\C} S^0 \wedge I^+,
\]
where we have used the fact that $w(e_i) = \min \{ w(k_i), w(k_{i+1}) \} = w(k_i)$. We thus define $\cT$ on $\mathcal{F}(e_i)$ by setting $\cT(x, t) = H_t(x)$; this descends to a well-defined map on the quotient (\ref{eq:5.3}). The case when $n < 0$ is similar.

The proof of Lemma~\ref{lem:5.5} shows that $\smash{\cB(\Hty_0(\Gamma, [k]))}$ is generated by the induced images (in co-Borel homology) of the inclusions $\mathcal{F}(k_i) \hookrightarrow \Hty_0(\Gamma, [k])$. Under the isomorphism of Lemma~\ref{lem:5.5}, these correspond to the $\Z[U]$-towers which generate $\Hla_0(\Gamma, [k])$. On each of these towers, the maps $\cT_*$ and $\T$ coincide by construction. The establishes the claimed commutative diagram.
\end{proof}

Finally, we have the following:

\begin{lemma}\label{lem:5.7}
Let $f \colon X_1 \rightarrow X_2$ be a map of simply-connected, pointed $S^1$-CW complexes of type SWF. Suppose that:
\begin{enumerate}
\item $f$ induces a homotopy equivalence on $S^1$-fixed point sets, and
\item $f$ induces an isomorphism on Borel (or co-Borel) homology with $\Z$-coefficients:
\[
f_*\colon \tilde{cH}_*^{S^1}(X_1)\to\tilde{cH}^{S^1}_*(X_2).
\]
\end{enumerate}
Then $f$ is an $S^1$-homotopy equivalence.
\end{lemma}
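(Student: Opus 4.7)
The plan is to apply the equivariant Whitehead theorem stated in Section~\ref{sec:3.3} to reduce the claim to a non-equivariant weak equivalence of underlying spaces, and then bootstrap the hypothesis on $\cB$-homology up to ordinary integral homology by repeated applications of the five lemma along the cofibration $X_i^{S^1}\hookrightarrow X_i\to X_i/X_i^{S^1}$.

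By the structure of $H$-fixed points for $S^1$-CW complexes of type SWF (the lemma immediately after the definition of type SWF), the equivariant Whitehead theorem applied to $f$ requires only that $f^{S^1}\colon X_1^{S^1}\to X_2^{S^1}$ and the underlying non-equivariant map $f\colon X_1\to X_2$ both be weak equivalences. The former is hypothesis (i). Since $X_1,X_2$ are simply-connected, the ordinary Whitehead theorem stated in Section~\ref{sec:3.3} reduces the latter to showing that $f$ induces an isomorphism on reduced integral homology.

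Set $Z_i := X_i/X_i^{S^1}$; because $S^1$ acts freely on $X_i\setminus X_i^{S^1}$, the induced action on $Z_i$ is free away from the (unique) fixed basepoint. The co-Borel theory $\cB$ sends cofiber sequences to long exact sequences, so applying $\cB$ to $X_i^{S^1}\hookrightarrow X_i\to Z_i$ and comparing via the naturality of $f$, the five lemma combined with hypotheses (i) and (ii) produces an isomorphism $f_*\colon \cB(Z_1)\to\cB(Z_2)$. Since $S^1$ acts freely off the basepoint of $Z_i$, the associated Tate construction vanishes and co-Borel homology collapses to the reduced ordinary homology of the orbit space, giving a natural isomorphism $\cB(Z_i)\cong \widetilde H_*(Z_i/S^1)$. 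Hence $f$ induces an isomorphism on $\widetilde H_*(Z_i/S^1)$. The Gysin long exact sequence of the principal $S^1$-bundle $Z_i\setminus\{*\}\to (Z_i\setminus\{*\})/S^1$ then relates $\widetilde H_*(Z_i)$ and $\widetilde H_*(Z_i/S^1)$; a third application of the five lemma gives that $f$ induces an isomorphism on $\widetilde H_*(Z_i)$.

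Finally, the long exact sequence of the cofibration $X_i^{S^1}\hookrightarrow X_i\to Z_i$ in ordinary reduced homology, together with hypothesis (i) on the fixed part and the step just completed, yields by a last application of the five lemma that $f$ induces an isomorphism on $\widetilde H_*(X_i)$, as desired. The principal technical ingredient is the identification $\cB(Z_i)\cong \widetilde H_*(Z_i/S^1)$ for pointed $S^1$-CW complexes whose action is free off the basepoint, together with naturality of the various long exact sequences under $f$; these are standard but require some care to verify cleanly, in particular at the level of pointed CW structures when the fixed set is small (e.g.\ when $X_i^{S^1}\simeq S^0$).
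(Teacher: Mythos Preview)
Your argument is correct, but it is organized differently from the paper's. Both proofs finish with the equivariant Whitehead theorem and both use a Gysin sequence together with a Tate-vanishing step; the difference is where the decomposition happens. The paper works with $X_i$ throughout: it first uses the Borel/co-Borel/Tate long exact sequence (and the localization theorem $t\widetilde H^*_{S^1}(X_i)\cong U^{-1}\widetilde H^*_{S^1}(X_i^{S^1})$ together with hypothesis~(i)) to pass freely between Borel and co-Borel, and then applies the Gysin sequence of the Borel fibration $EG\times X_i\to (EG\times X_i)/G$ directly, so that the five-lemma immediately yields an isomorphism on $H^*(EG\times X_i)\cong H^*(X_i)$. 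Your route instead cofibers off $X_i^{S^1}$ first, uses Tate-vanishing on the free quotient $Z_i$ to identify $\cB(Z_i)$ with $\widetilde H_*(Z_i/S^1)$, then runs Gysin on the free part, and finally reassembles via the cofiber sequence in ordinary homology. The paper's version is shorter (one Gysin, one five-lemma); yours is more modular and makes explicit where freeness of the action is used, at the cost of an extra pair of long exact sequences. One small point: your Gysin step is cleanest if phrased as the Gysin sequence of the Borel fibration for $Z_i$ (relating $\widetilde H^{S^1}_*(Z_i)$ and $\widetilde H_*(Z_i)$) rather than the principal bundle on $Z_i\setminus\{*\}$, which avoids the bookkeeping you flag about reduced versus unreduced homology near the basepoint.
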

\begin{proof}
We begin by noting that the second assumption is equivalent to the same assumption for \emph{unreduced} Borel homology.   Indeed, writing $G = S^1$, we have that the unreduced and reduced Borel homologies are the second and third terms (respectively) in the long exact sequence
\[
\cdots \to H_*((EG\times *)/G)\to H_*((EG\times X)/G)\to \widetilde{H}_*((EG_+\wedge X)/G)\to\cdots
\]
Clearly, $f$ induces an isomorphism on the first term; hence by the five-lemma it induces an isomorphism on unreduced Borel homology if and only if it induces an isomorphism on reduced Borel homology. We may also freely replace homology with cohomology, since a map induces an isomorphism on homology with ($\Z$-coefficients) if and only if it induces a map on cohomology (with $\Z$-coefficients).

Moreover, as discussed in \cite[Section 5.3]{Manlectures}, there is a natural long exact sequence relating Borel and co-Borel homology:
\[
\cdots \rightarrow \B(X_i) \rightarrow \cB(X_i) \rightarrow \tB(X_i) \rightarrow \cdots,
\]
where $\tB(X_i)$ is the \textit{Tate homology}. Importantly, Tate (co)homology can be computed by localizing the Borel (co)homology of the fixed point set (here we have suppressed grading shifts):
\[
t\widetilde{H}^*_{S^1}(X_i) = U^{-1}\widetilde{H}^*_{S^1}(X_i^{S^1}).
\]
The fact that $f$ induces a homotopy equivalence on fixed-point sets thus implies that $f$ induces an isomorphism on the third term of the above sequence. Hence by the five-lemma, $f$ induces an isomorphism on Borel homology if and only if it induces an isomorphism on co-Borel homology.

To establish the lemma, it suffices to show that $f_* \colon H_*(X_1) \rightarrow H_*(X_2)$ is an isomorphism. 
See Lemma \ref{lem:fixed pt} and Corollary \ref{cor:eq h.e SWF type}. 
For this, consider the Gysin sequence associated to the circle bundle $EG \times X_i \rightarrow (EG \times X_i)/G$:
\[
\cdots \rightarrow H^{*}((EG \times X_i)/G) \xrightarrow{\cup e} H^{*+2}((EG \times X_i)/G) \xrightarrow{\pi^*} H^{*+2}(EG \times X_i) \rightarrow \cdots.
\]
The first two terms of this sequence are none other than the (unreduced) Borel cohomology of $X_i$. Thus $f$ induces an isomorphism on the first two terms; the five-lemma then shows that $f$ induces an isomorphism on $H^*(EG \times X_i)$ (at least for $* \geq 2$, but note that $X_i$ is simply connected). Since $EG$ is contractible, this shows that $f$ induces an isomorphism on $H_*(X_i)$. Applying Corollary~\ref{cor:eq h.e SWF type} completes the proof.
\end{proof}

Putting everything together gives the proof of the main theorem:
\begin{proof}[Proof of Theorem \ref{thm:1.1}:]
We have constructed a map of $S^1$-spectra:
\[
\mathcal{T} \colon \Hty_0(\Gamma, [k]) \rightarrow \SWF(Y_\Gamma, \s).
\]
The commutative diagram of Lemma~\ref{lem:5.6}, combined with the fact that $\T$ is an isomorphism, shows that $\mathcal{T}$ induces an isomorphism on co-Borel homology. We may suspend the domain and the codomain of $\T$ so that both sides are simply connected. It is clear that $\cT$ is a homotopy equivalence of $S^1$-fixed point sets, since both the domain and codomain have $S^1$-fixed point set given by $S^0$. Applying Lemma~\ref{lem:5.7} then shows that $\mathcal{T}$ is a homotopy equivalence, as desired.
\end{proof}

\begin{remark}\label{rem:5.8}
The authors do not know how to define a map $\cT$ out of $\Hty(\Gamma, [k])$ in general. As in Lemma~\ref{lem:5.6}, it is straightforward to define $\cT$ on $\mathcal{F}(\square_d)$ for $d = 0$ or $1$, but it is unclear how to define $\cT$ on $\mathcal{F}(\square_d)$ for $d \geq 2$. Explicitly, consider a lattice square $\square_2$ with boundary edges $e_i$. On each $\mathcal{F}(e_i)$, the map $\cT$ may be defined by using a homotopy coming from Lemma~\ref{lem:5.6}. However, it is then unclear whether $\cT$ can be extended over $\mathcal{F}(\square_2)$. Indeed, note that Lemma~\ref{lem:5.6} does not even specify a preferred homotopy with which to define $\cT$. While for $\Hty(\gamma, [k])$, this does not affect the homotopy class of $\cT$, in general this need not be the case. It is possible that the homotopies of Lemma~\ref{lem:5.6} must be coherently chosen in order for $\cT$ to extend over $\mathcal{F}(\square_2)$.
\end{remark}

\section{Pin(2)-equivariant Results}\label{sec:6}

We now discuss how to upgrade our results to the Pin(2)-setting. As we will see, if $[k]$ is self-conjugate, it is not difficult to put a Pin(2)-action on the lattice spectrum $\Hty_0(\Gamma, [k])$. However, it turns out that the map 
\[
\mathcal{T} \colon \Hty_0(\Gamma, [k]) \rightarrow \SWF(Y_\Gamma, \s)
\]
constructed in Lemma~\ref{lem:5.6} is \textit{not} obviously Pin(2)-equivariant. Hence we cannot immediately generalize the proof of Theorem~\ref{thm:1.1}. We explain this subtlety below.

\subsection{Almost $J$-invariant paths} \label{sec:6.1}
Recall that we have identified the set of $\spinc$-structures on $W_\Gamma$ with $\Char_\Gamma$. Under this correspondence, the action of conjugation is given by the obvious negation map $J \colon k \mapsto -k$. Note that the weight function $w$ (as a function on $\Char_\Gamma$) is evidently $J$-invariant. 

A self-conjugate $\spinc$-structure $[k]$ on $Y_\Gamma$ corresponds to an equivalence class $[k]$ which is sent to itself under $J$. Explicitly, if $[k]$ is self-conjugate, then it is easily checked that there must exist some fixed $w_1, \ldots, w_{|\Gamma|}$, each equal to zero or one, such that $[k]$ is precisely equal to
\[
[k] = \left\{ \sum_{i = 1}^{|\Gamma|} c_i v_i^* \colon c_i \equiv w_i \bmod 2\right\}.
\]
We refer to the $w_i$ as the \textit{Wu coefficients} associated to $[k]$ and the characteristic element $\smash{w = w_1 v_1^* + \cdots + w_{|\Gamma|} v_{|\Gamma|}^*} \in [k]$ as the \textit{Wu element}. For example, if $W_\Gamma$ is spin, then the restriction of this spin structure to $Y_\Gamma$ corresponds to the equivalence class $[k]$ with Wu coefficients $w_1 = \cdots = w_{\Gamma} = 0$.

Assume $[k]$ is self-conjugate. Then $J$ induces an automorphism on the set of lattice cubes in $[k]$, simply by sending a lattice cube $\square_d$ to its reflection $J(\square_d)$. Working over $\F = \Z/2\Z$ so as to disregard signs, the $\F[U]$-linear extension of this automorphism defines a chain map
\[
J \colon \Cla(\Gamma, [k]) \rightarrow \Cla(\Gamma, [k])
\]
which by abuse of notation we also denote by $J$. This involution was studied in \cite{DaiMonopole} and used to compute the Pin(2)-equivariant monopole Floer homology of $Y_\Gamma$. (A similar strategy was carried out in \cite{DaiManolescu} in the setting of involutive Heegaard Floer homology.)  Although we will not spell out the details here, it is possible to use $J$ to construct a $\Pin(2)$-equivariant spectrum whose underlying $S^1$-spectrum is $\Hty(\Gamma,[k])$, although it is not clear to us if the $\Pin(2)$-spectrum defined this way is suitably independent of the choices in its construction.\footnote{The construction of $G$-spectra from $G$-diagrams is treated in Dotto-Moi \cite{dotto-moi};  a very special case of this is also performed in \cite{stoffregen-zhang}. However, that framework does not apply exactly, since the diagram category here is itself sent to equivariant spectra.}    

However, in our context, we will be interested in putting a Pin(2)-structure on the path lattice spectrum $\Hty_0(\Gamma, [k])$, as this is the object which admits a map into $\SWF(Y_\Gamma, [k])$. Unfortunately, $\Hty_0(\Gamma, [k])$ turns out to be somewhat unnatural from the point of view of $J$. To understand why, observe that there is exactly one lattice cube $\square_J$ which is sent to itself by $J$, but the dimension of this cube will be given by the number of non-zero Wu coefficients. Thus, in general $[k]$ may not admit any $J$-invariant path, in which case defining a $\Pin(2)$-structure on $\Hty_0(\Gamma, [k])$ is not entirely natural. As we will see, this leads to an important additional subtlety with the proof of Theorem~\ref{thm:1.2}. See Figure~\ref{fig:6.1}.

\begin{figure}[h!]
\includegraphics[scale = 0.9]{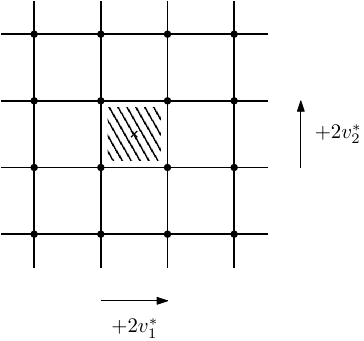}
\caption{An example of the action of $J$ on the affine lattice $[k]$. Here, the action of $J$ is given by reflection through the center of the single $J$-invariant lattice square, which is shaded.}\label{fig:6.1}
\end{figure}

The following lemma is taken from the proof of \cite[Theorem 3.2]{DaiMonopole}:

\begin{lemma}\cite[Theorem 3.2]{DaiMonopole}\label{lem:6.1}
The weight function $w$ takes the same value on all vertices of the $J$-invariant lattice cube $\square_J$.
\end{lemma}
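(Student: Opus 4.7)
The plan is to pin down $\square_J$ concretely, use the characteristic-element condition to show that the support $T$ of the Wu coefficients forms an independent set in the plumbing tree $\Gamma$, and then verify that $k^2$ is constant on the vertices of $\square_J$. For the first step: any $d$-cube in $[k]$ with vertices $\{k + 2\sum_{i \in S} v_{i_\ell}^* : S \subseteq \{i_1, \ldots, i_d\}\}$ has center $k + \sum_{\ell=1}^d v_{i_\ell}^*$, and since $J$ is the affine involution $k \mapsto -k$, invariance of the cube under $J$ forces this center to be the unique fixed point of $J$, namely $0$. Hence $k = -\sum_{\ell=1}^d v_{i_\ell}^*$ and the $2^d$ vertices take the form
\[
k_\epsilon = \sum_{\ell=1}^d \epsilon_\ell v_{i_\ell}^*, \qquad \epsilon \in \{\pm 1\}^d.
\]
Requiring $k_\epsilon \in [k]$ forces $\{i_1, \ldots, i_d\}$ to be precisely the set $T = \{i : w_i = 1\}$ of indices of nonzero Wu coefficients, so $\square_J$ is unique and of dimension $|T|$, matching the paper's assertion.

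The heart of the argument is showing that $T$ is an independent set in $\Gamma$. Since each $k_\epsilon$ must be a characteristic element, evaluating on $v_j$ for $j \in T$ yields
\[
k_\epsilon(v_j) = \sum_{i \in T} \epsilon_i (v_i, v_j)_\Gamma \equiv v_j^2 \pmod{2}.
\]
Every $\epsilon_i \in \{\pm 1\}$ is odd, so this reduces modulo $2$ to $\sum_{i \in T \setminus \{j\}} (v_i, v_j)_\Gamma \equiv 0 \pmod{2}$ for every $j \in T$. In other words, every vertex of the induced subgraph $\Gamma[T]$ has even degree in $\Gamma[T]$. Because $\Gamma$ is a tree (the standing hypothesis on the plumbing graphs in the lattice-homology setting used here and in \cite{DaiMonopole}), the induced subgraph $\Gamma[T]$ is a forest, and every nonempty forest contains a leaf of degree one. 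Therefore $\Gamma[T]$ must be edgeless; that is, $(v_i, v_j)_\Gamma = 0$ for all distinct $i, j \in T$.

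With $T$ independent, the computation is a single line:
\[
k_\epsilon^2 = \sum_{i, j \in T} \epsilon_i \epsilon_j (v_i^*, v_j^*)_\Gamma = \sum_{i \in T} v_i^2,
\]
which is manifestly independent of $\epsilon$. Consequently $w(k_\epsilon) = \tfrac{1}{4}\bigl(\sum_{i \in T} v_i^2 + |\Gamma|\bigr)$ takes the same value at every vertex of $\square_J$. The main conceptual obstacle is the combinatorial step showing that $T$ is independent; once that is in hand the remainder is a direct unwinding of definitions. Note that the weaker $J$-symmetry $w(k_\epsilon) = w(k_{-\epsilon})$ alone is insufficient, since it only pairs antipodal vertices of $\square_J$; the tree hypothesis is what promotes this partial symmetry to the full statement.
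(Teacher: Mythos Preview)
Your proof is correct and follows essentially the same line as the paper's: first establish that the set $T$ of nonzero Wu coefficients is independent in $\Gamma$, then expand $k_\epsilon^2$ and observe that the cross terms vanish. The only difference is that the paper cites \cite[Lemma~3.1]{DaiMonopole} for the independence of $T$, whereas you supply the argument directly (the characteristic condition forces every vertex of $\Gamma[T]$ to have even degree there, and a forest with an edge has a leaf); your version is thus more self-contained but otherwise identical in strategy.
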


\begin{proof}
In \cite[Lemma 3.1]{DaiMonopole}, it is shown that if $i$ and $j$ are two distinct indices with $w_i$ and $w_j$ both non-zero, then $v_i$ and $v_j$ are not adjacent in $\Gamma$; that is, $(v_i, v_j) = 0$. Now, the vertices of $\square_J$ are precisely the characteristic elements
\[
k = \sum_{i = 1}^{|\Gamma|} \epsilon_i w_i v_i^*
\]
with each $\epsilon_i \in \{-1, +1\}$. Expanding the self-pairing $(k, k)$ and applying the above fact shows that the value of the weight function $w$ on each of these vertices is the same.
\end{proof}

\begin{definition} \label{def:6.2}
We say that a path $\gamma$ is \textit{almost $J$-invariant} if we can decompose $\gamma = \gamma_\Theta \cup \gamma_0$, where:
\begin{enumerate}
\item The path $\gamma_\Theta$ connects two opposite corners $k_\Theta$ and $J(k_\Theta)$ of the invariant lattice cube $\square_J$. This path is contained in $\square_J$ but may not itself be $J$-invariant. Note that the weight function $w$ is constant along $\gamma_\Theta$.
\item The subset $\gamma_0$ consists of a pair of paths that are interchanged by $J$. One of these has an endpoint on $k_\Theta$, while the other has an endpoint on $J(k_\Theta)$; the paths are otherwise disjoint from $\square_J$.
\end{enumerate}
See Figure~\ref{fig:6.2}.
\end{definition}

\begin{figure}[h!]
\includegraphics[scale = 1]{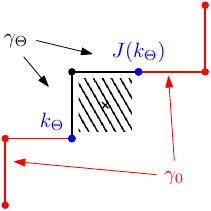}
\caption{An almost $J$-invariant path. Note that both $\gamma_\Theta$ and $\gamma_0$ contain the vertices $k_\Theta$ and $J(k_\Theta)$.}\label{fig:6.2}
\end{figure}

Using Lemma~\ref{lem:6.1}, it is not difficult to modify the algorithm of \cite[Section 7]{NemethiOS} to find a path $\gamma$ in $[k]$ which carries the lattice homology and is almost $J$-invariant.

\subsection{A Pin(2)-structure on $\Hty_0(\Gamma, [k])$} \label{sec:6.2}
Let $\gamma$ be an almost $J$-invariant path carrying the lattice homology. Clearly, we have
\[
\Hty(\gamma, [k]) = \Hty(\gamma_\Theta, [k]) \cup \Hty(\gamma_0, [k]),
\]
glued along $\mathcal{F}(k_\Theta) \vee \mathcal{F}(J(k_\Theta))$, as shown in Figure~\ref{fig:6.3}. We give each of these two pieces a Pin(2)-action as follows:
\begin{enumerate}
\item $\Hty(\gamma_\Theta, [k])$: Choose the cutoff weight $h$ so that $(w(\square_J) - h)$ is $0\bmod{4}$; that is to say $\mathbb{C}^{(w(\square_J)-h)/2}\cong \mathbb{H}^{(w(\square_J)-h)/4}$ so that $S^{((w(\square_J)-h)/2)\mathbb{C}}$ has the structure of a Pin(2)-sphere. It follows from Lemma~\ref{lem:6.1} that $\Hty(\gamma_\Theta, [k])$ is $S^1$-homotopy equivalent to
\[
S^{((w(\square_J) - h)/2)\C} \wedge I^+,
\]
where we view $I = [-1, 1]$. Let $j$ act on $I$ by $-1$, with the action of $S^1$ on $I$ being trivial. We then give $\Hty(\gamma_\Theta, [k])$ the product Pin(2)-structure.

\item $\Hty(\gamma_0, [k])$: First construct half of $\Hty(\gamma_0, [k])$ by applying the usual construction of Section~\ref{sec:5.2} to one of the two connected components of $\gamma_0$. Instead of constructing the other half arbitrarily, we introduce it as the formal image of the first half under $j$. More precisely, any $S^1$-sphere $(\C^n)^+$ may be completed to a pair of $S^1$-spheres
\[
(\C^n)^+ \vee j(\C^n)^+
\]
such that $j$ interchanges the two spheres and is skew-invariant with respect to the action of $S^1$. (That is, $e^{i \theta} j = j e^{-i \theta}$.) Let $\square_0$ and $J(\square_0)$ thus be a pair of lattice points in $\gamma_0$ related by $J$. Instead of introducing $\mathcal{F}(\square_0)$ and $\mathcal{F}(J(\square_0))$ independently, we introduce them simultaneously, so that 
\[
\mathcal{F}(\square_0) \vee \mathcal{F}(J(\square_0)) = S^{((w(\square_0) - h)/2)\C} \vee jS^{((w(\square_0) - h)/2)\C}
\]
as Pin(2)-spaces. A similar construction for the lattice edges and the gluing maps gives $\Hty(\gamma_0, [k])$ a Pin(2)-action. 
\end{enumerate}
We then glue together $\Hty(\gamma_\Theta, [k])$ and $\Hty(\gamma_0, [k])$ in a Pin(2)-equivariant manner to give a Pin(2)-action on all of $\Hty(\gamma, [k])$. We stress that as an $S^1$-space, the above construction is homotopy equivalent to that of $\Hty(\gamma, [k])$ in Section~\ref{sec:5.2}, as all we have done is re-parameterize $\Hty(\gamma_\Theta, [k])$ and chosen particular gluing/inclusion maps. See Figure~\ref{fig:6.3}.
\begin{figure}[h!]
\includegraphics[scale = 0.8]{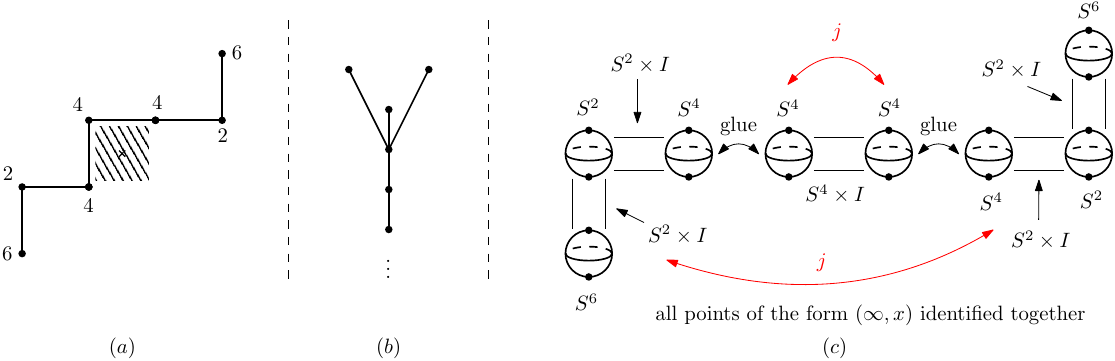}
\caption{An illustration of the $j$-action on $\Hty(\gamma, [k])$. Left: $(a)$ example weights on the vertices of $\gamma$. Middle: $(b)$ the corresponding graded root. Right: $(c)$ the decomposition of $\Hty(\gamma, [k])$ into the pieces $\Hty(\gamma_\Theta, [k])$ and $\Hty(\gamma_0, [k])$; all points of the form $(\infty, x)$ are identified together. On $\Hty(\gamma_0, [k])$, the action of $j$ interchanges the two halves. On $\Hty(\gamma_\Theta, [k])$, the action of $j$ is given by identifying $S^4 = \mathbb{H}^+$ and letting $j$ on $I = [-1, 1]$ be multiplication by $-1$. The product action descends to the quotient $(S^4 \times I)/(\{\infty\} \times I)$.}\label{fig:6.3}
\end{figure}

The following lemma shows that with these choices, the map $\mathcal{T}$ is compatible with the $\Pin(2)$-structure:

\begin{lemma}\label{lem:6.3}
The map $\mathcal{T}$ can be chosen such that
\[
\mathcal{T} \colon \Hty_0(\gamma, [k]) \rightarrow \SWF(Y_\Gamma, [k])
\]
is Pin(2)-equivariant on $\Hty(\gamma_0, [k])$.
\end{lemma}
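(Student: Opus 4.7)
The plan is to construct $\cT$ on all of $\Hty_0(\gamma, [k])$ by adapting the choices in Lemma~\ref{lem:5.6} so that its restriction to $\Hty(\gamma_0, [k])$ commutes with the $\Pin(2)$-action of Section~\ref{sec:6.2}. Decompose $\gamma_0 = \gamma_0^+ \sqcup \gamma_0^-$ into its two connected components, with $\gamma_0^- = J(\gamma_0^+)$. The $j$-action on $\Hty(\gamma_0, [k])$ swaps $\Hty(\gamma_0^+, [k])$ with $\Hty(\gamma_0^-, [k])$, so any $\Pin(2)$-equivariant map out of $\Hty(\gamma_0, [k])$ is uniquely determined by its $S^1$-equivariant restriction to $\Hty(\gamma_0^+, [k])$.

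First I would define $\cT$ on $\Hty(\gamma_0^+, [k])$ by the recipe of Lemma~\ref{lem:5.6}: on each vertex $k_i \in \gamma_0^+$, assign the Bauer-Furuta invariant $\Psi_{W_\Gamma, \ts_i}$, where $\ts_i$ denotes the $\spinc$-structure represented by $k_i$; on each edge of $\gamma_0^+$, choose an $S^1$-equivariant homotopy realizing the adjunction relation of Proposition~\ref{prop:adjunction}. Then I would extend by $j$-equivariance via $\cT(jx) := j\,\cT(x)$ on $\Hty(\gamma_0^-, [k])$. Since the two components are disjoint, this is well-defined, and the resulting map is $\Pin(2)$-equivariant on $\Hty(\gamma_0, [k])$ by construction. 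To complete the definition on $\Hty(\gamma_\Theta, [k])$ and glue the pieces together, I would apply the Lemma~\ref{lem:5.6} construction directly on $\gamma_\Theta$; the remaining task is to verify that the $j$-extension produces a map on $\Hty(\gamma_0^-, [k])$ consistent with the independent Bauer-Furuta choice on the shared endpoint sphere $\mathcal{F}(J(k_\Theta))$.

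The vertex-level compatibility rests on the $\Pin(2)$-equivariant Bauer-Furuta invariant from~(\ref{eq:non-spin-bf}). Its restrictions to the two wedge summands recover $\Psi_{W_\Gamma, \ts_i}$ and $\Psi_{W_\Gamma, \bar{\ts}_i}$ respectively, while the $j$-action on the source interchanges the summands. Consequently, one may fix Bauer-Furuta representatives so that $j \circ \Psi_{W_\Gamma, \ts_i} \circ j^{-1} = \Psi_{W_\Gamma, \bar{\ts}_i}$ on the nose. Applied to $k_i = k_\Theta$, this shows that at $\mathcal{F}(J(k_\Theta))$ the $j$-extended value of $\cT$ agrees with the Bauer-Furuta invariant $\Psi_{W_\Gamma, J(k_\Theta)}$ used on $\Hty(\gamma_\Theta, [k])$, so the three pieces $\Hty(\gamma_0^+, [k])$, $\Hty(\gamma_0^-, [k])$, and $\Hty(\gamma_\Theta, [k])$ fit together.

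The main obstacle is the corresponding compatibility for edges. For an edge $e_i \subset \gamma_0^+$ from $k_i$ to $k_{i+1} = k_i + 2v^*$, the $S^1$-homotopy chosen above realizes Proposition~\ref{prop:adjunction} for the plumbing sphere $S_v$ attached to $v$, with exponent $n = (k_i(v)+v^2)/2$. The $j$-conjugate homotopy lives on the image edge $J(e_i) \subset \gamma_0^-$, which runs from $J(k_i)$ to $J(k_{i+1}) = J(k_i) - 2v^*$. Applying Proposition~\ref{prop:adjunction} to the conjugate $\spinc$-structures with $L$ replaced by $L^{-1}$, one computes the same exponent $n$ (since $w$ is $J$-invariant and $c_1(L^{-1})\cdot[-S_v] = c_1(L)\cdot[S_v]$), so the adjunction relation is exchanged with itself under $J$. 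Hence the $j$-conjugate homotopy is a legitimate $S^1$-equivariant realization of Lemma~\ref{lem:5.6} on $J(e_i)$, which confirms that $\cT$ is well-defined on all of $\Hty_0(\gamma, [k])$ and that its restriction to $\Hty(\gamma_0, [k])$ is $\Pin(2)$-equivariant.
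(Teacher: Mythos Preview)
Your proposal is correct and follows essentially the same approach as the paper: define $\cT$ on one half of $\gamma_0$ via Lemma~\ref{lem:5.6}, extend to the other half by $j$-conjugation, and invoke the $\Pin(2)$-equivariant Bauer-Furuta map (\ref{eq:non-spin-bf}) to see that the vertex assignments on conjugate pairs are compatible. You supply somewhat more detail than the paper on two points---the gluing at the shared endpoint $\mathcal{F}(J(k_\Theta))$ and the verification that the $j$-conjugated homotopy is itself a valid adjunction homotopy---both of which the paper leaves implicit, but the underlying argument is the same.
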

\begin{proof}
Recall from the proof of Lemma~\ref{lem:5.6} that $\mathcal{T}$ is not canonically defined: we may choose any representative of the relative Bauer-Furuta invariant when defining $\mathcal{T}$ for lattice points, and similarly any homotopy (in the adjunction relation) when defining $\mathcal{T}$ for lattice edges. We use that, for any pair of vertices related by $J$, the associated pair of spheres in $\Hty(\gamma_0,[k])$ is identified with the domain of (\ref{eq:non-spin-bf}).  The construction of Section \ref{subsec:relative-bauer-furuta} then gives a $\Pin(2)$-equivariant map on the vertices of $\Hty(\gamma_0,[k])$.  The homotopies involved in the definition of $\mathcal{T}$ are defined to be $\Pin(2)$-equivariant by defining them first on one half $\gamma_L$ of $\gamma_0$, and then reflecting to define a homotopy on $J(\gamma_L)$.  
\end{proof}
If $W_\Gamma$ is spin and $[k]$ is the restriction of this spin structure to $Y_\Gamma$, then $\gamma_\Theta$ will consist of a single point corresponding to the zero characteristic element. In this case, $\mathcal{T}$ can be made Pin(2)-equivariant on all of $\Hty(\gamma, [k])$, since $\Psi_{W_\Gamma, 0}$ can be made Pin(2)-equivariant. However, in general we only know that $\mathcal{T}$ is an $S^1$-equivariant map on $\Hty(\gamma_\Theta, [k])$.

\subsection{Proof of Theorem \ref{thm:1.2}.} \label{sec:6.3}

We now complete the proof of Theorem~\ref{thm:1.2}. Throughout, let $[X, Y]_G$ denote stable homotopy classes of $G$-maps between two $G$-spectra; for notation on $G$-spectra we refer to \cite[Section 7]{sasahira-stoffregen}.  To consider $G$-spectra, it is necessary to fix a choice of \emph{universe}.  The natural choice of universe for our $\Pin(2)$-spectra is exactly $\mathcal{U}=\mathbb{R}^\infty\oplus \tilde{\mathbb{R}}^\infty\oplus \mathbb{H}^{\infty}$; note that $\mathcal{U}$ defines a $S^1$-universe $\mathcal{U}_{S^1}\cong \mathbb{R}^\infty\oplus \mathbb{C}^\infty$ by restricting along the inclusion $S^1\to \Pin(2)$.  A universe $\mathcal{V}$ is called \emph{complete} if every finite-dimensional orthogonal representation of $G$ embeds in $\mathcal{V}$.  Write $G$-$\mathrm{Sp}$ for the category of $G$-spectra, with the universe suppresed from the notation. 

For $H\subset G$ a closed subgroup, we will also frequently make use of the \emph{geometric $H$-fixed points} functor $\Phi^H\colon G$-$\mathrm{Sp}\to W_GH$-$\mathrm{Sp}$.  Here, $W_GH=N_GH/H$ is the Weyl group of $H$. If $\mathcal{V}$ is the universe associated to the domain of $\Phi^H$, then the universe associated to the codomain is $\mathcal{V}^H$, the $H$-fixed points of the universe $\mathcal{V}$.  In particular, for us the $S^1$-fixed points functor takes the universe $\mathcal{U}$ to a complete $C_2$-universe.  All spectra we consider below come from objects of $\mathfrak{C}_{S^1}$ and $\mathfrak{C}_{\Pin(2)}$; that is, we need only suspension spectra of finite $G$-CW complexes $X$, written as $\Sigma^{\infty}X$. For these, $\Phi^H(\Sigma^{\infty}X)=\Sigma^{\infty}X^H$, so fixed-point spectra are much like fixed-point sets. When there is no risk of confusion, we will sometimes conflate $X$ with its suspension spectrum.

Finally, two objects $(W_0,m_0,n_0)$ and $(W_1,m_1,n_1)$ of $\mathfrak{C}_{\Pin(2)}$ are defined to be equivalent if $n_1-n_0$ is an integer and the spectra 
\[
\Sigma^{m_1\tilde{\mathbb{R}}\oplus (n_0-n_1)\mathbb{H}}W_0 \quad \text{and} \quad \Sigma^{m_0\tilde{\mathbb{R}}}W_1
\]
are equivalent in the category of $\Pin(2)$-spectra over the universe $\mathcal{U}$.

We list the main tools of the proof of Theorem \ref{thm:1.2} here: 
\begin{enumerate}[label=(ho-\arabic*)]
\item The tom Dieck splitting \cite{tom-dieck-splitting, lewis-may-stein-mcclure}. For a very readable description see \cite[Section 4]{lin-mukherjee}:
\begin{theorem}\label{thm:tom-dieck}
Let $X$ be a pointed $G$-space ($G$ a compact Lie group), and $\mathcal{V}$ be a $G$-universe.  Then there is an isomorphism:
\[
\zeta=\sum \zeta^G_{H}\colon \bigoplus_{[H]} \pi^{W_GH}_{k,\mathcal{V}^H}(EW_GH_+\wedge X^H)\to \pi^G_{k,\mathcal{V}}(X)
\]
where the sum runs over conjugacy classes of closed subgroups of $G$ that arise as the isotropy group of some point $p$ in the universe $\mathcal{V}$. Here, the subscripts $\mathcal{V},\mathcal{V}^H$ serve to remind us that the homotopy groups are taken in the category of $G$ (etc.)-spectra over the universes $\mathcal{V}$ (resp. $\mathcal{V}^H$).  
\end{theorem}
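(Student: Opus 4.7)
The plan is to follow the standard isotropy-separation argument originally due to tom Dieck \cite{tom-dieck-splitting} and developed spectrum-theoretically in \cite{lewis-may-stein-mcclure}. First I would describe each map $\zeta^G_H$. Given a stable $W_GH$-map $f\colon S^k \to EW_GH_+\wedge X^H$, regard $f$ as an $N_GH$-map (with $H$ acting trivially), induce up along $N_GH\hookrightarrow G$, and post-compose with the canonical $G$-map $G_+\wedge_{N_GH}(EW_GH_+\wedge X^H)\to X$ obtained by collapsing $EW_GH$ to a point and using the action map $G_+\wedge_{N_GH}X^H\to X$. Combining this with the equivariant Pontryagin--Thom transfer for a wide embedding of $G/N_GH$ in a representation in the universe $\mathcal{V}$ produces the stable class $\zeta^G_H[f]\in\pi^G_{k,\mathcal{V}}(X)$.

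Next, to prove that $\zeta$ is an isomorphism, I would reduce to the case of finite pointed $G$-CW complexes and filter by orbit type. For a family $\mathcal{F}$ of closed subgroups of $G$ closed under subconjugation, let $E\mathcal{F}$ be the universal $G$-space with $(E\mathcal{F})^H\simeq \ast$ for $H\in\mathcal{F}$ and $(E\mathcal{F})^H=\emptyset$ otherwise, and let $\tilde{E}\mathcal{F}$ denote the cofiber of $E\mathcal{F}_+\to S^0$. For each conjugacy class $[H]$, let $\mathcal{F}[H]\subsetneq\mathcal{F}(H)$ denote the families of proper subconjugates and all subconjugates of $H$, respectively. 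Smashing the cofiber sequence $E\mathcal{F}[H]_+\to S^0\to\tilde{E}\mathcal{F}[H]$ with $E\mathcal{F}(H)_+\wedge X$ and inducting along an exhausting ascending chain of families breaks $X$ into pieces concentrated at each single orbit type $(H)$.

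The key identification, which I expect to be the main obstacle, is
\[
\pi^G_{k,\mathcal{V}}\bigl(\tilde{E}\mathcal{F}[H]\wedge E\mathcal{F}(H)_+\wedge X\bigr)\;\cong\;\pi^{W_GH}_{k,\mathcal{V}^H}\bigl(EW_GH_+\wedge X^H\bigr).
\]
I would establish this via the geometric fixed-point functor $\Phi^H$, using its compatibility with smash products together with the identifications $\Phi^H(\tilde{E}\mathcal{F}[H])\simeq S^0$, $\Phi^H(E\mathcal{F}(H)_+)\simeq EW_GH_+$, and $\Phi^H(\Sigma^\infty X)\simeq\Sigma^\infty X^H$; the first two follow from the defining characterization of $E\mathcal{F}$-spaces applied on $H$-fixed points, and the third is the standard formula for $\Phi^H$ on suspension spectra. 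The passage from $G$-equivariant to $W_GH$-equivariant homotopy groups, together with the corresponding change of universe from $\mathcal{V}$ to $\mathcal{V}^H$, is then supplied by the fact that $\Phi^H$ is a symmetric monoidal functor which induces an equivalence on the subcategory of $G$-spectra concentrated at orbit type $(H)$. A direct unwinding of the construction of $\zeta^G_H$ shows that it realizes the inverse of this equivalence, which completes the proof of the splitting.

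For the $G=C_2$ statement, the subgroups are $\{e\}$ and $C_2$ with Weyl groups $C_2$ and $\{e\}$, so the splitting reads
\[
\pi^{C_2}_k(X)\;\cong\;\pi^{C_2}_k\bigl((EC_2)_+\wedge X\bigr)\oplus\pi^e_k(X^{C_2}).
\]
Since $(EC_2)^{C_2}=\emptyset$ and hence $\bigl((EC_2)_+\wedge X\bigr)^{C_2}=\ast$, the fixed-point map $\pi^{C_2}_k(X)\to\pi^e_k(X^{C_2})$ annihilates the first summand and sends the second isomorphically to $\pi^e_k(X^{C_2})$, so its kernel equals the image of $\zeta^{C_2}_e$, as claimed.
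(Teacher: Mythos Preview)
The paper does not prove this theorem; it is quoted as a black box from the literature (tom Dieck's original paper and Lewis--May--Steinberger--McClure, with a pointer to Lin--Mukherjee for the $C_2$ addendum), and is used as one of several homotopy-theoretic tools in the proof of Theorem~\ref{thm:1.2}. There is therefore no ``paper's own proof'' to compare against. Your sketch follows the standard isotropy-separation argument from those references and is essentially correct.

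One small point worth tightening in the $C_2$ paragraph: you assert that the fixed-point map sends the second summand isomorphically onto $\pi^e_k(X^{C_2})$. This is true, but it requires checking that $\Phi^{C_2}\circ\zeta^{C_2}_{C_2}$ is the identity on $\pi^e_k(X^{C_2})$; this follows from unwinding the construction of $\zeta^{C_2}_{C_2}$ (here $W_{C_2}C_2=\{e\}$ and $EW_{C_2}C_2$ is a point, so $\zeta^{C_2}_{C_2}$ is induced by the inclusion $X^{C_2}\hookrightarrow X$, and applying $\Phi^{C_2}$ recovers the identity). With that said, the kernel statement follows exactly as you wrote.
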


We will apply the tom Dieck splitting in the setting $G = C_2$ and $k = 0$. This gives an isomorphism 
\[
\zeta^{C_2}_{e} \oplus \zeta^{C_2}_{C_2} \colon \pi_0^{C_2}((EC_2)_+ \wedge X) \oplus \pi_{0}(X^{C_2}) \rightarrow \pi^{C_2}_0(X).
\]

\item For a cofibration sequence of spectra $X\to Y\to Z$, there are exact sequences $[A,X]\to [A,Y]\to [A,Z]$ and $[Z,B]\to [Y,B]\to [X,B]$; the corresponding statement holds in $G$-spectra as well.  We note that fixed point functors $\Phi^H$ take cofibration sequences of spectra to cofibration sequences, so we obtain corresponding exact sequences on fixed points as well.  (We will usually refer to cofibration sequences of spectra as just \emph{exact sequences} of spectra)
\item \label{item:adjunction} Let $G$ be a compact Lie group and $H$ be a subgroup of $G$. Let $i\colon H \to G$ denote the inclusion and $i^*$ be the change-of-groups functor on spectra. There are natural bijections for $X$ a finite $H$-spectrum and $Y$ a finite $G$-spectrum:
\[
[G_+\wedge_{H} X,Y]_{G} \leftrightarrow [X,i^*Y]_{H}
\]
and
\[
[Y,G_+\wedge_{H} X]_{G} \leftrightarrow [\Sigma^{-L(H)}i^*Y,X]_{H}.
\]
See \cite[Theorem 5.1]{Adams-prerequisites} and \cite[Theorem 5.2]{Adams-prerequisites} for finite groups, or \cite[XVI.4, Theorem 4.9]{alaska} more generally.  Here, $L(H)$ is the tangent space, at the identity, of $G/H$, viewed as a representation of $H$.  

The bijections above show that (for $G$ a compact Lie group) the forgetful functor $i^*$ is right adjoint to $G_+\wedge -$, while $\Sigma^{-L(H)}i^*$ is left adjoint to $G_+\wedge -$. Explicitly, the construction of the above adjunctions are as follows: there is an $H$-map
\[
\alpha \colon X \rightarrow G_+\wedge_H X,
\]
sending $x\to (e,x)$ where $e\in G$ is the identity. The first bijection 
\[
\alpha^*\colon [G_+\wedge_H X,Y]_G \to [X,i^*Y]_H
\]
is obtained by applying the forgetful map $[G_+\wedge_H X,Y]_G\to [G_+\wedge_H X,i^*Y]_{H}$, followed by pullback along $\alpha$ to $[X,i^*Y]_H$. For the second bijection, in the case of finite $G$ there is likewise an $H$-map
\[
\alpha \colon G_+\wedge_H X \rightarrow X,
\]
see the proof of \cite[Theorem 5.2]{Adams-prerequisites}. The bijection 
\[
\alpha_* \colon [Y, G_+ \wedge_H X]_G \rightarrow [i^*Y, X]_H
\]
is similarly obtained by applying the forgetful map and then taking the pushforward along $\alpha$ to $[i^*(Y), X]_H$.  For the construction of the second bijection for general compact Lie groups, see \cite[XVI.4]{alaska}. 
\end{enumerate}

We now introduce the topological objects which will be used in the proof. Up to suspension, we may assume that the $\Pin(2)$-homotopy type of $\Hty(\gamma_\Theta, [k])$ is $S^0$; for notational convenience, let such a suspension be fixed. Let
\[
\Gamma_0 = \Hty(\gamma_0, [k]) \quad \text{and} \quad \Gamma = \Hty(\gamma, [k]).
\]
We remind the reader what these are in Figure~\ref{fig:aid} below. The crucial point is that $\Gamma$ is the union of $\Gamma_0$ and $\Hty(\gamma_\Theta, [k])$, which are comparatively simple: note that $\Gamma_0$ has a free $\Pin(2)$-action away from the basepoint and admits a $\Pin(2)$-map into $\SWF(Y, \s)$, while $\Hty(\gamma_\Theta, [k])$ is $\Pin(2)$-homotopy equivalent to $S^0$.

\begin{figure}[h!]
\includegraphics[scale = 0.65]{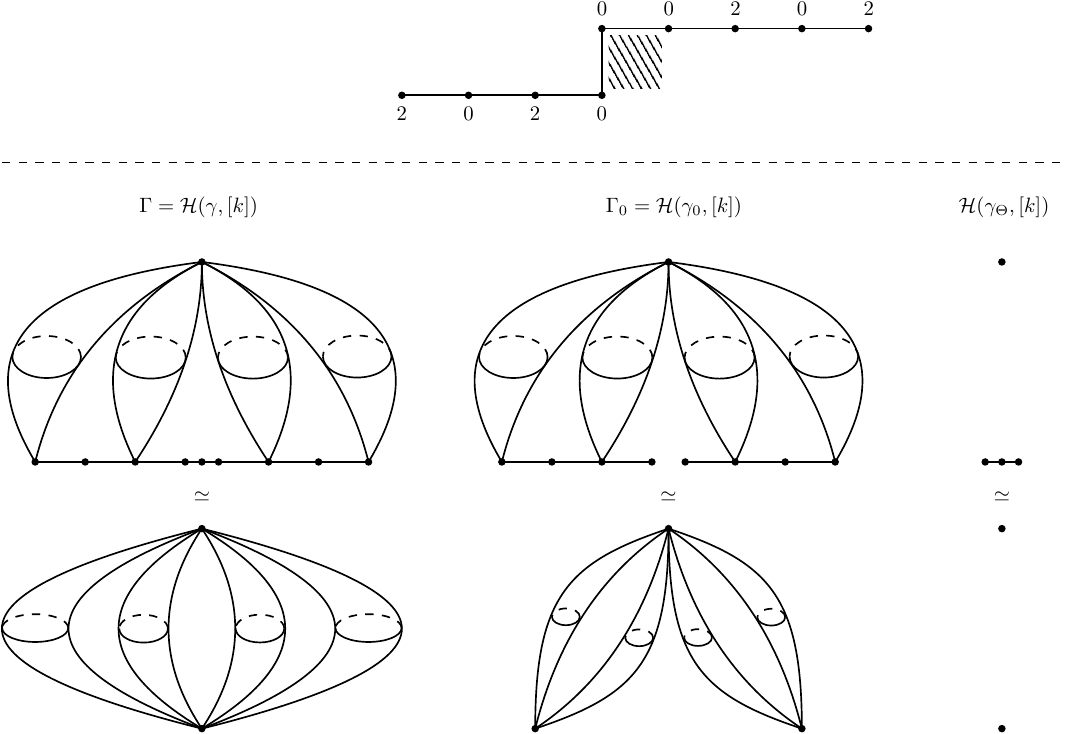}
\caption{Top: a sample almost $J$-invariant path $\gamma$. Middle: $\Gamma$, $\Gamma_0$, and $\Hty(\gamma_\Theta, [k])$. Each of the nine vertices of $\gamma$ contributes either an $S^0$ or an $S^2$ to $\Gamma$. Each sphere has a copy of the origin $0$ and the infinity point $\infty$; all nine $\infty$-points are identified together, while the nine copies of $0$ are displayed along the bottom of $\Gamma$. (See Remark~\ref{rem:5.1}.) Bottom: contracting the line segment along the bottom of $\Gamma$ is a homotopy equivalence.}\label{fig:aid}
\end{figure}

The idea of the proof of Theorem \ref{thm:1.2} is as follows. First observe that by direct inspection of the inclusion $\Gamma_0 \rightarrow \Gamma$, we have 
\[
\Sigma^{\tilde{\R}} S^0 = \text{Cone}(\Gamma_0 \rightarrow \Gamma)
\]
and thus an exact sequence of Pin(2)-spectra:
\begin{equation}\label{eq:exactsequence1}
\cdots \rightarrow \Gamma_0 \rightarrow \Gamma \rightarrow \Sigma^{\tilde{\R}} S^0 \rightarrow \cdots.
\end{equation}
As discussed in Lemma~\ref{lem:6.3}, $\mathcal{T}$ restricts to a Pin(2)-map from $\Gamma_0$ into $\SWF(Y, \s)$. Let
\[
\Theta = \text{Cone}(\Gamma_0 \xlongrightarrow{\mathcal{T}|_{\Gamma_0}} \SWF(Y, \s)).
\]
Note that this cone inherits a Pin(2)-structure. We have an analogous sequence of Pin(2)-spectra:
\begin{equation}\label{eq:exactsequence2}
\cdots \rightarrow \Gamma_0 \xrightarrow{\mathcal{T}|_{\Gamma_0}} \SWF(Y, \s) \rightarrow \Theta \rightarrow \cdots.
\end{equation}
Because $\mathcal{T}$ is an $S^1$-equivalence, (\ref{eq:exactsequence1}) and (\ref{eq:exactsequence2}) are isomorphic if we forget about the Pin(2)-action and consider them as $S^1$-sequences. That is, we have a commutative diagram
\begin{equation}\label{eq:exactsequences}
\cdots
\begin{tikzcd}
	{\Gamma_0} & \Gamma & {\Sigma^{\tilde{\mathbb{R}}} S^0} & {\Sigma^{\R} \Gamma^0} & {\Sigma^{\R} \Gamma}\\
	{\Gamma_0} & {\mathit{SWF}(Y, \mathfrak{s})} & \Theta & {\Sigma^{\R} \Gamma^0} & {\Sigma^{\R} \SWF(Y, \s)}
	\arrow[from=1-2, to=1-3]
	\arrow[from=1-3, to=1-4]
	\arrow[from=1-1, to=1-2]
	\arrow[from=1-4, to=1-5]
	\arrow[from=2-1, to=2-2]
	\arrow["{\mathcal{T}|_{\Gamma_0}}", from=2-2, to=2-3]
	\arrow[from=2-3, to=2-4]
	\arrow["\mathrm{id}",from=1-1, to=2-1]
	\arrow[from=1-3, to=2-3]
	\arrow["\mathrm{id}",from=1-4, to=2-4]
	\arrow["\Sigma^{\R}{\mathcal{T}}",from=1-5, to=2-5]
	\arrow["{\mathcal{T}}", from=1-2, to=2-2]
	\arrow[from=2-4, to=2-5]
\end{tikzcd}
\cdots
\end{equation}
where the rows are exact sequences of Pin(2)-spectra, but the second, third, and fifth vertical maps should \textit{a priori} be interpreted only as equivalences of $S^1$-spectra. 

We focus on the third square
\begin{equation}\label{eq:fundamentalsquare}
\begin{tikzcd}
{\Sigma^{\tilde{\mathbb{R}}}S^0} & {\Sigma^{\R}\Gamma_0} \\
{\Theta} & {\Sigma^{\R}\Gamma_0}
\arrow["\mathrm{id}", from=1-2, to=2-2]
\arrow["f", from=1-1, to=1-2]
\arrow["g", from=2-1, to=2-2]
\arrow["", from=1-1, to=2-1]
\end{tikzcd}
\end{equation}
appearing in $(\ref{eq:exactsequences})$. Now, from (\ref{eq:exactsequences}), we have
\begin{equation}\label{eq:cones}
\Sigma^{\R} \Gamma = \text{Cone}(\Sigma^{\tilde{\R}} S^0 \xrightarrow{f} \Sigma^{\R} \Gamma_0) \quad \text{and} \quad \Sigma^{\R} \SWF(Y, \s) = \text{Cone}(\Theta \xrightarrow{g} \Sigma^{\R} \Gamma_0)
\end{equation}
as Pin(2)-mapping cones. We claim that we can lift the left-hand vertical map in $\ref{eq:fundamentalsquare})$ -- which is \textit{a priori} only an equivalence of $S^1$-spectra -- to an equivalence of $\Pin(2)$-spectra such that $(\ref{eq:fundamentalsquare})$ commutes as a square of $\Pin(2)$-maps. By $(\ref{eq:cones})$, this immediately shows $\Sigma^{\R} \Gamma$ and $\Sigma^{\R} \SWF(Y, \s)$ are $\Pin(2)$-equivalent, and thus that $\Gamma$ and $\SWF(Y, \s)$ are $\Pin(2)$-equivalent, as desired.

The claim of the previous paragraph will follow quickly from a calculation of $\Theta$ as a $\Pin(2)$-spectrum. Note that since the vertical maps of $(\ref{eq:exactsequences})$ are $S^1$-equivalences, we have
\[
\Theta \simeq_{S^1} \smash{\Sigma^{\tilde{\R}} S^0}
\]
as $S^1$-spectra. We sometimes still write $\smash{\Sigma^{\tilde{\R}} S^0}$, even though as an $S^1$-spectrum this is just $\smash{\Sigma^{\R} S^0}$. In Lemma~\ref{lem:pin2calculationtheta} below, it is shown that
\[
\smash{\Theta \simeq_{\Pin(2)} \Sigma^{\tilde{\R}} S^0}.
\]
Given this, the remainder of the proof of Theorem~\ref{thm:1.2} is relatively short:

\begin{proof}[Proof of Theorem~\ref{thm:1.2}]
By Lemma~\ref{lem:pin2calculationtheta}, $\smash{\Theta\simeq_{\Pin(2)} \Sigma^{\tilde{\mathbb{R}}}S^0}$ as $\mathrm{Pin}(2)$-spectra. To ask whether the left-hand vertical map in (\ref{eq:fundamentalsquare}) lifts, we thus consider the map
\[
[\Sigma^{\tilde{\R}} S^0, \Theta]_{\Pin(2)} =  [S^0, S^0]_{\Pin(2)} \rightarrow [\Sigma^{\tilde{\R}} S^0, \Theta]_{S^1} = [S^0, S^0]_{S^1}.
\]
Now, it is not difficult to check that
\[
[S^0,S^0]_{S^1} = \pi_0^{S_1}(S^0) = \mathbb{Z}.
\]
Indeed, in general $\pi_0^G(S^0)$ is isomorphic to the Burnside ring $A(G)$; for the case of $G$ a compact Lie group, see \cite[Theorem 8.5.1]{tom_dieck_transformation}. In the case of a compact Lie group, $A(G)$ has a slightly subtle definition -- as discussed in \cite[Proposition 5.5.1]{tom_dieck_transformation}, $A(G)$ is additively isomorphic to the free abelian group generated by conjugacy classes of subgroups $H$ such that $N(H)/H$ is finite. Since the only such subgroup of $S^1$ is $S^1$ itself, we easily obtain the above computation. It follows immediately that: 
\begin{enumerate} 
\item Every element of $[S^0,S^0]_{S^1}$ lifts to an element of $[S^0,S^0]_{\Pin(2)}$ (since $1 \in [S^0, S^0]_{S^1}$ lifts and this generates $[S^0, S^0]_{S^1}$); and,
\item There are exactly two equivalences $\pm 1 \in [S^0,S^0]_{S^1}$. These clearly lift to equivalences $\pm 1 \in [S^0,S^0]_{\Pin(2)}$.
\end{enumerate}

We moreover claim that if $h$ is any $S^1$-map on the left-hand side of $(\ref{eq:fundamentalsquare})$ which makes $(\ref{eq:fundamentalsquare})$ homotopy commute as a square of $S^1$-maps, then any $\Pin(2)$-lift $\smash{\tilde{h}}$ of $h$ will automatically make $(\ref{eq:fundamentalsquare})$ commute as a square of $\Pin(2)$-maps. Roughly speaking, the reason for this is as follows. Recall that $\Gamma_0$ -- and thus $\Sigma^{\R}\Gamma_0$ -- has a free $\Pin(2)$-action away from the basepoint. The adjunction relation \ref{item:adjunction} then shows that the horizontal maps $f$ and $g$ in $(\ref{eq:fundamentalsquare})$ are determined by their behavior as $S^1$-maps. Thus, if $(\ref{eq:fundamentalsquare})$ commutes as a square of $S^1$-maps, then any $\Pin(2)$-lift will automatically commute. The precise version of this argument is given below.

Write
\[
\Gamma_0 = \Pin(2)_+\wedge_{S^1} \mathcal{H}(\gamma_L, [k])
\]
where $\gamma_L$ is one of the two components of $\gamma_0$. Suppose $h \colon \Sigma^{\tilde{\R}} S^0 \rightarrow \Theta$ is an $S^1$-map on the left-hand side of $(\ref{eq:fundamentalsquare})$ which makes $(\ref{eq:fundamentalsquare})$ commute as a square of $S^1$-maps, and let $\smash{\tilde{h}}$ be any $\Pin(2)$-lift. Consider the commutative diagram
\begin{equation}
\begin{tikzcd}\label{eq:M-square}
	{[\Theta, \Sigma^{\R} \Gamma_0]_{\Pin(2)}} & {[i^*(\Theta), \Sigma^{\R} \mathcal{H}(\gamma_L, [k])]_{S^1}} \\
	{[\Sigma^{\tilde{\R}} S^0, \Sigma^{\R} \Gamma_0]_{\Pin(2)}} & {[i^*(\Sigma^{\tilde{\R}} S^0), \Sigma^{\R} \mathcal{H}(\gamma_L, [k])]_{S^1}} 
	\arrow["{h^*}", from=1-2, to=2-2]
	\arrow["{\alpha_*}", from=1-1, to=1-2]
	\arrow["{\alpha_*}", from=2-1, to=2-2]
	\arrow["{\tilde{h}^*}", from=1-1, to=2-1].
\end{tikzcd}
\end{equation}
Here, the horizontal rows are given by \ref{item:adjunction}; the fact that the diagram commutes is proven easily from the definition of $\alpha_*$. For the maps $\smash{f \in [\Sigma^{\tilde{\R}} S^0, \Sigma^{\R} \Gamma_0]_{\Pin(2)}}$ and $\smash{g \in [\Theta, \Sigma^{\R} \Gamma_0]_{\Pin(2)}}$ of $(\ref{eq:fundamentalsquare})$, we have assumed 
\[
i^*(f) = i^*(g) \circ h
\]
where $i^*(f)$ and $i^*(g)$ are the maps $f$ and $g$ considered as $S^1$-morphisms. We claim
\[
f  = g \circ \tilde{h}.
\]
Indeed, observe that by definition of $\alpha_*$, we have
\[
\alpha_*(f) = \alpha \circ i^*(f) = \alpha \circ i^*(g) \circ h = h^*(\alpha_*(g)).
\]
Because $\alpha_*$ is a bijection, commutativity of (\ref{eq:M-square}) shows $f = \smash{\tilde{h}}^*(g)$; that is, $f = g \circ \smash{\tilde{h}}$. Hence $(\ref{eq:fundamentalsquare})$ commutes as a square of $\Pin(2)$-maps.

Putting everything together, we know that there exists a $\Pin(2)$-equivalence which lifts the original $S^1$-equivalence along the left-hand vertical column of $(\ref{eq:fundamentalsquare})$. With this lift, $(\ref{eq:fundamentalsquare})$ still commutes as a square of $\Pin(2)$-maps. It follows that the $\Pin(2)$-mapping cones of $f$ and $g$ are equivalent. By $(\ref{eq:cones})$, this completes the proof of Theorem \ref{thm:1.2}.
\end{proof}

The remainder of this subsection is devoted to proving Lemma~\ref{lem:pin2calculationtheta} and carrying out the computation of $\Theta$ as a $\Pin(2)$-spectrum. We begin by producing a candidate map
\[
\eta \colon \Theta \rightarrow \Sigma^{\tilde{\R}} S^0.
\]
This is done by first mapping $\Theta$ into some large $\Pin(2)$-sphere, and then successively factoring this map through smaller and smaller $\Pin(2)$-spheres. As we will see in our proof of Lemma~\ref{lem:pin2calculationtheta} it will also be useful to control the behavior of $\eta$ on $\Pin(2)$-fixed point spectra.
\begin{lemma}\label{lem:6.6}
For $p$ and $q$ sufficiently large, there exists a Pin(2)-map
\[
\eta' \colon \Theta \rightarrow \Sigma^{q \tilde{\R} + p \mathbb{H}} S^0.
\]
This may be taken to be an equivalence on $\Pin(2)$-fixed point spectra.
\end{lemma}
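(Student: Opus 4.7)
The plan is to construct $M$ in two steps: first define it on $\Theta^{S^1}$ using the $C_2$-equivalence provided by Lemma~\ref{lem:6.5}, then extend over the free Pin(2)-cells of $\Theta \setminus \Theta^{S^1}$ by equivariant obstruction theory.

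First I would upgrade the $C_2$-equivalence $\Phi^{S^1}\Theta \simeq_{C_2} \Sigma^{\tilde{\R}}S^0$ from Lemma~\ref{lem:6.5} to an unstable $C_2$-equivariant map of finite $C_2$-CW complexes $M_0 \colon \Theta^{S^1} \to S^{q\tilde{\R}}$. Such an unstable representative exists for $q$ sufficiently large, since both sides are finite $C_2$-CW complexes (after the appropriate suspensions). Identifying $S^{q\tilde{\R}}$ with the $S^1$-fixed point set of $\Sigma^{q\tilde{\R}+p\mathbb{H}}S^0$ and noting that the Pin(2)-action on $\Theta^{S^1}$ factors through the quotient $\Pin(2) \to C_2$, the map $M_0$ is automatically Pin(2)-equivariant into the target.

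Next I would extend $M_0$ over all of $\Theta$. Since $\Theta$ is of type SWF, Pin(2) acts freely on $\Theta \setminus \Theta^{S^1}$, so $\Theta$ is built from $\Theta^{S^1}$ by attaching cells of the form $\Pin(2)_+ \wedge D^n$. Extending an already-defined Pin(2)-map over such a free cell is equivalent, by the freeness of the action, to extending the underlying non-equivariant map on $D^n$; the corresponding obstruction lies in $\pi_{n-1}(\Sigma^{q\tilde{\R}+p\mathbb{H}}S^0)$. As a non-equivariant spectrum the target is $S^{q + 4p}$, hence $(q+4p-1)$-connected. Choosing $p$ large enough so that $q + 4p$ exceeds the top cell dimension of $\Theta$ kills all such obstructions and produces the required extension $M$.

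Finally, the $\Pin(2)$-fixed point equivalence is nearly immediate. Each pair of spheres $\mathcal{F}(\square_0) \vee \mathcal{F}(J(\square_0))$ used to build $\Gamma_0$ has trivial Pin(2)-fixed point set, since $j$ interchanges the two summands, so $\Phi^{\Pin(2)}\Gamma_0 = \ast$. Applying $\Phi^{\Pin(2)}$ to the cofibration $\Gamma_0 \to \SWF(Y, \s) \to \Theta$ gives $\Phi^{\Pin(2)}\Theta \simeq \Phi^{\Pin(2)}\SWF(Y, \s) \simeq S^0$, and similarly $\Phi^{\Pin(2)}(\Sigma^{q\tilde{\R} + p\mathbb{H}}S^0) = S^0$. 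The induced map $M^{\Pin(2)}$ is the Pin(2)-fixed part of $M_0$, which is the underlying non-equivariant equivalence $S^0 \to S^0$ coming from the $C_2$-fixed points of the $C_2$-equivalence $\Phi^{S^1}\Theta \simeq_{C_2} \Sigma^{\tilde{\R}}S^0$. The main technical point requiring attention is the extension step, where $p$ must be chosen large enough for obstruction theory to terminate; everything else is formal.
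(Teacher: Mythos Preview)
Your proof is correct and takes a genuinely different route from the paper. The paper argues as follows: it first shows $[\Gamma_0,\Sigma^{q\tilde{\R}+p\mathbb{H}}S^0]_{\Pin(2)}=0$ for $p,q$ large, so the cofiber sequence $\Gamma_0\to\SWF(Y,\s)\to\Theta$ yields a bijection $[\Theta,\Sigma^{q\tilde{\R}+p\mathbb{H}}S^0]_{\Pin(2)}\cong[\SWF(Y,\s),\Sigma^{q\tilde{\R}+p\mathbb{H}}S^0]_{\Pin(2)}$; it then produces a class $N$ on the right-hand side as the Bauer--Furuta invariant of a spin $4$-manifold bounding $Y$, with $\Phi^{S^1}N=c^q$, and defines $M$ to be the corresponding class on the left. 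The fixed-point condition is then read off from the commutative square relating $\Phi^{S^1}$ of both groups. Your argument instead builds $M$ directly by equivariant obstruction theory: Lemma~\ref{lem:6.5} supplies the map on $\Theta^{S^1}$, and the free $\Pin(2)$-cells impose only nonequivariant obstructions in $\pi_{*}(S^{q+4p})$, which vanish for $p$ large. Your approach is more self-contained in that it avoids the external geometric input of a spin filling and stays entirely within the homotopy theory already set up; the paper's approach, on the other hand, is consonant with its overall strategy of producing all maps via relative Bauer--Furuta invariants. One small point worth making explicit in your write-up: the unstable representative of the $C_2$-equivalence from Lemma~\ref{lem:6.5} may \emph{a priori} require trivial $\R$-suspensions as well as $\tilde{\R}$-suspensions, but since $\R$ is $\Pin(2)$-trivial you may absorb these into a global suspension of $\Theta$ without disturbing the argument.
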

\begin{proof}
Recall the exact sequence (\ref{eq:exactsequence2}) of $\Pin(2)$-spectra
\[
\cdots \rightarrow \Gamma_0 \rightarrow \SWF(Y, \s) \xrightarrow{m} \Theta \rightarrow \Sigma^{\R} \Gamma_0 \cdots
\]
where we have denoted the map $\SWF(Y, \s) \rightarrow \Theta$ by $m$. Taking $\Phi^{\Pin(2)}$, we obtain an exact sequence
\[
\cdots \rightarrow \Phi^{\Pin(2)} \Gamma_0 \rightarrow \Phi^{\Pin(2)}\SWF(Y, \s) \xrightarrow{\Phi^{\Pin(2)}m} \Phi^{\Pin(2)}\Theta \rightarrow \Phi^{\Pin(2)} \Sigma^{\R} \Gamma_0 \rightarrow \cdots
\]
Take maps from these exact sequences into $\smash{\Sigma^{q\tilde{\mathbb{R}}+p\mathbb{H}}S^0}$ and $\Phi^{\Pin(2)} \smash{\Sigma^{q\tilde{\mathbb{R}}+p\mathbb{H}}S^0} = S^0$, respectively. We focus on the commutative square
\begin{equation}\label{eq:mapisomorphism}
\begin{tikzcd}
	{[\Theta,\Sigma^{q\tilde{\mathbb{R}} + p\mathbb{H}}S^0]_{\mathrm{Pin}(2)}} & {[\mathit{SWF}(Y,\mathfrak{s}),\Sigma^{q\tilde{\mathbb{R}} + p\mathbb{H}}S^0]_{\mathrm{Pin}(2)}} \\
	{[\Phi^{\Pin(2)} \Theta, S^0]} & {[\Phi^{\Pin(2)}\SWF(Y, \s),S^0]}
	\arrow["m^*", from=1-1, to=1-2]
	\arrow["{\Phi^{\Pin(2)}m^*}",from=2-1, to=2-2]
	\arrow["{\Phi^{\Pin(2)}}"', from=1-1, to=2-1]
	\arrow["{\Phi^{\Pin(2)}}", from=1-2, to=2-2]
\end{tikzcd}
\end{equation}
where the bottom row is obtained from the top row by taking $\Pin(2)$-fixed points. Now, using \cite[Proposition 4.2]{Adams-prerequisites}, it follows from the cell description of $\Gamma_0$ that 
\[
[\Gamma_0, \Sigma^{q\tilde{\mathbb{R}}+p\mathbb{H}}S^0]_{\Pin(2)}=0
\]
for $p$ and $q$ sufficiently large, and similarly for $\Sigma^{\mathbb{R}}\Gamma_0$. Thus the top row of $(\ref{eq:mapisomorphism})$ is a bijection for such $p$ and $q$. Note also that since $\Gamma_0$ has no $\Pin(2)$-fixed points -- other than the basepoint -- we have that $\Phi^{\Pin(2)} \Gamma_0$ and $\Phi^{\Pin(2)} \Sigma^{\R} \Gamma_0$ are trivial, and so $\smash{\Phi^{\Pin(2)}m}$ is an equivalence.

Now, for $p$ and $q$ sufficiently large, there exists a $\Pin(2)$-map 
\[
\eta'' \colon \SWF(Y, \s) \to \Sigma^{q\tilde{\mathbb{R}}+p\mathbb{H}} S^0
\]
which is an equivalence on $\Pin(2)$-fixed points. This may be deduced from the fact that $Y$ bounds some spin $4$-manifold, which has a Bauer-Furuta map of this form. The map $\eta''$ is an element in the top-right corner of (\ref{eq:mapisomorphism}); let $\eta'$ be the corresponding element of the top-left corner. By commutativity of (\ref{eq:mapisomorphism}), we have
\[
(\Phi^{\Pin(2)} \eta') \circ (\Phi^{\Pin(2)} m) = \Phi^{\Pin(2)} \eta''.
\]
Since $\Phi^{\Pin(2)} m$ and $\Phi^{\Pin(2)} \eta''$ are equivalences, we have $\Phi^{\Pin(2)} \eta'$ is an equivalence, as desired.
\end{proof}

We now show that the map $\eta'$ of Lemma~\ref{lem:6.6} can be factored through successively smaller and smaller $\Pin(2)$-spheres to obtain a map $\smash{\eta \colon \Theta \rightarrow \Sigma^{\tilde{\R}} S^0}$. Denote by 
\[
c \colon S^0 \rightarrow \Sigma^{\tilde{\R}} S^0 \quad \text{and} \quad V \colon S^0 \rightarrow \Sigma^{\mathbb{H}} S^0
\]
the standard $\Pin(2)$-inclusions.

\begin{lemma}\label{lem:6.7}
Let $q \geq 2$. Any $\Pin(2)$-map 
\[
\eta' \colon \Theta \rightarrow \Sigma^{q \tilde{\R} + p \mathbb{H}} S^0
\]
can be factored as $\eta' = c^{q-1} V^p \eta$ for some $\Pin(2)$-map
\[
\eta \colon \Theta \rightarrow \Sigma^{\tilde{\R}} S^0.
\]
If $\eta'$ induces an equivalence on $\Pin(2)$-fixed point spectra, then so does $\eta$.
\end{lemma}
\begin{proof}
Consider the exact sequence
\begin{equation}\label{eq:c-exact}
S^0\xlongrightarrow{c} \Sigma^{\tilde{\R}} S^0 \to \Sigma^{\R} (C_2)_+
\end{equation}
of $\Pin(2)$-spectra. Suspend this sequence by $\tilde{\R}^{q-1}$ and take maps from $\Theta$ into the result. We claim that
\begin{equation}\label{eq:c-transfer}
[\Theta,\Sigma^{(q-1)\tilde{\mathbb{R}}}S^0]_{\Pin(2)}\xlongrightarrow{c_*}[\Theta,\Sigma^{q\tilde{\mathbb{R}}}S^0]_{\Pin(2)}
\end{equation}
is surjective for $q\geq 2$.  Indeed, observe that the third term in the exact sequences vanishes, since by \ref{item:adjunction} we have
\begin{align*}
[\Theta, \Sigma^{(q-1) \tilde{\mathbb{R}}} (\Sigma^{\R}(C_2)_+)]_{\Pin(2)}=[i^*(\Theta), \Sigma^{(q-1) \tilde{\R}} (\Sigma^{\R} S^0)]_{S^1} = [\Sigma^{\R} S^0, \Sigma^{q\R} S^0]_{S^1}
\end{align*}
and this is zero whenever $q \geq 2$. This shows that $\eta'$ can be lifted through $c^{q-1}$. 

To lift $\eta'$ through $V^p$, consider the analogous exact sequence
\begin{equation}\label{eq:V-exact}
S^0\xlongrightarrow{V} \Sigma^{\mathbb{H}} S^0 \to \Sigma^{\R} (S(\mathbb{H})_+),
\end{equation}
where $S(\mathbb{H})$ is the unit sphere in $\mathbb{H}$. Suspend this sequence by $\mathbb{H}^{p-1}$ and take maps from $\Theta$ into the result. We likewise claim that
\begin{equation}\label{eq:V-transfer}
[\Theta,\Sigma^{(p-1)\mathbb{H}}S^0]_{\Pin(2)}\xlongrightarrow{V_*}[\Theta,\Sigma^{p\mathbb{H}}S^0]_{\Pin(2)}
\end{equation}
is surjective for $p \geq 1$. For this, it suffices to prove
\begin{equation}\label{eq:p-arg}
[\Theta,\Sigma^{(p-1)\mathbb{H}}\Sigma^{\mathbb{R}}(S(\mathbb{H})_+)]_{\mathrm{Pin}(2)}=0
\end{equation}
whenever $p \geq 1$. We establish this computation by repeatedly applying \ref{item:adjunction} to show that any morphism as in (\ref{eq:p-arg}) can be homotoped to have image in successively smaller and smaller skeleta of the codomain. Consider the standard $\mathrm{Pin}(2)$-equivariant CW structure on $S(\mathbb{H})$ having one cell each in degrees zero, one, and two. Write $X_i$ for the equivariant $i$-skeleton of $S(\mathbb{H})_+$, so that $X_2=S(\mathbb{H})_+$.  Fix any morphism as in (\ref{eq:p-arg}). Following this by the (suspension of the) map $S(\mathbb{H})_+\to X_2/X_1=\Sigma^{2\mathbb{R}}(\mathrm{Pin}(2)_+)$ gives an element of 
\[
[\Theta,\Sigma^{(p-1)\mathbb{H}}\Sigma^{3\mathbb{R}}(\mathrm{Pin}(2)_+)]_{\mathrm{Pin}(2)}.
\]
Now apply \ref{item:adjunction} with $G = \Pin(2)$ and $H = \{e\}$. Noting that $L(H) = \R$ in this case, we have that the above set is in bijection with
\[
[\Sigma^{-\R} \Theta,\Sigma^{(p-1)\mathbb{H}}\Sigma^{3\mathbb{R}} S^0]=[S^0,\Sigma^{(p-1)\mathbb{H}}\Sigma^{3\mathbb{R}} S^0],
\]
where we have used that $\Theta = \Sigma^{\R} S^0$ as a non-equivariant spectrum. (As discussed previously, $\Theta = \Sigma^{\R}S^0$ as $S^1$-equivariant spectra.) This is clearly zero, so a morphism as in (\ref{eq:p-arg}) must factor through $\Sigma^{(p-1)\mathbb{H}}\Sigma^{\mathbb{R}}X_1$. Repeating the above argument using the map $X_1 \mapsto X_1/X_0 = \Sigma^{\R} (\mathrm{Pin}(2)_+)$ shows that our morphism likewise factors through $\Sigma^{(p-1)\mathbb{H}}\Sigma^{\mathbb{R}}X_0$. Since $X_0 = \Pin(2)_+$, this is just an element of
\[
[\Theta,\Sigma^{(p-1)\mathbb{H}}\Sigma^{\mathbb{R}}\mathrm{Pin}(2)_+]_{\mathrm{Pin}(2)}.
\]
Applying \ref{item:adjunction} once last time, we have that the above set is in bijection with
\[
[\Sigma^{-\R} \Theta,\Sigma^{(p-1)\mathbb{H}}\Sigma^{\R} S^0] = [S^0,\Sigma^{(p-1)\mathbb{H}}\Sigma^{\R} S^0],
\]
which is again zero. This establishes (\ref{eq:p-arg}) and hence that (\ref{eq:V-transfer}) is surjective, as desired.

We have thus shown that $\eta'$ can be factored as $\eta' = c^{q-1} V^p \eta$. If $\eta'$ is an equivalence on $\Pin(2)$-fixed point spectra, then clearly $\eta$ is also, since $c$ and $V$ are equivalences of $\Pin(2)$-fixed point spectra.
\end{proof}

Lemmas~\ref{lem:6.6} and \ref{lem:6.7} give a $\Pin(2)$-map $\eta \colon \Theta \rightarrow \Sigma^{\R} S^0$ which is an equivalence on $\Pin(2)$-fixed points. We will need one more ingredient for the proof of Lemma~\ref{lem:pin2calculationtheta}, which is that $\eta$ can be taken to be an equivalence on $S^1$-fixed points. For this, we use the following technical computation regarding the $S^1$-fixed point spectrum of $\Theta$:

\begin{lemma}\label{lem:6.5}
We have
\[
\Phi^{S^1}\Theta \simeq_{C_2} \Phi^{S^1} \Sigma^{\tilde{\R}}S^0 = \Sigma^{\tilde{\R}}S^0.
\]
\end{lemma}
\begin{proof}
The argument is similar to the proof of Theorem~\ref{thm:1.2}. Applying the $S^1$-fixed point functor to $(\ref{eq:exactsequence1})$ and $(\ref{eq:exactsequence2})$ gives the exact sequences of $C_2$-spectra
\[
\cdots \rightarrow \Phi^{S^1}\Gamma_0 \rightarrow \Phi^{S^1}\Gamma \rightarrow \Phi^{S^1}\Sigma^{\tilde{\R}} S^0 \rightarrow \cdots
\]
and
\[
\cdots \rightarrow \Phi^{S^1}\Gamma_0 \rightarrow \Phi^{S^1}\SWF(Y, \s) \rightarrow \Phi^{S^1}\Theta \rightarrow \cdots.
\]
These fit together into the commutative diagram obtained by applying $\Phi^{S^1}$ to (\ref{eq:exactsequences}). However, note that the second, third, and fifth columns obtained by doing so are \textit{a priori} maps of non-equivariant spectra. Consider the leftmost commutative square
\begin{equation}\label{eq:exactfixed}
\begin{tikzcd}
	{\Phi^{S^1}\Gamma_0} & {\Phi^{S^1}\Gamma} \\
	{\Phi^{S^1}\Gamma_0} & {\Phi^{S^1}\mathit{SWF}(Y, \mathfrak{s})} 
	\arrow["f", from=1-1, to=1-2]
	\arrow["g", from=2-1, to=2-2]
	\arrow[from=1-2, to=2-2]
	\arrow["\mathrm{id}", from=1-1, to=2-1].
\end{tikzcd}
\end{equation}
Here, the two horizontal rows are maps of $C_2$-spectra, whereas the right-hand vertical map should initially only be interpreted as a non-equivariant equivalence. Now,
\[
\Phi^{S^1}\Sigma^{\tilde{\R}}S^0 = \text{Cone}(\Phi^{S^1}\Gamma_0 \xrightarrow{f} \Phi^{S^1}\Gamma) \quad \text{and} \quad \Phi^{S^1}\Theta = \text{Cone}(\Phi^{S^1}\Gamma_0 \xrightarrow{g} \Phi^{S^1}\SWF(Y, \s)).
\]
As in the proof of Theorem~\ref{thm:1.2}, it thus suffices to show that the right-hand vertical map can be lifted to a $C_2$-equivalence such that $(\ref{eq:exactfixed})$ commutes as a square of $C_2$-maps.

By direct inspection, we have
\[
\Phi^{S^1}\Gamma \simeq_{C_2} S^0 \quad \text{and} \quad \Phi^{S^1}\SWF(Y, \s) \simeq_{C_2} S^0.
\]
To ask whether the right-hand vertical map in (\ref{eq:exactfixed}) lifts, we thus consider the map
\[
[\Phi^{S^1}\Gamma, \Phi^{S^1}\SWF(Y, \s)]_{C_2} = [S^0, S^0]_{C_2} \rightarrow [\Phi^{S^1}\Gamma, \Phi^{S^1}\SWF(Y, \s)] = [S^0, S^0].
\]
Since $[S^0, S^0] \cong \Z$, we immediately see that: 
\begin{enumerate} 
\item Every element of $[S^0,S^0]$ lifts to an element of $[S^0,S^0]_{C_2}$ (since $1 \in [S^0, S^0]$ lifts and this generates $[S^0, S^0]$); and,
\item There are exactly two equivalences $\pm 1 \in [S^0,S^0]$. These clearly lift to equivalences $\pm 1 \in [S^0,S^0]_{C_2}$.
\end{enumerate}

We moreover claim that if $h$ is any non-equivariant map on the right-hand side of (\ref{eq:exactfixed}) which makes (\ref{eq:exactfixed}) commute as a square of non-equivariant maps, then any $C_2$-lift $\smash{\tilde{h}}$ of $h$ will automatically make (\ref{eq:exactfixed}) commute as a square of $C_2$-maps. For this, consider the diagram
\begin{equation}\label{eq:commutative1}
\begin{tikzcd}
	{[\Phi^{S^1}\Gamma_0, \Phi^{S^1}\Gamma]_{C_2}} & {[S^0, i^*(\Phi^{S^1}\Gamma)]} \\
	{[\Phi^{S^1}\Gamma_0, \Phi^{S^1}\mathit{SWF}(Y, \mathfrak{s})]_{C_2}} & {[S^0, i^*(\Phi^{S^1}\mathit{SWF}(Y,\mathfrak{s}))]}
	\arrow["{h_*}", from=1-2, to=2-2]
	\arrow["{\alpha^*}", from=1-1, to=1-2]
	\arrow["{\alpha^*}", from=2-1, to=2-2]
	\arrow["{\tilde{h}_*}",from=1-1, to=2-1]
\end{tikzcd}
\end{equation}
Here, the horizontal rows are given by \ref{item:adjunction} using the fact that $\Phi^{S_1}\Gamma_0 = (C_2)_+$ as $C_2$-spectra. The fact that the diagram commutes is easily proven using the definition of $\alpha^*$. Let $h$ and $\smash{\tilde{h}}$ be as described. For the maps $\smash{f \in [\Phi^{S^1}\Gamma_0, \Phi^{S^1}\Gamma]_{C_2}}$ and $\smash{g \in [\Phi^{S^1}\Gamma_0, \Phi^{S^1}\SWF(Y, \s)]_{C_2}}$ of (\ref{eq:exactfixed}), we have supposed that
\[
i^*(g) = h\circ i^*(f) 
\]
where $i^*(f)$ and $i^*(g)$ are the maps $f$ and $g$ considered as non-equivariant morphisms. We claim
\[
g = \tilde{h} \circ f. 
\]
Indeed, observe that by definition of $\alpha^*$, we have
\[
\alpha^*(g) = i^*(g) \circ \alpha = h \circ i^*(f) \circ \alpha = h_*(\alpha^*(f)).
\]
Because $\alpha^*$ is a bijection, the commutativity of (\ref{eq:commutative1}) shows $g = \tilde{h}_*(f)$; that is, $g = \tilde{h} \circ f$. 

We thus see that there exists a $C_2$-equivalence which lifts the original non-equivariant equivalence along the right-hand vertical column of (\ref{eq:exactfixed}) With this lift, (\ref{eq:exactfixed}) still commutes as a square of $C_2$-maps. This completes the proof.
\end{proof}

We now use Lemma~\ref{lem:6.5} to establish the following:

\begin{lemma}\label{lem:thetacharacterization}
There exists a $\Pin(2)$-map
\[
\eta \colon \Theta \rightarrow \Sigma^{\tilde{\R}} S^0
\]
which is an equivalence on $\Pin(2)$- and $S^1$-fixed point sets.
\end{lemma}
\begin{proof}
Using Lemmas~\ref{lem:6.6} and \ref{lem:6.7}, we already have a $\Pin(2)$-map $\eta$ that induces an equivalence on $\Pin(2)$-fixed point sets. Recall that $\eta$ is constructed by factoring the map $\eta'$ of Lemma~\ref{lem:6.6} through smaller and smaller $\Pin(2)$-spheres. Observe that there is an ambiguity in the construction of $\eta$: in the case $q = 2$, suspending (\ref{eq:c-exact}) gives
\[
\cdots \to \Sigma^{\tilde{\R}} (C_2)_+ \to \Sigma^{\tilde{\R}}  S^0\xlongrightarrow{c} \Sigma^{2\tilde{\R}} S^0 \to \Sigma^{\tilde{\R}} (\Sigma^{\R} (C_2)_+) \to \cdots.
\] 
Take maps from $\Theta$ into this sequence. We showed in the proof of Lemma~\ref{lem:6.7} that the fourth term $\smash{[\Theta, \Sigma^{\tilde{\R}} (\Sigma^{\R} (C_2)_+)]_{\Pin(2)} = 0}$, so that $\smash{c_* \colon [\Theta, \Sigma^{\tilde{\R}}S^0]_{\Pin(2)} \rightarrow [\Theta, \Sigma^{2\tilde{\R}} S^0]_{\Pin(2)}}$ is surjective. However, note that
\[
[\Theta, \Sigma^{\tilde{\R}} (C_2)_+]_{\Pin(2)} = [i^*(\Theta), \Sigma^{\R} S^0]_{S^1} = [\Sigma^{\R} S^0, \Sigma^{\R} S^0]_{S^1},
\]
which is not zero. Hence lifting through $c$ in the case $q = 2$ has an ambiguity given by the image of the connecting morphism:
\[
[\Theta,\Sigma^{\tilde{\R}} (C_2)_+]_{\Pin(2)} \rightarrow [\Theta, \Sigma^{\tilde{\R}} S^0]_{\Pin(2)}.
\]
We claim that we can use this ambiguity to specify the behavior of $\eta$ on the $S^1$-fixed point spectrum. (There is a similar ambiguity from the exact sequence for $V$ in the case $p = 1$, although we will not use this here.)

To investigate the behavior of $\eta$ on $S^1$-fixed points, consider the commutative diagram
\begin{equation}\label{eq:tie-to-s1-fix}
\begin{tikzcd}
	{[\Theta,\Sigma^{\tilde{\R}} (C_{2})_+]_{\Pin(2)}} & {[\Theta,\Sigma^{\tilde{\mathbb{R}}} S^0]_{\Pin(2)}} \\
	{[\Phi^{S^1}\Theta,\Sigma^{\tilde{\R}} (C_2)_+]_{C_2}} & {[\Phi^{S^1}\Theta,\Sigma^{\tilde{\mathbb{R}}} S^0]_{C_2}}
	\arrow[from=1-1, to=1-2]
	\arrow[from=2-1, to=2-2]
	\arrow["{\Phi^{S^1}}"', from=1-1, to=2-1]
	\arrow["{\Phi^{S^1}}", from=1-2, to=2-2],
\end{tikzcd}
\end{equation}
where the first row is the aforementioned connecting homomorphism and the bottom row is its counterpart after taking $S^1$-fixed points. The top-left corner of this square is given by
\[
[\Theta, \Sigma^{\tilde{\R}}(C_2)_+]_{\Pin(2)} = [i^*(\Theta), \Sigma^{\tilde{\R}}S^0]_{S^1} = [\Sigma^{\tilde{\R}}S^0, \Sigma^{\tilde{\R}}S^0]_{S^1} = [S^0, S^0]_{S^1}.
\]
Here, the first equality follows from \ref{item:adjunction} and the second equality follows from the calculation of $\Theta$ as an $S^1$-spectrum. The bottom-left corner of the square is given by
\[
[\Phi^{S^1}\Theta,\Sigma^{\tilde{\R}} (C_2)_+]_{C_2} = [i^*(\Phi^{S^1}\Theta), \Sigma^{\tilde{\R}}S^0] = [\Sigma^{\tilde{\R}} S^0, \Sigma^{\tilde{\R}} S^0] = [S^0, S^0].
\]
Here, the first equality again follows from \ref{item:adjunction} and the second equality follows from the calculation of $\smash{\Phi^{S^1} \Theta}$ as a non-equivariant spectrum (which is a consequence of Lemma~\ref{lem:6.5}). Finally, the bottom-right corner of the square is given by
\[
[\Phi^{S^1}\Theta,\Sigma^{\tilde{\R}} S^0]_{C_2} = [\Sigma^{\tilde{\R}} S^0, \Sigma^{\tilde{\R}} S^0]_{C_2} = [S^0, S^0]_{C_2},
\]
with the first equality following from Lemma~\ref{lem:6.5}.

We now observe the following:

\begin{enumerate}
\item The left-hand vertical map in (\ref{eq:tie-to-s1-fix}) is a bijection. This corresponds to the map
\[
[S^0, S^0]_{S^1} \rightarrow [S^0, S^0].
\]
Note that we know both of these groups are $\Z$ (as discussed in the proof of Theorem~\ref{thm:1.2}) and hence generated by $1$. The claim follows immediately.
\item To understand the bottom map in (\ref{eq:tie-to-s1-fix}), we calculate the bottom-left and bottom-right corners of (\ref{eq:tie-to-s1-fix}) slightly differently. Applying the tom Dieck splitting to the bottom-left corner of (\ref{eq:tie-to-s1-fix}), we obtain
\begin{align*}
[\Phi^{S^1}\Theta,\Sigma^{\tilde{\R}} (C_2)_+]_{C_2} &= [S^0, (C_2)_+]_{C_2} = \pi_0^{C_2}((EC_2)_+ \wedge (C_2)_+) \oplus \pi_0^{\{e\}}(\{\text{pt}\}) \\
&= \pi_0((EC_2)_+) \oplus \{e\} \cong \Z.
\end{align*}
Applying the tom Dieck splitting to the bottom-right corner of (\ref{eq:tie-to-s1-fix}), we obtain
\begin{align*}
[\Phi^{S^1}\Theta,\Sigma^{\tilde{\mathbb{R}}} S^0]_{C_2} &= [S^0, S^0]_{C_2} = \pi_0^{C_2}((EC_2)_+\wedge S^0) \oplus \pi_0^{\{e\}}(S^0) \\
&= \pi_0((BC_2)_+) \oplus \pi_0(S^0) = \Z\langle w, 1 \rangle.
\end{align*}
Since the tom Dieck splitting is functorial, the bottom row of (\ref{eq:tie-to-s1-fix}) is identified with
\[
\pi_0^{C_2}((EC_2)_+\wedge (C_2)_+) = \pi_0((EC_2)_+) \to \pi_0^{C_2}((EC_2)_+\wedge S^0) = \pi_0((BC_2)_+).
\]
This is the same as a map (coming from the natural map of nonequivariant spaces) $\pi_0((EC_2)_+)\to \pi_0((BC_2)_+)$. The map $(EC_2)_+\to (BC_2)_+$ sends the non-basepoint component of $(EC_2)_+$ to the non-basepoint component of $(BC_2)_+$, and so is an isomorphism on $\pi_0$. In particular, under the above identifications the bottom map of (\ref{eq:tie-to-s1-fix}) sends $\Z$ isomorphically onto $\Z\langle w \rangle \subset \Z\langle w, 1 \rangle$.
\end{enumerate}
Let us now take our map $\smash{\eta \in [\Theta, \Sigma^{\tilde{\R}} S^0]_{\Pin(2)}}$, which lies in the top-right corner of (\ref{eq:tie-to-s1-fix}) and consider $\smash{\Phi^{S^1} \eta \in \Z\langle w, 1 \rangle}$.  Say $\Phi^{S^1}(\eta)=aw+b\in A(C_2)=\mathbb{Z}\langle w,1\rangle$. Since $\Phi^{S^1}(\eta)$ is an equivalence on $C_2$-fixed points, we have $b=\pm 1$.  

By our description of the bottom row of (\ref{eq:tie-to-s1-fix}), the image of the bottom-left corner in the bottom-right corner is $\mathbb{Z}\langle w\rangle$.    Thus, there is some $\omega$ in the bottom-left corner of (\ref{eq:tie-to-s1-fix}) so that $\smash{\Phi^{S^1} \eta}+\omega\in \mathbb{Z}\langle 1\rangle \subset A(C_2)$; since $\Phi^{S^1}(\eta)=aw\pm 1$, we have $\smash{\Phi^{S^1}\eta}+\omega=\pm 1$, so $\Phi^{S^1}(\eta)+\omega$ is an equivalence.  Since the left-hand vertical map of (\ref{eq:tie-to-s1-fix}) is a bijection, we may take an element $\tilde{\omega}$ in the top-left corner such that $\smash{\Phi^{S^1} \tilde{\omega} = \omega}$. If we change $\eta$ by the image of $\tilde{\omega}$, then by commutativity of (\ref{eq:tie-to-s1-fix}), we have that $\smash{\Phi^{S^1} \eta}$ is an equivalence. This completes the proof.
\end{proof}

Putting everything together, we have:

\begin{lemma}\label{lem:pin2calculationtheta}
We have a $\Pin(2)$-equivalence
\[
\smash{\Theta \simeq_{\Pin(2)} \Sigma^{\tilde{\R}} S^0}.
\]
\end{lemma}
\begin{proof}
By Lemma~\ref{lem:thetacharacterization}, we have a $\Pin(2)$-map $\smash{\eta \colon \Theta \rightarrow \Sigma^{\tilde{\R}} S^0}$ which is an equivalence on $\Pin(2)$- and $S^1$-fixed point spectra. The fact that $\eta$ is an equivalence on $S^1$-fixed point spectra implies that it induces an isomorphism on Tate homology. Since $\smash{\Theta \simeq_{S^1} \Sigma^{\tilde{\R}}} S^0$, the $S^1$-Borel homology of $\Theta$ consists of a single tower. An $S^1$-map from $\Theta$ to $\smash{\Sigma^{\tilde{\R}}S^0}$ which induces an isomorphism on Tate homology thus induces an isomorphism on Borel homology. By the same argument as in the proof of Lemma~\ref{lem:5.7}, we conclude that $\eta$ induces a (non-equivariant, stable) equivalence from $\Theta$ to $\smash{\Sigma^{\tilde{\R}}S^0}$. The equivariant Whitehead theorem then shows that $\eta$ is a Pin(2)-equivalence.
\end{proof}


\section{Computations} \label{sec:7}

We now give several consequences of Theorems~\ref{thm:1.1} and \ref{thm:1.2}. First, we prove that for AR homology spheres, the relation of local equivalence in the category of spectra \cite[Definition 2.7]{Stoffregen} coincides with the relation of local equivalence defined for chain complexes; see \cite[Definition 2.19]{Stoffregen} and \cite[Definition 8.5]{HM}. Following Remark~\ref{rem:2.1}, we also show that the construction of the lattice spectrum depends only on the homology of the graded root $R$, rather than its combinatorial structure. Finally, we establish Corollaries~\ref{cor:a} through \ref{cor:d}. Throughout this section, we write $G = \Pin(2)$.

\subsection{Graded roots and local maps} 

The goal of this section will be to show that various notions of local equivalence (e.g.\ for spectra, chain complexes, and graded roots) coincide for AR homology spheres. It will be helpful for the reader to be broadly familiar with the discussion of \cite{DaiManolescu} regarding symmetric graded roots. We review this briefly, adapting the original discussion over $\F = \Z/2\Z$ to $\Z$-coefficients.

\begin{definition}
Let $R$ be a graded root as in Section~\ref{sec:2.1}. We say that $R$ is \textit{symmetric} if it is invariant under reflection about a vertical axis; we denote the resulting involution by $J$. Note that in \cite{DaiManolescu} (and also Section~\ref{sec:6} above) we have used $\F$-coefficients, but it will be convenient for us to use $\Z$-coefficients in this subsection. Thus, $J$ is a $\Z[U]$-equivariant involution on (the $\Z[U]$-module corresponding to) $R$. It should be noted that if $x_i$ is a leaf of $R$, then $x_i - Jx_i$ is $U$-torsion, rather than $x_i + Jx_i$. 
\end{definition}

\begin{definition} \label{def:gradedrootmap}
Let $R_1$ and $R_2$ be two graded roots. A \textit{graded root map} is a grading-preserving, $\Z[U]$-equivariant map from the $\Z[U]$-module represented by $R_1$ to the $\Z[U]$-module represented by $R_2$. If $R_1$ and $R_2$ are symmetric, then we require $F$ to intertwine these symmetries. We denote such a map by
\[
F \colon R_1 \rightarrow R_2.
\]
Our terminology is slightly misleading, in that a graded root map need not preserve the combinatorial structure of the graded root and is simply a map between the corresponding $\Z[U]$-modules. We say that a graded root map is \textit{local} if it induces an isomorphism after inverting $U$. We say that $R_1$ and $R_2$ are \textit{locally equivalent} if there exist local maps in both directions.
\end{definition}

In \cite[Section 4]{DaiManolescu}, it is shown that every symmetric graded root $R$ is locally equivalent to a particularly simple subroot, which we call the \textit{monotone subroot of $R$}. The proof is sketched pictorially in Figure~\ref{fig:MSubroot}: the local map in one direction is obtained by collapsing $R$ onto its monotone subroot via pushing the (non-subroot) leaves of $R$ horizontally towards the central stem, while the map in the other direction is given by inclusion. These are clearly $J$-equivariant local maps. While the proof of \cite[Theorem 6.1]{DaiManolescu} gives this argument over $\F$, it is clear that both maps may be defined over $\Z$ in precisely the same manner.

\begin{figure}[h!]
\includegraphics[scale = 0.7]{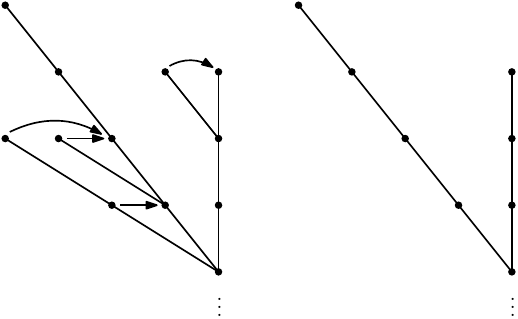}
\caption{Left: the left half of a symmetric graded root. Right: its monotone subroot. The arrows on the left schematically depict the local map which collapses the graded root onto its monotone subroot.}\label{fig:MSubroot}
\end{figure}

Although we will exclusively work with the homology associated to a graded root, it may be conceptually helpful to note that \cite{DaiManolescu} actually deals with chain complexes over $\f[U]$, rather than their homologies. More precisely, in \cite[Section 4]{DaiManolescu} the authors construct a particularly convenient chain complex associated to a graded root $R$, called the \textit{standard complex}. The standard complex associated to $R$ is then locally equivalent (as defined in \cite[Definition 8.5]{HM}) to the standard complex associated to its monotone subroot. While \cite{DaiManolescu} deals with $\F$-coefficients, it is again possible to lift this result to $\Z$-coefficients, although care must be taken with signs when defining the standard complex. Explicitly, if $R$ is a symmetric graded root, then we construct its standard complex over $\Z[U]$ as follows:

\begin{enumerate}
\item We introduce a generator $x_i$ (over $\Z[U]$) for each leaf of $R$, treating the uppermost $J$-invariant vertex of $R$ as a degenerate leaf.
\item We introduce a generator $\alpha_i$ (over $\Z[U]$) for each upwards-opening angle of $R$, as in Figure~\ref{fig:SComplex}. Furthermore, if there is a $J$-invariant angle, we subdivide it into two equal angles, as indicated in Figure~\ref{fig:SComplex}.
\item For every angle to the left of the axis of symmetry, we set 
\[
\partial \alpha_i = U^{(\gr(x_i) - \gr(\alpha_i))/2} x_i - U^{(\gr(x_{i+1}) - \gr(\alpha_i))/2} x_{i+1},
\]
while every angle to the right of the axis of symmetry, we set
\[
\partial \alpha_i = - U^{(\gr(x_i) - \gr(\alpha_i))/2} x_i + U^{(\gr(x_{i+1}) - \gr(\alpha_i))/2} x_{i+1}.
\]
\end{enumerate}

\begin{figure}[h!]
\includegraphics[scale = 0.9]{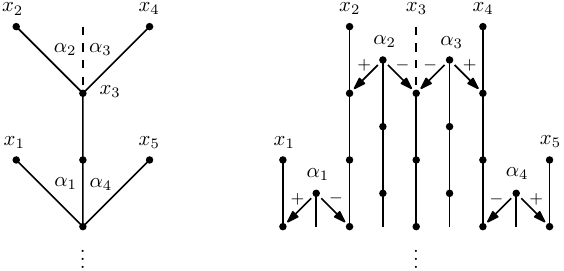}
\caption{Left: a symmetric graded root. Right: the generators of the standard complex. Arrows represent the differential, labeled with sign.}\label{fig:SComplex}
\end{figure}

We stress that the choice of signs in the differential is not analogous to the sign assignment in Definition~\ref{def:4.3}: the sign assignment here differs depending on whether $\alpha_i$ is to the left or right of the axis of symmetry. Due to the choice of signs in the differential, the involution $J$ which maps each generator of the standard complex to its corresponding reflection is a $U$-equivariant chain map. We emphasize that we are specifically working with graded roots, and that we do not claim to construct an $\Z[U]$-equivariant involution $J$ on the full lattice complex with $\Z$-coefficients.  Clearly, this standard complex reduces modulo two to that of \cite[Section 4]{DaiManolescu}. It is straightforward to check that \cite[Theorem 6.1]{DaiManolescu} can be generalized to this setting.

Note that our construction of the lattice spectrum $\Hty(\Gamma, [k])$ factors through the graded root associated to $\Hla(\Gamma, [k])$. In general, given an arbitrary graded root $R$, we denote by $\Hty(R)$ the resulting $S^1$-lattice homotopy type, with the understanding that if $R$ is symmetric then $\Hty(R)$ is a Pin(2)-homotopy type. The following fundamental lemma shows that any graded root map may be lifted to a map of spectra. If the original map is symmetric, then the result may be taken to be $\Pin(2)$-morphism under a mild grading hypothesis. 

\begin{lemma}\label{lem:graded-root-switch}
Let $R_1$ and $R_2$ be two graded roots and suppose that we have a graded root map $F \colon R_1 \rightarrow R_2$. Then there exists a $S^1$-equivariant morphism
\[
\mathcal{F} \colon \Hty(R_1) \rightarrow \Hty(R_2)
\]
which induces the map $F$ on ($S^1$-equivariant) co-Borel homology. Moreover, suppose $R_1$ and $R_2$ are symmetric graded roots and $F$ is equivariant. If the uppermost $J$-invariant vertices of $R_1$ and $R_2$ lie in the same grading $\bmod$ $4$, then $\mathcal{F}$ may be taken to be a $\Pin(2)$-morphism.  
\end{lemma}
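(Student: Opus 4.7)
The plan is to mimic the construction in Lemma \ref{lem:5.6}, but replacing the Bauer-Furuta inputs with purely sphere-theoretic inclusion maps coming from the graded roots themselves. Throughout, recall from the proof of Lemma~\ref{lem:5.5} that $\Hty(R_i)$ is built from spheres indexed by leaves, glued along smaller spheres indexed by angles. The key observation is that for any vertex $v$ of $R_2$ at grading $\gr(v) \geq h$, one has a canonical (up to $S^1$-homotopy) inclusion $S^{(\gr(v) - h)\R} \hookrightarrow \Hty(R_2)$ obtained by including into any leaf sphere lying above $v$; all such choices agree up to homotopy by the gluing relations in $\Hty(R_2)$. This determines, via linearity in the stable category $\mathfrak{C}_{S^1}$, an $S^1$-morphism $\iota_z \colon S^{(\gr(z) - h)\R} \to \Hty(R_2)$ for any homogeneous element $z \in R_2$, whose image under $\cB(-)$ is $z$.

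\textbf{Construction of $\mathcal{F}$ as an $S^1$-map.} For each leaf $x$ of $R_1$, write $F(x) \in R_2$ as a $\Z$-linear combination of vertices at grading $\gr(x)$, and define $\mathcal{F}$ on the leaf sphere $S^{x} \subset \Hty(R_1)$ to be $\iota_{F(x)}$. Next, consider an angle $\alpha$ between two consecutive leaves $x_i, x_{i+1}$ of $R_1$. The gluing relation $U^{(\gr(x_i) - \gr(\alpha))/2} x_i = U^{(\gr(x_{i+1}) - \gr(\alpha))/2} x_{i+1}$ in $R_1$ maps under $F$ to the corresponding relation in $R_2$, so the two restrictions $\iota_{F(x_i)}|_{S^\alpha}$ and $\iota_{F(x_{i+1})}|_{S^\alpha}$ are $S^1$-homotopic. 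As in the proof of Lemma~\ref{lem:5.6}, we use this homotopy to define $\mathcal{F}$ over the intermediate piece $\mathcal{F}(e) = S^{((\gr(\alpha) - h)/2)\C} \wedge I^+$ coming from the path-lattice construction (cf.\ Section~\ref{sec:5.2}). This assembles to a well-defined $S^1$-morphism $\mathcal{F} \colon \Hty(R_1) \to \Hty(R_2)$, and by Lemma~\ref{lem:5.5} the induced map on co-Borel homology is $F$ on each leaf generator, hence equals $F$.

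\textbf{The Pin(2)-enhancement.} Now suppose $R_1, R_2$ are symmetric and $F$ is $J$-equivariant. Organize the vertices of each $R_i$ into $J$-pairs together with an axis of $J$-fixed vertices; the uppermost $J$-fixed vertex of $R_i$ sits at some grading $g_i$. The assumption $g_1 \equiv g_2 \pmod 4$ lets us choose a cutoff $h$ (sufficiently negative) with $g_1 - h, g_2 - h \in 4\Z$, so that both ``central'' $J$-invariant spheres $S^{((g_i-h)/2)\C}$ carry compatible $\mathbb{H}$-structures. Choose almost $J$-invariant presentations of $R_1$ and $R_2$ as in Section~\ref{sec:6.2}, giving decompositions $\Hty(R_i) = \Hty(\gamma_{\Theta,i}) \cup \Hty(\gamma_{0,i})$ with the Pin(2)-action of Section~\ref{sec:6.2}.

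On $\Hty(\gamma_{0,1})$, group the leaves of $R_1$ into $J$-pairs $\{x, Jx\}$; on such a pair, $F(Jx) = JF(x)$ by $J$-equivariance, so defining $\mathcal{F}$ on the sphere over $x$ by the formula above and then extending by $j$-reflection on the $Jx$-sphere produces a $\Pin(2)$-equivariant map into $\Hty(\gamma_{0,2})$. On $\Hty(\gamma_{\Theta,1})$, the $J$-equivariant element $F$ applied to the top $J$-invariant generator is a $J$-invariant element of $R_2$ in grading $g_1$. Using the $\mathbb{H}$-identifications provided by the mod $4$ hypothesis, the relevant map between central pieces is a $\Pin(2)$-equivariant inclusion of $\mathbb{H}$-spheres (composed with a Pin(2)-equivariant collapse of the $I$-factor in $\gamma_{\Theta,i}$, where $j$ acts by $-1$). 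Finally, the two pieces are glued along $\mathcal{F}(k_\Theta) \vee \mathcal{F}(J(k_\Theta))$, and by the same homotopy-interpolation trick as in the $S^1$-case, arranged $j$-equivariantly (define the homotopy on one half and reflect), the assembly is Pin(2)-equivariant.

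\textbf{Main obstacle.} The only nontrivial step is guaranteeing that the individual leaf-wise maps can be glued along the angle subspheres. In the $S^1$-case this is exactly the mechanism used in Lemma~\ref{lem:5.6}, now powered by $\Z[U]$-linearity of $F$ in place of the adjunction relation. In the Pin(2)-case, the additional subtlety is handling the central $J$-invariant piece and the reflection symmetry simultaneously; this is precisely why the mod $4$ grading hypothesis on the top $J$-invariant vertices is needed, as it is what enables the $\mathbb{H}$-sphere identifications required for a $\Pin(2)$-equivariant inclusion between the two central pieces.
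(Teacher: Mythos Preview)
Your argument is correct and follows essentially the same strategy as the paper's proof: define $\mathcal{F}$ leafwise via sphere maps determined by the components of $F$, interpolate over the angle spheres using the $\Z[U]$-linearity of $F$ (in place of the adjunction relation from Lemma~\ref{lem:5.6}), and in the $\Pin(2)$-case split into the central $J$-invariant piece (where the $\bmod\ 4$ hypothesis yields an $\mathbb{H}$-sphere inclusion) and the off-center pieces (handled by defining on one half and reflecting by $j$). Two minor imprecisions worth tightening: the target of your map on $\Hty(\gamma_{0,1})$ should be all of $\Hty(R_2)$ rather than $\Hty(\gamma_{0,2})$ (since $F(x)$ for a non-central leaf $x$ may have central support), and when $F$ applied to the top $J$-invariant vertex has both central and non-central $J$-invariant components you should combine the $\mathbb{H}$-inclusion with the reflection construction additively---the paper treats these two sub-cases explicitly.
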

\begin{proof}
We handle the $S^1$-case first. Recall from Section~\ref{sec:2.1} that $R_1$ is generated (over $\Z[U]$) by the leaves $\{x_i\}_{i = 1}^n$, subject to the relations
\begin{equation}\label{eq:F-relations}
U^{(\gr(x_i)-\gr(\alpha_i))/2}x_i \sim U^{(\gr(x_{i+1})-\gr(\alpha_i))/2}x_{i+1}
\end{equation}
ranging over the angles $\{\alpha_i\}_{i = 1}^{n-1}$. To construct $\Hty(R_1)$, for each leaf $x_i$ we introduce an $S^1$-sphere $S_{x_i}$, while for each angle $\alpha_i$ we introduce a cylinder $S_{\alpha_i} \times I$ connecting a subsphere of $S_{x_i}$ to a subsphere of $S_{x_{i+1}}$. The copies of $S_{\alpha_i}$ in $S_{x_i}$ and $S_{x_{i+1}}$ are canonically identified with (de)suspensions of $S_{x_{i}}$ and $S_{x_{i+1}}$, respectively. To define a map $\Hty(R_1) \rightarrow \Hty(R_2)$, it thus suffices to construct maps $\varphi_i\colon S_{x_i}\to \Hty(R_2)$ for each $i$, together with homotopies
 \begin{equation}\label{eq:homotopy-between-vertices}
 U^{(\gr(x_i)-\gr(\alpha_i))/2}\varphi_i \simeq U^{(\gr(x_{i+1})-\gr(\alpha_i))}\varphi_{i+1}
 \end{equation}
for each successive pair of spheres. Here, in (\ref{eq:homotopy-between-vertices}) recall that $U\varphi$ is the restriction 
 \[
 [\Sigma^{n\C} S^0, Y]_{S^1} \xlongrightarrow{U} [\Sigma^{n\C} S^0, \Sigma^{\C} Y]_{S^1} =  [\Sigma^{(n-1)\C}S^0, Y]_{S^1}
 \]
 as discussed in Section~\ref{subsec:adjunct}.
 
Denote the leaves of $R_2$ by $z_j$. Given $F$ as in the lemma statement, write
\[
F(x_i) = \sum_j c_{ij} z_j
\]
with each $\smash{c_{ij}  = U^{(\gr(z_j) - \gr(x_i))/2} n_{ij}}\in \mathbb{Z}[U]$. This expression will not in general be unique since there are $\Z[U]$-relations among the $z_j$; here we fix an arbitrary such representation of $F(x_i)$. It is straightforward to show, by an argument analogous to that for Lemma \ref{lem:6.7}, that 
\[
[S^0, \Sigma^{n\C} S^0]_{S^1} \cong
\begin{cases}
\Z & \text { if } n \geq 0 \\
0 & \text{ else}
\end{cases}
\]
and that 
\[
U \colon [S^0, \Sigma^{n\C} S^0]_{S^1} \rightarrow [S^0, \Sigma^{(n+1)\C}]_{S^1}
\]
is an isomorphism for $ n \geq 0$. Thus we may identify $\oplus_{n \geq 0} [S^0, \Sigma^{n\C}S^0]_{S^1}$ with $\Z[U]$, where $1$ corresponds to a generator of $[S^0, S^0]_{S^1}$. In particular, $c_{ij}$ corresponds to an element of
\[
[S^0, \Sigma^{((\gr(z_j)-\gr(x_i))/2) \C} S^0]_{S^1} = [S_{x_i}, S_{z_j}]_{S^1}.
\]
Since the stable homotopy category is additive, we may sum together all of the maps for a fixed $i$ (as $j$ varies) to obtain a map 
\[
\bar{\varphi}_i \in [S_{x_i}, \vee_j S_{z_j}]_{S^1}.
\]
We claim that (\ref{eq:homotopy-between-vertices}) holds for $\bar{\varphi}_i$ and $\bar{\varphi}_{i+1}$. Indeed,
\[
[X,\vee_j Y_j]=\bigoplus_j \ [X,Y_j], 
\]
so it suffices to compare $\bar{\varphi}_i$ and $\bar{\varphi}_{i+1}$ componentwise. Then (\ref{eq:F-relations}), combined with our identification of $\oplus_{n \geq 0} [S^0, \Sigma^{n\C}S^0]_{S^1}$ with $\Z[U]$, tautologically implies (\ref{eq:homotopy-between-vertices}). Hence the $\bar{\varphi}_i$ glue together to define a map on all of $\Hty(R_1)$. We define $\mathcal{F}$ by postcomposing this with the map from the wedge product into $\Hty(R_2)$:
\[
\mathcal{F} \colon \Hty(R_1) \rightarrow \vee_j S_{z_j} \rightarrow \Hty(R_2).
\]
The fact that the wedge product of the $S_{z_j}$ admits a map to $\Hty(R_2)$ can be seen from Remark~\ref{rem:5.1}. Clearly, $\mathcal{F}$ induces the map $F$ on co-Borel homology. 

Now suppose that $R_1$ and $R_2$ are symmetric and $F$ is equivariant.\footnote{In fact, one can show that if any homology isomorphism between two symmetric graded roots exists, then an equivariant isomorphism exists; see for example the proof of \cite[Theorem 3.16]{Stoffregen}.} Let $x_J$ be the uppermost $J$-invariant vertex of $R_1$ and $z_J$ be the uppermost $J$-invariant vertex of $R_2$. We regard these as degenerate leaves, if they are not leaves already. We define $\mathcal{F}$ as follows. On each sphere $S_{x_i}$ and cylinder $S_{\alpha_i} \times I$ in the left half of $R_1$, we construct $\mathcal{F}$ exactly as before, as an $S^1$-map. Using the adjunction relation \ref{item:adjunction} in Section~\ref{sec:6.3}, this allows us to define a $\Pin(2)$-map on all of $\Hty(R_1)$ except the central sphere $S_{x_J}$. Explicitly, we define $\mathcal{F}$ on the right half of $\Hty(R_1)$ by $\smash{j^{-1} \circ \mathcal{F} \circ j}$, so that $\mathcal{F}$ is tautologically $\Pin(2)$-equivariant except possibly on $S_{x_J}$. The hypothesis that $F$ is equivariant ensures that the restriction of $\mathcal{F}$ to the right half of $\Hty(R_1)$ also agrees, on co-Borel homology, with $F$. 

It remains to show that the map out of the central sphere $S_{x_J}$ constructed in the $S^1$-case can be lifted to a $\Pin(2)$-map. Write $F(x_J)$ as a linear combination of leaves in $R_2$. Since $F$ is equivariant, this linear combination can be chosen such that it consists of the sum of symmetric pairs of vertices, together with a single term supported by $z_J$. Recall that our $S^1$-map out of $S_{x_J}$ is the sum of many constituent maps in the stable homotopy category. Each time a symmetric pair $U^{\ell}(z_j + Jz_j)$ appears in $F(x_J)$, we group their corresponding $S^1$-maps together to ensure that they combine to form a $\Pin(2)$-map. This is done in the same way as before, by defining the summand of $\mathcal{F}$ from $S_{x_J}$ to $S_{Jz_j}$ as the conjugate of the summand from $S_{x_J}$ to $S_{z_j}$. 

Finally, we have the summand of $\mathcal{F}$ corresponding to a term $\smash{U^{(\gr(z_J) - \gr(x_J))/2} z_J}$ in $\smash{F(x_J)}$. By hypothesis, the exponent of $U$ is even. Thus it is not difficult to promote our $S^1$-map from $S_{x_J}$ to $S_{z_J}$ to a $\Pin(2)$-map of $\Pin(2)$-spheres. Indeed, by our construction of the $\Pin(2)$-action in Section~\ref{sec:6.2}, we have that as $\Pin(2)$-spheres
\[
S_{x_J} = \Sigma^{(\gr(x_J)/4) \mathbb{H}} S^0 \quad \text{and} \quad S_{z_J} = \Sigma^{(\gr(z_J)/4) \mathbb{H}} S^0.
\]
Thus there is a clear $\Pin(2)$-map
\[
S_{x_J} \rightarrow \Sigma^{((\gr(z_J) - \gr(x_J))/4)\mathbb{H}} S_{z_J}
\]
which lifts the $S^1$-map constructed earlier. This completes the definition of $\mathcal{F}$.  
\end{proof}

It is straightforward to use Lemma~\ref{lem:graded-root-switch} to reconcile the different notions of local equivalence arising in this context. Explicitly, let $(Y_1, \s_1)$ and $(Y_2, \s_2)$ be two AR homology spheres equipped with self-conjugate $\spinc$-structures. There are several different settings in which to consider local equivalence:

\begin{enumerate}
\item \textit{(spectrum) local equivalence} between the spectra associated to $(Y_1, \s_1)$ and $(Y_2, \s_2)$, as discussed in \cite[Definition 2.7]{Stoffregen}; \label{item:spectrumle}
\item \textit{(chain) local equivalence} between the $\SWF$-type chain complexes associated to $(Y_1, \s_1)$ and $(Y_2, \s_2)$, as discussed in \cite[Definition 2.19]{Stoffregen}; \label{item:chainle}
\item \textit{(involutive Heegaard Floer) local equivalence} between the $\iota$-complexes associated to $(Y_1, \s_1)$ and $(Y_2, \s_2)$, as discussed in \cite[Definition 8.5]{HM}; and, \label{item:involutivele}
\item \textit{(graded root) local equivalence} between the symmetric graded roots associated to the lattice homologies of $(Y_1, \s_1)$ and $(Y_2, \s_2)$. (This may be considered over $\Z[U]$ or $\F[U]$.) \label{item:gradedle}
\end{enumerate}

The first of these notions is defined in terms of spectra, while the second and third are defined in terms of chain complexes. In principle, spectrum local equivalence is stronger than chain local equivalence, while involutive Heegaard Floer local equivalence should be thought of as analogous to chain local equivalence (albeit defined in the involutive Heegaard Floer setting).\footnote{Conjecturally, involutive Heegaard Floer homology corresponds to $\Z/4\Z$-equivariant Seiberg-Witten Floer homology. If this were true, then (\ref{item:involutivele}) would be weaker than (\ref{item:chainle}).} We defined the fourth notion in Definition~\ref{def:gradedrootmap}.

In our context, each graded root is the Heegaard Floer or Seiberg-Witten Floer homology of an AR homology sphere. Thus, the difference between (for example) (\ref{item:involutivele}) and (\ref{item:gradedle}) simply consists of passing from a chain complex to its homology and remembering the action of $j$ or $\iota$ on the result. In principle, this means that (\ref{item:gradedle}) is the weakest possible notion of local equivalence. However, the arguments of \cite[Section 6]{DaiManolescu} show that (in the case of a graded root) (\ref{item:involutivele}) and (\ref{item:gradedle}) are the same, at least over $\F[U]$. 


\begin{theorem}\label{thm:lesame}
Let $(Y_1, \s_1)$ and $(Y_2, \s_2)$ be two AR homology spheres equipped with self-conjugate $\spinc$-structures. If $(Y_1, \s_1)$ and $(Y_2, \s_2)$ are locally equivalent according to any one of (\ref{item:spectrumle}) through (\ref{item:gradedle}), then they are locally equivalent according all of them.
\end{theorem}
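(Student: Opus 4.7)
The plan is to establish $(\ref{item:spectrumle}) \Rightarrow (\ref{item:chainle}) \Rightarrow (\ref{item:gradedle})$, $(\ref{item:involutivele}) \Leftrightarrow (\ref{item:gradedle})$, and $(\ref{item:gradedle}) \Rightarrow (\ref{item:spectrumle})$. The first two implications are routine functoriality: taking reduced $\Pin(2)$-equivariant cellular chains sends a spectrum-level local map of SWF-type spectra to a local map of SWF-type chain complexes, and taking homology together with the induced $J$-action yields an equivariant graded-root local map between $R_i = \Hla_0(\Gamma_i,[k_i])$. At each stage, the defining ``isomorphism after inverting $U$'' condition is manifestly preserved.

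For $(\ref{item:involutivele}) \Leftrightarrow (\ref{item:gradedle})$, I appeal to \cite[Section 6]{DaiManolescu}, which shows that a symmetric graded root has a canonical \emph{monotone subroot} that is the simplest local representative of its class, and that two symmetric graded roots are locally equivalent if and only if their monotone subroots agree. The same characterization applies (with the same proof) to the $\iota$-complex associated to a symmetric graded root. Although \cite{DaiManolescu} is written over $\F$, the monotone-subroot construction is purely combinatorial and lifts verbatim to $\Z$ coefficients, since the graded roots arising from AR plumbings are canonically free $\Z$-modules determined entirely by the combinatorial structure of the root.

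The heart of the argument is $(\ref{item:gradedle}) \Rightarrow (\ref{item:spectrumle})$. Suppose $F \colon R_1 \to R_2$ and $G \colon R_2 \to R_1$ form an equivariant graded-root local equivalence. By Lemma~\ref{lem:graded-root-switch}, each lifts to an $S^1$-morphism $\mathcal{F}, \mathcal{G}$ between the lattice spectra $\Hty(R_i)$ realizing $F$ and $G$ on co-Borel homology, and in fact to $\Pin(2)$-morphisms: the grading hypothesis required for the $\Pin(2)$-refinement is automatic, since an equivariant grading-preserving graded-root local equivalence forces the topmost $J$-invariant vertices of $R_1$ and $R_2$ to lie in the same absolute grading (and hence in the same grading $\bmod 4$). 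Theorems~\ref{thm:1.1} and \ref{thm:1.2} identify $\Hty(R_i)$ with $\SWF(Y_i,\s_i)$ as $\Pin(2)$-spectra, so $\mathcal{F}$ and $\mathcal{G}$ define $\Pin(2)$-morphisms between the Seiberg-Witten-Floer spectra whose induced maps on co-Borel homology, being $F$ and $G$, are isomorphisms after inverting $U$. The main obstacle is to conclude from this that $\mathcal{F}$ and $\mathcal{G}$ are local in the spectrum sense of \cite[Definition 2.7]{Stoffregen}; this follows because, for a type-SWF space, co-Borel homology after inverting $U$ computes (up to known shifts) the Borel homology of the $S^1$-fixed-point sphere, so an equivariant map inducing an isomorphism there has degree $\pm 1$ on $S^1$-fixed points and is therefore a spectrum-level local map, which is exactly the spectrum-level definition of local equivalence.
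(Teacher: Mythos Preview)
Your proposal is correct and follows essentially the same route as the paper: the same chain of implications $(\ref{item:spectrumle})\Rightarrow(\ref{item:chainle})\Rightarrow(\ref{item:gradedle})$, $(\ref{item:involutivele})\Leftrightarrow(\ref{item:gradedle})$, and the key step $(\ref{item:gradedle})\Rightarrow(\ref{item:spectrumle})$ via Lemma~\ref{lem:graded-root-switch}, with locality checked through the Tate/localized co-Borel homology of the fixed-point sphere. If anything, you are slightly more explicit than the paper in verifying the $\bmod\ 4$ grading hypothesis of Lemma~\ref{lem:graded-root-switch} (via invariance of $\beta$ under graded-root local equivalence), which the paper leaves implicit.
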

\begin{proof}
We first show that graded root local equivalence over $\Z[U]$ and $\F[U]$ are the same. Clearly, the former implies the latter. Conversely, our discussion at the beginning of the subsection shows that if a graded root is locally equivalent to its monotone subroot over $\F[U]$, then it is locally equivalent to its monotone subroot over $\Z[U]$. Since two graded roots are locally equivalent over $\F[U]$ if and only if they have the same monotone subroot, this proves the claim.

We thus already have that (\ref{item:involutivele}) and (\ref{item:gradedle}) are equivalent and that (\ref{item:spectrumle}) implies (\ref{item:chainle}). It is not hard to see that (\ref{item:chainle}) implies (\ref{item:gradedle}) by passing to homology. Hence it suffices to show (\ref{item:gradedle}) implies (\ref{item:spectrumle}). This follows immediately from Lemma~\ref{lem:graded-root-switch}: if $F_1$ and $F_2$ are local equivalences between the symmetric graded roots $R_1$ and $R_2$ associated to $(Y_1, \s_1)$ and $(Y_2, \s_2)$, then we obtain Pin(2)-morphisms $\mathcal{F}_1$ and $\mathcal{F}_2$ between the lattice homotopy types $\Hty(R_1)$ and $\Hty(R_2)$. The fact that $F_1$ and $F_2$ are local implies $\mathcal{F}_1$ and $\mathcal{F}_2$ induce isomorphisms on Tate homology, which easily shows that $\mathcal{F}_1$ and $\mathcal{F}_2$ are local. 
\end{proof}

A similar argument establishes Remark~\ref{rem:2.1}. Let $R_1$ and $R_2$ be two graded roots (equipped with planar embeddings, as discussed in Remark~\ref{rem:2.1}). Then:

\begin{theorem}\label{thm:gradedrootindependence}
Let  $F \colon R_1 \rightarrow R_2$ be an isomorphism of graded roots. Then we have an $S^1$-equivariant homotopy equivalence
\[
\mathcal{F} \colon \Hty(R_1) \rightarrow \Hty(R_2).
\]
If $F$ is a $J$-equivariant isomorphism of symmetric graded roots, then this is a $\Pin(2)$-equivariant homotopy equivalence.
\end{theorem}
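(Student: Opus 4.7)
The plan is to apply Lemma~\ref{lem:graded-root-switch} to produce $\mathcal{F}$, and then invoke the equivariant Whitehead-type results of Section~\ref{sec:3.3} to conclude that $\mathcal{F}$ is a homotopy equivalence.

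For the $S^1$-statement, Lemma~\ref{lem:graded-root-switch} applied to $F$ yields an $S^1$-equivariant morphism $\mathcal{F}\colon\Hty(R_1)\to\Hty(R_2)$ whose induced map on co-Borel homology is $F$; since $F$ is an isomorphism, so is $\mathcal{F}_*$. After suspending by a sufficiently large trivial representation to ensure simple connectivity, I would apply Lemma~\ref{lem:5.7}. Both $S^1$-fixed point sets are copies of $S^0$ (cf.\ Section~\ref{sec:2.2} and Remark~\ref{rem:5.1}), and the restriction of $\mathcal{F}$ to these is a nontrivial pointed self-map of $S^0$, hence an equivalence. Lemma~\ref{lem:5.7} then concludes that $\mathcal{F}$ is an $S^1$-equivariant homotopy equivalence.

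For the $\Pin(2)$-statement, I would first verify the grading hypothesis of Lemma~\ref{lem:graded-root-switch}: since $F$ is a grading-preserving, $J$-equivariant isomorphism of symmetric graded roots, $F$ sends the uppermost $J$-invariant vertex of $R_1$ to that of $R_2$, and these therefore lie in the same grading, in particular the same grading modulo $4$. Lemma~\ref{lem:graded-root-switch} thus upgrades $\mathcal{F}$ to a $\Pin(2)$-morphism whose underlying $S^1$-morphism is the equivalence already constructed. To conclude that $\mathcal{F}$ itself is a $\Pin(2)$-equivalence, I would invoke the corollary to Theorem~\ref{thm: eq Whitehead}: beyond the $S^1$-isomorphism just established (which handles $H_*$ of the full space and of $H_*(X^{S^1})$), the only additional check is the isomorphism on $H_*(X^{\Pin(2)};\mathbb{Z})$. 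From the construction in Section~\ref{sec:6.2}, $X^{\Pin(2)}$ is a copy of $S^0$ on each side, coming from the $j$-fixed point $0\in I$ in the central cell $\Hty(\gamma_\Theta,[k])$ together with the basepoint, and $\mathcal{F}^{\Pin(2)}$ is a pointed nontrivial self-map of $S^0$, hence a homeomorphism.

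The main obstacle I anticipate is the verification of the fixed-point behavior in the $\Pin(2)$-case; however, this is largely automatic once the grading hypothesis of Lemma~\ref{lem:graded-root-switch} has been secured, since the construction of the lift $\mathcal{F}$ is rigid enough that an isomorphism $F$ of symmetric graded roots matches the central-cell structure of the two spectra, and the fixed-point sets are then identified by inspection.
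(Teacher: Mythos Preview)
Your proposal is correct and follows essentially the same approach as the paper: lift $F$ via Lemma~\ref{lem:graded-root-switch}, observe that the induced map on co-Borel homology is the isomorphism $F$, and then appeal to the Whitehead-type criteria of Section~\ref{sec:3.3} (Lemma~\ref{lem:5.7} in the $S^1$-case, its $\Pin(2)$-analogue in the symmetric case). The paper's proof is extremely terse, simply citing ``as in the proof of Theorem~\ref{thm:1.1}''; you have supplied the details it omits, notably the verification of the mod-$4$ grading hypothesis of Lemma~\ref{lem:graded-root-switch} and the explicit identification of the $\Pin(2)$-fixed point set with $S^0$.
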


\begin{proof}
The lifted morphism $\mathcal{F}$ induces an isomorphism on co-Borel homology. As in the proof of Theorem~\ref{thm:1.1} it follows that $\mathcal{F}$ induces a homotopy equivalence on all relevant fixed-point sets and is thus an equivariant homotopy equivalence. 
\end{proof}

Finally, we discuss some numerical results established in \cite{DaiMonopole} that will be useful in the sequel. Let $Y$ be an AR homology sphere with self-conjugate $\spinc$-structure $\s$. The Floer-theoretic invariants of $(Y, \s)$ can be read off from its symmetric graded root $R$ as follows:
\begin{enumerate}
\item Denote the grading of the uppermost vertex in $R$ by $2\delta(R)$. By definition of the Fr\o yshov invariant, we have 
\begin{equation}\label{eq:delta}
\delta(R) = \delta(Y, \s).
\end{equation}
\item Denote the grading of the uppermost $J$-invariant vertex in $R$ by $2 \beta(R)$. It is shown in \cite[Theorem 2.4]{DaiMonopole} and \cite[Theorem 3.2]{DaiMonopole} that 
\begin{equation}\label{eq:beta}
\beta(R) = \beta(Y, \s) = - \bar{\mu}(Y, \s).
\end{equation}
\end{enumerate} 
In particular, note that $\delta(R)$ and $\beta(R)$ take the same value among all representatives in the (graded root) local equivalence class of $R$.


By Theorem~\ref{thm:lesame}, for the purposes of computing the $K$-theoretic cobordism invariants of $\SWF(Y, \s) = \Hty(R)$, it suffices to replace $R$ with its monotone subroot. It will be useful for us to consider the following especially simple class of graded roots:

\begin{definition}\label{def:xn}
Let $X_n$ be the symmetric graded root given by
\[
X_n = \spa_{\Z[U]} \{v, Jv\} \quad \text{with} \quad U^nv = U^n(Jv).
\]
We fix the grading shift by setting $\gr(v) = \gr(Jv) = 2n$. With this convention, $\delta(X_n) = n$ and $\beta(X_n) = 0$. Denote
\[
A_n = \Hty(X_n).
\]
Note that we allow the degenerate case $n = 0$, in which case $A_0$ is homotopy equivalent to $S^0$.
\end{definition}

\begin{definition}\label{def:projective}
We say that a symmetric graded root $R$ is \textit{projective} if its monotone subroot is given by $X_n$ for some $n$, up to grading shift. If $R$ arises from $(Y, \s)$ and $R$ is projective, we refer to $(Y, \s)$ as projective. By Theorem~\ref{thm:lesame}, if $(Y, \s)$ is projective, then (up to suspension) $\SWF(Y, \s)$ is locally equivalent to $A_n$. It follows from the preceding discussion that in this situation
\[
n = \delta(Y, \s) - \beta(Y, \s) = \delta(Y, \s) + \bar{\mu}(Y, \s)
\]
and the suspension in question is given by
\[
\SWF(Y, \s) = \Hty(R) \sim \Sigma^{\beta(Y, \s) \C} A_n = \Sigma^{- \bar{\mu}(Y, \s) \C} A_n.
\]
See \cite[Section 1]{Stoffregen} and \cite[Section 1]{DaiManolescu} for further discussion of Definition~\ref{def:projective}.
\end{definition}


\subsection{$K$-theory of $A_n$}\label{subsec:computations}

The next lemma gives a complete calculation of the $K$-theory of $A_n$. We use notation as in \cite[Section 3]{ManolescuIntersection}. 

\begin{lemma}\label{lem:k-calc-projective}
We have
\[
\tilde{K}_{\Pin(2)}(A_n) = \mathfrak{I}(A_n) = (z^{\lceil n/2 \rceil},w) \quad \text{and} \quad K^1_{\Pin(2)}(A_n) = \Z[\theta,\theta^{-1}]/(z,(1-\theta)^n).
\]
Here, the formula for $K^1_{\Pin(2)}(A_n)$ is interpreted as an $R(G)$-module; see the proof for the notation.  In particular,
\[
\kappa(A_n) =
\begin{cases}
0 & \text { if } n = 0 \\
2 & \text{ else}.
\end{cases}
\]
\end{lemma}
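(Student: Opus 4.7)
My plan is to compute $\tilde{K}_{\Pin(2)}(A_n)$ using the cofiber sequence associated to the $\Pin(2)$-equivariant inclusion of the fixed-point set $S^0 \hookrightarrow A_n$. The key geometric observation I will use is that, as a pointed $\Pin(2)$-space, $A_n/S^0$ is naturally identified with $\Pin(2)_+ \wedge_{S^1}(S^{n\C}/S^0)$, since the two copies of $S^{n\C}$ in $A_n$ are swapped by $j$ and each collapses its two poles to the same basepoint. Applying the Wirthm\"uller isomorphism together with the $S^1$-equivariant cofiber sequence $S^0 \to S^{n\C} \to S^{n\C}/S^0$ (whose restriction map to the fixed points is multiplication by the Bott element $(1-t)^n \in R(S^1)$), I expect to obtain $\tilde{K}^0_{\Pin(2)}(A_n/S^0) = 0$ and $\tilde{K}^1_{\Pin(2)}(A_n/S^0) \cong R(S^1)/((1-t)^n)$ as an $R(\Pin(2))$-module via restriction.

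Feeding this into the long exact sequence of $S^0 \hookrightarrow A_n \to A_n/S^0$, I will obtain
\[
0 \to \tilde{K}^0_{\Pin(2)}(A_n) \xrightarrow{i^*} R(\Pin(2)) \xrightarrow{\partial} R(S^1)/((1-t)^n) \to \tilde{K}^1_{\Pin(2)}(A_n) \to 0,
\]
so that $i^*$ is automatically injective and identifies $\tilde{K}^0_{\Pin(2)}(A_n)$ with an ideal $\mathfrak{I}(A_n) \subseteq R(\Pin(2))$. To identify this ideal with $(z^{\lceil n/2 \rceil},w)$, I will produce explicit classes in $\tilde{K}^0_{\Pin(2)}(A_n)$ by pulling back Thom classes along suitable $\Pin(2)$-equivariant maps. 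The class $z^{\lceil n/2 \rceil}$ will come from a map $A_n \to S^{\lceil n/2 \rceil\mathbb{H}}$ obtained by embedding each copy of $S^{n\C} \subset A_n$ as a $j$-conjugate $S^1$-invariant subsphere of $S^{\lceil n/2 \rceil\mathbb{H}}$ (using, for $n$ odd, the decomposition $\mathbb{H} = \C \oplus j\C$). The class restricting to $w$ is more delicate; I expect it will arise from a $\Pin(2)$-twisted degree-one construction reflecting the fact that the two halves of $A_n$ are joined along $S^0$ by a $j$-switch rather than independently, so that a nontrivial virtual $\tilde{\R}$-class over $A_n$ becomes visible after restriction. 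The reverse containment $\mathfrak{I}(A_n) \subseteq (z^{\lceil n/2\rceil}, w)$ will follow from an explicit description of the Puppe attaching map $A_n/S^0 \to \Sigma S^0 \simeq S^{\tilde{\R}}$ read off from the $\Pin(2)$-cell structure of $A_n$, together with verification that the induced $\partial$ factors through $R(\Pin(2))/(z^{\lceil n/2 \rceil},w)$ and then surjects onto $R(S^1)/((1-t)^n)$, which also forces $\tilde{K}^1_{\Pin(2)}(A_n) = 0$.

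The $\kappa(A_n)$ formula will then follow algebraically from the ideal. For $n = 0$ we have $A_0 \simeq S^0$, so $\mathfrak{I}(A_0) = R(\Pin(2))$ and $\kappa(A_0) = 0$ is immediate. For $n \geq 1$ the ideal $(z^{\lceil n/2 \rceil}, w)$ is a proper ideal containing $w$; combined with the definition of $\kappa$ reviewed in \cite[Section 3]{ManolescuIntersection}, the presence of $w$ at the $\tilde{\R}$-level forces $\kappa(A_n) = 2$ uniformly in $n$. The main technical obstacle I anticipate is the explicit computation of the connecting map $\partial$: while producing the generators $z^{\lceil n/2 \rceil}$ and $w$ geometrically is reasonably direct, showing that no proper sub-ideal arises — that is, that $\partial$ is as surjective as claimed modulo $(z^{\lceil n/2 \rceil},w)$ — requires careful bookkeeping of how the $j$-swap between the two copies of $S^{n\C}$ interacts with the Bott filtration on $R(S^1)/((1-t)^n)$ and with the restriction $R(\Pin(2)) \to R(S^1)$.
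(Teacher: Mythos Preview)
Your approach is \emph{different} from the paper's and, for the computation of $\mathfrak{I}(A_n)$, it can be made to work; but several key steps are missing or misdirected, and one claim will not go through as planned.

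\textbf{Comparison with the paper.} The paper does not use the cofiber sequence $S^0\to A_n\to A_n/S^0$. Instead it uses
\[
S^{n\C}\vee S^{n\C}\;\longrightarrow\;A_n\;\longrightarrow\;S^{\tilde{\R}},
\]
with $j$ swapping the two wedge summands, and compares this in $K_G$-theory to the $S^1$-fixed-point sequence $S^0\vee S^0\to S^0\to S^{\tilde{\R}}$. The point is that the right-hand column of this two-row diagram is the identity, so one reads off directly that $y\in\mathfrak{I}(A_n)$ if and only if $r(y)\in (c^n)\subset R(S^1)$, where $r\colon R(G)\to R(S^1)$ is restriction. No connecting map needs to be identified, and $w\in\mathfrak{I}$ is immediate since $r(w)=0$.

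\textbf{Filling the gap in your route.} Your exact sequence
\[
0\to \tilde{K}^0_G(A_n)\xrightarrow{i^*} R(G)\xrightarrow{\partial} R(S^1)/(c^n)\to \tilde{K}^1_G(A_n)\to 0
\]
is correct, but you stop short of computing $\partial$. This can be done by naturality: the $G$-cofiber sequence $G_+\wedge_{S^1}(S^0\to S^{n\C}\to S^{n\C}/S^0)$ maps to yours via the counit $(C_2)_+\to S^0$, and the induced square forces $\partial = r\bmod (c^n)$. Once you know this, $\ker\partial=\{y:r(y)\in(c^n)\}$ and the algebra is \emph{identical} to the paper's: $r(z^t)=(-\theta^{-1})^tc^{2t}$, so $z^t\in\ker\partial$ iff $2t\geq n$. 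Your geometric plan to exhibit $z^{\lceil n/2\rceil}$ via a map $A_n\to S^{\lceil n/2\rceil\mathbb{H}}$ is not straightforward (there is no $S^1$-linear embedding $\C^n\hookrightarrow \lceil n/2\rceil\,\mathbb{H}$ when $n\geq 2$, since $\mathbb{H}|_{S^1}=\C\oplus\bar\C$), and your plan for $w$ is vague; both become unnecessary once $\partial$ is known. Also, the Puppe map lands in $\Sigma^{\R}S^0=S^{\R}$ with \emph{trivial} $j$-action, not $S^{\tilde{\R}}$.

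\textbf{The $K^1$ claim.} With $\partial=r\bmod(c^n)$, surjectivity of $\partial$ is equivalent to $\im(r)+(c^n)=R(S^1)$. But $\im(r)=\Z[\theta+\theta^{-1}]$, and for $n\geq 2$ one checks $\theta\notin\Z[\theta+\theta^{-1}]+(c^2)$, so $\partial$ is \emph{not} surjective and your argument will not yield $K^1_G(A_n)=0$. (The paper also leaves this point to the reader and does not use it elsewhere; the $\kappa$ computation depends only on $\mathfrak{I}(A_n)$.) So your route is sound for $\mathfrak{I}(A_n)$ and $\kappa(A_n)$ once $\partial$ is pinned down, but you should not expect it to deliver $K^1=0$.
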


\begin{proof}
Consider the exact sequence 
\[
S^{n\mathbb{C}}\vee S^{n\mathbb{C}} \to A_n \to S^{\tilde{\mathbb{R}}} \to \Sigma^{\mathbb{R}}(S^{n\mathbb{C}}\vee S^{n\mathbb{C}})\to\cdots
\]
where the action of $j$ on $S^{n\mathbb{C}}\vee S^{n\mathbb{C}}$ interchanges the two wedge summands. Apply the $S^1$-fixed point functor $\Phi$. This gives the exact sequence 
\[
S^0\vee S^0\to S^0 \to S^{\tilde{\R}} \to \Sigma^{\R}(S^0 \vee S^0) \to \cdots.
\]
Taking $G=\Pin(2)$-equivariant $K$-theory, we have the commutative diagram 
\[
\cdots
\begin{tikzcd}[column sep=0.75cm]
	{K^1_G(S^{\tilde{\mathbb{R}}})} & {\tilde{K}_G(S^{n\C} \vee S^{n\C})} & {\tilde{K}_G(A_n)} & {\tilde{K}_G(S^{\tilde{\mathbb{R}}})} & {K^1_G(S^{n\C} \vee S^{n\C})} \\
	{K^1_G(S^{\tilde{\mathbb{R}}})} & {\tilde{K}_G(S^0\vee S^0)} & {R(G)} & {\tilde{K}_G(S^{\tilde{\mathbb{R}}})} & {K^1_G(S^0\vee S^0)}
	\arrow["{\delta}"', from=1-2, to=1-1]
	\arrow["{i^*}"', from=1-3, to=1-2]
	\arrow[from=1-4, to=1-3]
	\arrow[from=1-5, to=1-4]
	\arrow["{\delta}"', from=2-2, to=2-1]
	\arrow["{i^*}"', from=2-3, to=2-2]
	\arrow[from=2-4, to=2-3]
	\arrow[from=2-5, to=2-4]
	\arrow["{=}", from=1-1, to=2-1]
	\arrow[from=1-2, to=2-2]
	\arrow["{\Phi^*}", from=1-3, to=2-3]
	\arrow["{=}", from=1-4, to=2-4]
	\arrow[from=1-5, to=2-5]
\end{tikzcd}
\cdots
\]
Here, $R(G) = \tilde{K}_G(S^0)$ is the complex representation ring of $G$. Our goal is to calculate both the $K$-theory $\tilde{K}_G(A_n)$ and the ideal $\mathfrak{I}(A_n) = \im \Phi^*$ from which $\kappa$ is defined.

We begin by determining some of the groups and maps in the above commutative diagram. 
\begin{enumerate}
\item Since $j$ acts freely away from the basepoint on $S^0 \vee S^0$, we have
\[
\tilde{K}_G(S^0\vee S^0) = \tilde{K}_{S^1}(S^0) = R(S^1) = \Z[\theta, \theta^{-1}],
\]
where $\theta$ is the standard complex representation of $S^1$. Likewise,
\[
\tilde{K}_G(S^{n\C} \vee S^{n\C}) = \tilde{K}_{S^1}(S^{n\C}) = R(S^1) = \Z[\theta, \theta^{-1}],
\]
by using Bott periodicity. Under this identification, the second vertical map is given by multiplication by the equivariant Euler class $c^n$, where $c = 1-\theta$.
\item By a similar argument, 
\[
K^1_G(S^0\vee S^0) = K^1_{S^1}(S^0) = 0
\]
and 
\[
K^1_G(S^{n\C} \vee S^{n\C}) = K^1_{S^1}(S^{n\C}) = 0.
\]
\item In \cite[Section 2.2]{ManolescuIntersection} it is shown that 
\[
R(G)=\mathbb{Z}[z,w]/(2w-zw,w^2-2w).  
\]
Under the above identification of $\tilde{K}_G(S^0\vee S^0)$ with $\Z[\theta, \theta^{-1}]$, the map
\[
i^* \colon R(G) \rightarrow \tilde{K}_G(S^0\vee S^0)
\]
in the bottom row sends $1\to 1,z\to 2- (\theta+\theta^{-1})$, and $w\to 0$. Note that $\ker i^* = (w)$. 
\end{enumerate}
Putting this together gives the commutative diagram
\begin{equation}\label{eq:kexact}
\cdots
\begin{tikzcd}
	{K^1_G(S^{\tilde{\mathbb{R}}})} & {\Z[\theta, \theta^{-1}]} & {\tilde{K}_G(A_n)} & {\tilde{K}_G(S^{\tilde{\mathbb{R}}})} \\
	{K^1_G(S^{\tilde{\mathbb{R}}})} & {\Z[\theta, \theta^{-1}]} & {R(G)} & {\tilde{K}_G(S^{\tilde{\mathbb{R}}})}
	\arrow["{\delta}"', from=1-2, to=1-1]
	\arrow["{i^*}"', from=1-3, to=1-2]
	\arrow[hookleftarrow, from=1-3, to=1-4]
	\arrow["{\delta}"', from=2-2, to=2-1]
	\arrow["{i^*}"', from=2-3, to=2-2]
	\arrow[hookleftarrow, from=2-3, to=2-4]
	\arrow["{=}", from=1-1, to=2-1]
	\arrow["{c^n}", from=1-2, to=2-2]
	\arrow["{\Phi^*}", from=1-3, to=2-3]
	\arrow["{=}", from=1-4, to=2-4]
\end{tikzcd}
\end{equation}
We first observe $w \in \im \Phi^*$. Indeed, $i^* \colon R(G) \rightarrow \Z[\theta, \theta^{-1}]$ has $w$ in its kernel; hence $w$ lies in the image of the bottom-right map. The fact that the fourth column of (\ref{eq:kexact}) is an isomorphism then gives the claim. 

We now claim that a class $y \in R(G)$ is in $\im \Phi^*$ if and only if $i^*(y) \in \im c^n$. Indeed, if $y = \Phi^*(x)$ for some $x \in \tilde{K}_G(A_n)$, then it is clear that $i^*(y) \in \im c^n$ by writing
\[
i^*(y) = i^*(\Phi^*(x)) = c^n(i^*(x)).
\]
Conversely, suppose $i^*(y)$ is in $\im c^n$; say $i^*(y) = c^n x'$. By exactness, $i^*(y)$ is annihilated by $\delta$. The fact that the first column is an isomorphism then shows that $x'$ must also be annihilated by $\delta$. Thus $x' \in \im i^*$; say $x' = i^*(x)$. Then we have
\[
i^*(y) = c^n(i^*(x)) = i^*(\Phi^*(x)).
\]
Hence $y$ is in the image of $\Phi^*$ modulo the kernel of $i^* \colon R(G) \rightarrow \Z[\theta, \theta^{-1}]$, which is $(w)$. However, we have already shown $w \in \im \Phi^*$, so this establishes the claim.

We now use this to compute $\mathfrak{I}(A_n) = \im \Phi^*$. We have already seen that $w \in \mathfrak{I}(A_n)$; we thus consider $z$. It is straightforward to check
\[
i^*(z^t) = c^t\bar{c}^t
\]
where $\bar{c}=1-\theta^{-1}=-\theta^{-1}(1-\theta)$. This is in the image of $c^n$ if and only if $2t \geq n$; hence
\[
\mathfrak{I}(A_n) = (z^{\lceil n/2 \rceil},w), 
\]
as desired. We moreover claim that $\Phi^*$ is injective. Indeed, let $x \in \tilde{K}_G(A_n)$ be non-zero. If $i^*(x) \neq 0$, then the fact that multiplication by $c^n$ is injective, together with commutativity of the second square in (\ref{eq:kexact}), shows that $\Phi^*(x) \neq 0$. On the other hand, if $i^*(x) = 0$, then $x$ is in the image of an element in the top-right corner of (\ref{eq:kexact}). In this case, the fact that the fourth column injects into the third column, together with commutativity of the third square, shows $\Phi^*(x) \neq 0$. Hence $\Phi^*$ is injective and $\tilde{K}_G(A_n) = \mathfrak{I}(A_n)$.

The assertion regarding $K^1_G(A_n)$ may be established by continuing (\ref{eq:kexact}) one column to the left. We focus on the portion
\begin{equation}
\cdots
\begin{tikzcd}
	{K^1_G(A_n)} & {K^1_G(S^{\tilde{\mathbb{R}}})} & {\Z[\theta, \theta^{-1}]} \\
	{0} & {K^1_G(S^{\tilde{\mathbb{R}}})} & {\Z[\theta, \theta^{-1}]} 
	\arrow[twoheadrightarrow, from=1-2, to=1-1]
	\arrow["{\delta}"', from=1-3, to=1-2]
	\arrow[twoheadrightarrow, from=2-2, to=2-1]
	\arrow["{\delta}"', from=2-3, to=2-2]
	\arrow[from=1-1, to=2-1]
	\arrow["{=}", from=1-2, to=2-2]
	\arrow["{c^n}", from=1-3, to=2-3]
\end{tikzcd}
\cdots
\end{equation}
Here, we have used the fact that $K^1_G(S^{n\C} \vee S^{n\C}) = K^1_G(S^0 \vee S^0) = 0$. Now, the map $\delta$ in the bottom row is surjective.  Thus $K^1_G(A_n)$ is the quotient of $\tilde{K}^1_G(S^{\tilde{\mathbb{R}}})=\Z[\theta,\theta^{-1}]/\Z[z]$ by the submodule generated by $c^n(\Z[\theta,\theta^{-1}])$.
\end{proof}

In fact, up to grading shift, the calculation of $\kappa$ in Lemma~\ref{lem:k-calc-projective} holds for a general homotopy type obtained from any graded root, not just $A_n = \mathcal{H}(X_n)$. 

\begin{lemma}\label{lem:k-calc-monotone}
Let $R$ be any symmetric graded root with $\beta(R) = 0$. Then
\[
\kappa(\Hty(R)) =
\begin{cases}
0 & \text { if } \beta(R) = \delta(R) \\
2 & \text{ if } \beta(R) < \delta(R).
\end{cases}
\]
\end{lemma}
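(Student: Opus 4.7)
The plan is to reduce the lemma to the already-computed case $R = X_n$ via Pin(2)-local equivalence. Since $\beta(R) = 0$, the monotone subroot of $R$ (as discussed in the paragraph preceding Definition~\ref{def:projective}) is exactly $X_n$ with $n = \delta(R) - \beta(R) = \delta(R)$. In particular, the uppermost $J$-invariant vertices of both $R$ and $X_n$ lie in grading $0$, so the hypothesis of Lemma~\ref{lem:graded-root-switch} is satisfied. The local equivalences between $R$ and $X_n$ (provided by Theorem~\ref{thm:lesame}) therefore lift to mutually inverse (up to local equivalence) Pin(2)-morphisms between $\Hty(R)$ and $A_n = \Hty(X_n)$.

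Next, I would invoke the fact that $\kappa$ is a Pin(2)-local-equivalence invariant. This follows directly from how $\kappa$ is defined via the ideal $\mathfrak{I}(X) \subset R(G)$ given by the image of the restriction-to-fixed-points map in equivariant $K$-theory: a Pin(2)-local map $f \colon X \to Y$ induces a commutative diagram of $K$-theory long exact sequences in which the columns coming from the $S^1$-fixed-point sets are identified (since local maps are isomorphisms on the localized $K$-theory of the fixed points). A two-sided local equivalence therefore produces equality $\mathfrak{I}(\Hty(R)) = \mathfrak{I}(A_n)$ and hence equality of $\kappa$.

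Combining these observations yields $\kappa(\Hty(R)) = \kappa(A_n)$. Applying Lemma~\ref{lem:k-calc-projective} immediately gives the two cases: when $\beta(R) = \delta(R)$ we have $n = 0$ and $A_0 \simeq S^0$, so $\kappa = 0$; when $\beta(R) < \delta(R)$ we have $n > 0$ and $\kappa(A_n) = 2$.

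The only step that requires real care is the local-equivalence invariance of $\kappa$, since the paper's framework asks us to work with objects of $\mathfrak{C}_{\Pin(2)}$ rather than with spaces; however this is essentially built into Manolescu's original definition in \cite{ManolescuIntersection} and can also be extracted from the definition of spectrum local equivalence \cite[Definition 2.7]{Stoffregen} combined with the functoriality of equivariant $K$-theory, so no new geometric input is required beyond Theorem~\ref{thm:lesame} and Lemma~\ref{lem:graded-root-switch}.
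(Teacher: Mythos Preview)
Your reduction step contains a genuine gap: it is \emph{not} true that the monotone subroot of an arbitrary symmetric graded root $R$ with $\beta(R)=0$ is $X_n$. A graded root is projective \emph{by definition} precisely when its monotone subroot is some $X_n$; in general, monotone roots can have more than two leaves (see Figure~\ref{fig:rootmaps} in the paper). Theorem~\ref{thm:lesame} only asserts that the various notions of local equivalence coincide --- it does not produce a local equivalence between $R$ and $X_n$. So your argument, as written, establishes the lemma only in the projective case, which is already Lemma~\ref{lem:k-calc-projective}.

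The paper's proof handles the general monotone root by sandwiching it between two $X_m$ and $X_n$ via one-sided local maps (not local equivalences), yielding $\kappa(A_m)\le\kappa(\Hty(R))\le\kappa(A_n)$; when both $m,n>0$ this pins $\kappa$ at $2$. There is also a subtle case when the uppermost $J$-invariant vertex of $R$ is itself a leaf: then the lower sandwich degenerates to $A_0$, and the paper instead uses an $S^1$-equivariant $K$-theory argument to show $\kappa(\Hty(R))\ge 2$ directly. Your approach could be salvaged by replacing the false ``monotone subroot $=X_n$'' claim with this sandwiching argument, at which point it would essentially coincide with the paper's proof.
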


\begin{proof}
By Theorem~\ref{thm:lesame}, we may assume that $R$ is monotone. If $\beta(R) = \delta(R)$, then $R$ is isomorphic to $\Z[U]$ and the claim is trivial. Thus we assume $\beta(R) < \delta(R)$. First suppose that the uppermost $J$-invariant vertex of $R$ is not a leaf. Then it is not hard to construct roots $X_m$ and $X_n$ with $m$ and $n$ both non-zero such that there exist local maps from $X_m$ to $R$ and $R$ to $X_n$. This procedure is depicted in Figure~\ref{fig:rootmaps} and follows from a similar construction as in \cite[Section 6]{DaiManolescu}. Lift these local maps to maps of spectra. Then we have
\[
\kappa(A_m) \leq \kappa(\Hty(R)) \leq \kappa(A_n)
\]
and the claim follows from Lemma~\ref{lem:k-calc-projective}.

\begin{figure}[h!]
\includegraphics[scale = 0.8]{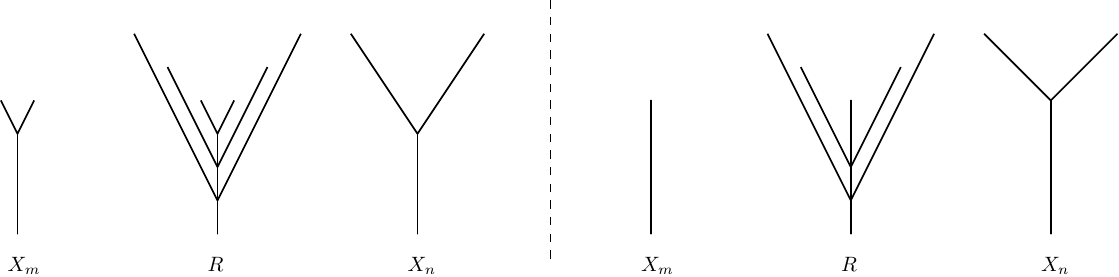}
\caption{Left: sandwiching a monotone root between $X_m$ and $X_n$. Right: the case when the highest-weight central vertex of $R$ is a leaf.}\label{fig:rootmaps}
\end{figure}

In the event that the uppermost $J$-invariant vertex is a leaf, the local map from $R$ to $X_n$ again gives the inequality $\kappa(\Hty(R))\leq 2$. In this case, however, the root $X_m$ which maps into $R$ is degenerate. Thus, on the other side we only obtain the inequality $0 \leq \kappa(\Hty(R))$. To complete the argument, we instead observe that we have a commutative diagram 
\[
\begin{tikzcd}
	{\tilde{K}_G(\Hty(R))} & {\tilde{K}_{S^1}(\Hty(R))} \\
	{R(G)} & {\Z[\theta,\theta^{-1}]} 
	\arrow[from=1-1, to=1-2]
	\arrow[from=2-1, to=2-2]
	\arrow[from=1-2, to=2-2]
	\arrow["\Phi^*"', from=1-1, to=2-1]
\end{tikzcd}
\]
where the bottom row is obtained by taking $S^1$-fixed points. Now, the image of the right-hand vertical map is easily seen to be $\smash{((1-\theta)^{\delta(R)})}$ using Bott periodicity.  Thus, if $\delta(R) > \beta(R) = 0$ then $1$ is not in the image of the right-hand vertical map. The commutativity of the diagram, combined with the form of the map $R(G) \rightarrow \Z[\theta, \theta^{-1}]$, gives $\kappa(\Hty(R))\geq 2$. 
\end{proof}

This easily gives the proof of Corollary~\ref{cor:a}:

\begin{proof}[Proof of Corollary~\ref{cor:a}]
Let $R$ be the symmetric graded root corresponding to $(Y, \s)$, so that $\kappa(Y, \s) = \kappa(\Hty(R))$. Suspend $\Hty(R)$ by $- \beta(Y, \s) \C$ so that the resulting symmetric graded root has $\beta = 0$. The claim then follows from applying Lemma~\ref{lem:k-calc-monotone}, undoing the suspension, and applying (\ref{eq:beta}).
\end{proof}

\subsection{Connected sums}\label{sec:connectedsum}

We now consider connected sums of projective AR homology spheres. Let $\{(Y_i, \s_i)\}_{i = 1}^n$ be such a family. As discussed in Definition~\ref{def:projective},
\[
\SWF(Y_i, \s_i) \sim \Sigma^{- \bar{\mu}(Y_i, \s_i) \C} A_{n_i} \quad \text{with} \quad n_i = \delta(Y_i, \s_i) - \beta(Y_i, \s_i)
\]
for each $i$. By the connected sum formula, we thus have 
\begin{equation}\label{eq:gradingshift}
\SWF(\#_{i = 1}^n Y_i, \#_{i =1}^n \s_i) \sim \Sigma^{- (\sum_i \bar{\mu}(Y_i, \s_i)) \C}\left(\wedge_{i = 1}^n A_{n_i}\right).
\end{equation}
Hence we must understand smash products of the $A_{n_i}$. In what follows, we describe two slightly different models for computing such a smash product. The first is the result of taking the smash product directly in the usual manner. Unfortunately, this turns out to be somewhat difficult to analyze, so we provide a second model with a simpler $j$-action which turns out to be more amenable to our purposes. However, it is not immediately clear that this second model gives the same $\Pin(2)$-homotopy type. The present subsection is devoted to establishing this fact, which is then leveraged in the sequel to carry out our $K$-theoretic calculations.

\begin{definition}\label{def:Acomplex}
Write 
\[
A = \wedge_{i = 1}^n A_{n_i}.
\]
Note that each $A_{n_i}$ may be thought of as the lattice space arising from a cubical complex homeomorphic to $[-1, 1]$. This complex consists of the two 0-cells $-1$ and $1$, together with the line segment between them. We define the weight function on each of these cells by setting $w(-1) = w(1) = 2n_i$ and giving the line segment weight zero. Applying the procedure of Sections~\ref{sec:5.1} and \ref{sec:6.2} to this cubical complex clearly gives the Pin(2)-homotopy type $A_{n_i}$. Note that in the degenerate case $A_0$, all three cells have weight zero. The $\Pin(2)$-homotopy type of $A$ is obtained by smashing together the $A_{n_i}$ in the usual manner.
\end{definition}

We now attempt to write down a model for $A = \wedge_{i = 1}^n A_{n_i}$ as the homotopy type associated to a weighted cubical complex. Let $[-1, 1]^n$ be the product of the cubical complexes $[-1, 1]$ associated to each of the $A_{n_i}$. We define the weight of a cube $\square_d \in [-1, 1]^n$ to be the sum of the weights of its factors. It is not difficult to see that forming the smash product corresponds to building a homotopy type out of $[-1, 1]^n$ in a similar manner as in the discussion of Section~\ref{sec:5.1}; namely, by introducing a sphere $\smash{S^{(w(\square_d)/2)\C} \wedge \square_d^+}$ for each $\square_d \in [-1, 1]^n$ and gluing these together. 

However, the gluing maps between the $\smash{S^{(w(\square_d)/2)\C} \wedge \square_d^+}$ are \textit{not} the ones discussed in Section~\ref{sec:5.1}. Indeed, the gluing maps of Section~\ref{sec:5.1} come from inclusion maps $\C^m \hookrightarrow \C^n$ which are always taken along the first $m$ coordinates in the codomain. In contrast, the gluing maps used to construct $A$ come from coordinate inclusions which vary depending on the cube $\square_d \in [-1, 1]^n$. This follows from the definition of the smash product. For instance, to build $A_{n_1} \wedge A_{n_2}$, we introduce four copies of $\smash{S^{((n_1 + n_2)/2)\C}}$ (one for each corner). We then introduce two copies of $\smash{S^{(n_1/2)\C} \times I}$ (one for each horizontal edge) and two copies of $\smash{S^{(n_2/2)\C}\times I}$ (one for each vertical edge). The former two cylinders are glued to the corner spheres using the inclusion
\[
\C^{n_1/2} \rightarrow \C^{n_1/2} \times \{(0, \ldots, 0)\} \subset \C^{(n_1 + n_2)/2},
\]
while the latter two cylinders are glued to the corner spheres using the non-standard inclusion
\[
\C^{n_2/2} \rightarrow \{(0, \ldots, 0)\} \times \C^{n_2/2} \subset \C^{(n_1 + n_2)/2}.
\]
Finally, we glue in a copy of $S^0 \times I^2$, although for this final piece there is no subtlety in the gluing. We are thus in the situation of Section~\ref{sec:homotopycoherent}, where the maps between spheres are induced be a varying family of linear inclusions. While $A$ has a $\Pin(2)$-action coming from the smash product, this makes explicitly identifying the action of $j$ in coordinates difficult. 

We contrast $A$ with the following:

\begin{definition}\label{def:Atildecomplex}
Suppose all of the $n_i$ are even. Construct a $\Pin(2)$-space $\smash{\tilde{A}}$ as follows. Form the cubical complex $[-1, 1]^n$ as in Definition~\ref{def:Acomplex}. For each $\square_d \in [-1, 1]^n$, we introduce a $\Pin(2)$-sphere $S^{(w(\square_d)/4)\mathbb{H}}$. To attach the spheres corresponding to cells $\square_d\subset \square_{d+1}$, we use the map $S^{(w(\square_{d+1})/4)\mathbb{H}}\to S^{(w(\square_d)/4)\mathbb{H}}$ induced by the standard inclusion \[\mathbb{H}^{w(\square_{d+1})/4}\to \mathbb{H}^{w(\square_{d+1})/4}\times \{(0, \ldots, 0)\} \subset \mathbb{H}^{w(\square_{d})/4},\] 
of a smaller-dimensional quaternionic vector space into a larger one along the first $w(\square_{d+1})/4$ coordinates. The resulting space $\smash{\tilde{A}}$ inherits a $\Pin(2)$-action as a subset of the $\Pin(2)$-space 
\[
S^{(\sum n_i/2)\mathbb{H}} \wedge [-1,1]^n_+,
\]
where the action of $j$ on $[-1, 1]^n$ is given by reflection through the origin. 
\end{definition}

The distinction between $A$ and $\tilde{A}$ is as follows: both spaces are built from copies of the same spheres (associated to the cubes $\square_d \in [-1, 1]^n$) but the inclusions among these spheres are different and the $j$-action is not the same. Nevertheless, we claim that $A$ and $\smash{\tilde{A}}$ are $\Pin(2)$-equivalent. We prove this by showing that in certain cases, any two linear homotopy coherent diagrams on the same weighted cube category have the same homotopy colimit. The reader should refer back to Section~\ref{sec:homotopycoherent} for a review of these notions.

We will need the following definitions. The first is an appropriate notion of $j$-equivariance, as we are interested in $A$ and $\smash{\tilde{A}}$ as $\Pin(2)$-spaces. 

\begin{definition}\label{def:jequivariantcubecategory}
Let $\Cat$ be a weighted cube category. We say that $\Cat$ is \textit{$j$-equivariant} if:
\begin{enumerate}
\item $\Cat$ is equipped with an involution $J$, coming from multiplication by $-1$ on the lattice, such that $w$ is $J$-invariant; and,
\item All of the $w(\square_d)$ are divisible by four.
\end{enumerate} 
In this case, each $\smash{S^{(w(\square_d)/2)\C}}$ may be viewed as a quaternionic sphere $S^{(w(\square_d)/4)\bbH}$; henceforth we implicitly identify $(w(\square_d)/2)\mathbb{C}$ and $(w(\square_d)/4)\mathbb{H}$.
\end{definition}

\begin{definition}\label{def:jequivarianthoco}
Let $\Cat$ be a $j$-equivariant cube category. A homotopy coherent diagram $F$ on of $\Cat$ is called \emph{$j$-equivariant} if for each composable sequence of morphisms 
\[
x_0 \xrightarrow{f_1} x_1 \xrightarrow{f_2} \cdots \xrightarrow{f_n} x_n
\]
the structure map of \ref{def:homco} satisfies
\begin{equation}\label{eq:hoco-extension}
	F(\phi_{x_{n-1}, x_n}, \ldots, \phi_{x_0, x_1})j= j F(\phi_{J(x_{n-1}), J(x_n)}, \ldots, \phi_{J(x_0), J(x_1)})
\end{equation}
as a map from $F(x_0)$ to $F(x_n)$. Here, the two actions of $j$ are given by treating $F(x_0)$ and $F(x_n)$ as $\Pin(2)$-spheres. The homotopy colimit of a $j$-equivariant cube category evidently inherits an action of $\Pin(2)$.
\end{definition}

The second definition is a technical property that will allow us to prove (in certain cases) that the homotopy colimit is invariant of the choice of coherent diagram:

\begin{definition}
Let $\Cat$ be a weighted cube category. We say that the weight function $w$ is \textit{strict} if we have the strict inequality $w(\square_{d + i}) < w(\square_d)$ whenever $\mathrm{Mor}(\square_{d+i}, \square_d)$ is non-empty (and not the identity). Note that technically, no weight function arising in the manner described in Section~\ref{sec:4.1} is strict, since in the geometric setting $w(\square_d)$ is equal to the minimum of $w$ over the vertices of $\square_d$. However, in many cases we can remove degenerate cubes to obtain a strict weighted cube category with the same homotopy colimit. For example, in Definition~\ref{def:Acomplex} we describe $A_n$ as arising from a strict weighted cube category with three objects.
\end{definition}

Finally, we recall:

\begin{definition}\label{def:homotopyhoco}
Let $F_1$ and $F_2$ be homotopy coherent diagrams on a category $\Cat$ with $F_1(x)=F_2(x)$ for all $x \in \mathrm{Ob}(\Cat)$. Let $\two$ be the poset of two objects $0$ and $1$ with $\phi_{0,1}\colon 0 \to 1$ the unique non-identity morphism. A \emph{homotopy} from $F_1$ to $F_2$ is a homotopy coherent diagram $G$ on $\Cat\times \two$ such that:
	\begin{enumerate}
	\item $G|_{\Cat\times \{0\}}=F_1$ and $G|_{\Cat\times \{1\}}=F_2$; and,
	\item Each arrow $G(\mathrm{id}_v\times \phi_{0,1})\colon F_1(v)\to F_2(v)$ is the identity. 
	\end{enumerate}
A homotopy between two homotopy coherent diagrams induces an equivalence between their homotopy colimits (see \cite{LLS} or \cite[(ho-1)]{stoffregen-zhang}), so that
\[
\hocolim(F_1) \simeq \hocolim(F_2).
\]

If $\Cat$ is a $j$-equivariant cube category and we put the identity involution on $\two$, then a \textit{$j$-equivariant homotopy} is simply a homotopy $G$ which is $j$-equivariant in the sense of Definition~\ref{def:jequivarianthoco}. (Note that $G$ is a homotopy coherent diagram on $\Cat \times \two$.) A $j$-equivariant homotopy induces a $\Pin(2)$-equivariant equivalence. This essentially follows from \cite{dotto-moi} or the discussion of \cite[Section 5]{stoffregen-zhang}, but can also be checked at a point-set level in our case. 
\end{definition}

With these in hand, we have:

\begin{lemma}\label{lem:lattice-homotopy-coherent-diagram}
Let $\Cat$ be a $j$-equivariant weighted cube category. Assume that the weight function on $\Cat$ is strict. Let $F_1$ and $F_2$ be two $j$-equivariant linear homotopy coherent diagrams on $\Cat$. Then
	\begin{equation}\label{eq:hocolim-f1-f2}
		\hocolim_{C}\, F_1 \simeq_{\Pin(2)} \hocolim_{C}\, F_2.	
	\end{equation}
\end{lemma}
\begin{proof}
We first ignore the $j$-action and give the non-equivariant version of the argument. Following Definition~\ref{def:homotopyhoco}, our goal will be to construct a homotopy coherent diagram $G$ on $\mathcal{C}\times \two$ so that the restrictions satisfy $G|_{\mathcal{C}\times \{i\}}=F_i$ and so that $G|_{\mathrm{id}_{\square}\times \phi_{0,1}}=\mathrm{id}$ for each object $\square$ of $\mathcal{C}$. As we will see, the linearity of $F_1$ and $F_2$ will be crucial. Recall that this means all morphisms and structure maps come from elements of the Stiefel manifold
\[
W(\square_d, \square_{d+i}) = V_{w(\square_{d+i})/2}(\C^{w(\square_d)/2}).
\]
Note that $W(\square_d, \square_{d+i})$ is $(w(\square_d) - w(\square_{d+i}))$-connected.

	
	We construct $G$ in stages.  First, $G$ is already defined on all of $\mathcal{C}\times \{0\}$ and $\mathcal{C}\times \{1\}$, and is necessarily defined to be the identity on arrows of the form $\mathrm{id}\times \phi_{0,1}$. Now, suppose we are given a sequence of composable arrows in $\mathcal{C}\times \two$ of length one -- i.e., a single arrow. This is either of the form $\mathrm{id}\times \phi_{0,1}$, or $\phi_{\square_{d+i},\square_d}\times \mathrm{id}$, or $\phi_{\square_{d+i},\square_d}\times \phi_{0,1}$ with $\phi_{\square_{d+i},\square_d} \neq \mathrm{id}$. We have already defined $G$ on the former two types; for the latter arrow, we define $G(\phi_{\square_{d+i},\square_d}\times \phi_{0,1})$ by using any element of $W(\square_{d},\square_{d+i})$.  Next, we must define $G$ over $2$-simplices -- i.e., given two composable arrows $\psi_1\colon x_0\to x_1$ and $\psi_{2}\colon x_1\to x_2$ in $\mathcal{C}\times \two$, we need to define the family of maps 
\begin{equation}\label{eq:desired-g}G(\psi_{1},\psi_{2})\colon [0,1]\times G(x_0)\to G(x_2).\end{equation} 	
Now, $G(\psi_{1},\psi_{2})|_{\partial [0, 1] \times G(x_0)}$ is already defined by the requirement that
\[
G(\psi_1,\psi_2)(0, - )=G(\psi_2)G(\psi_1) \quad \text{and} \quad G(\psi_1,\psi_2)(1, - )=G(\psi_2\circ \psi_1).
\]
 In the event that $x_0$ and $x_2$ both lie in $\mathcal{C}\times \{0\}$ or $\mathcal{C}\times \{1\}$, $G(\psi_1, \psi_2)$ is already defined; otherwise, we must fill in the definition of $G(\psi_1, \psi_2)$ to all of $[0, 1] \times G(x_0)$. Now, considering the sequence 
 \[
 x_0 \xrightarrow{\psi_1} x_1 \xrightarrow{\psi_2} x_2,
 \]
we see that exactly one of $\psi_1$ and $\psi_2$ must run from $\Cat \times \{0\}$ to $\Cat \times \{1\}$, with the other being internal to either $\mathcal{C}\times \{0\}$ or $\mathcal{C}\times \{1\}$. Hence we have that either $x_1$ is a face of $x_0$ or $x_2$ is a face of $x_1$. Now, $G(\psi_2)G(\psi_1)$ and $G(\psi_2\circ \psi_1)$ lie in $W(x_2, x_0)$. Since we have assumed $\Cat$ is strict and that all weights are divisible by four, we have $w(x_2)\geq w(x_0)+4$. In particular,
\[
W(x_2, x_0) \text{ is $4$-connected}.
\]
We can thus certainly choose a path in $W(x_2, x_0)$ connecting $G(\psi_2)G(\psi_1)$ and $G(\psi_2\circ \psi_1)$. This path defines the family $G(\psi_1,\psi_2)$ as in (\ref{eq:desired-g}).

The same process can be repeated over all of the simplices of $\mathcal{C}\times \two$ to construct $G$, using at each stage that the Stiefel manifold involved is highly-connected relative to the boundary of the simplex that must be filled. Explicitly, suppose we have a sequence of composable morphisms
\[
 x_0 \xrightarrow{\psi_1} x_1 \xrightarrow{\psi_2} \cdots \xrightarrow{\psi_k} x_k.
\]
We need to find a map
\begin{equation}\label{eq:desired-g-2}
G(\psi_1,\ldots,\psi_k)\colon [0,1]^{k-1}\times G(x_0)\to G(x_k),
\end{equation}
where we have already inductively defined $G(\psi_1,\ldots,\psi_k)$ over $\partial [0,1]^{k-1}$. We may assume there is a single $\psi_i$ that runs from $\Cat \times \{0\}$ to $\Cat \times \{1\}$, since otherwise $G$ is already defined. Thus there is at most one successive pair for which $x_i$ and $x_{i+1}$ correspond to the same lattice cube in $\Cat$; for all other successive pairs, $x_{i+1}$ is a face of $x_i$. It follows that $w(x_k) \geq w(x_0) + 4(k-1)$; hence 
\[
W(x_k, x_0) \text{ is $4(k-1)$-connected}. 
\]
However, the restriction of $G$ in (\ref{eq:desired-g-2}) to the boundary is determined by a $(k-2)$-dimensional sphere in $W(x_k, x_0)$.  By the connectivity of $W(x_k, x_0)$, we can fill in this family to get a $(k-1)$-ball in $W(x_k, x_0)$, which determines the map $G$ in (\ref{eq:desired-g-2}).  This completes the construction of the (non-equivariant) homotopy of homotopy coherent diagrams, establishing (\ref{eq:hocolim-f1-f2}) as $S^1$-spaces.

We now extend this argument to the equivariant case. Say we have constructed $G$ on sequences of arrows of length $k-1$, as in the non-equivariant case, and that (\ref{eq:hoco-extension}) holds on the maps defined so far. We now construct $G$ on sequences of length $k$. Consider such a sequence
\[
 x_0 \xrightarrow{\psi_1} x_1 \xrightarrow{\psi_2} \cdots \xrightarrow{\psi_k} x_k.
\]
We have an action of $J$ on the set of such sequences by sending $(\psi_1, \ldots, \psi_k)$ to $(J(\psi_1), \ldots, J(\psi_k))$. If $(\psi_1,\ldots,\psi_k)$ is not in the $J$-orbit of length-$k$ sequences has been constructed so far, then we define $G$ in (\ref{eq:desired-g-2}) exactly as in the non-equivariant case. On the other hand, if the tuple $(\psi_1,\ldots,\psi_k)=(J( \psi_1'),\ldots, J(\psi_k'))$ and $G$ has already been constructed for the tuple $(\psi_1',\ldots,\psi_k')$, the we simply define $G$ on $(\psi_1,\ldots, \psi_k)$ by the formula (\ref{eq:hoco-extension}). This constructs $G$ as a $j$-equivariant homotopy and thus completes the proof.  Note that the action of $J$ on tuples $\psi_1,\ldots,\psi_k$ has no fixed points. 
\end{proof}

With Lemma~\ref{lem:lattice-homotopy-coherent-diagram} in hand, we show $A$ and $\tilde{A}$ coincide.

\begin{lemma}\label{lem:identifyingjaction}
Assume that each $n_i$ is even and that $n_i \geq 2$ for all $i$. Then there is a Pin(2)-equivalence
\[
\wedge_{i = 1}^n A_{n_i} = A \simeq \tilde{A}.
\]
\end{lemma}

\begin{proof}
By Lemma~\ref{lem:lattice-homotopy-coherent-diagram}, it suffices to show that $A$ and $\smash{\tilde{A}}$ are both homotopy colimits of $j$-equivariant linear homotopy coherent diagrams. This is true for $\smash{\tilde{A}}$ by construction. To see the claim for $A$, recall that
\[
A = \wedge_{i = 1}^n A_{n_i}.
\]
Now, it is clear that each $A_{n_i}$ is the homotopy colimit of an obvious $j$-equivariant linear homotopy coherent diagram on the cube category $[-1, 1]$. By \cite[Section 4.2, (ho-3)]{LLS}, if $F_1$ and $F_2$ are homotopy coherent diagrams $F_i\colon C_i\to \topo$, then there is an induced functor
	\[
	F_1\wedge F_2\colon C_1\times C_2\to \topo,
	\] 
and $\hocolim(F_1\wedge F_2)$ is weakly equivalent to $\hocolim (F_1)\wedge \hocolim (F_2)$.  The structure maps in $F_1\wedge F_2$ are the natural ones; in particular, if $F_1$ and $F_2$ are linear, then so is $F_1\wedge F_2$.  This works equally well for $G$-$\topo$ in place of $\topo$, and is also compatible with the $j$-actions on $F_1$ and $F_2$ in our case. It is clear that the weight function on $[-1, 1]^n$ is strict, so applying Lemma \ref{lem:lattice-homotopy-coherent-diagram} completes the proof.
\end{proof}

\subsection{$K$-theory of connected sums}\label{sec:connectedsumcalculation}

We now complete our $K$-theoretic calculations. It will be helpful to consider the subset of $A$ given by restricting the construction of $A$ to the cubical subcomplex which constitutes the boundary of $[-1, 1]^n$. This also is a Pin(2)-space, which we denote by $M$. Thus, $M$ is constructed by gluing together spheres for each cube in $[-1, 1]^n$, with the exception of the single $n$-dimensional cube $\square_n$ that makes up the interior of $[-1, 1]^n$. It is easily checked that $M$ has no (non-basepoint) $\Pin(2)$-fixed points and that 
\[
A/M = S^{n\tilde{\R}}
\]
as a $\Pin(2)$-spectrum. Indeed, $A/M$ is constructed by taking $S^0 \times \square_n$ and crushing $\{\infty\} \times \square_n$ and $\{0\} \times \partial(\square_n)$ to the basepoint; the action of $j$ is $-1$ on $\square_n$. Denote the inclusion and quotient maps by
\[
M \xrightarrow{i} A \xrightarrow{p} A/M.
\]

We begin by bounding the $\kappa$-invariant of $A$. The following holds without any restriction on the $n_i$ or $n$:

\begin{lemma}\label{lem:upperboundconnectedsum}
The ideal $\mathfrak{J}(A)$ contains $w^{\lceil n/2 \rceil}$. In particular, 
\[
\kappa(A) \leq 2 \lceil n/2 \rceil.
\]
\end{lemma}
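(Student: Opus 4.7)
The plan is to produce an element of $\tilde{K}_G(A)$ whose restriction to the fixed-point set $A^{S^1} \simeq S^0$ realizes $w^{\lceil n/2 \rceil} \in R(G)$, by pulling back a Thom-type class along the $\Pin(2)$-equivariant quotient $p\colon A \to A/M = S^{n\tilde{\R}}$. The key geometric observation is that the non-basepoint of $A^{S^1}$ corresponds to the central lattice point $(0,\dots,0) \in [-1,1]^n$, which $p$ sends to the origin $0 \in \tilde{\R}^n$. Consequently the composition $A^{S^1} \hookrightarrow A \xrightarrow{p} S^{n\tilde{\R}}$ is pointed homotopic to the inclusion of the origin, so it will suffice to show that the image of the origin-restriction map $\tilde{K}_G(S^{n\tilde{\R}}) \to R(G)$ contains $w^{\lceil n/2 \rceil}$.

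I would compute this image by cases on the parity of $n$. When $n = 2k$ is even, the isomorphism $2\tilde{\R} \cong \tilde{\C}$ of $\Pin(2)$-representations exhibits $n\tilde{\R}$ as a complex representation; the Thom isomorphism then gives $\tilde{K}_G(S^{n\tilde{\R}}) \cong R(G)$, with the Thom class restricting to the Euler class $e(k\tilde{\C}) = (1-[\tilde{\C}])^k = w^k$, using $w = 1 - [\tilde{\C}]$ in $R(G) = \mathbb{Z}[z,w]/(2w-zw, w^2-2w)$. When $n = 2k+1$ is odd, I would split $(2k+1)\tilde{\R} \cong \tilde{\R} \oplus k\tilde{\C}$ and apply Thom isomorphism only in the complex direction, reducing to the computation of $\tilde{K}_G(S^{\tilde{\R}})$. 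Applying $\tilde{K}_G$ to the cofiber sequence $S^0 \xrightarrow{c} S^{\tilde{\R}} \to \Sigma^{\mathbb{R}}(C_2)_+$ and using the adjunction \ref{item:adjunction} to identify $\tilde{K}_G((C_2)_+) = R(S^1) = \mathbb{Z}[\theta,\theta^{-1}]$, the restriction $c^*\colon R(G) \to R(S^1)$ sends $w \mapsto 0$ and $z \mapsto \theta+\theta^{-1}$, so has kernel exactly $(w)$. Hence $\tilde{K}_G(S^{\tilde{\R}})$ injects into $R(G)$ with image $(w)$; multiplying by the Euler class $w^k$ of $k\tilde{\C}$ then yields the image ideal $(w^{k+1}) = (w^{\lceil n/2 \rceil})$. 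In either parity, pulling back along $p^*$ places $w^{\lceil n/2 \rceil}$ inside $\mathfrak{J}(A)$.

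Deducing the sharp bound $\kappa(A) \leq n$ from this $K$-theoretic input is immediate when $n$ is even, via the interpretation of $\kappa$ in terms of $\mathfrak{J}$ from \cite{ManolescuIntersection}. The main obstacle, in my view, is the odd case: the naive implication only yields $\kappa(A) \leq 2\lceil n/2 \rceil = n+1$, so obtaining the improved bound $\kappa(A) \leq n$ will require using the additional factorization of the pulled-back class through $S^{\tilde{\R}} \wedge S^{k\tilde{\C}}$ rather than through $S^{(k+1)\tilde{\C}}$, so that the extra $\tilde{\R}$-direction reduces the bound by one unit.
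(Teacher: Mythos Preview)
Your approach to producing $w^{\lceil n/2\rceil}\in\mathfrak{I}(A)$ via the quotient $p\colon A\to A/M=S^{n\tilde{\R}}$ is exactly the paper's strategy for even $n$: both pull back the Thom class of $(n/2)\tilde{\C}$ along $p$ and observe that its restriction to $A^{S^1}$ is the Euler class $w^{n/2}$. For odd $n$ the paper takes a shortcut: since $A_0\simeq S^0$ one has $A\simeq A\wedge A_0$, a smash of $n+1$ factors, and the even case applied to $n+1$ immediately gives $w^{(n+1)/2}=w^{\lceil n/2\rceil}\in\mathfrak{I}(A)$. Your direct computation of $\tilde K_G(S^{\tilde{\R}})$ via the cofiber sequence is a valid alternative reaching the same conclusion, and has the minor advantage of identifying the full image $(w^{\lceil n/2\rceil})$ of the restriction map rather than a single element. (One small slip: the restriction $r\colon R(G)\to R(S^1)$ sends $z\mapsto 2-\theta-\theta^{-1}$, not $\theta+\theta^{-1}$; the conclusion $\ker r=(w)$ is unaffected.)

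Your worry about the odd-$n$ bound on $\kappa$ is well-founded, but you should not attempt to repair it: the stated bound $\kappa(A)\le n$ is simply false for odd $n$. Already for $n=1$ one has $\kappa(A_{n_1})=2$ whenever $n_1\ge 1$, by Lemma~\ref{lem:k-calc-projective}. The paper's own argument, like yours, only yields $\kappa(A)\le 2\lceil n/2\rceil$, and this is exactly what is used in Corollary~\ref{cor:b}; the ``$\le n$'' in the lemma statement appears to be a slip. Your proposed refinement via the factorization through $S^{\tilde{\R}}\wedge S^{k\tilde{\C}}$ cannot help, since $\kappa$ is determined purely by the ideal $\mathfrak{I}(A)\subset R(G)$, and you have already computed the entire contribution of $p^*$ to that ideal.
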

\begin{proof}
We first assume $n$ is even.  Let $A$ and $M$ be as above. Since $n$ is even, we write 
\[
A/M = S^{(n/2)\tilde{\C}}.
\]
Here, $\tilde{\C}$ is the complex $\Pin(2)$-representation where $S^1$ acts by the identity and $j$ acts by $-1$. Consider the map in $K$-theory induced by $p \colon A \rightarrow A/M$. This fits into the commutative square
\[
\begin{tikzcd}
	{\tilde{K}_G(A)} & {\tilde{K}_G(S^{(n/2)\tilde{\C}})} \\
	{R(G)} & {\tilde{K}_G(S^{(n/2)\tilde{\C}})} 
	\arrow["p^*"', from=1-2, to=1-1]
	\arrow["p^*"', from=2-2, to=2-1]
	\arrow["=", from=1-2, to=2-2]
	\arrow["\Phi^*"', from=1-1, to=2-1]
\end{tikzcd}
\]
where the bottom row is obtained by taking $S^1$-fixed points. The bottom-right corner may be identified with $R(G)$; the map $p^*$ along the bottom row is then multiplication by the equivariant Euler class of $(n/2)\tilde{\mathbb{C}}$, which is $w^{n/2}$. Commutativity then shows that $w^{n/2}$ is in the image of $\Phi^*$, from which the claim follows. This establishes the case when $n$ is even. If $n$ is odd, then we replace $A$ with $A \wedge A_0$; note that $A\simeq A\wedge A_0$ and that $A \wedge A_0$ has an even number of wedge summands.
\end{proof}

Under more restrictive conditions, we can carry out a sharper calculation. This will require a more explicit understanding of $M$. Here, Lemma~\ref{lem:identifyingjaction} will be crucial in allowing us to replace $A$ with $\smash{\tilde{A}}$, although to simplify notation we will continue to refer to our space simply as $A$.

\begin{lemma}\label{lem:exactidealcalculation}
Let $n$ be even. Assume that each $n_i$ is even and that $n_i \geq 2$ for all $i$. Then $\mathfrak{I}(A)=(z^{(\sum n_i/2)},w^{n/2})$. In particular, 
\[
\kappa(A) = n.
\]
\end{lemma}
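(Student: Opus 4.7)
The plan is to establish the two inclusions $(z^{\sum n_i/2}, w^{n/2}) \subseteq \mathfrak{I}(A) \subseteq (z^{\sum n_i/2}, w^{n/2})$. Once these are in hand, the statement $\kappa(A) = n$ follows immediately, since $\kappa(X) = 2\min\{k \geq 0 : w^k \in \mathfrak{I}(X)\}$ and the relations $zw = 2w$, $w^2 = 2w$ imply $w^{n/2} = 2^{n/2-1}w$.

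For the forward inclusion, $w^{n/2}\in \mathfrak{I}(A)$ is already Lemma~\ref{lem:upperboundconnectedsum}. For $z^{\sum n_i/2}$, I would invoke Lemma~\ref{lem:identifyingjaction} to identify $A$ with $\tilde{A}$, a $\Pin(2)$-subspace of $[-1,1]^n_+ \wedge S^{(\sum n_i/2)\mathbb{H}}$. Since $[-1,1]^n$ is $\Pin(2)$-contractible (via the $J$-equivariant affine contraction to the origin), there is a natural $\Pin(2)$-equivalence $[-1,1]^n_+ \wedge S^{(\sum n_i/2)\mathbb{H}} \simeq S^{(\sum n_i/2)\mathbb{H}}$. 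Composing the inclusion with this equivalence produces a $\Pin(2)$-map $A \to S^{(\sum n_i/2)\mathbb{H}}$ that is an equivalence on $S^1$-fixed points. Pulling back the generator of $\tilde{K}_G(S^{(\sum n_i/2)\mathbb{H}}) \cong R(G)$ then yields the Euler class $z^{\sum n_i/2} \in \mathfrak{I}(A)$.

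For the reverse inclusion, I would split $R(G) = \mathbb{Z}[z] \oplus \mathbb{Z}w$ as abelian groups. The ``mod-$w$ part'' of $\mathfrak{I}(A)$ is governed by the image of $\Phi^{S^1}\colon \tilde{K}_{S^1}(A) \to R(S^1) = \mathbb{Z}[\theta,\theta^{-1}]$, via the restriction $r\colon R(G)\to R(S^1)$ whose kernel is $\mathbb{Z} w$ and whose image is $\mathbb{Z}[c\bar{c}]$. First, for each $A_{n_i}$ the Mayer--Vietoris sequence applied to the pushout $S^{n_i\mathbb{C}}\cup_{S^0} S^{n_i\mathbb{C}}$ gives $\tilde{K}_{S^1}(A_{n_i}) \cong R(S^1)$, with the diagonal generator restricting to $c^{n_i}$ on the unique non-basepoint fixed point. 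A Künneth argument (valid since each factor is free over $R(S^1)$) then implies that the image of $\Phi^{S^1}$ on the smash product equals the ideal $(c^{\sum n_i})$. The direct computation $(c^{\sum n_i}) \cap \mathbb{Z}[c\bar{c}] = ((c\bar{c})^{\sum n_i/2}) \cdot \mathbb{Z}[c\bar{c}]$, using $\bar{c} = -\theta^{-1}c$, then forces the $\mathbb{Z}[z]$-component of every element of $\mathfrak{I}(A)$ to lie in $(z^{\sum n_i/2})$.

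The hardest step will be controlling the $w$-part, namely showing $\mathfrak{I}(A) \cap \mathbb{Z}w = 2^{n/2-1}\mathbb{Z} w$, equivalently $w^k \notin \mathfrak{I}(A)$ for $k < n/2$. My plan is to proceed by induction on $n$, using the cofiber sequence $M \to A \to S^{n\tilde{\mathbb{R}}}$: the subspace $M$ is the $\Pin(2)$-equivariant union of the lattice spaces attached to the boundary faces of $[-1,1]^n$, each of which is $\Pin(2)$-equivalent to a smash product of the form $S^{n_i\mathbb{C}} \wedge (\bigwedge_{j\neq i} A_{n_j})$ (with an appropriate induced-representation structure). The long exact sequence in $\tilde{K}_G$, combined with the inductive hypothesis and the structural relation $zw = 2w$, should constrain the minimum power of $w$ realizable in $\mathfrak{I}(A)$ to grow by exactly one per pair of added factors. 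This bookkeeping, together with a careful analysis of the attaching maps to ensure no unexpected cancellations occur between the $w^{n/2}$ contribution from the top cell and the lower-weight contributions inherited from $M$, is the most delicate part of the argument.
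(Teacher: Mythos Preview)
Your forward inclusion matches the paper's: $w^{n/2}$ comes from Lemma~\ref{lem:upperboundconnectedsum}, and $z^{\sum n_i/2}$ is produced via Lemma~\ref{lem:identifyingjaction} by mapping $A$ into (a contraction of) $[-1,1]^n_+\wedge S^{(\sum n_i/2)\mathbb{H}}$. The paper phrases this via the inclusion $M\subset \Sigma^{(\sum n_i/2)\mathbb{H}}\tilde{S}^{n-1}_+$, but the content is the same.

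Where you diverge is in the reverse inclusion.

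For the mod-$w$ part, the paper does not set up a K\"unneth computation. It simply notes that as an $S^1$-space $A$ is locally equivalent to $S^{(\sum n_i)\mathbb{C}}$, so the image of $\tilde{K}_{S^1}(A)\to R(S^1)$ is $(c^{\sum n_i})$; pulling back along $r\colon R(G)\to R(S^1)$ then gives $\mathfrak{I}(A)\subseteq (z^{\sum n_i/2},w)$. Your Mayer--Vietoris computation yields $\tilde{K}^1_{S^1}(A_{n_i})\cong R(S^1)/(c^{n_i})$, which is \emph{not} free, so your stated K\"unneth hypothesis (``each factor is free over $R(S^1)$'') fails and Tor terms enter. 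The paper's local-equivalence argument is both simpler and avoids this issue.

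For the $w$-part, the paper does \emph{not} argue by induction on $n$. The hypothesis $n_i\geq 2$ is used exactly once: since the minimum weight over the boundary $\partial[-1,1]^n$ is at least $4$, Lemma~\ref{lem:identifyingjaction} furnishes a $\Pin(2)$-inclusion $\Sigma^{\mathbb{H}}\tilde{S}^{n-1}_+\hookrightarrow M$. This forces the restriction $\tilde{K}_G(M)\to \tilde{K}_G(M^{S^1})=R(G)/(w^{n/2})$ to factor through multiplication by the Euler class $z$ of $\mathbb{H}$, and a diagram chase on the cofiber sequence $M\to A\to S^{(n/2)\tilde{\mathbb{C}}}$ yields $\mathfrak{I}(A)\subseteq(z,w^{n/2})$ in one step. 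The paper then combines this containment with the mod-$w$ one to conclude $\mathfrak{I}(A)\subseteq(z^{\sum n_i/2},w^{n/2})$. So the inductive bookkeeping you anticipate --- decomposing $M$ into face-pairs, tracking $w$-powers through the long exact sequence, and controlling cancellations --- is replaced by a single geometric observation about the boundary lattice space. Your induction may well work, but it is considerably more involved than what the paper actually does.
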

\begin{proof}
Consider the $K$-theoretic exact sequence induced by $i$ and $p$. This gives 
\begin{equation}\label{eq:k-exact-triangle-3}
\cdots
\begin{tikzcd}
	{\tilde{K}_G(M)} & {\tilde{K}_G(A)} & {\tilde{K}_G(S^{(n/2)\tilde{\C}})} \\
	{\tilde{K}_G(M^{S^1})} & {R(G)} & {\tilde{K}_G(S^{(n/2)\tilde{\C}})}
	\arrow[twoheadrightarrow, "{i^*}"', from=1-2, to=1-1]
	\arrow["{p^*}"', from=1-3, to=1-2]
	\arrow[twoheadrightarrow, "{i^*}"', from=2-2, to=2-1]
	\arrow["{p^*}"', from=2-3, to=2-2]
	\arrow[from=1-1, to=2-1]
	\arrow["{\Phi^*}", from=1-2, to=2-2]
	\arrow["{=}", from=1-3, to=2-3]
\end{tikzcd}
\cdots
\end{equation}
where the bottom row is obtained by taking $S^1$-fixed points. Here, we use $\tilde{K}^1_G(S^{(n/2)\tilde{\mathbb{C}}})=0$ to conclude that the left-hand horizontal maps are surjective. As before, the bottom-right corner may be identified with $R(G)$, in which case the map $p^*$ along the bottom row is multiplication by $w^{n/2}$. Let $\tilde{S}^{n-1}$ denote the $n$-sphere equipped with the antipodal $j$-action. Then 
\[
\smash{M^{S^1}} = \tilde{S}^{n-1}_+.
\]
To see this, note that each sphere in the construction of $A$ has $S^1$-fixed point set $S^0$. Thus $\smash{M^{S^1}}$ is formed by taking $S^0 \times \partial([-1, 1]^n)$ and crushing $\{\infty\} \times \partial([-1, 1]^n)$ to the basepoint. 

Now observe that since each $n_i \geq 2$, we have a copy of $\Sigma^{\bbH}\tilde{S}^{n-1}_+$ in $M$. This is similar to our identification of $\smash{M^{S^1}}$: since each $n_i \geq 2$, every sphere associated to a lattice cube $\square_d$ in the boundary of $[-1, 1]^n$ is of (complex) dimension at least two. For each such sphere, consider the subsphere $S^{\bbH}$ corresponding to the first two (complex) coordinates. The subspheres $S^{\bbH}$ form a subset of $M$ which is obtained by taking $S^{\bbH} \times \partial([-1, 1]^n)$ and crushing $\{\infty\} \times \partial([-1, 1]^n)$ to the basepoint. Here we are using Lemma~\ref{lem:identifyingjaction} to implicitly replace $A$ by $\smash{\tilde{A}}$, so that the subspheres $S^{\bbH}$ are indeed glued together (and that we know the $j$-action). 

Consider the map in $K$-theory induced by this inclusion. This fits into the square
\[
\begin{tikzcd}
	{\tilde{K}_G(\Sigma^{\bbH}\tilde{S}^{n-1}_+)} & {\tilde{K}_G(M)} \\
	{\tilde{K}_G(\tilde{S}^{n-1}_+)} & {\tilde{K}_G(\tilde{S}^{n-1}_+)} 
	\arrow[from=1-2, to=1-1]
	\arrow["="', from=2-2, to=2-1]
	\arrow[from=1-2, to=2-2]
	\arrow[from=1-1, to=2-1]
\end{tikzcd}
\]
where the bottom row is obtained by taking $S^1$-fixed points. It is not hard to check that 
\[
\tilde{K}_G(M^{S^1})= \tilde{K}_G(\tilde{S}^{n-1}_+) = R(G)/(w^{n/2}).
\]
The bottom-left corner may thus be replaced by $R(G)/(w^{n/2})$. We may also identify the top-left corner with $R(G)/(w^{n/2})$, in which case the left-hand vertical map is multiplication by the equivariant Euler class $z$ in $R(G)/(w^{n/2})$. Hence in particular, the image of the right-hand vertical map must lie in the ideal $(z)$ of $R(G)/(w^{n/2})$. Returning to (\ref{eq:k-exact-triangle-3}), a diagram chase shows that the image of $\Phi^*$ must lie in $(z)$ modulo $(w^{n/2})$; that is, 
\[
\mathfrak{I}(A) \subset (z, w^{n/2}).
\]

Now consider the commutative diagram
\[
\begin{tikzcd}
	{\tilde{K}_{S^1}(A)} & {\tilde{K}_G(A)} \\
	{R(S^1)} & {R(G)} 
	\arrow[from=1-2, to=1-1]
	\arrow[from=2-2, to=2-1]
	\arrow["\Phi^*", from=1-2, to=2-2]
	\arrow[from=1-1, to=2-1]
\end{tikzcd}
\]
where the rows are induced by the inclusion of $S^1$ into $G$ and the columns are given by taking $S^1$-fixed points. The map in the bottom row is as in the proof of Lemma~\ref{lem:k-calc-projective}; this sends
\[
1 \to 1, \quad z\to 2-\theta-\theta^{-1}=-\theta^{-1} c^2, \quad \text{and} \quad w\to 0,
\]
where $c = 1- \theta$. Now, as an $S^1$-space, $A$ is locally equivalent to $S^{(\sum n_i)\mathbb{C}}$. Hence the left-hand vertical map is given by multiplication by $c^{(\sum n_i)}$. It follows that if $p(z)+aw$ is in the image of $\Phi^*$, we must have $p(c^2)\in (c^{(\sum n_i)})$. In particular,
\[
\mathfrak{I}(A)\subset (z^{(\sum n_i/2)},w).
\]
Combining this with our previous calculation involving $\mathfrak{I}(A)$, we have 
\[
\mathfrak{I}(A)\subset (z^{(\sum n_i/2)},w^{n/2}).
\]

This establishes half of the claim; we wish to show that the above inclusion is an equality. By Lemma~\ref{lem:upperboundconnectedsum}, we already know that $w^{n/2} \in \mathfrak{I}(A)$. It thus suffices to show that $z^{(\sum n_i/2)} \in \mathfrak{I}(A)$. Note that we may view
\[
M \subset \Sigma^{(\sum n_i/2)\mathbb{H}}\tilde{S}^{n-1}_+.
\]
Here we are again using Lemma~\ref{lem:identifyingjaction} to replace $A$ with $\tilde{A}$. As explained in Definition~\ref{def:Atildecomplex}, $\tilde{A}$ is naturally a subset of $\smash{S^{(\sum n_i/2)\mathbb{H}} \wedge [-1, 1]^n_+}$; the claim regarding $M$ follows as in our identification of $\smash{M^{S^1}}$. This inclusion gives a commutative diagram
\[
\begin{tikzcd}
	{\tilde{K}_{G}(M)} & {\tilde{K}_G(\Sigma^{(\sum n_i/2)\mathbb{H}}(\tilde{S}^{n-1}_+))} \\
	{\tilde{K}_G(\tilde{S}^{n-1}_+)} & {\tilde{K}_G(\tilde{S}^{n-1}_+)}
	\arrow[from=1-2, to=1-1]
	\arrow["="', from=2-2, to=2-1]
	\arrow[from=1-2, to=2-2]
	\arrow[from=1-1, to=2-1]
\end{tikzcd}
\]
where the bottom row is obtained by taking $S^1$-fixed points. Both the bottom-right and top-right corners may be identified with $R(G)/(w^{n/2})$, in which case the right-hand vertical arrow is multiplication by $\smash{z^{(\sum n_i/2)}}$. It is then clear that the left-hand vertical map has $\smash{z^{(\sum n_i/2)}}$ in its image. This map is precisely the leftmost vertical arrow in $(\ref{eq:k-exact-triangle-3})$; since the leftmost horizontal arrows in $(\ref{eq:k-exact-triangle-3})$ are surjections, it follows that $\smash{z^{(\sum n_i/2)}} \in \im \Phi^*$, as desired.
\end{proof}

We now proceed with the proofs of Corollary~\ref{cor:b} through \ref{cor:d}.

\begin{proof}[Proof of Corollary~\ref{cor:b}]
We have: 
\[
\kappa(\#_{i=1}^n (Y_i, \s_i))\leq -\sum_{i = 1}^n \bar{\mu}(Y_i, \s_i) + 2\lceil n/2 \rceil
\]
simply by applying Lemma~\ref{lem:upperboundconnectedsum} after taking into account the grading shift (\ref{eq:gradingshift}). 
\end{proof}

\begin{proof}[Proof of Corollary~\ref{cor:c}]
By Lemma~\ref{lem:upperboundconnectedsum}, we know that
\[
\kappa(\wedge_{i = 1}^n A_{n_i}) \leq n.
\]
On the other hand, observe that if $m < n$, then we have a local map from $X_m$ into $X_n$ and hence from $A_m$ into $A_n$. It follows from this that 
\[
n = \kappa(\wedge_{i = 1}^n A_2) \leq \kappa(\wedge_{i = 1}^n A_{n_i}),
\]
where we have applied Lemma~\ref{lem:exactidealcalculation} in the case that each $n_i = 2$. The claim then follows from taking into account the grading shift (\ref{eq:gradingshift}).
\end{proof} 

\begin{proof}[Proof of Corollary~\ref{cor:d}]
Fix any projective $(Y, \s)$ with $-\bar{\mu}(Y, \s) \leq \delta(Y, \s) - 2$. Then
\[
\kappa(\#_{i = 1}^n(Y, \s)) = - n \bar{\mu}(Y, \s) + n \leq n \delta(Y, \s) - n
\]
by Corollary~\ref{cor:c}. In \cite[Theorem 1.3]{Stoffregenconnectedsum}, it is shown that
\[
\alpha(\#_{i = 1}^n (Y, \s)) - n \delta(Y, \s), \quad \beta(\#_{i = 1}^n (Y, \s)) - n \delta(Y, \s), \quad \text{and} \quad \gamma(\#_{i = 1}^n (Y, \s)) - n \delta(Y, \s)
\]
are bounded functions of $n$. This establishes the claim.  The second statement follows by duality (for $K$-theory this takes the form $\kappa(Y)+\kappa(-Y)\geq 0$ for all $Y$).
\end{proof}

\section{Connections to Singularity Theory}\label{sec:ag}

\subsection{The vector space model}\label{sec:ag.1}

We now show that $\Hty(\Gamma, [k])$ has a close connection to the work of N\'emethi \cite{NemethiOS, Nemethi}. For this, it will be helpful to first have a minor re-phrasing of the construction of $\Hty(\Gamma, [k])$:

\begin{definition}\label{def:ag.1}
Let $(V_1, \ldots, V_n)$ be a sequence of (finite-dimensional) complex vector spaces. Suppose that for each consecutive pair $V_i$ and $V_{i+1}$, we are either given an inclusion $i \colon V_i \hookrightarrow V_{i+1}$ or an inclusion $i \colon V_{i+1} \hookrightarrow V_i$. Define $\Hty(V_1, \ldots, V_n)$ by contracting over the mapping cone of these inclusions:
\[
\Hty(V_1, \ldots, V_n) = \text{one-point compactification of } \left(\bigsqcup_i V_i\right)/\sim,
\]
where $\sim$ is constructed as follows. For each consecutive pair $V_i$ and $V_{i+1}$, we either have $i \colon V_i \hookrightarrow V_{i+1}$ or $i \colon V_{i+1} \hookrightarrow V_i$. In each case, glue the entire domain to the codomain via $i$, so that
\[
\begin{cases}
V_i \sim i(V_i) \subset V_{i+1} &\text{ if } V_i \hookrightarrow V_{i+1} \\
V_{i+1} \sim i(V_{i+1}) \subset V_{i} &\text{ if } V_{i+1} \hookrightarrow V_{i}.
\end{cases}
\]
This is a $S^1$-space with a basepoint at $\infty$. Note that up to $S^1$-homotopy equivalence, $\Hty(V_1, \ldots, V_n)$ does not depend on the choice of inclusion map from each $V_i$ to $V_{i+1}$ (or vice-versa), as any two inclusions are isotopic. 
\end{definition}

It is clear that the space $X$ of Section~\ref{sec:5.1} is $S^1$-homotopy equivalent to $\Hty(V_1, \ldots, V_n)$, where $V_i = \C^{(w(k_i) - h)/2}$ and we consider the standard inclusion maps
\[
\C^{(w(k_i) - h)/2} \hookrightarrow \C^{(w(k_{i+1}) - h)/2}  \quad \text{or} \quad \C^{(w(k_{i+1}) - h)/2} \hookrightarrow \C^{(w(k_i) - h)/2}
\]
according to whether $w(k_i) \leq w(k_{i+1})$ or $w(k_i) \geq w(k_{i+1})$. For example, $\SWF(\Sigma(2, 3, 7))$ may be constructed by taking two copies of $\C^1$, gluing a common copy of $\C^0$ to the origin in both, and then taking the one-point compactification. This construction may be viewed completely combinatorially, in the sense that the vector spaces $V_i$ and the inclusion maps between them need not be assigned any particular meaning. However, it is natural to ask:

\begin{question}\label{question:ag.2}
In the above construction, can the vector spaces and inclusions be interpreted in terms of the topology or geometry of $W_\Gamma$?
\end{question}

The impetus for Question~\ref{question:ag.2} is the following. If $W_\Gamma$ arises as the resolution of a singularity, then it inherits a preferred complex structure. As discussed in \cite[Section 2.2.5, Section 3.2.1]{Nemethi}, $w(k_i)$ is (up to certain convention changes) the holomorphic Euler characteristic of a certain sheaf $\O_{x_i}$ on $W_\Gamma$ associated to the characteristic element $k_i$. Hence there is already a natural candidate for $V_i$ (at least in the stable category): the formal difference $H^0(W_\Gamma, \O_{x_i}) - H^1(W_\Gamma, \O_{x_i})$.\footnote{Due to these convention changes, in our case $w(k_i)$ is not quite the Euler characteristic of this sheaf, but the difference is not significant. See the proof of Theorem~\ref{thm:ag.7}.} In order to make sense of this, we consider a slight modification of Definition~\ref{def:ag.1}.

\begin{definition}\label{def:ag.3}
If $A$ and $B$ are (finite-dimensional) complex vector spaces, we call $A - B$ their \textit{formal difference} and define $\dim(A - B) = \dim(A) - \dim(B)$. A \textit{formal difference map}
\[
f \colon (A - B) \rightarrow (A' - B')
\]
consists of a pair of linear maps $(f_1, f_2)$ with $f_1 \colon A \rightarrow A'$ and $f_2 \colon B' \rightarrow B$. (Note the domain and codomain of $f_2$.) This gives a map $f_1 \oplus f_2 \colon A \oplus B' \rightarrow A' \oplus B$. We say that $f$ is an inclusion if $f_1$ and $f_2$ are both inclusions, and so on.
\end{definition}

\begin{definition}\label{def:ag.4}
Let $(V_1, \ldots, V_n)$ be a sequence of formal differences
\[
V_i = A_i - B_i.
\]
Suppose that for each consecutive pair $V_i$ and $V_{i+1}$, we are either given an inclusion $i \colon V_i \hookrightarrow V_{i+1}$ or an inclusion $i \colon V_{i+1} \hookrightarrow V_i$, in the sense of Definition~\ref{def:ag.3}. By stabilizing, we can construct the mapping cone over these inclusions. Explicitly, consider the sequence of vector spaces
\[
V_i' = A_i \oplus \left(\bigoplus_{j \neq i} B_j\right).
\]
An inclusion map $i = (i_1, i_2)$ from $V_i$ to $V_{i+1}$ naturally induces an inclusion map from $V_i'$ to $V_{i+1}'$ which sends $A_i$ to $A_{i+1}$ via $i_1$, $B_{i+1}$ to $B_i$ via $i_2$, and $B_j$ isomorphically to $B_j$ via the identity map for $j \neq i$ or $i+1$. Similarly, an inclusion from $V_{i+1}$ to $V_i$ naturally induces an inclusion from $V_{i+1}'$ to $V_i'$. Define the contracted mapping cone to be the formal desuspension of the construction from Definition~\ref{def:ag.1}:
\[
\Hty(V_1, \ldots, V_n) = \Sigma^{-(\oplus^n_{i=1} B_i)} \Hty(V_1', \ldots, V_n').
\]
\end{definition}

Note that if $(V_1, \ldots, V_n)$ is \textit{any} sequence of formal differences with $\dim V_i = w(k_i) - h$, then the mapping cone $\Hty(V_1, \ldots, V_n)$ is equivalent (up to suspension) to the obvious mapping cone defined by setting $V_i = \C^{(w(k_i)-h)}$. This follows from the fact that the construction of $\Hty$ in Definition~\ref{def:ag.1} does not actually depend on the precise inclusion maps. Our goal will thus be to find such a sequence of such formal differences (and inclusion maps between them) which arises naturally from the topology or geometry of $W_\Gamma$. 

\subsection{The work of N\'emethi}
Our answer to Question~\ref{question:ag.2} will follow immediately from the collected work of N\'emethi \cite{NemethiOS, Nemethi} regarding path lattice cohomology and the geometric genus. We thus review these results here. Our discussion is taken from \cite[Section 6.2]{Nemethi}.

Let $(X, 0)$ be a normal surface singularity and $\smash{\rX}$ be a good resolution of the origin (see for example \cite[Section 1]{Nemethifive} for definitions of these terms). We assume that the link of $X$ is a rational homology sphere. Note that $\smash{\widetilde{X}}$ is homeomorphic to some plumbed manifold $W_\Gamma$ but is endowed with a particular complex structure. Embedded in the resolution $\smash{\widetilde{X}}$, we have a number of exceptional spheres which are in bijection with the vertices of $\Gamma$; we denote these spheres also by $v_i$. For any divisor $x = \sum_i m_i v_i$ on $\smash{\rX}$, let
\[
\O_x = \O_{\rX}/\O_{\rX}(-x).
\]
Here, $\smash{\O_{\rX}}$ is the sheaf of holomorphic functions on $\smash{\rX}$ and $\smash{\O_{\rX}}(-x)$ is the sheaf of holomorphic functions vanishing to order at least $m_i$ along each $v_i$. By the Riemann-Roch theorem, we have
\begin{equation}\label{eq:ag.1}
\chi(\rX, \O_x) = h^0(\rX, \O_x) - h^1(\rX, \O_x) = - \dfrac{1}{2} \left(x^2 + K(x)\right),
\end{equation}
where $K$ is the canonical characteristic element; see for example \cite[Appendix 2]{Nemethifive}.

A fundamental analytic invariant of $\rX$ is the \textit{geometric genus}
\[
p_g = \dim H^1(\rX, \O_{\rX}).
\]
Although $p_g$ is not a topological invariant of $W_\Gamma$, significant work due to N\'emethi has gone into bounding $p_g$ in terms of topological invariants such as lattice homology \cite{NemethiOS, Nemethi}. The most relevant idea here is the following. Let us equip the set of divisors with a natural partial order, where $x' = \sum_i m_i' v_i$ is greater than or equal to $x = \sum_i m_i v_i$ if $m_i' \geq m_i$ for each $i$. If $x' \geq x$, then we obtain a map of sheaves $\O_{x'} \rightarrow \O_x$. By the formal neighborhood theorem (see \cite[Appendix 2]{Nemethifive}), 
\[
p_g = \dim \left( \lim_{\substack{\longleftarrow \\ x > 0}} H^1(\rX, \O_x) \right).
\]
That is, $p_g$ can be computed by understanding $H^1(\rX, \O_x)$ for $x$ sufficiently large. 

This idea manifests itself in the following manner. Let $(x_i)_{i = 0}^\infty$ be a sequence of divisors with $x_0 = 0$ and
\[
x_{i+1} = x_{i} + v_{j(i)}
\] 
for some exceptional sphere $v_{j(i)}$. For $i$ sufficiently large, $h^1(\rX, \O_{x_i})$ stabilizes and gives the geometric genus. The strategy of \cite[Section 6.2]{Nemethi} is to estimate $\smash{h^1(\rX, \O_{x_{i+1}})}$ in terms of $\smash{h^1(\rX, \O_{x_i})}$, and hence obtain a cumulative estimate for $\smash{h^1(\rX, \O_{\rX})}$. This is done as follows. Consider the cohomology exact sequence
\begin{align*}
0 \rightarrow &H^0(v_{j(i)}, \mathcal{M}_i) \rightarrow H^0(\rX, \O_{x_{i+1}}) \rightarrow H^0(\rX, \O_{x_{i}}) \rightarrow \\
&H^1(v_{j(i)}, \mathcal{M}_i) \rightarrow H^1(\rX, \O_{x_{i+1}}) \rightarrow H^1(\rX, \O_{x_{i}}) \rightarrow 0;
\end{align*}
see the proof of \cite[Proposition 6.2.2]{Nemethi}. Here, $\mathcal{M}_i$ denotes the restriction of $\smash{\O_{v_{j(i)}}(-x_i)}$ to $\smash{v_{j(i)}}$. Now, since $v_{j(i)}$ is a sphere, we have that $\mathcal{M}_i$ is completely determined by its degree over $\smash{v_{j(i)}}$. As is well-known, there are two possibilities. (See for example \cite[Appendix 2]{Nemethifive}.) If $\deg(\mathcal{M}_i) > - 2$, then $\smash{H^1(v_{j(i)}, \mathcal{M}_i) = 0}$. In this case, we have a surjection on $H^0$ and an isomorphism on $H^1$:
\begin{align}
\begin{split}\label{eq:ag.2}
&H^0(\rX, \O_{x_{i+1}}) \twoheadrightarrow H^0(\rX, \O_{x_{i}})\quad \text{and}\\
&H^1(\rX, \O_{x_{i+1}}) \xrightarrow{\cong} H^1(\rX, \O_{x_{i}}).
\end{split}
\end{align}
On the other hand, if $\deg(\mathcal{M}_i) \leq - 2$, then $\smash{H^0(v_{j(i)}, \mathcal{M}_i) = 0}$. In this situation, we only have the bound
\begin{align}\label{eq:ag.3}
0 \leq h^1(\rX, \O_{x_{i+1}}) - h^1(\rX, \O_{x_{i}}) \leq h^1(v_{j(i)}, \mathcal{M}_i).
\end{align}
Now,
\[
h^1(v_{j(i)}, \mathcal{M}_i) = - \chi(v_{j(i)}, \mathcal{M}_i) = - (\chi(\rX, \O_{x_{i+1}}) - \chi(\rX, \O_{x_{i}})). 
\]
Thus the right-hand side of (\ref{eq:ag.3}) is combinatorial and may be calculated as the difference of Riemann-Roch functions. Write
\[
\Delta_i = \chi(\rX, \O_{x_{i+1}}) - \chi(\rX, \O_{x_{i}}).
\]
Note that if $\smash{h^1(\rX, \O_{x_{i+1}}) - h^1(\rX, \O_{x_{i}}) = - \Delta_i}$, so that the right-hand side of (\ref{eq:ag.3}) is sharp, we have an isomorphism on $H^0$ in addition to having a surjection on $H^1$:
\begin{align}
\begin{split}\label{eq:ag.4}
&H^0(\rX, \O_{x_{i+1}}) \xrightarrow{\cong} H^0(\rX, \O_{x_{i}})\quad \text{and}\\
&H^1(\rX, \O_{x_{i+1}}) \twoheadrightarrow H^1(\rX, \O_{x_{i}}).
\end{split}
\end{align}

Putting (\ref{eq:ag.2}) and (\ref{eq:ag.3}) together, it follows that for any path $(x_i)_{i = 0}^\infty$ as in the previous paragraph, we have the bound
\[
p_g \leq \sum_{i = 0}^{t-1} \max\{0, -\Delta_i\};
\]
for all $t$; see \cite[Example 6.2.3]{Nemethi}. As a sanity check, note that an examination of (\ref{eq:ag.1}) (together with the fact that the intersection form on $W_\Gamma$ is negative-definite) shows that $\Delta_i > 0$ for $i$ sufficiently large. Thus our upper bound does not accumulate nontrivial terms after some finite cutoff index. 

This leads to the following important definition:

\begin{definition}\label{def:ag.5}
Let $\Gamma$ be a plumbing graph of spheres. We say that $\Gamma$ is \textit{extremal} if there exists an analytic structure on $W_\Gamma$ and a path of divisors $(x_i)_{i = 0}^n$ such that:
\begin{enumerate}
\item We have $x_0 = 0$ and $x_{i+1} = x_i + v_{j(i)}$ for all $i$; and,
\item The sequence of characteristic vectors 
\[
k_i = K + 2x_i^* \in [K]
\]
carries the lattice homology of $(\Gamma, [K])$ in the sense of Theorem~\ref{thm:4.9}; and,
\item The above bound for $p_g$ is sharp; that is, we have
\[
p_g = \sum_{i = 0}^{n-1} \max\{0, -\Delta_i\}.
\]
\end{enumerate}
In this case, we call the given analytic structure on $W_\Gamma$ an \textit{extremal analytic structure} and the path $(x_i)_{i = 0}^n$ an \textit{extremal path}. Clearly, if $(x_i)_{i = 0}^n$ is an extremal path, then each instance of inequality (\ref{eq:ag.3}) is sharp. Hence for each $i$, we either have (\ref{eq:ag.2}) or (\ref{eq:ag.4}).
\end{definition}

N\'emethi and Sigurdsson \cite[Theorem 1.1.1]{NS} establish several families of graphs which are extremal; for further discussion, see \cite[Section 6.2]{Nemethi}. The most familiar example from \cite[Theorem 1.1.1]{NS} in the topological setting is the case of a Brieskorn sphere $Y$, which may be realized as the link of a weighted homogenous singularity. Here, the resolution of this singularity gives an extremal analytic structure on the usual negative-definite plumbing for $Y$.

The above discussion immediately indicates:

\begin{lemma}\label{lem:ag.6}
Let $\Gamma$ be an extremal (almost-rational) plumbing graph equipped an extremal analytic structure. Let $(x_i)_{i = 0}^n$ be an extremal path of divisors. Consider the sequence of formal differences
\[
V_i = H^1(\rX, \O_{x_i})^* - H^0(\rX, \O_{x_i})^*,
\]
where $H^i(\rX, \O_{x_i})^*$ is the dual of $H^i(\rX, \O_{x_i})$. Then for each $i$, the map of sheaves $\O_{x_{i+1}} \rightarrow \O_{x_i}$ either induces an inclusion $V_i \hookrightarrow V_{i+1}$ or an inclusion $V_{i+1} \hookrightarrow V_i$. 
\end{lemma}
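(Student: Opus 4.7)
The plan is to unwind the definition of a formal-difference inclusion and feed in the two cases coming from the cohomology exact sequence discussed before Definition~\ref{def:ag.5}. Recall from Definition~\ref{def:ag.3} that an inclusion $(A-B)\hookrightarrow (A'-B')$ is the data of a pair of linear inclusions $f_1\colon A\hookrightarrow A'$ and $f_2\colon B'\hookrightarrow B$. In our situation, $A_i=H^1(\rX,\O_{x_i})^*$ and $B_i=H^0(\rX,\O_{x_i})^*$, so the question reduces to checking that the induced maps between the two $H^0$'s and the two $H^1$'s are either surjective or zero in the appropriate directions, because a linear map of finite-dimensional vector spaces is a surjection if and only if its dual is an inclusion.

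First, I would recall that the extremality hypothesis on the path $(x_i)$ forces each elementary step $x_i\to x_{i+1}=x_i+v_{j(i)}$ to fall into one of the two sharp cases discussed in Section~\ref{sec:ag.1}. Namely, because the right-hand side of (\ref{eq:ag.3}) is realized on every step of an extremal path (by Definition~\ref{def:ag.5}(3) and the telescoping of the Riemann--Roch differences $\Delta_i$), the cohomology exact sequence
\[
0\to H^0(v_{j(i)},\mathcal{M}_i)\to H^0(\rX,\O_{x_{i+1}})\to H^0(\rX,\O_{x_i})\to H^1(v_{j(i)},\mathcal{M}_i)\to H^1(\rX,\O_{x_{i+1}})\to H^1(\rX,\O_{x_i})\to 0
\]
degenerates to either (\ref{eq:ag.2}) or (\ref{eq:ag.4}), according to whether $\deg\mathcal{M}_i>-2$ or $\deg\mathcal{M}_i\leq -2$.

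Next I would dualize each case and read off which inclusion of formal differences we obtain. In case (\ref{eq:ag.4}) the natural maps give an isomorphism on $H^0$ and a surjection $H^1(\rX,\O_{x_{i+1}})\twoheadrightarrow H^1(\rX,\O_{x_i})$; dualizing yields an inclusion $A_i=H^1(\rX,\O_{x_i})^*\hookrightarrow H^1(\rX,\O_{x_{i+1}})^*=A_{i+1}$ together with an isomorphism $B_{i+1}\xrightarrow{\cong} B_i$, which is precisely the data of an inclusion $V_i\hookrightarrow V_{i+1}$. In case (\ref{eq:ag.2}) we instead have an isomorphism on $H^1$ and a surjection $H^0(\rX,\O_{x_{i+1}})\twoheadrightarrow H^0(\rX,\O_{x_i})$; dualizing now gives an isomorphism $A_{i+1}\xrightarrow{\cong}A_i$ and an inclusion $B_i\hookrightarrow B_{i+1}$, which by Definition~\ref{def:ag.3} is exactly an inclusion $V_{i+1}\hookrightarrow V_i$.

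No step should be a serious obstacle: the whole argument is a bookkeeping exercise translating the sharpness of (\ref{eq:ag.3}) into the required direction of inclusion. The one subtle point worth flagging is keeping the variance straight: the sheaf map $\O_{x_{i+1}}\to \O_{x_i}$ always points from step $i{+}1$ to step $i$, so the induced cohomology maps go in this direction, and the direction of the formal-difference inclusion between $V_i$ and $V_{i+1}$ is determined entirely by which of $H^0$ or $H^1$ degenerates to an isomorphism. This sign/direction convention is the only place where a careless reader could be tripped up, so I would state it explicitly at the start of the proof.
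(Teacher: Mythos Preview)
Your proposal is correct and follows essentially the same approach as the paper's proof: split into the two cases (\ref{eq:ag.2}) and (\ref{eq:ag.4}) coming from extremality, dualize the resulting surjections/isomorphisms on $H^0$ and $H^1$, and read off the direction of the formal-difference inclusion from Definition~\ref{def:ag.3}. The paper's argument is slightly terser but substantively identical, including the need to invert the isomorphism on one of the two cohomology groups to get the maps pointing in the required direction.
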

\begin{proof}
As discussed above, there are two possibilities when comparing $\O_{x_{i+1}}$ and $\O_{x_i}$. In the first case, we have (\ref{eq:ag.2}). Dualizing these maps gives 
\begin{align*}
&H^0(\rX, \O_{x_{i}})^* \hookrightarrow H^0(\rX, \O_{x_{i+1}})^*\quad \text{and}\\
&H^1(\rX, \O_{x_{i}})^* \xrightarrow{\cong} H^1(\rX, \O_{x_{i+1}})^*.
\end{align*}
Inverting the latter map gives an inclusion from $V_{i+1}$ to $V_i$ in the sense of Definition~\ref{def:ag.4}. In the second case, we have (\ref{eq:ag.4}). Dualizing gives 
\begin{align*}
&H^0(\rX, \O_{x_{i}})^* \xrightarrow{\cong} H^0(\rX, \O_{x_{i+1}})^*\quad \text{and}\\
&H^1(\rX, \O_{x_{i}})^* \hookrightarrow H^1(\rX, \O_{x_{i+1}})^*.
\end{align*}
Inverting the former map gives an inclusion from $V_{i}$ to $V_{i+1}$.
\end{proof}

Hence if $(x_i)_{i = 0}^n$ is an extremal path, then the $V_i$ of Lemma~\ref{lem:ag.6} form a sequence of formal differences as in Definition~\ref{def:ag.4}. We claim that taking the contracted mapping cone over these formal differences gives, up to suspension, the lattice homotopy type $\Hty(\Gamma, [K])$:

\begin{theorem}\label{thm:ag.7}
Let $\Gamma$ be an extremal (almost-rational) plumbing graph equipped an extremal analytic structure. Let $(x_i)_{i = 0}^n$ be an extremal path of divisors. Consider the sequence of formal differences
\[
V_i = H^1(\rX, \O_{x_i})^* - H^0(\rX, \O_{x_i})^*
\]
equipped with the inclusions induced by the sheaf maps $\O_{x_{i+1}} \rightarrow \O_{x_i}$. Up to suspension, the contracted mapping cone $\Hty(V_1, \ldots, V_n)$ is $S^1$-homotopy equivalent to the lattice homotopy type $\Hty(\Gamma, [K])$. 
\end{theorem}
\begin{proof}
All that remains is to translate between various conventions. As in Definition~\ref{def:ag.5}, let
\[
k_i = K + 2x_i^*.
\]
The weight function $w(k_i)$, as given in Definition~\ref{def:4.2}, is
\[
w(k_i) = \dfrac{1}{4}(k_i^2 + |\Gamma|) = x_i^2 + K(x_i) + \dfrac{1}{4}(K^2 + |\Gamma|) = -2 \chi(\rX, \O_{x_i}) + \dfrac{1}{4}(K^2 + |\Gamma|).
\]
Hence $\dim V_i = - \chi(\rX, \O_{x_i})$ differs from $w(k_i)/2$ by a constant shift, establishing the claim.
\end{proof}

\begin{remark}\label{rem:ag.8}
As discussed in \cite[Section 6.2]{Nemethi}, it is possible to bound $\dim H^1(\rX, \mathcal{L})$ for more general $\mathcal{L}$ by taking the discussion of this section and replacing $\O_{x_i}$ with $\O_{x_i} \otimes \mathcal{L}$ throughout. The presence of an extremal path in this setting gives an interpretation of the lattice homotopy type in $\spinc$-structures other than $[K]$. We emphasize the case presented here due to a lack of examples establishing extremality for general $\mathcal{L}$.
\end{remark}

If $\Gamma$ is extremal, Theorem~\ref{thm:ag.7} provides an interpretation of the lattice homotopy type $\Hty(\Gamma, [K])$ in terms of the algebraic geometry associated to a particular analytic structure on $W_\Gamma$. We stress that although $\Hty(\Gamma, [K])$ depends only on the topological type of $W_\Gamma$, Theorem~\ref{thm:ag.7} depends on a choice of extremal analytic structure. This is in fact unsurprising. Indeed, in \cite{MOY} it is shown that if $Y$ is a Seifert fibered space, the critical points of the Chern-Simons-Dirac functional (and flows between them) may be interpreted in terms of the algebraic geometry of a particular ruled surface. Hence one should expect that any algebro-geometric interpretation of the Floer-theoretic invariants of $Y$ should be specific to especially meaningful choices of complex structure on $W_\Gamma$.

\section{Appendix}\label{sec:8}

\subsection{Lattice cohomology}\label{sec:8.1}
We now briefly discuss the change in conventions between the present paper and \cite{OSplumbed, NemethiOS, Nemethi}. The most notable difference is the use of lattice \textit{co}homology in \cite{OSplumbed, NemethiOS, Nemethi}, as opposed to our use of lattice homology. Let $\Cla_d = \Cla_d(\Gamma, [k])$ be the free $\Z[U]$-module from Definition~\ref{def:4.3} and denote
\[
\mathcal{T}_0^+ = \Z[U, U^{-1}]/U\Z[U].
\]
We then set
\[
\Cla_d^\vee = \Hom_{\Z[U]}(\Cla_d, \mathcal{T}_0^+).
\]
As before, $d$ is called the \textit{dimensional grading}. Each $\Cla_d^\vee$ also has a \textit{Maslov grading}, which is given by its grading shift as a map from $\Cla_d$ to $\mathcal{T}_0^+$. Explicitly, an element $\phi \in \Cla_d^\vee$ is homogenous of Maslov grading $\gr(\phi)$ if for each homogenous chain $x \in \Cla_d$, the image $\phi(x)$ is a homogenous element of $\mathcal{T}_0^+$ with
\begin{equation}\label{eq:8.1}
\gr(\phi) = \gr(\phi(x)) - \gr(x).
\end{equation}
Here, $\gr(x)$ is the Maslov grading of $x$ (as in Definition~\ref{def:4.3}) and $\gr(\phi(x))$ is the Maslov grading on $\mathcal{T}_0^+$ fixed by the fact that $\gr(1) = 0$ and $\deg U = -2$. The action of of $U$ on $\Cla_d^\vee$ is given by
\[
(U\phi)(x) = \phi(Ux).
\]
Note that $U$ has grading $-2$. 

The \textit{lattice cohomology} is then the homology of the cochain complex
\[
\Cla_0^\vee \xrightarrow{\partial^\vee} \Cla_1^\vee \xrightarrow{\partial^\vee} \Cla_2^\vee \xrightarrow{\partial^\vee} \cdots;
\]
see \cite[Definition 3.1.3]{Nemethi}. Following \cite{Nemethi}, we denote this by $\Hla^d(\Gamma, [k])$.\footnote{In \cite{OSplumbed}, the zeroth lattice cohomology is denoted by $\Hla^+(G)$ and the zeroth lattice homology is denoted by $\mathbb{K}^+(G)$.} As pointed out in \cite[Section 2]{OSplumbed} it is easy to check that
\begin{equation}\label{eq:8.2}
\Hla^0(\Gamma, [k]) \cong \Hom_{\Z[U]}(\Hla_0(\Gamma, [k]), \cT_0^+).
\end{equation}
This follows from the fact that $\Hla_0(\Gamma, [k])$ contains no $\Z$-torsion, which we formally record:

\begin{lemma}\label{lem:8.1}
The zeroth lattice homology $\Hla_0(\Gamma, [k])$ contains no $\Z$-torsion.
\end{lemma}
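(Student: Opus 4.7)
The plan is to prove the stronger statement that $\Hla_0(\Gamma,[k])$ is free abelian, from which $\Z$-torsion-freeness is immediate. The key observation is that the defining presentation can be reduced, grading by grading, to a quotient of a free abelian group by relations that each identify two basis elements.

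First, I would use the presentation $\Hla_0(\Gamma,[k]) = \Cla_0(\Gamma,[k])/\operatorname{im}\partial_1$ and decompose by Maslov grading. The piece $\Cla_0^{(n)}$ in grading $n$ is the free abelian group with basis
\[
\{U^{(w(k)-n)/2}\, k : k \in [k],\ w(k) \geq n,\ w(k) \equiv n \bmod 2\}.
\]
Each lattice edge $e$ with endpoints $k_1,k_2$ contributes, via Definition~\ref{def:4.3}, a single generator of $\operatorname{im}\partial_1$ of the form $\pm\bigl(U^{(w(k_1)-w(e))/2}k_1 - U^{(w(k_2)-w(e))/2}k_2\bigr)$; the two endpoints of an edge carry opposite signs in the cubical differential, and the $U$-exponents are non-negative since $w(e)=\min\{w(k_1),w(k_2)\}$. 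Multiplying by powers of $U$ then produces, in Maslov grading $n$, relations of the shape $\pm(U^{a_1}k_1 - U^{a_2}k_2)$, each a signed difference of two distinct basis elements of $\Cla_0^{(n)}$.

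Next, I would invoke the following elementary fact: if $G$ is a free abelian group with basis $B$ and $H \subseteq G$ is the subgroup generated by a set of differences $\{b-b' : (b,b')\in S\}$, then $G/H$ is the free abelian group on $B/{\sim}$, where $\sim$ is the equivalence relation on $B$ generated by $S$. The proof is standard: the surjection $G/H \twoheadrightarrow \Z[B/{\sim}]$, $b \mapsto [b]$, is injective because any element in its kernel has coefficients summing to zero on each equivalence class, hence is (within each class) a telescoping $\Z$-combination of the defining differences. Applied to each $\Cla_0^{(n)}/\operatorname{im}\partial_1^{(n)}$, this shows $\Hla_0^{(n)}(\Gamma,[k])$ is free abelian, and therefore $\Hla_0(\Gamma,[k])$ is free abelian as a $\Z$-module.

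The only place where care is needed — and therefore the main (minor) obstacle — is verifying the sign conventions in the cubical differential: one must check that each $\partial e$ really produces a single $(+1,-1)$ pair of coefficients, so that no relation of the form $2b=0$ or $b+b'=0$ can arise among the generators of $\operatorname{im}\partial_1$. Once this is in hand, the general fact above applies and torsion-freeness follows. I note that this argument is specific to $\Hla_0$; the higher lattice groups $\Hla_d$ involve $\ker\partial_d$, where free-abelian-ness is not automatic.
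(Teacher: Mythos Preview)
Your proof is correct and follows essentially the same approach as the paper. The paper's one-line argument (``immediate from Theorem~\ref{thm:4.9} and an analysis of the equivalence relation $\sim$ of (\ref{eq:4.1})'') points to exactly the observation you unpack: in each Maslov grading the relations coming from $\partial_1$ identify pairs of $\Z$-basis elements, so the quotient is free abelian on the equivalence classes. You have simply spelled this out in full, and your argument works for arbitrary negative-definite $\Gamma$, whereas the paper's parenthetical reference to Theorem~\ref{thm:4.9} would only directly handle the AR case via the graded-root description.
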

\begin{proof}
This is immediate from (for example) Theorem~\ref{thm:4.9} and an analysis of the equivalence relation $\sim$ of (\ref{eq:4.1}).
\end{proof}

Following \cite{OSplumbed}, consider the map
\[
T^+ \colon \HF^+(-Y_\Gamma, \s) \rightarrow \Hla^0(\Gamma, [k])
\]
which sends $\xi \in \HF^+(-Y_\Gamma, \s)$ to the element $\phi \in \Hla^0(\Gamma, [k])$ defined by
\[
k \mapsto \phi(k) = F^+_{W, k}(\xi) \in \HF^+(-S^3) \cong \cT_0^+.
\]
Here, we are viewing $W$ as a cobordism from $-Y_\Gamma$ to $-S^3$, so that we have the associated Floer cobordism map
\[
F^+_{W, k} \colon \HFp(-Y_\Gamma, \s) \rightarrow \HFp(S^3).
\]

It is easily checked that $\phi$ descends to a map out of $\Hla_0(\Gamma, [k])$ due to the adjunction relations; see \cite[Section 1]{OSplumbed}. In \cite{OSplumbed, NemethiOS}, it is shown that $T^+$ is an isomorphism of $\Z[U]$-modules in the case that $\Gamma$ is a one-bad-vertex or AR plumbing. Given the surgery exact sequence for Seiberg-Witten Floer homology established by the second and third authors \cite{Sasahira-Stoffregen_Triangle}, the same proof applies verbatim to show that $T^+$ is an isomorphism in the setting of Seiberg-Witten Floer homology:
\[
T^+ \colon \B(\SWF(-Y_\Gamma, \s)) \rightarrow \Hla^0(\Gamma, [k]).
\]
For readers unfamiliar with the proof, we give a brief sketch of this in Section~\ref{sec:8.3}. It is then not difficult to see that $T^+$ is an isomorphism if and only if $\T$ is an isomorphism; this is because $\T$ is essentially the dual of $T^+$. However, there is a slight subtlety: the lattice cohomology is defined by dualizing over $\Z[U]$, while usually duality in Floer homology is phrased in terms of $\Z$. We explain this minor point below.

\subsection{Duality}\label{sec:8.2}
Let $i$ be an arbitrary Maslov grading. Denote the lattice homology in grading $i$ by $\Hla_{0,i}(\Gamma, [k])$ and the lattice cohomology in grading $-i$ by $\Hla^{0,-i}(\Gamma, [k])$. We then have maps
\begin{equation}\label{eq:8.3}
\Hla_{0,i}(\Gamma, [k]) \xlongrightarrow{\T} c\widetilde{H}_i^{S^1}(\SWF(Y_\Gamma, \s))
\end{equation}
and
\begin{equation}\label{eq:8.4}
\Hla^{0,-i}(\Gamma, [k]) \xlongleftarrow{T^+} \widetilde{H}_{-i}^{S^1}(\SWF(-Y_\Gamma, \s)).
\end{equation}

We begin by considering the right-hand side of (\ref{eq:8.3}). By definition, we have
\[
c\widetilde{H}_i^{S^1}(\SWF(Y_\Gamma, \s)) = \widetilde{H}^{-i}_{S^1}(\SWF(-Y_\Gamma, \s)),
\]
where the latter is Borel \textit{co}homology. Since the Borel homology of $\SWF(-Y_\Gamma, \s)$ has no $\Z$-torsion (being isomorphic to lattice cohomology), we moreover have that
\[
\widetilde{H}^{-i}_{S^1}(\SWF(-Y_\Gamma, \s)) \cong \Hom_\Z(\widetilde{H}_{-i}^{S^1}(\SWF(-Y_\Gamma, \s)), \Z).
\]
Hence it follows that there is a perfect $\Z$-valued pairing $(a, b) \mapsto a(b)$ between the right-hand side of (\ref{eq:8.3}) and the right-hand side of (\ref{eq:8.4}). 

As discussed in the previous section, we have $\Hla^0(\Gamma, [k]) \cong \Hom_{\Z[U]}(\Hla_0(\Gamma, [k]), \cT_0^+)$. Hence there is similarly a $\Z$-valued pairing $\langle c, d \rangle \mapsto d(c)$ between the left-hand side of (\ref{eq:8.3}) and the left-hand side of (\ref{eq:8.4}), given by evaluating an element of $\Hla^{0,-i}(\Gamma, [k])$ on an element of $\Hla_{0,i}(\Gamma, [k])$. Due to the definition of the Maslov grading (\ref{eq:8.1}), this gives an element in $\cT_0^+$ of Maslov grading zero; that is, an element of $\Z$. We claim that this pairing is perfect.

For brevity, denote 
\[
H_* = \Hla_0(\Gamma, [k]), \quad H_i = \Hla_{0, i}(\Gamma, [k]), \quad H^* = \Hla^0(\Gamma, [k]), \quad \text{and} \quad H^{-i} = \Hla^{0, -i}(\Gamma, [k]),
\]
so $\langle \cdot, \cdot \rangle \colon H_i \times H^{-i} \rightarrow \Z$. Since $H_i$ and $H^{-i}$ are free, finitely-generated $\Z$-modules, to show that $\langle \cdot, \cdot \rangle$ is perfect, it suffices to prove that the evaluation map mediates an isomorphism
\begin{equation}
H^{-i} \cong \Hom_{\Z}(H_i, \Z).
\end{equation}
Note that this is not quite the same duality statement as (\ref{eq:8.2}), which says that $H^{-i}$ is isomorphic to the part of $\Hom_{\Z[U]}(H_*, \cT_0^+)$ in Maslov grading $-i$. Indeed, the elements of $\Hom_{\Z}(H_i, \Z)$ are homomorphisms which are only defined on elements of $H_i$, while the elements of $\Hom_{\Z[U]}(H_*, \cT_0^+)$ are defined on all of $H_*$. However, there is an obvious map of $\Z$-modules
\[
(\text{part of } \Hom_{\Z[U]}(H_*, \cT_0^+) \text{ in Maslov grading } -i) \rightarrow \Hom_{\Z}(H_i, \Z)
\]
given by restriction to $H_i \subset H_*$. We claim that this is an isomorphism. 

The isomorphism is most easily seen by choosing a generating set for $H_*$ as a $\Z[U]$-module, as shown in Figure~\ref{fig:8.1}. This consists of elements $\{x_1, \ldots, x_n\}$ where each $x_j$ has a particular $U$-torsion order $t_j$. Here, $t_1 = \infty$, while all the other $t_j$ are finite. This gives a $\Z$-basis for $H_i$ in the following sense: for each $x_j$ with $\smash{\gr(x_j) -2 t_j \leq i \leq \gr(x_j)}$, we obtain the basis element $\smash{U^{(\gr(x_j)-i)/2} x_j}$; otherwise $x_j$ contributes nothing. Then any $\Z$-morphism $\phi \in \Hom_{\Z}(H_i, \Z)$ extends to a $\Z[U]$-morphism $\smash{\Phi \in \Hom_{\Z[U]}(H_*, \cT_0^+)}$ of Maslov grading $-i$ by setting
\[
\Phi(x_j) = U^{-(\gr(x_j)-i)/2} \phi(U^{(\gr(x_j)-i)/2} x_j)
\]
whenever $\smash{\gr(x_j) -2 t_j \leq i \leq \gr(x_j)}$ and $\Phi(x_j) = 0$ otherwise. Note that the latter equality is forced by the condition that the Maslov grading of $\Phi$ be $-i$. Indeed, if $\gr(x_j) < i$, then we have
\[
\gr(\Phi(x_j)) = \gr(x_j) + (-i) < 0
\]
which forces $\Phi(x_j) = 0$. On the other hand, suppose $\gr(x_j) - 2 t_j > i$. Then
\[
\gr(\Phi(x_j)) = \gr(x_j) + (-i) > 2t_j.
\]
Since $U^{t_j + 1} \Phi(x_j) = \Phi(U^{t_j+1} x_j) = \Phi(0) = 0$, combining the above grading inequality with the structure of $\cT_0^+$ shows that $\Phi(x_j) = 0$. It follows that the extension $\Phi$ is unique, which easily implies $\langle \cdot, \cdot \rangle$ is a perfect pairing.

\begin{figure}[h!]
\includegraphics[scale = 1]{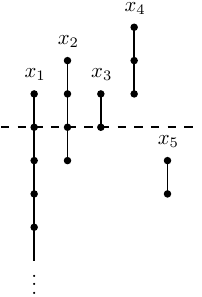}
\caption{The $\Z[U]$-module $H_*$ with a set of generators $\{x_1, \ldots, x_5\}$. The Maslov grading $i$ is represented by the dotted line, so that there are three generators for $H_i$.}\label{fig:8.1}
\end{figure}

Now let $k \in \Hla_{0, i}(\Gamma, [k])$ and $\xi \in \widetilde{H}_{-i}^{S^1}(\SWF(-Y_\Gamma, \s))$. We claim that
\begin{equation}\label{eq:8.5}
\langle k, T^+ \xi \rangle = (\T k, \xi).
\end{equation}
This is straightforward from unwinding the definitions. On the left, we have that
\[
\langle k, T^+ \xi \rangle = (T^+ \xi)(k) = F^+_{W, k}(\xi)
\]
by definition of $T^+$. On the right, we have that $(\T k, \xi) = [F_{W, k}(1)](\xi)$, where
\[
F_{W, k} \colon c\widetilde{H}_i^{S^1}(\SWF(S^3)) \rightarrow c\widetilde{H}_i^{S^1}(\SWF(Y_{\Gamma}, \s))
\]
and we interpret $F_{W, k}(1) \in c\widetilde{H}_i^{S^1}(\SWF(Y_{\Gamma}, \s))$ as an element of $\smash{\widetilde{H}^{-i}_{S^1}(\SWF(-Y_{\Gamma}, \s))}$. Since this has no $\Z$-torsion, we have that $F_{W, k}$ is dual to the map 
\[
F^+_{W, k} \colon \widetilde{H}_{-i}^{S^1}(\SWF(-Y_\Gamma, \s)) \rightarrow \widetilde{H}_{-i}^{S^1}(\SWF(-S^3))
\]
on Borel homology. Hence $[F_{W, k}(1)](\xi) = 1(F^+_{W, k}(\xi)) = F^+_{W, k}(\xi)$, as desired.

Given that $(\cdot, \cdot)$ and $\langle \cdot, \cdot \rangle$ are perfect, it is a straightforward argument in basic algebra to show that (\ref{eq:8.5}) implies $\T$ is an isomorphism if and only if $T^+$ is an isomorphism. This establishes the desired claim.

\subsection{The isomorphism theorem}\label{sec:8.3}

For readers unfamiliar with the proof of the lattice isomorphism theorem of \cite{OSplumbed, Nemethi, NemethiOS}, we give an abbreviated version of the argument in the setting of Seiberg-Witten Floer homology. The rough idea is to perform an induction on the size of $\Gamma$ and the framings of its vertices. There are two important structural results which will allow us to carry out this induction. 

The first is a naturality statement regarding the behavior of $T^+$ under blow-downs. Let $\Gamma$ be a plumbing graph with a specified vertex $v$. Define the following:
\begin{enumerate}
\item Let $\Gamma'$ be constructed from $\Gamma$ by introducing an additional vertex $x$ with framing $-1$, which is attached only to $v$. 
\item Let $\Gamma_{+1}$ be constructed from $\Gamma$ by increasing the framing of $v$ by one. 
\end{enumerate}
Note that $\Gamma_{+1}$ may be obtained from $\Gamma'$ by blowing down the leaf $x$. Thus $Y_{\Gamma_{+1}}$ and $Y_{\Gamma'}$ are homeomorphic $3$-manifolds. However, since $\Gamma_{+1}$ and $\Gamma'$ are not the same graph, it is not obvious that their corresponding lattice cohomologies are isomorphic. The following lemma asserts that $\Hla^0(\Gamma_{+1})$ and $\Hla^0(\Gamma')$ are indeed isomorphic, and that this isomorphism is natural with respect to $T^+$:

\begin{lemma}\cite[Proposition 2.5]{OSplumbed}\label{lem:8.2}
We have the following commutative square:
\[
\begin{tikzcd}
\B(\SWF(-Y_{\Gamma'})) \arrow[dd, "T^+"] \arrow[r, "\cong"]       & \B(\SWF(-Y_{\Gamma_{+1}})) \arrow[dd, "T^+"] \\
                                       &                                         &   \\
\Hla^0(\Gamma') \arrow[r, "\cong"] & \Hla^0(\Gamma_{+1}).
\end{tikzcd}
\]
\end{lemma}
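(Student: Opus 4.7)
The plan is to construct the horizontal isomorphisms explicitly and then verify commutativity using the blow-up formula from Section~\ref{subsec:adjunct}. The key observation is that $W_{\Gamma'} = W_{\Gamma_{+1}} \# \overline{\mathbb{CP}}^2$, with the exceptional sphere corresponding to the $(-1)$-vertex $x$. In particular $Y_{\Gamma'}$ and $Y_{\Gamma_{+1}}$ are canonically identified as oriented $3$-manifolds, which induces a canonical isomorphism $\SWF(-Y_{\Gamma'}) \cong \SWF(-Y_{\Gamma_{+1}})$ of $S^1$-spectra and hence the top horizontal arrow.

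For the bottom horizontal arrow, first identify the set $\Char_{\Gamma'}$ with $\Char_{\Gamma_{+1}} \times (2\Z + 1)$, where a characteristic element $k' \in \Char_{\Gamma'}$ corresponds to the pair $(k, k'(x))$ with $k = k'|_{L_{\Gamma_{+1}}}$. Using the relation $2x^* = -2x + 2v^*|_{\Gamma'}$ (where we view $v^*$ as lying in the appropriate dual lattice), one checks that each equivalence class $[k'] \subset \Char_{\Gamma'}$ corresponds to a unique equivalence class $[k] \subset \Char_{\Gamma_{+1}}$; within each class $[k']$, the action of $L_{\Gamma'}$ realizes all odd values of $k'(x)$. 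Moreover, writing $k'(x) = 2\ell + 1$, we have
\[
w(k') = w(k) - \frac{\ell(\ell+1)}{2},
\]
so the weight is maximized precisely when $\ell = 0$ or $\ell = -1$, i.e.\ when $|k'(x)| = 1$. From the presentation~(\ref{eq:4.1}) of $\Hla_0$ it follows that the map $[k'] \mapsto [k]$, which sends the distinguished generator with $|k'(x)| = 1$ to the corresponding generator $k$, induces an isomorphism $\Hla_0(\Gamma') \cong \Hla_0(\Gamma_{+1})$ (any other representative $k'$ with $|\ell| \geq 1$ differs by the required power of $U$ because of the displayed weight formula, precisely matching the equivalence $\sim$ on $\Hla_0(\Gamma_{+1})$). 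Dualizing via the discussion of Section~\ref{sec:8.2} gives the desired isomorphism of lattice cohomologies.

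To verify the square commutes, it suffices to check commutativity after pairing with an arbitrary class in $\B(\SWF(-Y_{\Gamma'}))$. By the definition of $T^+$ and the $\Z[U]$-module structure, this reduces to showing that for any characteristic element $k'$ with $k'(x) = 2\ell + 1$ and corresponding $k \in \Char_{\Gamma_{+1}}$,
\[
\Psi_{W_{\Gamma'}, k'} = U^{\ell(\ell+1)/2} \, \Psi_{W_{\Gamma_{+1}}, k}
\]
as morphisms in $\mathfrak{C}_{S^1}$. This is exactly the content of the blow-up formula, since $\ts_\ell|_{\overline{\mathbb{CP}}^2}$ has $c_1 = (2\ell+1) PD(E)$ and the Bauer-Furuta invariant of $\overline{\mathbb{CP}}^2$ in this $\spinc$-structure equals $U^{\ell(\ell+1)/2}$. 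The weight identity established above ensures the Maslov/dimensional bookkeeping matches: under the identification of generators, multiplication by $U^{\ell(\ell+1)/2}$ is precisely the combinatorial isomorphism $\Hla^0(\Gamma') \cong \Hla^0(\Gamma_{+1})$.

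The main obstacle is bookkeeping: one must carefully confirm that the bijection between equivalence classes, the weight shift formula, and the blow-up formula all match signs and powers of $U$ consistently. Once the weight computation $w(k') - w(k) = -\ell(\ell+1)/2$ is in hand, both horizontal arrows are essentially tautologies and the blow-up formula forces commutativity.
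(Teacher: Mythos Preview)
Your approach is essentially the same as the paper's: define the combinatorial blow-down isomorphism on lattice (co)homology and verify commutativity via the blow-up formula for the relative Bauer--Furuta invariants, exactly as in \cite[Proposition~2.5]{OSplumbed}. One minor bookkeeping slip: under the orthogonal splitting $W_{\Gamma'}\cong W_{\Gamma_{+1}}\#\overline{\mathbb{CP}}^2$ one has $w(k')=w(k)-\ell(\ell+1)$ rather than $w(k)-\tfrac{\ell(\ell+1)}{2}$, but since $\deg U=-2$ the resulting power of $U$ is still $\ell(\ell+1)/2$, so your conclusion and the match with the blow-up formula are correct.
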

\begin{proof}
See \cite[Proposition 2.5]{OSplumbed}. The map between lattice cohomologies along the bottom row is defined combinatorially and is the same as in the proof of \cite[Proposition 2.5]{OSplumbed}. The proof of commutativity relies only on the blow-up formula, which holds for both Seiberg-Witten Floer homology and Heegaard Floer homology. 
\end{proof}

The second is a commuting pair of surgery sequences, one from lattice cohomology and one from Seiberg-Witten Floer homology. To this end, let $\Gamma$ be a negative-definite plumbing graph with a specified vertex $v$. Define the following:
\begin{enumerate}
\item Let $\Gamma - v$ be the graph $\Gamma$ with $v$ deleted.
\item As before, let $\Gamma'$ be constructed from $\Gamma$ by introducing an additional vertex $x$ with framing $-1$, which is attached only to $v$.
\end{enumerate}
Note that $Y_{\Gamma - v}$, $Y_\Gamma$, and $\smash{Y_{\Gamma'}}$ form a surgery triple.

\begin{lemma}\cite[Lemmas 2.9 and 2.10]{OSplumbed}\label{lem:8.3}
There exist maps $\mathbb{A}$ and $\mathbb{B}$ such that the diagram
\[
\begin{tikzcd}
\cdots \arrow[r] & \B(\SWF(-Y_{\Gamma'})) \arrow[dd, "T^+"] \arrow[r]       & \B(\SWF(-Y_{\Gamma})) \arrow[dd, "T^+"] \arrow[r]       &\B(\SWF(-Y_{\Gamma-v})) \arrow[dd, "T^+"] \arrow[r]       & \cdots \\
                 &                                           &                                         &                                              &        \\
  0 \arrow[r] & \Hla^0(\Gamma') \arrow[r, "\mathbb{A}"] & \Hla^0(\Gamma) \arrow[r, "\mathbb{B}"] & \Hla^0(\Gamma-v) &  
\end{tikzcd}
\]
commutes up to sign and the top and bottom rows are exact.\footnote{Note that the bottom row does \textit{not} continue to the right.}
\end{lemma}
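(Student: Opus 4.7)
The plan is to follow the structure of \cite[Lemmas 2.9 and 2.10]{OSplumbed}, with the Heegaard Floer exact triangle replaced by the Seiberg-Witten-Floer exact triangle of Section~\ref{sec:3.5}. The top row is precisely the surgery exact sequence for the triple $(Y_{\Gamma-v}, Y_\Gamma, Y_{\Gamma'})$ coming from the elementary cobordisms obtained by attaching $2$-handles; here we observe that the relevant $Y_n$ in the notation of Section~\ref{sec:3.5} are $(-Y_{\Gamma'})$, $(-Y_\Gamma)$, $(-Y_{\Gamma-v})$, and that the top row is exact once we take co-Borel homology (equivalently, Borel homology after passing to the appropriate completion).

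Next, I would define the maps $\mathbb{A}$ and $\mathbb{B}$ combinatorially, mimicking exactly the definitions in \cite[Section 2.3]{OSplumbed}. Concretely: the map $\mathbb{A}$ is defined by declaring, for a characteristic vector $k$ on $W_\Gamma$, that $\mathbb{A}(k)$ is the sum (with signs determined by the adjunction structure) of the characteristic vectors on $W_{\Gamma'}$ which restrict to $k$ on $W_\Gamma$ and whose evaluation on the exceptional vertex $x$ lies in a prescribed range. The map $\mathbb{B}$ is defined by restriction of characteristic vectors from $W_\Gamma$ to $W_{\Gamma - v}$, again summed (with signs) over values of $k(v)$ in a suitable range. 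Here, we dualize to work at the level of $\Hla^0$; the definitions respect the relations generated by $\sim$ (as in (\ref{eq:4.1})) by virtue of the adjunction equalities, so that $\mathbb{A}$ and $\mathbb{B}$ indeed descend to the quotient.

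The commutativity of each square reduces to the composition law for relative Bauer-Furuta invariants. Namely, since $T^+$ is defined by summing cobordism-induced maps $F^+_{W_\Gamma,k}$ over the generators of $\Hla^0$, and since the elementary $2$-handle cobordism appearing in the exact triangle can be stacked with $W_\Gamma$ (respectively $W_{\Gamma-v}$, $W_{\Gamma'}$) to recover the appropriate plumbed cobordism, the composition law of Section~\ref{subsec:relative-bauer-furuta} (together with the blow-up formula of Section~\ref{subsec:adjunct} and Lemma~\ref{lem:8.2} for the $\Gamma'$ case) identifies the two compositions around each square. Signs are reconciled exactly as in \cite[Section 2.1]{OSplumbed} by making the standard choices. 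Exactness at $\Hla^0(\Gamma)$ and injectivity of $\mathbb{A}$ is a purely combinatorial claim about the lattice and may be inherited verbatim from \cite[Lemmas 2.9 and 2.10]{OSplumbed}, since $\Hla^0$ in this setting has an identical combinatorial description to the one used in \cite{OSplumbed}.

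The main technical obstacle is bookkeeping. First, one must verify that the sum over $\spinc$-structures defining $f_{n\bullet}$ in Section~\ref{sec:3.5} corresponds, under $T^+$, to the combinatorial sum defining $\mathbb{A}$ and $\mathbb{B}$; this requires matching the sign conventions $\epsilon_n$ in the exact triangle with the adjunction signs on the lattice side and checking that the ranges $|\langle c_1(\mathfrak{t}), \sigma_n\rangle| \leq N$ in the filtration used to define $f_{n\bullet}$ induce, after dualizing to co-Borel homology and passing to the limit, exactly the combinatorial formulas for $\mathbb{A}$ and $\mathbb{B}$. Second, one must control the completion appearing in Section~\ref{sec:3.5}: because $\widetilde{H}^{S^1}_{-p}(\SWF(-Y);\Z) = 0$ for $p \gg 0$, and because $\Hla^0(\Gamma,[k])$ is bounded above in Maslov grading, the inverse limit collapses and exactness passes unchanged to the uncompleted sequence, just as in the corollary following the exact triangle theorem.
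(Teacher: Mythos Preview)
Your approach is essentially the same as the paper's: both defer to \cite[Lemmas 2.9 and 2.10]{OSplumbed} for the combinatorial definitions of $\mathbb{A}$ and $\mathbb{B}$ and the exactness of the bottom row, invoke the surgery exact triangle of Section~\ref{sec:3.5} for the top row, and reduce commutativity to the composition law for cobordism maps. Your sketch is in fact more detailed than the paper's proof, which is a two-line citation; the extra discussion of the completion and sign-matching is correct and useful. One minor imprecision: your informal description of $\mathbb{A}$ and $\mathbb{B}$ at the level of characteristic vectors has the directions slightly tangled (in cohomology, $\mathbb{B}(\phi)$ for $\phi \in \Hla^0(\Gamma)$ is evaluated on $k \in \Char_{\Gamma-v}$ by summing $\phi$ over extensions of $k$ to $W_\Gamma$, not by restricting), but this is a wording issue rather than a mathematical one and does not affect the argument.
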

\begin{proof}
See \cite[Lemmas 2.9 and 2.10]{OSplumbed}. The maps $\mathbb{A}$ and $\mathbb{B}$ are defined combinatorially and are the same as in the proof of \cite[Lemmas 2.9 and 2.10]{OSplumbed}, where it is shown that the bottom row is exact; see also \cite{Greenesurgery, Nemethisequence}. The proof of commutativity relies only on the formal behavior of cobordism maps under composition. For a discussion of signs in the exact sequence, see the discussion at the end of \cite[Section 2]{OSplumbed}.
\end{proof}

We now sketch the inductive argument itself. The base case corresponds to a plumbing graph $\Gamma$ with only a single vertex. In this situation, $-Y_\Gamma$ is an L-space; an easy computation shows that $\Hla^0(\Gamma)$ is likewise a copy of $\cT_0^+$ in each $\spinc$-structure. Moreover, a calculation of the grading shift associated to $W_\Gamma$ combined with the fact that $W_\Gamma$ is negative-definite shows that $T^+$ is an isomorphism.

The proof proceeds in several stages. First consider the category of zero-bad-vertex plumbings. We will need the fact that if $\Gamma$ is any zero-bad-vertex plumbing, then $Y_\Gamma$ is an L-space; see \cite[Lemma 2.6]{OSplumbed}. In particular, $\HFp_\text{odd}(-Y_\Gamma) = 0$.\footnote{Here, we mean that in each $\spinc$-structure, the Floer homology vanishes in gradings which are congruent to $(d + 1) \bmod 2$.} Invoking the isomorphism between Heegaard Floer, monopole Floer, and Seiberg-Witten Floer homology \cite{KLT, CGH, LidmanMan} shows that this holds for $\smash{\B(\SWF(-Y_\Gamma))}$ also. 

We now proceed by induction on the size of $\Gamma$. The base case of a single vertex has already been addressed; for the inductive step, let $\Gamma$ be a zero-bad-vertex plumbing. Then:
\begin{enumerate}
\item If $\Gamma$ has a leaf with framing $-1$, blow down to obtain the zero-bad-vertex plumbing $\Gamma_{+1}$. Lemma~\ref{lem:8.2} combined with the inductive hypothesis establishes that $T^+$ is an isomorphism for $\Gamma$.
\item Otherwise, select any leaf $v$ of $\Gamma$. We proceed by a downwards sub-induction on the framing of $v$. The base case where the framing of $v$ is $-1$ is covered above. Otherwise, $\Gamma - v$ and $\Gamma_{+1}$ are also zero-bad-vertex plumbings. (These may be assumed to be negative-definite; see for example \cite[Lemma 6.1]{ABDS}.) Lemma~\ref{lem:8.3} gives an exact sequence
\[
\begin{tikzcd}
0 \arrow[r] & \B(\SWF(-Y_{\Gamma'})) \arrow[dd, "T^+"] \arrow[r]       & \B(\SWF(-Y_{\Gamma})) \arrow[dd, "T^+"] \arrow[r]       &\B(\SWF(-Y_{\Gamma-v})) \arrow[dd, "T^+"] \arrow[r]       & 0 \\
                 &                                           &                                         &                                              &        \\
  0 \arrow[r] & \Hla^0(\Gamma') \arrow[r, "\mathbb{A}"] & \Hla^0(\Gamma) \arrow[r, "\mathbb{B}"] & \Hla^0(\Gamma-v) &  
\end{tikzcd}
\]
where in the top-left and top-right, we have used the fact that $\smash{\B(\SWF(-Y_{\Gamma-v}))}$ and $\smash{\B(\SWF(-Y_{\Gamma'}) = \B(\SWF(-Y_{\Gamma_{+1}}))}$ are zero in odd gradings. By inductive hypothesis, $T^+$ is an isomorphism for $\Gamma - v$. The inductive hypothesis implies that $T^+$ is an isomorphism for $\Gamma_{+1}$; Lemma~\ref{lem:8.2} then shows that $T^+$ is an isomorphism for $\Gamma'$. Applying the five-lemma to the above diagram shows that $T^+$ is an isomorphism for $\Gamma$.
\end{enumerate}
This completes the proof in the case that $\Gamma$ has no bad vertices.

Now suppose that $\Gamma$ has a single bad vertex $v$. We proceed by upwards induction on the framing of $v$. If the framing of $v$ is sufficiently negative, then $\Gamma$ has no bad vertices and hence $T^+$ is an isomorphism by the previous paragraph. Otherwise, we have an exact sequence
\[
\begin{tikzcd}
0 \arrow[r] & \B(\SWF(-Y_{\Gamma'})) \arrow[dd, "T^+"] \arrow[r]       & \B(\SWF(-Y_{\Gamma})) \arrow[dd, "T^+"] \arrow[r]       &\B(\SWF(-Y_{\Gamma-v})) \arrow[dd, "T^+"] \arrow[r]       & \cdots \\
                 &                                           &                                         &                                              &        \\
  0 \arrow[r] & \Hla^0(\Gamma') \arrow[r, "\mathbb{A}"] & \Hla^0(\Gamma) \arrow[r, "\mathbb{B}"] & \Hla^0(\Gamma-v) &  
\end{tikzcd}
\]
Here, in the top-left corner, we have used the fact that $\Gamma - v$ has no bad vertices. (Note that, unlike the previous case, we do \textit{not} know that the top-right corner is zero, since $-Y_{\Gamma'} \cong - Y_{\Gamma_{+1}}$ is not necessarily an L-space.) By the previous paragraph, we also know that $T^+$ is an isomorphism for $\Gamma - v$. The inductive hypothesis is that $T^+$ is an isomorphism for $\Gamma$; by the five-lemma, $T^+$ is thus an isomorphism for $\Gamma'$. By Lemma~\ref{lem:8.2}, this shows that $T^+$ is an isomorphism for $\Gamma_{+1}$ and completes the inductive step. This gives the proof for one-bad-vertex manifolds.

The case when $\Gamma$ is an AR plumbing is similar, as discussed in \cite[Section 8]{NemethiOS}. The essential point is to show that a plumbing is rational if and only if it is a lattice cohomology L-space; see \cite[Theorem 6.3]{NemethiOS}. Then a similar argument as in the first part of the above proof shows that $T^+$ is an isomorphism for all rational plumbings. Recall that an AR plumbing has a vertex $v$ such that decreasing the framing of $v$ makes $\Gamma$ into a rational plumbing (see for example \cite[Section 6.2]{NemethiOS} for a definition). The same argument as for one-bad-vertex plumbings then applies to show that $T^+$ is an isomorphism for AR plumbings. See \cite[Theorem 8.3]{NemethiOS}.

\bibliographystyle{amsalpha}
\bibliography{bib}

\end{document}